



\documentclass[11pt,a4paper]{article} 
\usepackage[margin=1.095in]{geometry}

\sloppy







\usepackage{euscript,amsmath,amsfonts,amssymb,amscd,amsthm,enumerate,hyperref}
\usepackage{comment}
\usepackage{graphicx}
\usepackage{color}






\newtheorem{theorem}{Theorem}[section]
\newtheorem{lemma}[theorem]{Lemma}
\newtheorem{corollary}[theorem]{Corollary}
\newtheorem{proposition}[theorem]{Proposition}
\theoremstyle{definition}
\newtheorem{definition}[theorem]{Definition}
\theoremstyle{remark}



\newcommand{\HO}{\mathcal F}
\newcommand{\QHO}{\widetilde \HO}

\newcommand{\D}{\mathcal{D}}
\newcommand{\DI}{\mathcal{DI}}

\newcommand{\GTH}{\widehat{\mathbb{GT}}}
\newcommand{\St}{\mathcal R}
\newcommand{\E}{\mathbb E}
\renewcommand{\i}{{\mathbf i}}
\newcommand{\Cr}{\mathbf{Cr}}
\renewcommand{\P}{\mathcal P}
\newcommand{\Y}{\mathbb Y}
\renewcommand{\S}{\mathbb S}
\newcommand{\eps}{\varepsilon}
\renewcommand{\Pr}{\mathbb P}
\newcommand{\GTP}{\mathbb{GT}^+}
\newcommand{\Mac}{\mathfrak{Mac}}
\renewcommand{\r}{\mathbf{r}}

\newcommand{\Jac}{\mathcal {J}}
\newcommand{\jr}{\mathfrak{j}}


\begin{document}                        


\title{General $\beta$ Jacobi corners process and the Gaussian Free Field.}

\author{Alexei Borodin\thanks{Department of Mathematics, Massachusetts Institute of Technology,
USA, and
 Institute for Information Transmission Problems of Russian Academy of Sciences,  Russia}
\footnote{e-mail: borodin@math.mit.edu} \quad Vadim Gorin$^*$\footnote{e-mail: vadicgor@gmail.com}}


\date{}






\maketitle

\begin{abstract}
We prove that the two--dimensional Gaussian Free Field describes the asymptotics of global
fluctuations of a multilevel extension of the general $\beta$ Jacobi random matrix ensembles.
 Our approach is based on the connection of the Jacobi
ensembles to a degeneration of the Macdonald processes that parallels the degeneration of the
Macdonald polynomials to the Heckman--Opdam hypergeometric functions (of type A). We also
discuss the $\beta\to\infty$ limit.
\end{abstract}



\tableofcontents



\section{Introduction}

\subsection{Preface}

The goal of this article is two--fold. First, we want to show how the Macdonald measures and
Macdonald processes \cite{BigMac}, \cite{BCGS}, \cite{BG}, \cite{BP2} (that generalize the Schur measures and
Schur processes of Okounkov and Reshetikhin \cite{Ok}, \cite{OR}) are related to classical general
$\beta$ ensembles of random matrices. The connection is obtained through a limit transition which
is parallel to the degeneration of the Macdonald polynomials to the Heckman--Opdam hypergeometric
functions. This connection brings certain new tools to the domain of random matrices, which we
further exploit.

Second, we extend the known Central Limit Theorems for the global fluctuations of (classical)
general $\beta$ random matrix ensembles to \emph{corners processes}, one example of which is the
joint distribution of spectra of the GUE--random matrix and its principal submatrices. We show
that the global fluctuations of certain general $\beta$ corners processes can be described via the
two--dimensional Gaussian Free Field. This suggests a new viewpoint on several known Central Limit
Theorems of random matrix theory: there is a unique two--dimensional universal limit object (the
Gaussian Free Field) and in different models one sees its various one--dimensional slices.

Let us proceed to a more detailed description of our model and results.
\subsection{The model}

The general $\beta$ ensemble of rank $N$ is the distribution on the set of $N$--tuples of reals
(``particles'' or ``eigenvalues'') $x_1<x_2<\dots<x_N$ with density (with respect to the Lebesgue
measure) proportional to
\begin{equation}
\label{eq_general_beta_intro}
 \prod_{1\le i<j\le N} (x_j-x_i)^\beta  \prod_{i=1}^N w(x_i),
\end{equation}
where $w(x)$ is the \emph{weight} of the ensemble. Perhaps, the most well-known case is $\beta=2$,
$w(x)=\exp(-x^2/2)$ (often called Gaussian or Hermite ensemble), which corresponds to the
eigenvalue distribution of the complex Hermitian matrix from the \emph{Gaussian Unitary Ensemble}
(GUE). Other well--studied weights are $w(x)=x^p e^{-x}$ on $\mathbb R_{>0}$, $p>-1$, known as the
Laguerre (or Wishart) ensemble, and $w(x)=x^p(1-x)^q$ on $(0,1)$, $p,q>-1$, known as the Jacobi (or
MANOVA) ensemble. These three weight functions correspond to \emph{classical} random matrix
ensembles because of their relation to classical orthogonal polynomials. In the present
article we focus on the Jacobi ensemble, but it is very plausible that all our results extend to
other ensembles by suitable limit transitions.


\smallskip

For $\beta=1,2,4$, and the three classical choices of $w(x)$ above, the distribution
\eqref{eq_general_beta_intro} appears as the distribution of eigenvalues of natural classes of
random matrices, see  Anderson--Guionnet--Zeitouni \cite{AGZ}, Forrester \cite{For}, Mehta
\cite{Me}. Here the parameter $\beta$ corresponds to the dimension of the base field over $\mathbb
R$, and one speaks about real, complex or quaternion matrices, respectively. There are also
different points of view on the density function \eqref{eq_general_beta_intro}, relating it to the
Coulomb log--gas, to the squared ground state wave function of the Calogero--Sutherland quantum
many--body system, or to random tridiagonal and unitary Hessenberg matrices. These viewpoints
naturally lead to considering $\beta>0$ as a continuous real parameter, see e.g.\ Forrester
\cite[Chapter 20 ``Beta Ensembles'']{ABF} and references therein.

\smallskip

 The above random matrix ensembles at $\beta=1,2,4$ come with an additional structure,
which is a natural coupling between the distributions \eqref{eq_general_beta_intro} with varying
number of particles $N$. In the case of the Gaussian Unitary Ensemble, take $M=[M_{ij}]_{i,j=1}^N$
to be a random Hermitian matrix with probability density proportional to $\exp\big(-{\rm
Trace}(M^2/2)\big)$. Let $x^k_1\le \dots\le x^k_k$, $k=1,\dots,N$, denote the set of (real)
eigenvalues of the top--left $k\times k$ corner $[M_{ij}]_{i,j=1}^k$. The joint probability
density of $(x^k_1,\dots,x^k_k)$ is given by \eqref{eq_general_beta_intro} with $\beta=2$,
$w(x)=\exp(-x^2/2)$. The eigenvalues satisfy the \emph{interlacing conditions} $x^j_i\le x^{j-1}_i
\le x^{j}_{i+1}$ for all meaningful values of $i$ and $j$.  (Although, the inequalities are not
strict in general, they are strict almost surely.) The joint distribution of the
$N(N+1)/2$--dimensional vector $\{x^j_i\}$, $i=1,\dots,j$, $j=1,\dots,N$ is known as the
\emph{GUE--corners} process (the term \emph{GUE--minors} process is also used), explicit formulas
for this distribution can be found in Gelfand--Naimark \cite{GN}, Baryshnikov \cite{Bar}, Neretin
\cite{Ner}, Johansson--Nordenstam \cite{JN}.

Similar constructions are available for the Hermite (Gaussian) ensemble with $\beta=1,4$. One can
notice that in the resulting formulas for the distribution of the corners process $\{x^j_i\}$,
$i=1,\dots,j$, $j=1,\dots,N$, the parameter $\beta$ enters in a simple way (see e.g.\
\cite[Proposition 1.1]{Ner}), which readily leads to the generalization of the definition of the
corners process to the case of general $\beta>0$, see Neretin \cite{Ner} and Okounkov--Olshanski
\cite[Section 4]{OO}.

From a different direction, if one considers the restriction of the corners process (both for
classical $\beta=1,2,4$ and for a general $\beta$) to two neighboring levels, i.e.\ the joint
distribution of vectors $(x^k_1,\dots,x^k_k)$ and $(x^{k-1}_1,\dots,x^{k-1}_{k-1})$, then one
finds
 formulas that are well-known in the theory of Selberg integrals. Namely, this distribution appears in the
Dixon--Anderson integration formula, see Dixon \cite{Dixon}, Anderson \cite{Anderson}, Forrester
\cite[chapter 4]{For}. More recently the same two--level distributions were studied by Forrester
and Rains \cite{FR} in relation with finding random recurrences for classical matrix ensembles
\eqref{eq_general_beta_intro} with varying $N$, and with  percolation models.

All of the above constructions presented for the Hermite ensemble admit a generalization to the
Jacobi weight. This leads to a multilevel general $\beta$ Jacobi ensemble, or
\emph{$\beta$--Jacobi corners process}, which is the main object of the present paper and whose
definition we now present. In Section \ref{section_matrix_models} we will further comment on the
relation of this multilevel ensemble to matrix models.

\smallskip

Fix two integer parameters $N>0$, $M>0$ and a real parameter $\alpha>0$. Also set
$\theta=\beta/2>0$. Let $\St^M(N)$ denote the set of families $x^1,x^2,\dots,x^N$, such that
for each $1\le n \le N$, $x^n$ is a sequence of real numbers of length $\min(n,M)$, satisfying:\
$$
 0< x^n_1< x^n_2 < \dots < x^n_{\min(n,M)} < 1.
$$
We also require that for each $1\le n\le N-1$ the sequences $x^n$ and $x^{n+1}$ \emph{interlace}
$x^n\prec x^{n+1}$, which means that
$$
 x^{n+1}_1 < x^n_1 < x^{n+1}_2 < x^n_2 < \dots.
$$
Note that for $n\ge M$ the length of sequence $x^n$ does not change and equals $M$, this is a
specific feature of the Laguerre and Jacobi ensembles which is not present in the Hermite
ensemble.

\begin{definition}
\label{Definition_multi_level_intro} The $\beta$--Jacobi corners process of rank $N$ with
parameters $M,\alpha,\theta$ as above is a probability distribution on the set $\St^M(N)$ with
density with respect to the Lebesgue measure proportional to
\begin{multline}
\label{eq_multilevel_dist_introN}
 \prod_{1\le i<j\le \min(N,M)} (x_j^N-x_i^N)
 \prod_{i=1}^{\min(N,M)} (x_i^N)^{\theta(\alpha+N-1)-1} \prod_{i=1}^{\min(N,M)} \left(1-x_i^{\min(N,M)}\right)^{\theta-1+\theta(N-M)_+ }
\\ \times
 \prod_{k=1}^{N-1}\left( \prod_{i=1}^{\min(k,M)} (x_i^k)^{-2\theta} \prod_{1\le i<j\le \min(k,M)} (x_j^k-x_i^k)^{2-2\theta} \prod_{a=1}^{\min(k,M)} \prod_{b=1}^{\min(k+1,M)} |x^k_a-x^{k+1}_b|^{\theta-1}
 \right),
\end{multline}
where $(N-M)_+=\max(N-M,0)$.
\end{definition}

 Similarly to the GUE--corners, the projection of $\beta$-Jacobi corners process to a single level
 $k$
 is given by the $\beta$--Jacobi
ensemble whose density in our notations is given by
\begin{equation}
\label{eq_Jacobi_intro}
 \prod_{1\le i<j \le \min(k,M)} (x^k_j-x^k_i)^{2\theta}
 \prod_{i=1}^{\min(k,M)} (x^k_i)^{\theta\alpha-1} (1-x^k_i)^{\theta(|M-k|+1)-1}.
\end{equation}
Further, the definition of $\beta$--Jacobi corners process is consistent for various $N\ge 1$,
meaning that the restriction of the corners process of rank $N$ to the first $n<N$ levels is the
corners process of rank $n$.

\subsection{The main result}



Our main result concerns the global (Gaussian) fluctuations of the $\beta$--Jacobi corners
process. Let us start, however, with a discussion of similar results for single--level
$\beta$--Jacobi ensembles that have been obtained before.\footnote{For a discussion of the
remarkable recent progress in understanding local fluctuations of general $\beta$ ensembles see
e.g.\ Valk\'{o}--Vir\'{a}g \cite{VV}, Ramirez--Rider--Vir\'{a}g
\cite{RRV}, Bourgade--Erdos--Yau \cite{BEY1}, \cite{BEY2}, Shcherbina \cite{Shch}, Bekerman--Figalli--Guionnet
\cite{BFG}
and references therein.}

Fix two reals $u_0, u_1\ge 0$ and let $x_1<\dots<x_N$ be sampled from the $\beta$-Jacobi ensemble
of rank $N$ with parameters $\beta u_0 N$, $\beta u_1 N$, i.e.\ \eqref{eq_general_beta_intro} with
$w(x)=x^{\beta u_0 N}(1-x)^{\beta u_1 N}$. Define the \emph{height function} ${\mathcal H}(x,N)$,
$x\in[0,1]$, $N=1,2,\dots$ as the number of eigenvalues $x_i$ which are less than $x$. The
computation of the leading term of the asymptotics of ${\mathcal H}(x,N)$ as $N\to\infty$ can be
viewed as the Law of Large Numbers.

\begin{proposition}[Dumitriu--Paquette \cite{DP}, Jiang \cite{Jiang}, Killip \cite{Killip}] \label{Proposition_LLN} For any values of $\beta>0$, $u_0, u_1\ge
0$, the normalized height function ${\mathcal H}(x,N)/N$ tends (in $sup$--norm, in probability) to
an explicit deterministic limit function $\widehat{\mathcal H}(x; u_0, u_1)$, which does not
depend on $\beta$.
\end{proposition}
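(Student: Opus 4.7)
The approach I would take is the classical Coulomb gas / large deviations method, which becomes structurally $\beta$-independent once the correct energy scaling is identified. First I would rewrite the density of the single-level $\beta$-Jacobi ensemble as
$$
 \frac{1}{Z_N}\exp\bigl(-\beta N^2\,\mathcal{E}_N(\mu_N)\bigr), \qquad \mu_N=\frac1N\sum_{i=1}^N \delta_{x_i},
$$
with
$$
 \mathcal{E}_N(\mu) = -\iint_{x\neq y}\log|x-y|\,d\mu(x)d\mu(y) - 2\int\bigl(u_0\log x + u_1\log(1-x)\bigr) d\mu(x).
$$
Because the Jacobi weight has exponents proportional to $\beta N$, the two-body interaction and the confining potential enter with the same power of $N$; consequently $\mathcal{E}_N$ is genuinely $\beta$-independent and $\beta$ only contributes to the large-deviations speed.

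Next, I would invoke the standard LDP for $\beta$-log-gases, in the spirit of Ben Arous--Guionnet and Hiai--Petz: the empirical measure $(\mu_N)$ satisfies a large deviations principle on the space of probability measures on $[0,1]$ at speed $N^2$, with good rate function proportional to $\mathcal{E}(\mu)-\inf\mathcal{E}$, where $\mathcal{E}$ is the lower-semicontinuous extension of $\mathcal{E}_N$. The functional $\mathcal{E}$ is strictly convex on its effective domain, so it admits a unique minimizer $\mu_*$, characterized by the Euler--Lagrange equation
$$
 2\int\log|x-y|\,d\mu_*(y) + 2u_0\log x + 2u_1\log(1-x)=\text{const}, \qquad x\in\mathrm{supp}(\mu_*).
$$
Solving this (as in Dumitriu--Paquette and Killip) produces an explicit absolutely continuous equilibrium measure supported on a subinterval $[a,b]\subset[0,1]$, with square-root behavior at the endpoints. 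Setting $\widehat{\mathcal H}(x;u_0,u_1):=\mu_*([0,x])$ gives the candidate limit, and its independence of $\beta$ is manifest since neither $\mathcal{E}$ nor its minimizer sees $\beta$.

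From the LDP one deduces $\mu_N\to\mu_*$ weakly in probability, which already gives pointwise convergence of $\mathcal H(x,N)/N$ to $\widehat{\mathcal H}(x;u_0,u_1)$ at every continuity point. The upgrade to sup-norm convergence is automatic from the monotonicity of both $\mathcal H(\cdot,N)/N$ and $\widehat{\mathcal H}(\cdot;u_0,u_1)$ together with continuity of the latter (a P\'olya--Dini argument). I would expect the main technical point to be precisely this upgrade near the spectral edges: one has to rule out a macroscopic cluster of outlier particles outside the support $[a,b]$, which follows from the exponential-in-$N^2$ upper LDP bound applied to neighborhoods of measures charging $[0,1]\setminus[a,b]$, and hence again does not require any $\beta$-specific analysis.
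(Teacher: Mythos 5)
The paper does not actually prove Proposition \ref{Proposition_LLN} itself; it cites Dumitriu--Paquette, Jiang and Killip, who establish the law of large numbers via tridiagonal matrix models, Jacobi-matrix/orthogonal-polynomial techniques, and moment estimates respectively. Your Coulomb-gas/large-deviations route is a genuinely different and in fact more classical argument, and it is essentially correct. A couple of small remarks. First, your normalization is off by an innocuous factor: with your $\mathcal{E}_N$ the exponent in the density is $-\tfrac{\beta N^2}{2}\mathcal{E}_N(\mu_N)$, not $-\beta N^2\mathcal{E}_N(\mu_N)$; this only rescales the rate function and does not affect the minimizer. Second, the upgrade to sup-norm convergence is even cheaper than you suggest: once $\mu_N\to\mu_*$ weakly in probability and $\mu_*$ is absolutely continuous, the cumulative distribution function of $\mu_*$ is continuous on the compact interval $[0,1]$, and P\'olya's theorem (pointwise convergence of distribution functions to a continuous limit implies uniform convergence) applies directly; there is no separate issue of controlling outliers outside $[a,b]$, because the particles are already confined to $[0,1]$ and the CDFs are automatically tight. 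What your approach buys is structural clarity about the $\beta$-independence (it is visible from the fact that $\beta$ only affects the LDP speed, not the rate functional or its minimizer), and it generalizes immediately to other weights; what it costs is the invocation of the Ben Arous--Guionnet/Hiai--Petz LDP machinery for $\beta$-log-gases with singular background potential on $[0,1]$, which is substantially heavier than the direct computations in the cited references. One point you should state explicitly to make the argument rigorous: the confining potential $-2(u_0\log x+u_1\log(1-x))$ is merely lower semicontinuous and blows up at the endpoints when $u_0,u_1>0$, and vanishes altogether when $u_0=u_1=0$; in both cases the LDP still holds because $[0,1]$ is compact and the Selberg integral guarantees finiteness of the partition function, but this is where the hypotheses $\beta>0$, $u_0,u_1\ge 0$ are actually used.
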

In the case $\beta=1$ Proposition \ref{Proposition_LLN} was first established by Wachter
\cite{Wachter}, nowadays other proofs for classical $\beta=1,2,4$ exist, see e.g.\ Collins
\cite{Collins}. In the case of Hermite (Gaussian) ensemble and $\beta=1,2,4$ an analogue of
Proposition \ref{Proposition_LLN} dates back to the work of Wigner \cite{Wigner} and is known as
the Wigner semicircle law.

One feature of the limit profile $\widehat{\mathcal H}(x; u_0, u_1)$ is that its derivative (in
$x$) is non-zero only inside a certain interval $({\mathfrak l}(u_0, u_1), {\mathfrak r}(u_0,
u_1))\subset (0,1)$. In other words, outside this interval asymptotically as $N\to\infty$ we see
no eigenvalues, and $\widehat{\mathcal H}(x; u_0, u_1)$ is constant. Somewhat abusing notation
we call $[{\mathfrak l}(u_0,u_1), {\mathfrak r}(u_0, u_1)]$ \emph{the support} of
$\widehat{\mathcal H}(x; u_0, u_1)$.

\smallskip

Studying the next term in the asymptotic expansion of ${\mathcal H}(x,N)$ leads to various Central
Limit Theorems. One could either study ${\mathcal H}(x,N)-\E {\mathcal H}(x,N)$ as $N\to\infty$
for a fixed $x$ or a smoothed version
$$
 \int_{0}^1 f(x) \big({\mathcal H}(x,N)-\E {\mathcal H}(x,N)\big) dx,
$$
for a (smooth) function $f(x)$. It turns out that these two variations leads to different scalings
and different limits. We will  concentrate on the second (smoothed) version of the central limit
theorems, see Killip \cite{Killip} for some results in the non--smoothed case.

Central Limit Theorems for random matrix ensembles at $\beta=2$ go back to Szeg\"{o}'s theorems on
the asymptotics of Toeplitz determinants, see Szeg\"{o} \cite{Szego1}, Forrester \cite[Section
14.4.2]{For}, Krasovsky \cite{Kr}. Nowadays several approaches exist, see \cite{AGZ}, \cite{For},
\cite{ABF} and references therein. The first result for general $\beta$ was obtained by Johansson
\cite{J} who studied the distribution \eqref{eq_general_beta_intro} with \emph{analytic}
potential. Formally, the Jacobi case is out of the scope of the results of \cite{J} (although the
approach is likely to apply) and here the Central Limit Theorem was obtained very recently by
Dumitriu and Paquette \cite{DP} using the tridiagonal matrices approach to general $\beta$
ensembles, cf.\ Dumitriu--Edelman \cite{DE}, Killip--Nenciu \cite{KN}, Edelman--Sutton \cite{ES}.

\begin{proposition}[\cite{DP}] Take $k\ge 1$ and any $k$ continuously differentiable functions
$f_1$,  \dots, $f_k$ on $[0,1]$. In the settings of Proposition \ref{Proposition_LLN} with
$u_0+u_1>0$, the vector
$$
 \int_{0}^{1} f_i(x) \big({\mathcal H}(x,N)-\E {\mathcal H}(x,N)\big) dx, \quad i=1,\dots,k,
$$
converges as $N\to\infty$ to a Gaussian random vector.
\end{proposition}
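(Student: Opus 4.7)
The plan is to use loop equations (Dyson-Schwinger / Virasoro constraints) for the Stieltjes transform of the empirical eigenvalue measure. This is a flexible approach for general $\beta>0$ log-gas ensembles that treats $\beta$ as a continuous parameter, and a natural alternative to the tridiagonal matrix model route used in \cite{DP}.

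\textbf{Step 1 (exact identity).} Integration by parts in the density \eqref{eq_general_beta_intro} with $w(x)=x^{\beta u_0 N}(1-x)^{\beta u_1 N}$, tested against $(z-y)^{-1}$, combined with the Vandermonde interaction, produces a closed quadratic relation of the schematic form
$$\mathbb{E}\bigl[G_N(z)^2\bigr] + V'(z)\,\mathbb{E}[G_N(z)] + \frac{1}{N}\mathbb{E}[P_N(z)] = 0, \quad G_N(z):=\frac{1}{N}\sum_{i=1}^N \frac{1}{z-x_i},$$
valid for $z\in\mathbb{C}\setminus[0,1]$, where $V'(z)$ comes from $(\log w)'/N$ and $P_N(z)$ is regular off $[0,1]$. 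The leading $N\to\infty$ limit recovers the algebraic equation for the equilibrium Stieltjes transform $g(z)$, whose imaginary part on $[\mathfrak{l},\mathfrak{r}]$ reproduces Proposition \ref{Proposition_LLN}.

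\textbf{Step 2 (linearization and cumulants).} Write $\delta G_N(z):=N(G_N(z)-\mathbb{E}G_N(z))$. Applying the same integration-by-parts manipulation to products $\prod_i G_N(z_i)$ yields a triangular recursion expressing the joint cumulants of $\delta G_N$ at several spectral parameters via a single fixed ``master'' linear operator $\mathcal K$, which can be inverted explicitly using the algebraic curve of $g$. Induction on the cumulant order $k$ shows that the covariance $(k=2)$ converges to a finite limit $C(z_1,z_2)$, while all cumulants with $k\geq 3$ vanish as $N\to\infty$. Hence $\delta G_N$ converges, as a random analytic function off $[0,1]$, to a centered Gaussian field with covariance $C$.

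\textbf{Step 3 (transfer to linear statistics).} Integration by parts converts the statement into joint Gaussianity of
$$\sum_j \bigl(F_i(x_j)-\mathbb{E}F_i(x_j)\bigr), \qquad F_i(y):=\int_y^1 f_i(x)\,dx.$$
For analytic $F_i$ the contour integral $\sum_j F(x_j) = -\frac{1}{2\pi\mathrm{i}}\oint F(z)\, N G_N(z)\,dz$ reduces this to Step 2; for merely $C^1$ test functions one approximates by polynomials and controls the error via the variance bound from Step 2 (or, more cleanly, uses a Helffer-Sj\"{o}strand representation). Joint Gaussianity of the vector $(f_1,\dots,f_k)$ then follows from the scalar CLT by the Cram\'{e}r-Wold device. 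The main obstacle is the uniform-in-$z$ control required in Step 2 as $z$ approaches the spectral edges $\mathfrak{l},\mathfrak{r}$: there $\mathcal K^{-1}$ becomes singular and individual eigenvalue fluctuations are non-Gaussian, so one needs a priori rigidity estimates on the extreme eigenvalues. The non-degeneracy hypothesis $u_0+u_1>0$ is what prevents the worst hard-edge degeneracies and allows such rigidity bounds to go through.
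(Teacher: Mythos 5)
The paper does not prove this proposition; it cites it from Dumitriu--Paquette, noting there that the cited proof goes through the tridiagonal random matrix models of Dumitriu--Edelman and Killip--Nenciu. The paper's own machinery (Macdonald / Heckman--Opdam processes, the difference operators $\D^k_N$, the contour-integral decomposition of Theorem \ref{theorem_decomposition_of_P}, and the combinatorial Wick Lemma \ref{lemma_limit_gaussianity}) proves the multilevel Theorem \ref{theorem_joint_CLT}, which implies this single-level statement for \emph{polynomial} test functions, but does not directly address general $C^1$ test functions.

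Your loop-equation route is therefore a genuinely third approach, distinct both from the tridiagonal proof in \cite{DP} and from the integrable machinery of the present paper. The tradeoffs are familiar: the Dyson--Schwinger hierarchy, once the rigidity input near the edges is in place, handles non-analytic ($C^1$ or Helffer--Sj\"{o}strand-regular) test functions natively and treats any $\beta>0$ on equal footing; by contrast the paper's algebraic approach produces exact contour-integral formulas for joint moments which make the Wick structure and the multilevel (corners process) extension transparent, but is restricted to polynomial observables and relies on the special Jacobi/Heckman--Opdam structure rather than general potentials. The outline you give (exact quadratic identity, cumulant recursion via the master operator $\mathcal K$, inversion on the spectral curve, transfer by Cram\'{e}r--Wold and polynomial approximation) is the standard and correct scaffolding; the real mathematical work, which you correctly flag, is the rigidity estimate needed to control $\mathcal K^{-1}$ near $\mathfrak{l}, \mathfrak{r}$. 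One small imprecision: $u_0+u_1>0$ permits exactly one of $u_0,u_1$ to vanish, in which case the corresponding endpoint of the equilibrium support does touch $0$ or $1$ and a hard edge is present there; so the hypothesis does not eliminate hard edges entirely, only the fully degenerate case, and your edge analysis would still need to accommodate a one-sided hard edge.
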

Dumitriu and Paquette \cite{DP} also prove that the covariance matrix diagonalizes when $f(\cdot)$
ranges over suitably scaled Chebyshev polynomials of the first kind.

\bigskip
Let us now turn to the multilevel ensembles, i.e.\ to the $\beta$-Jacobi corners processes, and
state our result. Fix parameters $\hat M>0$, $\hat \alpha > 0$. Suppose that as our large
parameter $L\to\infty$, parameters $M$, $\alpha$ of Definition \ref{Definition_multi_level_intro}
grow linearly in $L$:
$$
  M \sim L \hat M,\quad \alpha \sim L \hat\alpha.
 $$
Let ${\mathcal H}(x,k)$, $x\in[0,1]$, $k\ge 1$, denote the height function of the $\beta$--Jacobi
corners process, i.e.\ ${\mathcal H}(x,k)$ counts the number of eigenvalues from $(x^{\lfloor k
\rfloor}_1,\dots,x^{\lfloor k \rfloor}_{\lfloor k \rfloor})$ that are less than $x$. Observe that
Proposition \ref{Proposition_LLN} readily implies the Law of Large Numbers for ${\mathcal
H}(x,L\hat N)$, $\hat N>0$, as $L\to\infty$. Let $[{\mathfrak l}(\hat N), {\mathfrak r}(\hat N)]$
denote the support of $\lim_{L\to\infty} {\mathcal H}(x,L\hat N)$ (both endpoints depend on $\hat
\alpha, \hat M$, but we omit this dependence from the notations). Further, let $D$ denote the
region inside $[0,1]\times {\mathbb R_{>0}}$ on the $(x,\hat N)$ plane defined by the inequalities
${\mathfrak l}(\hat N)\le x \le {\mathfrak r}(\hat N)$.

Now we are ready to state our main theorem, giving the asymptotics of the fluctuations of
$\beta$--Jacobi corners process in terms of the two--dimensional Gaussian Free Field (GFF, for
short). We briefly recall the definition and basic properties of the GFF in Section
\ref{Section_GFF}.

\begin{theorem}
\label{Theorem_main_intro}
 Suppose that as our large parameter $L\to\infty$, parameters $M$, $\alpha$ grow linearly in $L$:
 $$
  M \sim L \hat M,\quad \alpha \sim L \hat\alpha;\quad \hat M>0, \quad \hat \alpha>0.
 $$
 Then the centered random (with respect to the measure of Definition \ref{Definition_multi_level_intro}) height function
 $$
  \sqrt{\theta\pi}\big({\mathcal H}(x,L\hat N)-\E {\mathcal H}(x,L\hat N)\big)
 $$
 converges to the pullback of the Gaussian Free Field with Dirichlet boundary conditions on the upper halfplane $\mathbb H$ with respect to a map $\Omega: D\to \mathbb H$ (see Definition
 \ref{def_Omega} for the explicit formulas)
 in
 the following sense:
 For any set of polynomials $R_1,\dots,R_k\in \mathbb C[x]$ and positive numbers $\hat N_1,\dots,\hat N_k$,
 the joint distribution of
 $$
  \int_0^1 R_i(x) \big({\mathcal H}(x,L\hat N_i)-\E {\mathcal H}(x,L\hat N_i)\big)  dx,\quad i=1,\dots, k,
 $$
 converges to the joint distribution of the similar averages
 $$
  \int_0^1 {\mathcal F}(\Omega(x,\hat N_i)) R_i(x) dx,\quad i=1,\dots, k,
 $$
 of the pullback of the GFF.
\end{theorem}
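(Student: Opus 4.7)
\emph{Approach.} Following the strategy outlined in the abstract, I would exploit the connection of the $\beta$--Jacobi corners process to the Macdonald measures under the degeneration in which the Macdonald polynomials converge to the Heckman--Opdam hypergeometric functions (i.e.\ $q,t\to 1$ with $t=q^{\theta}$). In this limit the Macdonald difference operators degenerate into commuting differential operators whose action on the normalizing partition function of the corners process computes joint moments of the power sums $p_{k}(x^n)=\sum_i (x^n_i)^k$ at various levels $n$. The first task is therefore to identify Definition \ref{Definition_multi_level_intro} as precisely such a limit of a Macdonald process and to record the resulting operator formulas.

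\emph{From moments to contour integrals.} The second step is to convert these operator identities into contour integral formulas for the joint moments and cumulants of the linear statistics $\int_0^1 R_i(x)\,\mathcal{H}(x,L\hat N_i)\,dx$, which, after integration by parts and expansion of $R_i$ in powers of $x$, reduce to linear combinations of the observables $p_{k}(x^{\lfloor L\hat N_i\rfloor})$. For a product of $r$ such observables one expects an $r$-fold nested contour integral with an explicit integrand depending on $(\hat\alpha,\hat M,\hat N_i,k_i,L)$, analogous to the Jack/Macdonald-measure formulas of \cite{BigMac}, \cite{BCGS}, \cite{BG}.

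\emph{Saddle-point asymptotics.} In the third step I would perform a saddle-point analysis as $L\to\infty$. The saddle equation is an algebraic (complex Burgers-type) equation in a complex variable $z$ with coefficients depending on $(\hat\alpha,\hat M,\hat N)$; inside the liquid region $D$ it has a unique root in the upper half plane, and this root, viewed as a function of $(x,\hat N)\in D$, is (up to a conformal identification of $\mathbb H$) the map $\Omega$ of Definition \ref{def_Omega}. Standard residue/deformation arguments should then yield, for each fixed $r$, that the $r$-th cumulant of a linear statistic decays like $L^{2-r}$, so that after centering only the covariance survives and joint Gaussianity follows from a Wick-type calculation on the cumulants.

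\emph{Identifying the covariance with the GFF.} Finally, the limiting covariance of two observables indexed by $(\hat N_1,k_1)$ and $(\hat N_2,k_2)$ comes out as a double contour integral which, after residue evaluation at coinciding saddles and the change of variables $(x,\hat N)\mapsto \Omega(x,\hat N)$, should collapse (against test polynomials) to the kernel
\[
 -\frac{1}{2\pi}\log\left|\frac{\Omega(x_1,\hat N_1)-\Omega(x_2,\hat N_2)}{\Omega(x_1,\hat N_1)-\overline{\Omega(x_2,\hat N_2)}}\right|,
\]
i.e.\ the covariance of the Dirichlet GFF on $\mathbb H$; the prefactor $\sqrt{\theta\pi}$ of the theorem is precisely what is needed to absorb the $\theta$-dependence coming from the saddle-point Gaussian. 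The main obstacles I expect are (i) carrying out the Macdonald--Heckman--Opdam degeneration of the operators at the corners level, with uniform control on error terms sufficient to justify the interchange of limits inside the contour integrals, and (ii) verifying that the map $\Omega$ defined intrinsically by the saddle equation is a homeomorphism of $D$ onto $\mathbb H$ whose boundary behavior matches the Dirichlet conditions; once these are in place the covariance identification is a direct, if somewhat intricate, residue computation.
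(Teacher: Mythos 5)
Your high-level plan shares the paper's opening moves (Macdonald $\to$ Heckman--Opdam degeneration, operator formulas for power-sum moments, contour integrals, identification of the covariance with the Dirichlet kernel on $\mathbb H$), but it diverges from the paper's actual argument in two places and glosses over the paper's central technical obstacle.

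First, a factual slip: in the $t=q^\theta$, $q\to 1$ limit the Macdonald difference operators $\Mac^k_N$ do \emph{not} become differential operators; they become the finite-difference operators $\D^k_N$ of \eqref{eq_definition_of_dif_operators}, built from the unit shift $T_i: y_i\mapsto y_i-1$. (There is a Calogero--Sutherland differential operator diagonalized by the same functions, but it plays no role in the proof of this theorem.) The moment formula Theorem~\ref{Theorem_HO_expectations_p} is a difference-operator identity, and everything downstream is organized around it.

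Second, and more importantly, your proposal assumes one can directly import nested contour-integral formulas from \cite{BigMac} and then do a saddle-point analysis. The paper explicitly points out that the Borodin--Corwin contour formulas \emph{do not apply here}: for the specializations $a_i=t^{i-1}$, $b_j=t^{\alpha+j-1}$ the required nested contours simply do not exist. The entire content of Section~\ref{Section_integral} is a substitute: $\D^k_N$ is re-expressed as a sum of residue-type integral operators $\DI^s_N$ over set partitions (Proposition~\ref{proposition_e_operator}), and then $\P^k_N$ is decomposed as $(-\theta)^{-k}\DI^{[k]}_N$ plus subleading admissible integral operators indexed by labeled graphs (Theorem~\ref{theorem_decomposition_of_P}); the proof that subleading terms really are subleading requires the cancelation Lemma~\ref{Lemma_order_of_polynomial} and Proposition~\ref{proposition_sum_is_zero}. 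Your sketch does not address this, and without something equivalent the ``convert operators to contour integrals'' step stalls.

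Third, the asymptotic step in the paper is not a saddle-point argument. After the rescaling $w=Lu$ the integrand converges pointwise to an analytic limit with no steepest-descent deformation required; Gaussianity instead follows from the combinatorial Lemma~\ref{lemma_limit_gaussianity}, which exploits the structure $Cr_L = 1 + L^{-2}V_L$ of the cross-terms and a perfect-matching (Wick) count to show that all centered joint moments of order $\ne 2$ vanish in the limit. Likewise, $\Omega$ is not defined as a root of a saddle equation; it is read off algebraically from the explicit limiting covariance integral \eqref{eq_limit_covariance} via Lemmas~\ref{lemma_Maple}--\ref{lemma_circles_of_real}. A saddle-point/complex-Burgers route in the style of the tilings literature is conceivable but would be a genuinely different (and likely harder) proof, and you would still need to resolve the non-existence of the BigMac contours before any steepest-descent analysis could even begin.
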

There are several reasons why one might expect the appearance of the GFF in the study of the
general $\beta$ random matrix corners processes.

First, the GFF is believed to be a universal scaling limit for various models of \emph{random
surfaces} in $\mathbb R^3$. Now the appearance of the GFF is rigorously proved for several models
of random \emph{stepped} surfaces, see Kenyon \cite{Kenyon}, Borodin--Ferrari \cite{BF}, Petrov
\cite{Petrov}, Duits \cite{Duits}, Kuan \cite{Kuan}, Chhita--Johansson--Young \cite{CJY},
Borodin--Bufetov \cite{BB}. On the other hand, it is known that random matrix ensembles for
$\beta=2$ can be obtained as a certain limit of stepped surfaces, see Okounkov--Reshetikhin
\cite{OR-birth}, Johansson--Nordenstam \cite{JN}, Fleming--Forrester--Nordenstam \cite{FlForNor},
Gorin \cite{G}, \cite{G2}, Gorin--Panova \cite{GP}, hence one should expect the presence of the GFF in
random matrices.

Second, for random \emph{normal} matrices, whose eigenvalues are no longer real, but the
interaction potential is still logarithmic, the convergence of the fluctuations to the GFF was
established by Ameur--Hedenmalm--Makarov \cite{AHM1}, \cite{AHM2}, see also Rider--Vir\'{a}g
\cite{RV}.

Further, in \cite{B-CLT}, \cite{B-CLT2} one of the authors proved an analogue of
Theorem \ref{Theorem_main_intro} for $\beta=1,2$ Wigner random matrices. This result
can also be accessed through a suitable degeneration of our results, see Remark 2
after Proposition \ref{prop_Chebyshev_ortho} for more details. Let us note that in
\cite{B-CLT}, \cite{B-CLT2} the image of an analogue of our map $\Omega$ was the
whole upper half--plane $\mathbb H$. This is not the case in Theorem
\ref{Theorem_main_intro}, the image is smaller, see Section
\ref{section_identification} for more details.

Finally, Spohn \cite{Spohn} found the GFF in the asymptotics of the general $\beta$ circular Dyson
 Brownian Motion. In fact, generalizations of Dyson Brownian motion give ways to add a \emph{third} dimension to the
corners processes, cf.\ \cite{B-CLT2} and also \cite{GS}. It would be interesting to study the global fluctuations of
the resulting $3d$ object, see \cite{B-CLT2} and \cite{BF} for some progress in this direction.

\smallskip

Note that other classical ensembles can be obtained from $\beta$-Jacobi through a suitable limit
transition and, thus, one should expect that similar results hold for them as well.
\subsection{The method}

Let us outline our approach to the proof of Theorem \ref{Theorem_main_intro}.

Recall that \emph{Macdonald polynomials} $P_\lambda(x_1,\dots,x_N;q,t)$ and
$Q_\lambda(x_1,\dots,x_N;q,t)$ are certain symmetric polynomials in variables $x_1,\dots,x_N$
depending on two parameters $q,t>0$ and parameterized by \emph{Young diagrams} $\lambda$, see
e.g.\ Macdonald \cite[Chapter VI]{M}. Given two sets of parameters $a_1,\dots, a_N>0$,
$b_1,\dots,b_M>0$ satisfying $a_i b_j<1$, $1\le i\le N$, $1\le j\le M$, the \emph{Macdonald
measure} on the set of all Young diagrams $\Y$ is defined as a probability measure assigning to
a Young diagram $\lambda$ a weight proportional to
\begin{equation}
\label{eq_Macd_meas_intro}
 P_\lambda(a_1,\dots,a_N;q,t) Q_\lambda(b_1,\dots,b_M;q,t).
\end{equation}
It turns out that (for a specific choice of $a_1,\dots,a_N$, $b_1,\dots,b_M$) when $q,t\to 1$ in
such a way that $t=q^{\theta}$, the measure \eqref{eq_Macd_meas_intro} weakly converges to the
(single level) $\beta$--Jacobi distribution with $\beta=2\theta$, see Theorem
\ref{Theorem_convergence_to_Jacobi} for the exact statement. This fact was first noticed by
Forrester and Rains in \cite{FR}.

Further, the Macdonald measures admit multilevel generalizations called \emph{ascending Macdonald
processes}. The same limit transition as above yields the $\beta$--Jacobi  corners process of
Definition \ref{Definition_multi_level_intro}.

In Borodin--Corwin \cite{BigMac} an approach for studying Macdonald measures through the Macdonald
difference operators, which are diagonalized by the Macdonald polynomials, was suggested. This
approach survives in the above limit transition and allows us to compute the expectations of
certain observables (essentially, moments) of $\beta$--Jacobi ensembles as results of the
application of explicit difference operators to explicit functions, see Theorem
\ref{Theorem_HO_expectations} and Theorem \ref{Theorem_HO_expectations_p} for the details.
Moreover, as we will explain, a modification of this approach allows us to study the Macdonald
processes and, thus, the $\beta$--Jacobi corners process, see also
Borodin--Corwin--Gorin--Shakirov \cite{BCGS} for further generalizations.

The next step is to express the action of the obtained difference operators through contour
integrals, which are generally convenient for taking asymptotics. Here the approach of
\cite{BigMac} fails (the needed contours cease to exist for our choice of parameters $a_i$, $b_j$
of the Macdonal measures), and we have to proceed in a different way. This is explained in Section
\ref{Section_integral}.

The Central Limit Theorem itself is proved in Section \ref{Section_CLT_general} using a
combinatorial lemma, which is one of the important new ingredients of the present paper.
Informally, this lemma shows that when the joint moments of a family of random variables can be
written via nested contour integrals typical for the Macdonald processes, then the asymptotics of
these moments is given by Isserlis's theorem (also known as Wick's formula), which proves the
asymptotic Gaussianity. See Lemma \ref{lemma_limit_gaussianity} for more details.

\medskip

We also note that the convergence of Macdonald processes to $\beta$--Jacobi corners processes is a
manifestation of a more general limit transition that takes Macdonald polynomials to the so-called
Heckman-Opdam hypergeometric functions (see  Heckman--Opdam \cite{HO}, Opdam \cite{Op},
Heckman--Schlichtkrull \cite{HS} for the general information about these functions), and general
Macdonald processes to certain probability measures that we call \emph{Heckman-Opdam
processes}\footnote{In spite of a similar name, they are different from ``Heckman--Opdam Markov
processes'' of Schapira \cite{Sch}.}. We discuss this limit transition in more detail in the
Appendix.

\subsection{Matrix models for multilevel ensembles}
\label{section_matrix_models}

There are many ways to obtain $\beta$--Jacobi ensembles at $\beta=1,2,4$, with various special
exponents $p$, $q$, through random matrix models, see e.g.\ Forrester \cite[Chapter 3]{For},
Duenez \cite{Due}, Collins \cite{Collins}. In most of them there is a natural way of extending the
probability measure to multilevel settings. We hope that a non-trivial subset of these situations
would yield the Jacobi corners process of Definition \ref{Definition_multi_level_intro}, but we do
not know how to prove that.

For example, take two infinite matrices $X_{ij}$, $Y_{ij}$, $i,j=1,2,\dots$ with i.i.d.\ Gaussian
entries (either real, complex or quaternion). Fix three integers $A,B,C>0$, let $X^{AC}$ be the
$A\times C$ top--left corner of $X$, and let $Y^{BC}$ be $B\times C$ top--left corner of of $Y$.
Then the distribution of (distinct from $0$ and $1$) eigenvalues $x_1\le x_2\le \dots\le x_N$,
$N=\min(A,B,C)$, of
$$
 {\mathcal M}^{ABC}=(X^{AC})^*X^{AC} \Big((X^{AC})^*X^{AC}+(Y^{BC})^*Y^{BC}\Big)^{-1}
$$ is given by the $\beta$--Jacobi ensemble
($\beta=1,2,4$, respectively) with density (see e.g.\ \cite[Section 3.6]{For})
$$
 \prod_{1\le i<j\le N} (x_j-x_i)^\beta \prod_{i=1}^N (x_i)^{\frac{\beta}{2}(|A-C|+1)-1}
 (1-x_i)^{\frac{\beta}{2}(|B-C|+1)-1}.
$$
Comparing this formula with \eqref{eq_Jacobi_intro} it is reasonable to expect that the joint
distribution of the above eigenvalues for matrices ${\mathcal M}^{AnC}$, $n=1,2,\dots,N$, is given
by Definition \ref{Definition_multi_level_intro} with $\theta=\beta/2$, $M=C$, $\alpha=A-C+1$ (and
we assume that $A\ge C$ here). However, we were unable to locate this statement in the literature,
thus, we leave it as a conjecture here. A similar statement in the limiting case of the Hermite
ensemble is proven e.g.\ by Neretin \cite{Ner}, while for the Laguerre ensemble with $\beta=2$
this is discussed by Borodin--Peche \cite{BP}, Dieker--Warren \cite{DW}, Forrester--Nagao
\cite{FN}.

\smallskip

 Another matrix model for Definition \ref{Definition_multi_level_intro} at $\theta=\beta/2=1$ was
suggested by Adler--van Moerbeke--Wang \cite{AMW}. In the above settings (with complex matrices)
set
$$
 \hat X_{ij}=\begin{cases} X_{ij},& \text{if } 1\le i\le 2j+A,\\ 0,&\text{otherwise}.\end{cases}
$$
Let $\hat X^{AC}$ be $(2C+A)\times C$ top--left corner of $\hat X$ and $Y^{BC}$ be as above, and
set
$$ \hat {\mathcal
M}^{ABC}= (\hat X^{AC})^*\hat X^{AC} \Big((\hat X^{AC})^*X^{AC}+(Y^{BC})^*Y^{BC}\Big)^{-1}.
$$
Suppose $N\le M$; then the joint distribution of eigenvalues of $\hat {\mathcal M}^{ABn}$,
$n=1,\dots,N$,
 is  given by Definition \ref{Definition_multi_level_intro} with $\theta=\beta/2$, $M=B$,
$\alpha=A+1$, as seen from \cite[Theorem 1]{AMW} (one should take $\alpha_n=A+n-1$ in this
theorem).

\smallskip

Let us also remark on the connection with tridiagonal models for classical general $\beta$
ensembles of Dumitriu--Edelman \cite{DE}, Killip--Nenciu \cite{KN}, Edelman--Sutton \cite{ES}. One
may try to produce an alternative definition of multilevel $\beta$-Jacobi (Laguere, Hermite)
ensembles using tridiagonal models for the single level ensembles
 and taking the joint distribution of eigenvalues of suitable
submatrices. However, note that these models produce the ensemble of rank $N$ out of linear in $N$
number of independent random variables, while the dimension of the set of interlacing
configurations grows quadratically in $N$. Therefore, this construction would produce a
distribution concentrated on a lower dimensional subset and, thus, singular with respect to the
Lebesgue measure, which is not what we need. On the other hand, it is possible that the marginals
of the $\beta$--Jacobi corners processes on two neighboring levels can be obtained as eigenvalues
of a random tridiagonal matrix and its submatrix of size by one less, see Forrester--Rains
\cite{FR} for some results in this direction.

\smallskip

Let us finally mention that Edelman \cite{Ed} discovered an ingenious random matrix algorithm that
(conjecturally for $\beta\ne 1,2,4$) yields the $\beta$--Jacobi corners process of Definition
\ref{Definition_multi_level_intro}.

\subsection{Further results}
Let us list a few other results proved below.

First, in addition to Theorem \ref{Theorem_main_intro}, we express the limit covariance of the
$\beta$--Jacobi corners process in terms of Chebyshev polynomials (in the spirit of the results of
Dumitiu--Paquette \cite{DP}), see Proposition \ref{prop_Chebyshev_ortho} for the details.

Further, we use the same techniques as in the proof of Theorem \ref{Theorem_main_intro} to analyze
the behavior of the (multilevel) $\beta$--Jacobi ensemble as $\beta\to\infty$. It is known that
the eigenvalues of the Jacobi ensemble concentrate near roots of the corresponding Jacobi
orthogonal polynomials as $\beta\to\infty$ (see e.g.\ Szeg\"{o} \cite[Section 6.7]{Szego}, Kerov
\cite{K-LLN}), and in the Appendix we sketch a proof of the fact that the fluctuations (after
rescaling by $\sqrt{\beta}$) are asymptotically Gaussian, see Theorem \ref{Theorem_CLT_in_beta}
for the details. Similar results for the \emph{single--level} Hermite and Laguerre ensembles were
previously obtained in Dumitriu--Edelman \cite{DE-CLT}. We have so far been unable to produce
simple formulas for the limit covariance or to identify the limit Gaussian process with a known
object.

\section{Setup}

The aim of this section is to put general $\beta$ Jacobi random matrix ensembles in the context of
the Macdonald processes and their degenerations that we will refer to as the Heckman--Opdam
processes.

\subsection{Macdonald processes}

\label{Section_Macdonald}

Let $\GTP_N$ denote the set of all $N$ tuples of non-negative integers
$$
 \GTP_N=\{\lambda_1\ge\lambda_2\ge\dots\ge\lambda_N\ge 0\mid \lambda_i\in\mathbb Z\}.
$$
We say that $\lambda\in\GTP_N$ and $\mu\in\GTP_{N-1}$ \emph{interlace} and write
$\mu\prec\lambda$, if\footnote{The notation $\GTP$ comes from \emph{Gelfand--Tsetlin patterns},
which are interlacing sequence of elements of $\GTP_i$, $i=1,\dots,N$ and parameterize the same
named basis in the irreducible representations of the unitary group $U(N)$. In the representation
theory the above interlacing condition appears in the \emph{branching rule} for the restriction of
an irreducible representation to the subgroup.}
$$
\lambda_1\ge\mu_1\ge\lambda_2\ge\dots\ge\mu_{N-1}\ge\lambda_N.
$$
Sometimes (when it leads to no confusion) we also say that $\lambda\in\GTP_N$ and $\mu\in\GTP_{N}$
(note the change in index) interlace and write $\mu\prec\lambda$ if
$$
\lambda_1\ge\mu_1\ge\lambda_2\ge\dots\ge\lambda_N\ge\mu_{N}.
$$
Informally, in this case we complement $\lambda$ with a single zero coordinate.

Let $\Lambda_N$ denote the algebra of symmetric polynomials in $N$ variables $x_1,\dots,x_N$ with
complex coefficients. $\Lambda_N$ has a distinguished (linear) basis formed by Macdonald
polynomials $P_\lambda(\cdot;q,t)$, $\lambda\in\GTP_N$, see e.g.\ \cite[Chapter VI]{M}.  Here $q$
and $t$ are parameters which (for the purposes of the present paper) we
assume to be real numbers satysfying $0<q<1$, $0<t<1$. We also need the ``dual'' Macdonald
polynomials $Q_\lambda(\cdot;q,t)$. By definition
$$
 Q_\lambda(\cdot;q,t)=b_\lambda P_\lambda(\cdot;q,t) ,
$$
where $b_\lambda=b_\lambda(q,t)$ is a certain explicit constant, see \cite[Chapter VI, (6.19)]{M}.

We also need \emph{skew} Macdonald polynomials $P_{\lambda/\mu}$ ($\lambda,\mu\in\GTP_N$,
$\lambda_i\ge\mu_i$ for all $i$) and $Q_{\lambda/\mu}$, they can be defined through the identities
\begin{equation}
\label{eq_Skew_Macdonald}
 P_{\lambda}(x_1,\dots,x_N,y_1,\dots,y_N;q,t)=\sum_{\mu\in\GTP_N} P_{\lambda/\mu}(x_1,\dots,x_N; q,t)
 P_{\mu}(y_1,\dots,y_N;q,t),
\end{equation}
$$
 Q_{\lambda}(x_1,\dots,x_N,y_1,\dots,y_N;q,t)=\sum_{\mu\in\GTP_N} Q_{\lambda/\mu}(x_1,\dots,x_N; q,t)
 Q_{\mu}(y_1,\dots,y_N;q,t).
$$
Somewhat abusing the notations, in what follows we write $P_\lambda(x_1,\dots,x_M;q,t)$, with
$\lambda\in\GTP_N$, $N\le M$, for $P_{\hat \lambda}(x_1,\dots,x_M;q,t)$, where $\hat \lambda\in
\GTP_M$ is obtained from $\lambda$ adding $M-N$ zero coordinates; similarly for $Q_\lambda$,
$P_{\lambda/\mu}$, $Q_{\lambda/\mu}$.

Finally, we adopt the notation $(a;q)_n$, $n=0,1,\dots$, for the $q$--Pochhammer symbols:
$$
 (a;q)_n=\prod_{i=1}^n (1-a q^{i-1}),\quad \quad (a;q)_{\infty}=\prod_{i=1}^{\infty} (1-a q^{i-1}).
$$

Let us fix the following set of parameters: an integer $M>0$, positive
reals $a_1,a_2,\dots$ and positive reals $b_1,\dots,b_M$. We will assume that $a_i b_j<1$ for all $i$, $j$. The
following definition is a slight
generalization of \cite[Definition 2.2.7]{BigMac}.

\begin{definition} \label{Definition_Asc_Macdo} The \emph{infinite ascending Macdonald process} indexed by $M,\{a_i\},\{b_j\}$ is a (random) sequence $\lambda^1$,
$\lambda^2,\dots$ such that
\begin{enumerate}
\item For each $N\ge 1$, $\lambda^N\in\GTP_{\min(N,M)}$ and also
$\lambda^{N}\prec\lambda^{N+1}$.
\item For each $N\ge 1$ the (marginal) distribution of $\lambda^N$ is given by
\begin{equation}
\label{eq_macdonald_single}
 {\rm Prob } \{\lambda^N=\mu\} = \frac{1}{Z_N} P_\mu(a_1,\dots,a_N;q,t) Q_\mu(b_1,\dots,b_M;q,t),
\end{equation}
where
\begin{equation}
\label{eq_normalizing_Macdonald}
 Z_N=\sum_{\mu \in\GTP_{\min(N,M)}} P_\mu(a_1,\dots,a_N;q,t) Q_\mu(b_1,\dots,b_M;q,t) =
 \prod_{i=1}^N \prod_{j=1}^M \frac{(ta_i b_j;q)_{\infty}}{(a_i b_j;q)_{\infty}}.
\end{equation}
\item $\{\lambda^N\}_{N\ge 1}$ is a trajectory of a Markov chain with (backward) transition
probabilities
\begin{equation}
\label{eq_macdonald_transition}
 {\rm Prob } \{\lambda^{N-1}=\mu\mid \lambda^N=\nu\}= P_{\nu/\mu}(a_N;q,t)
 \frac{P_{\mu}(a_1,\dots,a_{N-1};q,t)}{P_\nu(a_1,\dots,a_N;q,t)}.
\end{equation}
\end{enumerate}
\end{definition}

\begin{proposition}
 If sequences $\{a_i\}_{i=1}^{\infty}$ and $\{b_j\}_{j=1}^M$
 of positive parameters are such that $a_i b_j<1$ for all $i,j$,
 then the infinite ascending Macdonald process indexed by
 $M\ge 1,\{a_i\},\{b_j\}$ is well defined.
\end{proposition}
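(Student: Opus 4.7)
The plan is to check the three conditions needed for Kolmogorov's extension theorem to yield a well-defined probability measure on $\prod_{N\ge 1}\GTP_{\min(N,M)}$: non-negativity of every ingredient in \eqref{eq_macdonald_single} and \eqref{eq_macdonald_transition}, convergence and positivity of the normalizing constants $Z_N$, and Chapman--Kolmogorov compatibility between the marginals and the transition kernel. Non-negativity on positive arguments in the regime $0<q<1$, $0<t<1$ comes directly from Macdonald's combinatorial branching rule \cite[Ch.~VI, (7.13')]{M}, which expands $P_\lambda$, $Q_\lambda$ and their skew versions as sums over column-strict tableaux with manifestly non-negative coefficients.

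For the second ingredient, the closed form for $Z_N$ in \eqref{eq_normalizing_Macdonald} is the Macdonald Cauchy identity \cite[Ch.~VI, (4.13)]{M}, with $P_\mu(a_1,\dots,a_N)=0$ for $\ell(\mu)>N$ and $Q_\mu(b_1,\dots,b_M)=0$ for $\ell(\mu)>M$ truncating the sum to $\mu\in\GTP_{\min(N,M)}$. The hypothesis $a_ib_j<1$ ensures each factor $(1-a_ib_j q^k)$, $k\ge 0$, is strictly positive and that the infinite product converges absolutely, so $0<Z_N<\infty$ and \eqref{eq_macdonald_single} is a bona fide probability distribution.

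Next I would handle the transitions. Substituting $(x_1,\dots,x_{N-1},a_N)$ into \eqref{eq_Skew_Macdonald} yields
$$P_\nu(a_1,\dots,a_N;q,t)=\sum_{\mu\prec\nu} P_{\nu/\mu}(a_N;q,t)\,P_\mu(a_1,\dots,a_{N-1};q,t),$$
because the one-variable skew polynomial $P_{\nu/\mu}(a_N;q,t)$ vanishes unless $\mu\prec\nu$. Dividing by $P_\nu(a_1,\dots,a_N)$ shows that \eqref{eq_macdonald_transition} is a probability kernel. For Chapman--Kolmogorov compatibility between levels $N$ and $N-1$, after cancelling the factors $P_\mu(a_1,\dots,a_{N-1})/P_\nu(a_1,\dots,a_N)$ the required identity reduces to
$$\sum_{\nu\succ\mu} P_{\nu/\mu}(a_N;q,t)\,Q_\nu(b_1,\dots,b_M;q,t)=\frac{Z_N}{Z_{N-1}}\,Q_\mu(b_1,\dots,b_M;q,t),$$
which is exactly the skew Cauchy identity \cite[Ch.~VI, (7.9)]{M} specialized to one extra $x$-variable, with the ratio $Z_N/Z_{N-1}=\prod_{j=1}^M (ta_Nb_j;q)_\infty/(a_Nb_j;q)_\infty$ read off from \eqref{eq_normalizing_Macdonald}. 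Once the three checks are in place, Kolmogorov's extension theorem produces the desired measure on interlacing sequences.

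The only genuinely delicate point, and the expected main obstacle, is the bookkeeping across the threshold $N=M$, where the length of $\lambda^N$ stops growing. I would resolve this via the facts that $Q_\nu(b_1,\dots,b_M;q,t)$ vanishes for $\ell(\nu)>M$ and that the one-variable skew polynomial $P_{\nu/\mu}(a_N;q,t)$ forces $\ell(\nu)\le\ell(\mu)+1$ through the interlacing $\mu\prec\nu$. Together these guarantee that for every $N$ the effective summation range is $\nu\in\GTP_{\min(N,M)}$ and $\mu\in\GTP_{\min(N-1,M)}$, which matches Definition \ref{Definition_Asc_Macdo} exactly, so the consistency calculation extends uniformly through and past $N=M$.
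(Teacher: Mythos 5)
Your proof is correct and follows essentially the same approach as the paper's: non-negativity via Macdonald's combinatorial (tableau) formula, normalization via the Cauchy identity with the hypothesis $a_ib_j<1$ guaranteeing absolute convergence, and consistency of the marginals with the transition kernel via the skew Cauchy identity (which is what the paper's "follows from the definition of skew Macdonald polynomials" is shorthand for). The only difference is one of exposition: you spell out the skew-Cauchy computation and the bookkeeping across the $N=M$ threshold, both of which the paper leaves to the reader.
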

\begin{proof} The nonegativity of \eqref{eq_macdonald_single},
\eqref{eq_macdonald_transition} follows from the combinatorial formula for the (skew) Macdonald
polynomials \cite[Chapter VI, Section 7]{M}. The identity \eqref{eq_normalizing_Macdonald} is the
Cauchy identity for Macdonald polynomials \cite[Chapter VI, (2.7)]{M}. Note that the absolute
convergence of the series $\sum_\mu P_\mu Q_\mu$ in \eqref{eq_normalizing_Macdonald} follows from
the fact that it is a rearrangement of the absolutely convergent power series in $\{a_i,b_j\}$ for
the product form of $Z_N$. The consistency of properties \eqref{eq_macdonald_transition} and
\eqref{eq_macdonald_single} follows from the definition of skew Macdonald polynomials, cf.\
\cite{BG}, \cite{BigMac}.
\end{proof}

Let $f\in\Lambda_N$ be any symmetric polynomial. For $\lambda\in\GTP_N$ we define
$$
 f(\lambda)=f(q^{\lambda_1}t^{N-1},q^{\lambda_2}t^{N-2},\dots q^{\lambda_N}).
$$
Further, for any subset $I\subset\{1,\dots,N\}$ define
$$
 A_I(z_1,\dots,z_N;t) = t^{\frac{|I|(|I|-1)}2}\prod_{i\in I} \prod_{j\not \in I} \frac{tz_i-z_j}{z_i-z_j}.
$$
Define the shift operator $T^q_i$ through
$$
 [T^q_i f](z_1,\dots,z_N) = f(z_1,\dots,z_{i-1}, qz_i,z_{i+1},\dots,z_N).
$$
For any $k\le N$ define the $k$th \emph{Macdonald difference operator} $\Mac^k_N$ through
$$
\Mac^k_N = \sum_{|I|=k} A_I(z_1,\dots,z_N;t) \prod_{i\in I} T^q_i.
$$

\begin{theorem}
\label{theorem_Macdonald_expectations}
 Fix any integers $m\ge 1$,  $N_1\ge N_2 \ge\dots\ge N_m\ge 1$ and $1\le k_i\le N_i$, $i=1,\dots,m$.
  Suppose that
$\lambda^1,\lambda^2,\dots$ is an infinite ascending Macdonald process indexed by $M\ge
1,\{a_i\}_{i=1}^{\infty},\{b_j\}_{j=1}^M$ as described above. Then
\begin{equation}
\label{eq_Macd_expect}
 \E \left(\prod_{i=1}^m e_{k_i}\big(\lambda^{N_i}\big) \right) = \dfrac{ \Mac^{k_m}_{N_m} \cdots \Mac^{k_2}_{N_2} \Mac^{k_1}_{N_1}
 \left[\prod\limits_{i=1}^{N_1} H(z_i)\right]}{ \prod_{i=1}^{N_1} H(z_i)} \rule[-5mm]{0.9pt}{17mm}_{\,
 z_i=a_i},
\end{equation}
where
$$
 H(z)=\prod_{i=1}^M \frac{(t z b_i;q)_\infty}{(zb_i;q)_{\infty}},
$$
 $e_k$ is the degree $k$ elementary symmetric polynomial, and $|_{z_i=a_i}$ in the right side of \eqref{eq_Macd_expect}
means that we plug in $z_i=a_i$, $i=1,2,\dots$ into the formula after applying the difference operators in
$z$--variables.
\end{theorem}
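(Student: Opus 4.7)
The plan is to combine the eigenoperator property of the Macdonald difference operators with the branching (skew) structure of Macdonald polynomials, which simultaneously encodes the transition probabilities of the Markov chain $\{\lambda^N\}$ and the expansion of $P_\mu$ upon splitting the variable set. Recall from \cite[Chapter VI]{M} that
$$
\Mac^k_N\, P_\mu(z_1,\dots,z_N;q,t) = e_k\bigl(q^{\mu_1}t^{N-1},\dots,q^{\mu_N}\bigr)\, P_\mu(z_1,\dots,z_N;q,t),
$$
which in the notation of the paper reads $\Mac^k_N P_\mu = e_k(\mu) P_\mu$, and that the Cauchy identity yields $\prod_{i=1}^{N_1} H(z_i) = \sum_\mu P_\mu(z_1,\dots,z_{N_1};q,t)\, Q_\mu(b_1,\dots,b_M;q,t)$. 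The case $m=1$ then follows at once: apply $\Mac^{k_1}_{N_1}$ to both sides of the Cauchy identity, specialize $z_i=a_i$, and divide by $Z_{N_1} = \prod_i H(a_i)$ from \eqref{eq_normalizing_Macdonald}.

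For $m \geq 2$, I would iterate the backward transition \eqref{eq_macdonald_transition} of the Markov chain $\{\lambda^N\}$ and use the branching rule \eqref{eq_Skew_Macdonald} to obtain the joint distribution
$$
\Pr\bigl\{\lambda^{N_j} = \mu^j,\ j = 1,\dots,m\bigr\} = \frac{1}{Z_{N_1}}\, Q_{\mu^1}(b_1,\dots,b_M)\, P_{\mu^m}(a_1,\dots,a_{N_m}) \prod_{j=2}^{m} P_{\mu^{j-1}/\mu^j}(a_{N_j+1},\dots,a_{N_{j-1}}),
$$
with $\mu^j \subseteq \mu^{j-1}$ implicitly (otherwise the corresponding skew polynomial vanishes). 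On the right-hand side of \eqref{eq_Macd_expect}, I would expand $\prod_i H(z_i)$ via Cauchy and apply the operators one at a time, innermost first. The first operator $\Mac^{k_1}_{N_1}$ pulls out $e_{k_1}(\mu)$ from $P_\mu(z_1,\dots,z_{N_1})$. Decomposing $P_\mu(z_1,\dots,z_{N_1}) = \sum_{\mu^2 \subseteq \mu} P_{\mu^2}(z_1,\dots,z_{N_2})\, P_{\mu/\mu^2}(z_{N_2+1},\dots,z_{N_1})$ and applying $\Mac^{k_2}_{N_2}$ (which by definition only manipulates $z_1,\dots,z_{N_2}$) pulls out $e_{k_2}(\mu^2)$ while leaving the second factor untouched. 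Iterating this $m$ times one arrives at
$$
\frac{1}{Z_{N_1}}\sum_{\mu^1 \supseteq \mu^2 \supseteq \cdots \supseteq \mu^m} \left(\prod_{j=1}^{m} e_{k_j}(\mu^j)\right) Q_{\mu^1}(b_1,\dots,b_M)\, P_{\mu^m}(a_1,\dots,a_{N_m})\, \prod_{j=2}^{m} P_{\mu^{j-1}/\mu^j}(a_{N_j+1},\dots,a_{N_{j-1}}),
$$
which matches $\E\prod_j e_{k_j}(\lambda^{N_j})$ via the joint distribution formula above.

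The main obstacle is the bookkeeping: one must verify that applying the operators in the prescribed "innermost first" order is consistent with the iterated branching decomposition, and that the monotonicity hypothesis $N_1 \geq \cdots \geq N_m$ is precisely what makes each $\Mac^{k_j}_{N_j}$ compatible with the skew factors produced by earlier branching steps. This compatibility is automatic from the definition, since the coefficients $A_I$ and shifts $T^q_i$ comprising $\Mac^{k_j}_{N_j}$ involve only the variables $z_1,\dots,z_{N_j}$, whereas each previously produced skew factor depends only on variables $z_{N_i+1},\dots,z_{N_{i-1}}$ with $N_i \geq N_j$. Modulo this bookkeeping, the proof is a level-by-level iteration of the single-level computation of Borodin--Corwin \cite[Section 2.2.3]{BigMac} for Macdonald measures, in the spirit of arguments appearing in \cite{BG}.
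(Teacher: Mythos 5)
Your proof is correct and follows essentially the same route as the paper's: expand $\prod_i H(z_i)$ by the Cauchy identity, exploit the eigenoperator property \eqref{eq_Macdonald_operator_eigen}, and interleave skew (branching) decompositions of $P_\mu$ whenever the number of variables drops. The only cosmetic difference is that the paper's sketch groups consecutive operators with equal $N_i$ into a single batch before each branching step, whereas you apply them one at a time; this is harmless, since when $N_j = N_{j-1}$ the skew factor in zero variables is $\delta_{\mu^{j-1},\mu^j}$ and the branching step degenerates trivially. Your explicit telescoped joint-law formula for $(\lambda^{N_1},\dots,\lambda^{N_m})$ is a useful addition that the paper leaves implicit, but it does not change the argument.
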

For $N_1=N_2=\dots=N_m$ this statement coincides with the observation of \cite[Section
2.2.3]{BigMac}. For general $N_j$'s a proof can be found in \cite{BCGS}, however, it is quite
simple and we reproduce its outline below.
\begin{proof}[Sketch of the proof of Theorem \ref{theorem_Macdonald_expectations}]
 Macdonald
 polynomials are the eigenfunctions of Macdonald difference operators
  (see \cite[Chapter VI, Section 4]{M}):
 \begin{equation}
 \label{eq_Macdonald_operator_eigen}
  \Mac^k_n(P_\lambda(z_1,\dots,z_n;q,t)) = e_k(q^{\lambda_1}t^{n-1},\dots,q^{\lambda_n})
  P_\lambda(z_1,\dots,z_n;q,t).
 \end{equation}
 So, first expand $\prod_{i=1}^{N_1} H(z_i)$ using the Cauchy identity \cite[Chapter VI, (2.7)]{M}
$$
 \prod_{i=1}^{N_1} H(z_i)=\sum_{\lambda\in\GTP_{\min(N_1,M)}} P_\lambda(z_1,\dots,z_{N_1};q,t)
 Q_\lambda(b_1,\dots,b_M;q,t)
$$
 and then
 apply
 $\prod_{i=1}^{r-1} \Mac^{k_i}_{N_i}$
  to the sum, where $r$ is the maximal number such that $N_1=N_2=\dots=N_{r-1}$.
   Using \eqref{eq_Macdonald_operator_eigen} we get
 \begin{multline}
 \label{eq_x20}
  \prod_{i=1}^{r-1} \Mac^{k_i}_{N_i} \left( \prod_{i=1}^{N_1} H(z_i) \right)=
  \sum_{\lambda^{N_1}\in\GTP_{\min(N_1,M)}}
  \left(\prod_{i=1}^{r-1} e_{k_i}\left(q^{\lambda^{N_1}_1}t^{{N_1}-1},\dots,q^{\lambda^{N_1}_{N_1}}\right)\right)
  \\ \times
  P_{\lambda^{N_1}}(z_1,\dots,z_{N_1};q,t) Q_{\lambda^{N_1}}(b_1,\dots,b_M;q,t)
 \end{multline}
 Now substitute in \eqref{eq_x20} the decomposition (which is a version of the
 definition \eqref{eq_Skew_Macdonald})
 $$
 P_{\lambda^{N_1}}(z_1,\dots,z_{N_1};q,t)= \sum_{\lambda^{N_r}\in\GTP_{\min(N_r,M)}}
 P_{\lambda^{N_k}}(z_1,\dots,z_{N_r};q,t) P_{\lambda^{N_1}/\lambda^{N_r}}(z_{N_r+1},\dots,z_{N_1};q,t)
 $$
 and apply (again using \eqref{eq_Macdonald_operator_eigen})
 $\prod_{i=r}^{h-1} \Mac^{k_i}_{N_i}$
 to the resulting sum, where $h$ is the maximal number such that $N_r=N_{r+1}=\dots=N_{h-1}$.
 Iterating this procedure we arrive at the
 desired statement.
 \end{proof}

\subsection{Elementary asymptotic relations}

In what follows we will use the following technical lemmas.

\begin{lemma}
\label{lemma_q_poch_conv}
 For any $a,b\in\mathbb C$ and complex--valued function $u(\cdot)$ defined in a neighborhood of $1$ and
  such that
$$
 \lim_{q\to 1} u(q)=u
$$
with $0<u<1$, we have
 $$
  \lim_{q\to 1} \frac{(q^a u(q);q)_{\infty}}{(q^b u(q);q)_{\infty}} = (1-u)^{b-a}.
 $$
\end{lemma}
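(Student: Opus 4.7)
The plan is to reduce the statement to a convergent double series by taking logarithms. For $q$ sufficiently close to $1$ we have $|q^{a+i} u(q)| < 1$ for all $i \ge 0$, so the logarithm of each factor in the infinite product admits the Mercator expansion $\log(1-y) = -\sum_{k\ge 1} y^k/k$. Applying this factor by factor and interchanging the (absolutely convergent) double sum yields
$$
\log \frac{(q^a u(q);q)_\infty}{(q^b u(q);q)_\infty} = \sum_{k=1}^\infty \frac{u(q)^k}{k} \cdot \frac{q^{kb} - q^{ka}}{1-q^k}.
$$

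Next I would compute the termwise $q\to 1$ limit. Using the elementary identity $\lim_{q\to 1}\frac{1-q^x}{1-q} = x$, valid for all complex $x$, the key fraction can be rewritten as $\frac{q^{kb}-q^{ka}}{1-q^k} = \frac{1-q^{ka}}{1-q^k} - \frac{1-q^{kb}}{1-q^k} \to a-b$. Together with $u(q)^k \to u^k$, the $k$th summand tends to $(a-b)\,u^k/k$, and summing over $k$ gives $(a-b)\sum_{k\ge 1} u^k/k = -(a-b)\log(1-u) = (b-a)\log(1-u)$. Exponentiation then produces the desired limit $(1-u)^{b-a}$.

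The main obstacle is justifying the interchange of $\lim_{q\to 1}$ and $\sum_k$ by dominated convergence. For this I would bound $\left|\frac{q^{kb}-q^{ka}}{1-q^k}\right|$ uniformly in $k$ and in $q$ close to $1$ as follows: the integral representation $q^{ka} - q^{kb} = k(\log q)\int_b^a q^{kt}\,dt$ (over any path from $b$ to $a$ in $\mathbb{C}$) gives $|q^{ka}-q^{kb}| \le k|\log q|\,|a-b|\,q^{k\mu}$ with $\mu = \min(\mathrm{Re}(a),\mathrm{Re}(b))$, while the elementary inequality $1-q^k \ge \tfrac{1}{2}\min(k|\log q|, 1)$ controls the denominator. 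Combined with $|u(q)|^k \le u_0^k$ for some $u_0<1$ (available for $q$ close to $1$ since $0<u<1$), this produces a summable dominating series as long as $q$ is close enough to $1$ for $u_0 q^{-|\mu|}<1$ to hold, completing the argument.
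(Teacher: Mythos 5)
Your argument is correct, and it takes a genuinely different route from the paper. Both proofs begin by passing to logarithms of the infinite product, but they diverge immediately afterwards. The paper applies $\ln(1+x)\sim x$ to each factor of the ratio and then multiplies and divides by $1-q$, recasting the sum over $m$ as a Riemann sum (with partition points $q^m$ and mesh $q^m(1-q)$) for $\int_0^1 \frac{u(a-b)}{1-ux}\,dx$; the uniformity needed to pass to the integral limit is explicitly omitted as ``standard.'' You instead expand each $\log(1-q^{a+m}u(q))$ via the Mercator series, interchange the absolutely convergent double sum over $(m,k)$, and collapse the geometric sum in $m$ to obtain the single series $\sum_{k\ge1}\frac{u(q)^k}{k}\cdot\frac{q^{kb}-q^{ka}}{1-q^k}$; the termwise $q\to1$ limit is then elementary, and you supply the missing uniformity by constructing an explicit geometric dominating series (the bound $1-q^k\ge\tfrac12\min(k|\log q|,1)$ and the contour-integral estimate for $q^{ka}-q^{kb}$ combine, for $q$ close enough to $1$, to give a bound $2|a-b|\rho^k$ with $\rho<1$ independent of $q$). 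The paper's argument is shorter and more geometric; yours is more self-contained since the dominated-convergence justification is actually carried out, and the intermediate series identity is of some independent interest. Both deliver $(1-u)^{b-a}$.
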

\begin{proof}
 For $q$ approaching $1$ we have (using $\ln(1+x)\sim x$ for small $x$)\footnote{We use the notation $f(x)\sim g(x)$ as
$x\to a$ if $\lim_{x\to a}\frac{f(x)}{g(x)}=1$.}
\begin{multline}
\label{eq_x16} \frac{(q^a u(q);q)_{\infty}}{(q^b u(q);q)_{\infty}}=\exp\left( \sum_{m=0}^{\infty}
\ln \frac{1-q^{a+m} u(q)}{1-q^{b+m} u(q)}\right)\\=\exp\left( \sum_{m=0}^{\infty} \ln\left(1+
\frac{q^m u(q) (q^b-q^a)}{1-q^{b+m} u(q)}\right)\right)\sim \exp\left( \sum_{m=0}^{\infty}
\frac{q^m u(q) (q^b-q^a)}{1-q^{b+m} u(q)}\right)
\\=
\exp\left( \sum_{m=0}^{\infty} (q^m-q^{m+1})  \frac{u(q) (q^b-q^a)}{(1-q^{b+m} u(q))(1-q)}\right)
\end{multline}
Note that as $q\to 1$
$$
 \frac{u(q) (q^b-q^a)}{(1-q^{b+m} u(q))(1-q)}\sim \frac{u(a-b)}{1-q^m u},
$$
the last sum in \eqref{eq_x16} turns into a Riemannian sum for an integral, and we get (omitting a
standard uniformity of convergence estimate)
$$
 \exp\left(\int_0^1 \frac{u(a-b)}{1-ux} dx\right) =\exp(-(a-b)\ln(1-u))=(1-u)^{b-a}. \qedhere
$$
\end{proof}
 {\bf Remark.} The convergence in Lemma \ref{lemma_q_poch_conv} is uniform in $u$ bounded
away from $1$.


\begin{lemma}
\label{lemma_q_Gamma_conv}
 For any $x\in\mathbb C\setminus\{0,-1,-2,\dots\}$ we have
$$
 \lim_{q\to 1} \frac{(q;q)_\infty}{(q^x;q)_\infty} (1-q)^{1-x} = \Gamma(x).
$$
\end{lemma}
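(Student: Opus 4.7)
The quantity on the left is the classical $q$-Gamma function $\Gamma_q(x) := (1-q)^{1-x}(q;q)_\infty/(q^x;q)_\infty$, and the claim is the familiar convergence $\Gamma_q(x) \to \Gamma(x)$. My approach has three parts. First, I would verify the statement at positive integers by telescoping: for $x = n \in \mathbb Z_{\geq 1}$ one has $(q;q)_\infty/(q^n;q)_\infty = \prod_{i=1}^{n-1}(1-q^i)$, so $\Gamma_q(n) = \prod_{i=1}^{n-1}\frac{1-q^i}{1-q} \to (n-1)! = \Gamma(n)$. Second, using the factorization $(q^x;q)_\infty = (1-q^x)(q^{x+1};q)_\infty$, I would derive the $q$-analogue of the functional equation, $\Gamma_q(x+1) = \frac{1-q^x}{1-q}\Gamma_q(x)$, whose prefactor tends to $x$ as $q\to 1$, matching the functional equation of $\Gamma$.

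Iterating the functional equation to translate $x$ by any positive integer, it suffices to prove the convergence on a strip, say $1 \leq \mathrm{Re}(x) \leq 2$, where the factors $1-q^{x+m}$ are uniformly bounded away from zero. The analysis now parallels the proof of Lemma \ref{lemma_q_poch_conv}: writing
$$
\log \Gamma_q(x) = (1-x)\log(1-q) + \sum_{m=0}^\infty \log\frac{1-q^{m+1}}{1-q^{m+x}}
$$
and expanding each logarithm as in \eqref{eq_x16}, one converts the sum into a Riemann sum by inserting the factor $(q^m - q^{m+1})/(1-q)$. The logarithmic divergence of the near-zero tail is exactly matched by the prefactor $(1-x)\log(1-q)$; after this cancellation, the sum converges to a Malmsten-type integral representation of $\log \Gamma(x)$, uniformly on compact subsets of the strip.

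Thus the pointwise limit $G(x) := \lim_{q\to 1}\Gamma_q(x)$ is holomorphic on the strip, extends by the functional equation to all of $\mathbb C \setminus \{0,-1,-2,\ldots\}$, and satisfies $G(x+1) = xG(x)$ together with $G(n) = (n-1)!$ for positive integers $n$. By Wielandt's theorem (or Bohr--Mollerup on $\mathbb R_{>0}$, using the easy log-convexity in $x$ of each factor $\frac{1-q^{n+1}}{1-q^{n+x}}$, combined with analytic continuation), $G \equiv \Gamma$. The main technical point is the careful cancellation of the near-zero tail in the Riemann-sum analysis, which controls the interaction between the divergent $(1-x)\log(1-q)$ prefactor and the first few terms of the sum; the remaining steps are standard.
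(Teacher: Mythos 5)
The paper offers no proof here, only a citation to standard references, so any argument you supply is already more than the paper does. Your skeleton --- values at positive integers, the $q$-functional equation, reduction to a strip, and a Bohr--Mollerup or Wielandt conclusion --- is the standard route (it goes back to Askey), and the log-convexity you invoke is correct: for fixed $q\in(0,1)$, $\frac{d^2}{dx^2}\bigl[-\log(1-q^{n+x})\bigr]=(\log q)^2\, q^{n+x}/(1-q^{n+x})^2>0$, so each factor and hence $\Gamma_q$ is log-convex on $(0,\infty)$.

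However, the middle step --- converting the sum to a Riemann sum ``as in \eqref{eq_x16}'' --- conceals the entire difficulty, and as written it does not go through. Lemma~\ref{lemma_q_poch_conv} uses $0<u<1$ in an essential way: $1-q^{b+m}u$ stays bounded away from zero uniformly in $q$ and $m$, so $\log(1+w)\approx w$ is legitimate. Here the analogue of $u$ is $1$, and for $m=O(1)$ one has $\log\frac{1-q^{m+1}}{1-q^{m+x}}\to\log\frac{m+1}{m+x}$, an $O(1)$ quantity for which the first-order expansion of the logarithm fails; the naive ``Riemann integral'' one would write down, $\int_0^\infty e^{-t}(1-e^{-t})^{-1}\,dt$, already diverges, and the cancellation against $(1-x)\log(1-q)$ pins down the answer only after tracking the Euler--Maclaurin endpoint corrections near $m=0$. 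That tracking \emph{is} the lemma, not a technical point to wave through. Fortunately the step is dispensable: with log-convexity, $\Gamma_q(k)=[k-1]_q!$, and $\Gamma_q(x+1)=[x]_q\Gamma_q(x)$ in hand, the Artin sandwich (which is the usual proof of Bohr--Mollerup) gives, for $0<x\le1$ and $n\ge2$,
$$
x\log[n-1]_q+\log\Gamma_q(n)-\sum_{j=0}^{n-1}\log[x+j]_q\ \le\ \log\Gamma_q(x)\ \le\ x\log[n]_q+\log\Gamma_q(n)-\sum_{j=0}^{n-1}\log[x+j]_q.
$$
Fix $n$, let $q\to1$ using $[k]_q\to k$, and then let $n\to\infty$: both bounds converge to $\log\Gamma(x)$ by the Gauss product, proving the limit on $(0,\infty)$ with no integrals at all. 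Since $|\Gamma_q(x)|\le\Gamma_q(\Re x)$, Vitali's theorem upgrades this to locally uniform convergence on $\Re x>0$, and the functional equation extends the statement to all of $\mathbb C\setminus\{0,-1,-2,\dots\}$.
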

\begin{proof}
 E.g.\ \cite[Section 1.9]{KLS}, \cite[Section 10.3]{AAR}.
\end{proof}

\subsection{Heckman--Opdam processes and Jacobi distributions}

\label{Section_main_def}

Throughout this section we fix two parameters $M\in \mathbb {Z}_+$ and $\alpha >0$.

Let $\St^M$ denote the set of families $\r^1,\r^2,\dots$, such that for each $N\ge 1$, $\r^N$ is a
sequence of real numbers of length $\min(N,M)$, satisfying:\
$$
 0< \r^N_1< \r^N_2 < \dots < \r^N_{\min(N,M)} < 1.
$$
We also require that for each $N$ the sequences $\r^N$ and $\r^{N+1}$ \emph{interlace} $\r^N\prec
\r^{N+1}$, which means that
$$
 \r^{N+1}_1 < \r^N_1 < \r^{N+1}_2 < \r^N_2 < \dots.
$$

\begin{definition}
\label{Def_distrib} The probability distribution $\Pr^{\alpha,M,\theta}$ on $\St^M$ is the unique
distribution satisfying
 two conditions:
\begin{enumerate}
 \item For each $N\ge 1$ the distribution of $\r^N$ is given by the
 following density (with respect to the Lebesgue measure)
\begin{multline}
\label{eq_general_beta}
 \Pr^{\alpha,M,\theta}(\r^N\in [z,z+dz])=\\ {\rm const}\cdot
  \prod_{1\le i<j \le \min(N,M)} (z_j-z_i)^{2\theta}
 \prod_{i=1}^{\min(N,M)} z_i^{\theta\alpha-1} (1-z_i)^{\theta|M-N|+\theta-1} d z_i,
\end{multline}
 where $0< z_1<\dots< z_{\min(N,M)}< 1$ and ${\rm const}$ is an (explicit) normalizing constant.
\item Under $\Pr^{\alpha,M,\theta}$, $\{\r^N\}_{N\ge 1}$, is a trajectory of
a Markov chain with backward transition probabilities having the following density:
\begin{multline}
\label{eq_transitional_probabilities}
 \Pr^{\alpha,M,\theta}(\r^{N-1}\in [z,z+dz] \mid \r^{N}=y)=
 \frac{\Gamma(N\theta)}{\Gamma(\theta)^{N}}\\ \times \prod\limits_{j=1}^{N}{y_j}^{(N-1)\theta} \prod\limits_{1\le i<j<N}
 (z_j-z_i) \prod\limits_{1\le i<j\le N} (y_j-y_i)^{1-2\theta}
 \prod\limits_{i=1}^{N-1}\prod\limits_{j=1}^N
\left|y_{j}-z_i\right|^ {\theta-1}  \prod\limits_{i=1}^{N-1} \frac{dz_i}{z_i^{N\theta}},
\end{multline}
for $N\le M$ and
\begin{multline}
\label{eq_transitional_probabilities_big_N}
 \Pr^{\alpha,M,\theta}(\r^{N-1}\in [z,z+dz] \mid \r^{N}=y)= \frac{\Gamma(N\theta)}{\Gamma(\theta)^{M}\Gamma(N\theta-M\theta)}\prod\limits_{1\le i<j \le M}
 (z_j-z_i)(y_j-y_i)^{1-2\theta}
 \\ \times \prod\limits_{j=1}^{M}{y_j}^{(N-1)\theta}(1-y_j)^{\theta(M-N-1)+1} \prod\limits_{i, j=1}^M
\left|y_{j}-z_i\right|^ {\theta-1}  \prod\limits_{i=1}^{M}
(1-z_i)^{\theta(N-M)-1}\frac{dz_i}{z_i^{N\theta}},
\end{multline}
for $N>M$, where $z\prec y$ in both formulas.
\end{enumerate}
\end{definition}

{\bf Remark 1.} The distribution \eqref{eq_general_beta} is known as the \emph{general $\beta$
Jacobi ensemble.}

{\bf Remark 2.} Straightforward computation shows that the restriction of $\Pr^{\alpha,M,\theta}$
on the first $N\ge 1$ levels gives the $\beta$--Jacobi corners process of Definition
\ref{Definition_multi_level_intro}.

{\bf Remark 3.} The backward transitional probabilities \eqref{eq_transitional_probabilities} are
known in the theory of Selberg integrals. They appear in the integration formulas due to Dixon
\cite{Dixon} and Anderson \cite{Anderson}. More recently, two--level distribution of the above
kind was studied by Forrester and Rains \cite{FR}.

{\bf Remark 4.} Alternatively, one can write down \emph{forward} transitional probabilities of the
Markov chain  $\{\r^N\}_{N\ge 1}$; together with the distribution of $\r^1$ given by
\eqref{eq_general_beta} they uniquely define $\Pr^{\alpha,M,\theta}$. In particular, for $1\le
N\le M$ we have
\begin{multline*}
\Pr^{\alpha,M,\theta}(\r^{N}\in [y,y+dy] \mid \r^{N-1}=z)=
 {\rm const} \prod\limits_{1\le i<j<N}
 (z_j-z_i)^{1-2\theta} \prod\limits_{1\le i<j\le N} (y_j-y_i) \\ \times
 \prod\limits_{i=1}^{N-1}\prod\limits_{j=1}^N
\left|y_{j}-z_i\right|^ {\theta-1}  \prod\limits_{i=1}^{N-1} z_i^{-(N+\alpha)\theta+1}
(1-z_i)^{-\theta(N-M)+1} \prod\limits_{j=1}^{N}{y_j}^{(N-1+\alpha)\theta-1}
(1-y_j)^{\theta(N-M+1)-1}  dy_j.
\end{multline*}

\begin{proposition} The distribution $\Pr^{\alpha,M,\theta}$ is well-defined.
\end{proposition}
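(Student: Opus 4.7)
The plan is to construct $\Pr^{\alpha,M,\theta}$ via a consistent projective family of finite-level distributions and then invoke Kolmogorov's extension theorem. For each $N \ge 1$, I propose to take the probability measure on $\St^M(N)$ whose density is \eqref{eq_multilevel_dist_introN} from Definition \ref{Definition_multi_level_intro}, and then verify three things: (a) these measures are well-defined probability measures on $\St^M(N)$; (b) their one-level marginals and backward conditional kernels coincide with those prescribed in \eqref{eq_general_beta}--\eqref{eq_transitional_probabilities_big_N}; and (c) the projection of the $N$-level measure onto the first $N-1$ levels equals the $(N-1)$-level measure.

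For (a), the density \eqref{eq_multilevel_dist_introN} is manifestly non-negative on $\St^M(N)$; integrability follows by iteratively integrating out $x^1, x^2, \dots$ using the classical Dixon--Anderson integration formula (see Dixon \cite{Dixon}, Anderson \cite{Anderson}, Forrester \cite[Chapter 4]{For}), which handles precisely the type of interlaced product $\prod|x^k_a-x^{k+1}_b|^{\theta-1}$ appearing in \eqref{eq_multilevel_dist_introN}. The normalization can be computed recursively and is positive and finite because the top-level marginal is the normalizable Jacobi density \eqref{eq_Jacobi_intro}, whose partition function is an explicit Selberg integral.

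For (b), the verification that the marginal at level $N$ of \eqref{eq_multilevel_dist_introN} is \eqref{eq_general_beta} reduces, by writing \eqref{eq_multilevel_dist_introN} as a telescoping product in $k$, to checking
\[
 \int_{x^{k}\prec x^{k+1}} (\text{layer } k\text{ factor})\, d x^k \;\propto\; \prod_{1\le i<j\le \min(k+1,M)} (x^{k+1}_j-x^{k+1}_i)^{2\theta}\cdot (\text{boundary factors}),
\]
which is another instance of the Dixon--Anderson formula; the matching of the constants $\Gamma(N\theta)/\Gamma(\theta)^N$ and $\Gamma(N\theta)/(\Gamma(\theta)^M\Gamma((N-M)\theta))$ in \eqref{eq_transitional_probabilities} and \eqref{eq_transitional_probabilities_big_N} is then identified with the explicit normalization in Dixon--Anderson. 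The case $N\le M$ uses the formula in its textbook form; the case $N>M$ corresponds to the variant with two extra ``endpoint'' external points at $0$ and $1$, which is still Dixon--Anderson with parameters $s_0=-N\theta+1$ and $s_\infty=\theta(N-M)-1$ associated to the endpoints (equivalently, it follows from the standard formula by a change of variable $z\mapsto 1-z$ or by analytic continuation in the exponents).

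For (c), the projection identity is another Dixon--Anderson integration, now over the top layer $x^N$ subject to $x^{N-1}\prec x^N$; the factors depending on $x^N$ in \eqref{eq_multilevel_dist_introN} combine into precisely the integrand for which the formula yields the boundary and interlacing factors of the $(N-1)$-level density. With (a)--(c) established, the finite-dimensional distributions on $(\r^1,\dots,\r^N)$ form a consistent projective system, so Kolmogorov's extension theorem provides a unique probability measure $\Pr^{\alpha,M,\theta}$ on $\St^M$ whose one-level marginals are \eqref{eq_general_beta} and whose backward conditionals are \eqref{eq_transitional_probabilities}--\eqref{eq_transitional_probabilities_big_N}. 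The main obstacle I anticipate is purely bookkeeping: correctly tracking the exponents and the Gamma-function constants when applying Dixon--Anderson in the case $N>M$ with both endpoint singularities, and making sure the powers in \eqref{eq_multilevel_dist_introN} reduce cleanly to those in \eqref{eq_general_beta} after integration.
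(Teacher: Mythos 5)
Your proposal is correct and follows essentially the same route as the paper: both arguments rest on the Selberg integral for normalization and the Dixon--Anderson integration formula (in its two forms, \eqref{eq_Dixon_formula} and \eqref{eq_Dixon_2}) for the stochasticity of the transition kernel and the consistency of marginals across levels. Your framing via Kolmogorov extension applied to the joint densities \eqref{eq_multilevel_dist_introN} is a cosmetic repackaging of the paper's direct verification that the marginals \eqref{eq_general_beta}, the backward kernels \eqref{eq_transitional_probabilities}--\eqref{eq_transitional_probabilities_big_N}, and their compatibility define a consistent Markov chain on $\St^M$.
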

\begin{proof} We should check three properties here. First, we want to check  that the density
\eqref{eq_general_beta} is integrable and find the normalizing constant in this formula. This is
known as the Selberg Integral evaluation, see \cite{Selberg}, \cite{Anderson}, \cite[Chapter
4]{For}:
\begin{multline*}
 S_n(\alpha_0,\alpha_1,\gamma):=\int_{0}^1\cdots\int_0^1 \prod_{i=1}^n t_i^{\alpha_0-1} (1-t_i)^{\alpha_1-1}
 \prod_{1\le i<j\le n} |t_i-t_j|^{2\gamma} dt_1\dots dt_n
 \\=
 \prod_{j=0}^{n-1}
 \frac{\Gamma(\alpha_0+j\gamma)\Gamma(\alpha_1+j\gamma)\Gamma(1+(j+1)\gamma)}{\Gamma(\alpha_0+\alpha_1+(n+j-1)\gamma)\Gamma(1+\gamma)}.
\end{multline*}
We conclude that ${\rm const}$ in \eqref{eq_general_beta} is given by
$$
 {\rm const}=\dfrac{(\min(N,M))!}{ S_{\min(N,M)}(\theta\alpha,\theta|M-N|+\theta,\theta)}.
$$
Next, we want to check that \eqref{eq_transitional_probabilities} defines a probability
distribution, i.e.\ that the integral over $z$'s is $1$. For that we use a particular case of the
Dixon integration formula (see \cite{Dixon}, \cite[Exercise 4.2, q.\ 2]{For}) which reads
\begin{multline}
\label{eq_Dixon_formula}
 \int_T \prod_{1\le i<j\le n} (t_i-t_j) \prod_{i=1}^n \prod_{j=1}^{n+1}
 \frac{|t_i-a_j|^{\alpha_j-1}}{|b-t_i|^{\alpha_j}}
 dt_1\cdots dt_n
 \\
 = \dfrac{\prod\limits_{j=1}^{n+1}\Gamma(\alpha_j)}{\Gamma\left(\sum\limits_{j=1}^{n+1} \alpha_j\right)} \prod_{1\le i<j \le n+1}
 (a_i-a_j)^{\alpha_i+\alpha_j-1} \prod_{i=1}^{n+1} |b-a_i|^{\alpha_i-\sum_{j=1}^{n+1} \alpha_j} ,
\end{multline}
where the domain of integration $T$ is given by
$$
 a_1< t_1 <a_2<t_2\dots<t_n<a_{n+1}.
$$
Substituting , $n=N-1$, $b=0$, $\alpha_j=\theta$, $j=1,\dots,N$, in \eqref{eq_Dixon_formula} we
arrive at the required statement.

Finally, we want to prove the consistency of formulas \eqref{eq_general_beta} and
\eqref{eq_transitional_probabilities}, i.e.\ that for probabilities defined through those formulas
we have
\begin{multline*}
 \int_y \Pr^{\alpha,M,\theta}(\r^{N-1}\in [z,z+dz] \mid \r^{N}=y) \Pr^{\alpha,M,\theta}(\r^N\in
[y,y+dy])=\Pr^{\alpha,M,\theta}(\r^{N-1}\in [z,z+dz]).
\end{multline*}
Assuming $N\le M$, this is equivalent to
\begin{multline}
\label{eq_multilevel_consistency} \int \cdots \int \prod\limits_{j=1}^{N}{y_j}^{(N-1)\theta}
\prod\limits_{i<j}
 (z_j-z_i) \prod\limits_{i<j} (y_j-y_i)^{1-2\theta} \prod\limits_{i, j}
\left|y_{j}-z_i\right|^ {\theta-1}  \prod\limits_{i=1}^{N-1} z_i^{-N\theta}
\\\times
\prod_{1\le i<j \le N} (y_j-y_i)^{2\theta}
 \prod_{i=1}^{N} y_i^{\theta\alpha-1} (1-y_i)^{\theta|M-N|+\theta-1} d y_i,
\\
= {\rm const} \cdot \prod_{1\le i<j \le N-1} (z_j-z_i)^{2\theta}
 \prod_{i=1}^{N-1} z_i^{\theta\alpha-1} (1-z_i)^{\theta|M-N+1|+\theta-1} d z_i,
\end{multline}
where the integration goes over all $y$'s such that
$$
 0< y_1< z_1 < y_2< \dots < z_{N-1}<y_{N} < 1.
$$
In order to prove \eqref{eq_multilevel_consistency} we use another particular case of the Dixon
integration formula (this is $b\to\infty$ limit of \eqref{eq_Dixon_formula}), which was also
proved by Anderson \cite{Anderson}. This formula reads
\begin{multline}
\label{eq_Dixon_2} \int \cdots \int \prod\limits_{i<j} (y_j-y_i) \prod\limits_{i, j}
\left|y_{j}-a_i\right|^ {\alpha_i-1} dy_1 \cdots
dy_n=\frac{\prod\limits_{j=1}^{n+1}\Gamma(\alpha_j)} {\Gamma\left(\sum\limits_{j=1}^{n+1}
\alpha_j\right)} \prod_{1\le i<j \le n+1}
 (a_i-a_j)^{\alpha_i+\alpha_j-1},
\end{multline}
where the integration is over all $y_i$ such that
$$
 a_1< y_1 < a_2 < \dots < y_n < a_{n+1}.
$$
Choosing
$$
n=N,\quad a_1=0,\quad a_2=z_1,\quad a_3=z_2,\quad \dots,\quad a_{n}=z_{n-1},\quad a_{n+1}=1
$$
and appropriate $\alpha_i$ we arrive at \eqref{eq_multilevel_consistency}. For $N>M$ the argument
is similar.
\end{proof}

Our next aim is to show that $\Pr^{\alpha,M,\theta}$ is a  scaling limit of the ascending
Macdonald processes from Section \ref{Section_Macdonald}.

\begin{theorem}
\label{Theorem_convergence_to_Jacobi}
 Fix two positive reals $\theta$, $\alpha$ and a positive integer $M$. Consider two sequences $a_i=t^{i-1}$, $i=1,2,\dots$ and
$b_i=t^\alpha t^{i-1}$, $i=1,\dots,M$, and let $\lambda^1,\lambda^2,\dots$ be distributed
according to the infinite ascending  Macdonald process of Definition \ref{Definition_Asc_Macdo}.
For
 $\eps>0$ set
$$
 q=\exp(-\eps),\quad t=q^{\theta},
$$
and define
$$
 \r^i_j(\eps)=\exp(-\eps\lambda^i_j).
$$
Then as $\eps\to 0$ the finite-dimensional distributions of $\{\r^i_j(\eps)$, $i=1,2,\dots$,
$j=1,\dots,\min(M,i)\}$ weakly converge to those of $\Pr^{\alpha,M,\theta}$.
\end{theorem}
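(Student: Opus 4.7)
The plan is to verify the theorem by separately establishing two convergences: (i) the single-level marginal distribution \eqref{eq_macdonald_single} of $\lambda^N$, pushed through $\lambda^N_j\mapsto \r^N_j(\eps)=\exp(-\eps\lambda^N_j)$, converges weakly to the density \eqref{eq_general_beta}; and (ii) the backward transition kernel \eqref{eq_macdonald_transition} converges weakly to \eqref{eq_transitional_probabilities} (resp.\ \eqref{eq_transitional_probabilities_big_N}) when $N\le M$ (resp.\ $N>M$). Since Definition \ref{Def_distrib} characterizes $\Pr^{\alpha,M,\theta}$ through its marginals and Markov transitions, and since the Markov property is preserved in the scaling limit, these two convergences together yield convergence of all finite-dimensional distributions, as required.

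For the marginal, the essential tool is the principal specialization formula for Macdonald polynomials (Macdonald, Chapter VI, formula (6.11$'$) and its analogue for $Q$): both $P_\lambda(1,t,\dots,t^{N-1};q,t)$ and $Q_\lambda(t^\alpha,t^{\alpha+1},\dots,t^{\alpha+M-1};q,t)$ admit explicit product expressions in terms of $q$-Pochhammer symbols indexed by pairs of indices of $\lambda$, together with boundary factors involving $q^{\lambda_i}t^\alpha$ and $q^{\lambda_i}t^{M-N}$. Substituting $q=e^{-\eps}$, $t=q^\theta$, $\r^N_i=e^{-\eps\lambda^N_i}$ and invoking Lemma \ref{lemma_q_poch_conv} converts each ratio of the shape $(q^{\lambda_i-\lambda_j}t^c;q)_\infty/(q^{\lambda_i-\lambda_j}t^{c'};q)_\infty$ into a power of $1-\r^N_i/\r^N_j$, thereby reconstructing the Vandermonde $\prod_{i<j}(\r^N_j-\r^N_i)^{2\theta}$, the weight $(\r^N_i)^{\theta\alpha-1}$ and the factor $(1-\r^N_i)^{\theta|M-N|+\theta-1}$. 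The normalizing constant $Z_N$ from \eqref{eq_normalizing_Macdonald} converts via Lemma \ref{lemma_q_Gamma_conv} into the Selberg normalization. The passage from discrete mass on the $\lambda$-lattice to Lebesgue density on the $\r$-coordinates is handled by the Jacobian $|d\r/d\lambda|=\eps\r$, which absorbs the residual power of $\eps$ produced by Lemma \ref{lemma_q_Gamma_conv}.

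For the transition kernel, I will use the explicit expression for the single-variable skew Macdonald polynomial $P_{\nu/\mu}(a;q,t)$ (Macdonald, Chapter VI, (6.24)), which vanishes unless $\mu\prec\nu$ and otherwise factors into a product of $q$-Pochhammer ratios over the interlacing gaps $\nu_i-\mu_i$ and $\mu_i-\nu_{i+1}$. With $a_N=t^{N-1}$, the same asymptotic analysis via Lemma \ref{lemma_q_poch_conv} turns this into the continuous kernel: the gap Pochhammers produce the $|y_j-z_i|^{\theta-1}$, $(y_j-y_i)^{1-2\theta}$ and $(z_j-z_i)$ factors of \eqref{eq_transitional_probabilities}, while the ratio $P_\mu(a_1,\dots,a_{N-1};q,t)/P_\nu(a_1,\dots,a_N;q,t)$ produces the monomial prefactors $y_j^{(N-1)\theta}$ and $z_i^{-N\theta}$; Lemma \ref{lemma_q_Gamma_conv} then yields the explicit constant $\Gamma(N\theta)/\Gamma(\theta)^N$.

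The main obstacle will be twofold. First, careful bookkeeping of the many $q$-Pochhammer factors is needed to correctly assemble the exponents $2\theta$, $\theta-1$, $\theta\alpha-1$, and $\theta|M-N|+\theta-1$; a miscount of how many factors of a given type arise from pairs $(i,j)$ would spoil the identification. Second, and more substantively, the transition between the regimes $N\le M$ and $N>M$ requires separate treatment: once the length of $\lambda^N$ has stabilized at $M$, the skew polynomial $P_{\lambda^{N+1}/\lambda^N}$ implicitly enforces interlacing of $\lambda^{N+1}$ with an appended zero coordinate of $\lambda^N$, and this produces the additional boundary factors $(1-z_i)^{\theta(N-M)-1}$ and $(1-y_j)^{\theta(M-N-1)+1}$ of \eqref{eq_transitional_probabilities_big_N}. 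The uniformity of Lemma \ref{lemma_q_poch_conv} on compact subsets bounded away from $\{0,1\}$ and from coincidences $\r_i=\r_j$ upgrades the pointwise convergence of densities to weak convergence of the induced measures, and consistency of the Markov chains across all $N$ then completes the argument.
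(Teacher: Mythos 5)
Your proposal follows essentially the same route as the paper's proof: split into convergence of the single-level marginal and of the backward transition kernel, express both via the explicit principal-specialization and single-variable skew-$P$ (branching) formulas for Macdonald polynomials, apply Lemma \ref{lemma_q_poch_conv} and Lemma \ref{lemma_q_Gamma_conv} to the resulting $q$-Pochhammer ratios, account for the Jacobian $d\r_i\sim\eps\r_i\,d\lambda_i$, and upgrade pointwise density convergence to weak convergence by uniformity on compacts together with the fact that both sides are probability measures. The only cosmetic difference is the citation ((6.24) vs.\ (7.13) in Macdonald); the underlying formula and the structure of the argument coincide.
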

{\bf Remark.} The result of Theorem \ref{Theorem_convergence_to_Jacobi} is a manifestation of a
more general limit transition that takes Macdonald polynomials to the so-called Heckman-Opdam
hypergeometric functions and general Macdonald processes to certain probability measures that we
call \emph{Heckman-Opdam processes}. In particular, $\Pr^{\alpha,M,\theta}$ is a Heckman-Opdam
process. As all we shall need in the sequel is the above theorem, we moved the discussion of these
more general limiting relations to the appendix.

\begin{proof}[Proof of Theorem \ref{Theorem_convergence_to_Jacobi}]
 We need to prove that \eqref{eq_macdonald_single} converges to \eqref{eq_general_beta} and
\eqref{eq_macdonald_transition} converges to \eqref{eq_transitional_probabilities},
\eqref{eq_transitional_probabilities_big_N}. Let us start from the former.

For any $\lambda\in\GTP_N$  and $M\ge N$ we have with the agreement that $\lambda_i=0$ for $i>N$
(see \cite[Chapter VI, (6.11)]{M})
\begin{multline*}
 P_\lambda(1,\dots,t^{M-1};q,t)=t^{\sum_{i=1}^N (i-1)\lambda_i} \prod_{i<j\le M} \frac{(q^{\lambda_i-\lambda_j}
 t^{j-i};q)_\infty} {(q^{\lambda_i-\lambda_j}
 t^{j-i+1};q)_\infty} \cdot \frac{(
 t^{j-i+1};q)_\infty} {(
 t^{j-i};q)_\infty}
 \\= t^{\sum_{i=1}^N (i-1)\lambda_i} \prod_{i<j\le N} \frac{(q^{\lambda_i-\lambda_j}
 t^{j-i};q)_\infty} {(q^{\lambda_i-\lambda_j}
 t^{j-i+1};q)_\infty} \prod_{i=1}^N \prod_{j=N+1}^M \frac{(q^{\lambda_i}
 t^{j-i};q)_\infty} {(q^{\lambda_i}
 t^{j-i+1};q)_\infty}
 \prod_{i<j}^{i\le N;\, j\le M} \frac{(
 t^{j-i+1};q)_\infty} {(
 t^{j-i};q)_\infty}.
\end{multline*}
In the limit regime
$$
 q=\exp(-\eps),\quad t=q^{\theta},\quad \lambda_i=-\eps^{-1} \log(\r_i),\quad \eps\to 0,
$$
using Lemma \ref{lemma_q_poch_conv} we have
$$
 t^{\sum_{i=1}^N \lambda_i(i-1)} \to \prod_{i=1}^N (\r_i)^{\theta(i-1)}, \quad
\frac{(q^{\lambda_i-\lambda_j}
 t^{j-i};q)_\infty} {(q^{\lambda_i-\lambda_j}
 t^{j-i+1};q)_\infty}\to \Big(1-\r_i/\r_j\Big)^{\theta},
$$
$$
 \prod_{j=N+1}^M \frac{(q^{\lambda_i}
 t^{j-i};q)_\infty} {(q^{\lambda_i}
 t^{j-i+1};q)_\infty}= \frac{(q^{\lambda_i}
 t^{N+1-i};q)_\infty} {(q^{\lambda_i}
 t^{M+1-i};q)_\infty} \to \Big(1-\r_i\Big)^{\theta(M-N)},
$$
and using Lemma \ref{lemma_q_Gamma_conv} we get
$$
  \frac{(
 t^{j-i+1};q)_\infty} {(
 t^{j-i};q)_\infty}\sim
 \frac{\Gamma(\theta(j-i))}{\Gamma(\theta(j-i+1))} \eps^{-\theta}.
$$
We also need (see \cite[Chapter VI, (6.19)]{M})
\begin{equation}
\label{eq_blambda}
 \frac{Q_\lambda(\cdot;q,t)}{P_\lambda(\cdot;q,t)}=b_\lambda= \prod_{1\le i\le j \le \ell(\lambda)}
 \frac{f(q^{\lambda_i-\lambda_j} t^{j-i})}{f(q^{\lambda_i-\lambda_{j+1}} t^{j-i})},\quad
 f(u)=\frac{(tu;q)_\infty}{(qu; q)_\infty}.
\end{equation}
 Thus, canceling asymptotically equal factors, we get
$$
 b_\lambda\sim \prod_{i=1}^N \frac{f(1)}{f(q^{\lambda_i}t^{N-i})}\sim
 \frac{\eps^{N(1-\theta)}}{\Gamma(\theta)^N} \prod_{i=1}^N(1-\r_i)^{\theta-1}.
$$
Also
\begin{multline*}
\frac{(ta_i b_j;q)_\infty}{(a_i b_j;q)_\infty}=\frac{(t \cdot t^{i-1}\cdot t^{\alpha+j-1}
;q)_{\infty}}{(t^{i-1}\cdot t^{\alpha+j-1};q)_{\infty}}=
 f(t^{i-1} \cdot t^{\alpha+j-1})  \sim
\left[\frac{\Gamma(\theta(i-1+j-1+\alpha)}{\Gamma(\theta(i-1+j-1+\alpha+1))}
\eps^{-\theta}\right].
\end{multline*}

We conclude that for $N\le M$ as $\eps\to 0$
\begin{multline*}
 \prod_{i=1}^N \prod_{j=1}^M \frac{(t^{i-1}\cdot t^{\alpha+j-1};q)_{\infty}} {(t \cdot t^{i-1}\cdot t^{\alpha+j-1} ;q)_{\infty}}
 P_\lambda(1,\dots,t^{N-1};q,t)Q_\lambda(t^{\alpha},\dots,t^{\alpha+M-1};q,t)
\\ \sim {\rm const}\cdot \eps^{N} \prod_{1\le i<j\le N} (\r_i-\r_j)^{2\theta} \prod_{i=1}^N
(\r_i)^{\alpha\theta}(1-\r_i)^{\theta(M-N)+\theta-1}
\end{multline*}
where ${\rm const}$ is a certain (explicit) constant (depending on $N$, $M$, $\theta$ and
$\alpha$). Taking into the account that $d\r_i\sim -\eps \r_i d \lambda_i$ , that the convergence
in all the above formulas is uniform over compact subsets of the set $0<\r_1<\dots<\r_N<1$, and
that both prelimit and limit measures have mass one, we conclude that \eqref{eq_macdonald_single}
weakly converges to \eqref{eq_general_beta}. For $N>M$ the argument is similar.

\medskip
It remains to prove that \eqref{eq_macdonald_transition} weakly converges to
\eqref{eq_transitional_probabilities}. Using \cite[Chapter VI, (7.13)]{M} we have
\begin{equation}
\label{eq_psi}
  P_{\lambda/\mu}(t^{N-1};q,t)=\psi_{\lambda/\mu}(t^{N-1}),
\end{equation}
where ($f(\cdot)$ was defined above, see \eqref{eq_blambda})
$$
 \psi_{\lambda/\mu}(x)=x^{|\lambda|-|\mu|}
f(1)^{N-1} \prod_{i<j<N} f(q^{\mu_i-\mu_j}
 t^{j-i})
  \prod_{i\le j<N}
\frac{f(q^{\lambda_i-\lambda_{j+1}}t^{j-i})}{f(q^{\mu_i-\lambda_{j+1}}t^{j-i})
{f(q^{\lambda_i-\mu_j}t^{j-i})}}.
$$
In the limit regime
$$
 q=\exp(-\eps),\quad t=q^{\theta},\quad \lambda_i=-\eps^{-1} \log(\r_i),\quad \mu_i=-\eps^{-1}
 \log(\r_i'),\quad \eps\to 0,
$$
$$
 f(1)\sim \frac{\varepsilon^{1-\theta}}{\Gamma(\theta)}, \quad
 f(q^{\mu_i-\mu_j}
 t^{j-i})\sim (1-\r'_i/\r'_j)^{1-\theta},
$$
$$
\frac{f(q^{\lambda_i-\lambda_{j+1}}t^{j-i})}{f(q^{\mu_i-\lambda_{j+1}}t^{j-i}){f(q^{\lambda_i-\mu_j}t^{j-i})}}\sim
\left[\frac{1-\r_i/\r_{j+1}}{\big(1-\r'_i/\r_{j+1}\big)\big(1-\r_i/\r'_j\big)}\right]^ {1-\theta},
$$
$$
 t^{(N-1)(|\lambda|-|\mu|)}\sim \left(\frac{\prod_i \r_i}{\prod_i \r'_i}\right)^{(N-1)\theta} .
$$
Therefore,
\begin{multline}
  \psi_{\lambda/\mu}(t^{N-1})\sim \frac{\varepsilon^{(N-1)(1-\theta)}}{\Gamma(\theta)^{N-1}}
 \prod_i \r_i^{(N-1)\theta} \prod_i (\r'_i)^{1-N\theta} \prod_{i<j<N}
 (\r'_j-\r'_i)^{1-\theta} \\ \times \prod_{i\le j<N}
\left[\frac{\r_{j+1}-\r_i}{\big(\r_{j+1}-\r'_i\big)\big(\r'_j-\r_i\big)}\right]^ {1-\theta}.
\end{multline}
Taking into the account that $d\r'_i\sim - \eps \r'_i d \lambda_i$ and the above formulas for the
asymptotic behavior of $P_\lambda(1,\dots,t^{N-1};q,t)$, the uniformity of convergence on compact
subsets of the set defined by interlacing condition $\r'\prec \r$, and the fact that we started
with a probability measure and obtained a probability density,  we conclude that
\eqref{eq_macdonald_transition} weakly converges to \eqref{eq_transitional_probabilities}. For
$N>M$ the argument is similar.
\end{proof}

Let $\Lambda$ denote the algebra of symmetric functions, which can be viewed as the algebra of
symmetric polynomials of bounded degree in infinitely many variables $x_1,x_2,\dots$, see e.g.\
\cite[Chapter I, Section 2]{M}. One way to view $\Lambda$ is as an algebra of polynomials in
Newton power sums
$$
 p_k=\sum_{i} (x_i)^k,\quad k=1,2,\dots.
$$

\begin{definition}
\label{definition_observables}
 For a symmetric function $f\in\Lambda$, let
 $f(N;\cdot)$ denote the function on $\St^M$ given by
 $$
  f(N;\r)=\begin{cases}
  f(\r^N_1,\r^N_2,\dots,\r^N_N,0,0,\dots), & N\le M,\\
  f(\r^N_1,\r^N_2,\dots,\r^N_M,\underbrace{1,\dots,1}_{N-M},0,0,\dots), & N>M.
  \end{cases}
 $$
\end{definition}
For example,
 $$
   p_k(N;\r) = \begin{cases} \sum\limits_{i=1}^N (\r^N_i)^k, & N\le M,\\
                            \sum\limits_{i=1}^M (\r^N_i)^k + N-M, & N>M.
                            \end{cases}
 $$
For every $M,N\ge 1$ and $\alpha>0$ define functions $H(y;\alpha,M)$ and $H_N(\cdot; \alpha,M)$ in
variables $y_1,\dots,y_N$ through
\begin{equation}
\label{eq_definition_of_H}
  H_N(y_1,\dots,y_N;\alpha,M)=\prod_{i=1}^N H(y_i;\alpha,M) =\prod_{i=1}^N \frac{\Gamma(-y+\theta\alpha)}{\Gamma(-y+\theta\alpha+M\theta)}.
\end{equation}
For any subset $I\subset\{1,\dots,N\}$ define
$$
 B_I(y_1,\dots,y_N;\theta) = \prod_{i\in I} \prod_{j\not \in I} \frac{y_i -y_j-\theta}{y_i-y_j}.
$$
Define the shift operator $T_i$ through
$$
 [T_i f](y_1,\dots,y_N) = f(y_1,\dots,y_{i-1}, y_i-1,y_{i+1},\dots,y_N).
$$
For any $k\le N$ define the $k$th order difference operator $\D^k_N$ acting on functions in
variables $y_1,\dots,y_N$ through
\begin{equation}
\label{eq_definition_of_dif_operators} \D^k_N = \sum_{|I|=k} B_I(y_1,\dots,y_N;\theta) \prod_{i\in I}
T_i.
\end{equation}

{\bf Remark.} The operators $\D^k_N$ appeared in \cite[Appendix]{Cherednik-Quantum}, their
relation to the Heckman--Opdam hypergeometric functions was studied in \cite{Cherednik-Harish},
see also Proposition \ref{prop_HO_propeties} (V) below.

\smallskip

The following statement is parallel to Theorem \ref{theorem_Macdonald_expectations}.
\begin{theorem}
\label{Theorem_HO_expectations}  Fix any integers $m\ge 1$,  $N_1\ge N_2 \ge\dots\ge N_m\ge 1$ and
$1\le k_i\le N_i$, $i=1,\dots,m$.  With $\E$ taken with respect to $\Pr^{\alpha,M,\theta}$ of
Definition \ref{Def_distrib} we have
\begin{equation}
\label{eq_HO_expect}
 \E \left(\prod_{i=1}^m e_{k_i}(N_i,\r) \right) = \dfrac{ \D^{k_m}_{N_m}\cdots \D^{k_2}_{N_2} \D^{k_1}_{N_1}
 \left[\prod\limits_{i=1}^{N_1} H(y_i;\alpha,M)\right]}{ \prod_{i=1}^{N_1} H(y_i;\alpha,M)} \rule[-5mm]{0.9pt}{17mm}_{\,
 y_i=\theta(1-i)},
\end{equation}
where $e_k(N,\r)$ is given by Definition \ref{definition_observables} with $e_k$ being the $k$th degree elementary
symmetric polynomial.
\end{theorem}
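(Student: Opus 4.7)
The plan is to obtain \eqref{eq_HO_expect} by degenerating Theorem \ref{theorem_Macdonald_expectations} under the $q\to 1$ scaling of Theorem \ref{Theorem_convergence_to_Jacobi}. I apply Theorem \ref{theorem_Macdonald_expectations} to the ascending Macdonald process with $a_i = t^{i-1}$ ($i\ge 1$), $b_j = t^{\alpha+j-1}$ ($1\le j\le M$), $q=e^{-\varepsilon}$, $t=q^\theta$, and make the change of variables $z_i=e^{\varepsilon y_i}$ in the right-hand side of \eqref{eq_Macd_expect}. Under this substitution, the evaluation $z_i=a_i$ becomes $y_i=\theta(1-i)$, matching the evaluation point in \eqref{eq_HO_expect}; then I send $\varepsilon\to 0$.

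For the left-hand side, the Macdonald observable $e_{k_i}(\lambda^{N_i})=e_{k_i}(q^{\lambda_1}t^{N_i-1},\dots,q^{\lambda_{N_i}})$ (with $\lambda^{N_i}$ padded by zeros to length $N_i$ when $N_i>M$) is uniformly bounded by $\binom{N_i}{k_i}$ since every argument lies in $(0,1]$. As $\varepsilon\to 0$, $q^{\lambda_j}t^{N_i-j}$ tends to $\r^{N_i}_j$ for $j\le\min(N_i,M)$ and to $1$ for $j>M$ (because then $\lambda_j=0$ and $t^{N_i-j}\to 1$), producing exactly the phantom $1$'s of Definition \ref{definition_observables}. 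Combined with the weak convergence of Theorem \ref{Theorem_convergence_to_Jacobi} and uniform boundedness, this yields $\E\prod_i e_{k_i}(\lambda^{N_i}) \to \E_{\Pr^{\alpha,M,\theta}}\prod_i e_{k_i}(N_i;\r)$.

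On the right-hand side, two asymptotic ingredients are needed. First, under $z_i=e^{\varepsilon y_i}$ the Macdonald shift $T^q_i$ (sending $z_i\to qz_i$) coincides with $T_i$ (sending $y_i\to y_i-1$), while $(tz_i-z_j)/(z_i-z_j) = (e^{\varepsilon(y_i-\theta)}-e^{\varepsilon y_j})/(e^{\varepsilon y_i}-e^{\varepsilon y_j}) \to (y_i-y_j-\theta)/(y_i-y_j)$ and $t^{|I|(|I|-1)/2}\to 1$, so $A_I\to B_I$ and hence $\Mac^k_N \to \D^k_N$ pointwise on functions of $y$ at any tuple with distinct coordinates. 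Second, with $zb_j=q^{-y+\theta(\alpha+j-1)}$ and $tzb_j = q^{-y+\theta(\alpha+j)}$, applying Lemma \ref{lemma_q_Gamma_conv} to each factor of
$$H(z) = \prod_{j=1}^M \frac{(q^{-y+\theta(\alpha+j)};q)_\infty}{(q^{-y+\theta(\alpha+j-1)};q)_\infty}$$
and telescoping in $j$ gives $H(z) \sim (1-q)^{-\theta M}\,\Gamma(-y+\theta\alpha)/\Gamma(-y+\theta\alpha+M\theta) = (1-q)^{-\theta M}\,H(y;\alpha,M)$, uniformly on compacts in $y$. The $y$-independent prefactor $(1-q)^{-\theta M N_1}$ in $\prod_{i=1}^{N_1}H(z_i)$ commutes with every $\Mac^{k_j}_{N_j}$ and cancels between numerator and denominator of \eqref{eq_Macd_expect}.

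Putting these together, after the constant cancellation the composition $\Mac^{k_m}_{N_m}\cdots\Mac^{k_1}_{N_1}[\prod H(z_i)]$ evaluated at $z_i=a_i$ is a finite sum of products of $A_I$ coefficients times values of $\prod H$ at integer-shifted points, each term of which converges to the corresponding entry of $\D^{k_m}_{N_m}\cdots\D^{k_1}_{N_1}[\prod H(y_i;\alpha,M)]$ at $y_i=\theta(1-i)$, yielding \eqref{eq_HO_expect}. The main technical issue I anticipate is purely bookkeeping: since the ranks $N_1\ge N_2\ge\cdots\ge N_m$ may be unequal, the inner operators $\Mac^{k_j}_{N_j}$ act on a function of all $N_1$ variables but shift only the first $N_j$, and this mixed structure must be tracked through the composition. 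No singularity issue arises, however, because starting from $y_i=\theta(1-i)$ and applying only non-negative integer shifts keeps all quantities $-y_i+\theta(\alpha+j-1)$ strictly positive (as $\alpha,\theta>0$), so the $\Gamma$-factors in $H(y;\alpha,M)$ and the denominators $y_i-y_j$ of the $B_I$ remain regular throughout.
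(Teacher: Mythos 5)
Your proposal follows the paper's own proof exactly: it degenerates Theorem \ref{theorem_Macdonald_expectations} under the limit transition of Theorem \ref{Theorem_convergence_to_Jacobi}, with $a_i=t^{i-1}$, $b_j=t^{\alpha+j-1}$ and $z_i=e^{\varepsilon y_i}$, uses Lemma \ref{lemma_q_Gamma_conv} (after telescoping $\prod_j(tzb_j;q)_\infty/(zb_j;q)_\infty$ to $(zt^{\alpha+M};q)_\infty/(zt^\alpha;q)_\infty$) to obtain $\varepsilon^{-\theta M}H(y;\alpha,M)$, checks that $\Mac^k_N\to\D^k_N$ under this change of variables, cancels the $\varepsilon$-dependent prefactor between numerator and denominator, and passes to the limit on the left side by uniform boundedness of $e_{k_i}(\lambda^{N_i})$ together with the weak convergence of Theorem \ref{Theorem_convergence_to_Jacobi}. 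One small inaccuracy in your closing remark: the differences $y_i-y_j$ in the $B_I$ denominators do \emph{not} stay regular after a composition of operators with different shift patterns (already at $\theta=1$, composing $\D^1_N$ with itself produces the factor $y_2-y_1+1$, which vanishes at the evaluation point $y_i=\theta(1-i)$), but these poles are cancelled by zeros from neighbouring $B_I$ numerators, so the conclusion is unaffected.
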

\begin{proof} We start from Theorem \ref{theorem_Macdonald_expectations} and perform the limit transition of Theorem
\ref{Theorem_convergence_to_Jacobi}.

Note that $q^{\lambda_i}t^{N-i}<1$, therefore, $e_{k_i}(\lambda^{N_i})<N_i^{k_i}$ and Theorem
\ref{Theorem_convergence_to_Jacobi} implies that left side of \eqref{eq_Macd_expect} converges to
the left side of \eqref{eq_HO_expect}. Turning to the right sides, observe that for
$b_i=t^{\alpha+i-1}$ we have
$$
 \prod_{i=1}^M
 \frac{(tzb_i;q)_\infty}{(zb_i;q)_\infty}=\frac{(zt^{\alpha+M};q)_{\infty}}{(zt^{\alpha};q)_\infty}.
$$
Set
$$
q=\exp(-\eps),\quad t=q^\theta,\quad z_i=\exp(\eps y_i).
$$
Note that for any function $g(z_1,\dots,z_N)$ we have as $\eps\to 0$
$$
 \Mac^k_N g(z_1,\dots,z_n) \sim \D^k_N g(\exp(\eps y_1),\dots,\exp(\eps y_N)).
$$
Further, Lemma \ref{lemma_q_Gamma_conv} implies that as $\eps\to 0$
$$
  \frac{(z_it^{\alpha+M};q)_{\infty}}{(z_i t^{\alpha};q)_\infty}\sim \eps^{-\theta M}
 \frac{\Gamma(-y_i+\theta\alpha)}{\Gamma(-y_i+\theta\alpha+M\theta)} = \eps^{-\theta M} H(y_i).
$$
It follows that
\begin{multline*}
\lim_{\eps\to 0} \dfrac{ \Mac^{k_m}_{N_m}\cdots \Mac^{k_2}_{N_2} \Mac^{k_1}_{N_1}
 \left[\prod\limits_{i=1}^{N_1} \frac{(z_it^{\alpha+M};q)_{\infty}}{(z_it^{\alpha};q)_\infty} \right]}{ \prod_{i=1}^{N_1} \frac{(z_it^{\alpha+M};q)_{\infty}}{(z_it^{\alpha};q)_\infty}} \rule[-5mm]{0.9pt}{17mm}_{\,
 z_i=t^{i-1}}\\=\dfrac{ \D^{k_m}_{N_m}\cdots \D^{k_2}_{N_2}  \D^{k_1}_{N_1}
 \left[\prod\limits_{i=1}^{N_1} H(y_i;\alpha,M)\right]}{ \prod_{i=1}^{N_1} H(y_i;\alpha,M)} \rule[-5mm]{0.9pt}{17mm}_{\,
 y_i=\theta(1-i)}. \qedhere
\end{multline*}
\end{proof}

Next, we aim to define operators $\P^k_N$ which will help us in studying the limiting behavior of
observables $p_k(N,\r)$, cf.\ Definition \ref{definition_observables}.

Recall that partition $\lambda$ of number $n\ge 0$ is a sequence of integers
$\lambda_1\ge\lambda_2\ge\dots\ge 0$ such that $\sum_{i=1}^{\infty} \lambda_i=n$. The number of
non-zero parts $\lambda_i$ is denoted $\ell(\lambda)$ and called its length. The number $n$ is
called the size of $\lambda$ and denoted $|\lambda|$. For a partition $\mu=(\mu_1,\mu_2,\dots)$
let
$$
 e_\mu=\prod_{i=1}^{\ell(\mu)} e_{\mu_i}.
$$
Elements $e_\mu$ with $\mu$ running over the set $\Y$ of all partitions, form a linear basis of
$\Lambda$, cf.\ \cite[Chapter I, Section 2]{M}.

Let $PE(k,\mu)$ denote the transitional coefficients between $e$ and $p$ bases in the algebra of
symmetric functions, cf.\ \cite[Chapter I, Section 6]{M}:
$$
 p_k= \sum_{\mu\in\Y:\, |\mu|=k} PE(k,\mu) e_\mu.
$$
Define
\begin{equation}
\label{eq_P_through_Mac}
 \P^k_N=\sum_{\mu\in\Y:\, |\mu|=k} PE(k,\mu) \frac{1}{\ell(\mu)!} \sum_{\sigma\in\mathfrak S_{\ell(\mu)}} \prod_{i=1}^{\ell(\mu)}
\D^{\mu_{\sigma(i)}}_N,
\end{equation}
where $\mathfrak S_m$ is the symmetric group of rank $m$.

{\bf Remark.} Recall that operators $\D^k_N$, $k=1,\dots,N$ \emph{commute} (because they are
limits of $\Mac^k_N$ that are all diagonalized by the Macdonald polynomials, cf.\ \cite[Chapter
VI, (4.15)-(4.16)]{M}). However, it is convenient for us to think that the products of operators
in \eqref{eq_P_through_Mac} are ordered (and we sum over all orderings).

\smallskip

Theorem \ref{Theorem_HO_expectations} immediately implies the following statement.
\begin{theorem}
\label{Theorem_HO_expectations_p} Fix any integers $m\ge 1$,  $N_1\ge N_2 \ge\dots\ge N_m\ge 1$
and $k_i\ge 1$, $i=1,\dots,m$.  With $\E$ taken with respect to $\Pr^{\alpha,M,\theta}$ of
Definition \ref{Def_distrib} we have
\begin{equation}
\label{eq_HO_expect_2}
 \E \left(\prod_{i=1}^m p_{k_i}(N_i,\r) \right) = \dfrac{ \P^{k_m}_{N_m}\cdots \P^{k_2}_{N_2} \P^{k_1}_{N_1}
 \left[\prod\limits_{i=1}^{N_1} H(y_i;\alpha,M)\right]}{ \prod_{i=1}^{N_1} H(y_i;\alpha,M)} \rule[-5mm]{0.9pt}{17mm}_{\,
y_i=\theta(1-i)}.
\end{equation}
\end{theorem}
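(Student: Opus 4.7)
The plan is to deduce Theorem \ref{Theorem_HO_expectations_p} directly from Theorem \ref{Theorem_HO_expectations} by expanding each power-sum observable $p_{k_i}(N_i,\r)$ in the elementary symmetric basis and matching the result against the definition \eqref{eq_P_through_Mac} of the operators $\P^k_N$.

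First I would write
$$
p_{k_i}(N_i,\r)=\sum_{\mu^{(i)}\in\Y,\ |\mu^{(i)}|=k_i} PE(k_i,\mu^{(i)})\,\prod_{j=1}^{\ell(\mu^{(i)})} e_{\mu^{(i)}_j}(N_i,\r),
$$
so that after taking the product over $i=1,\dots,m$ and using multilinearity of expectation, the claim reduces to computing each $\E\bigl(\prod_i \prod_j e_{\mu^{(i)}_j}(N_i,\r)\bigr)$ for fixed partitions $\mu^{(1)},\dots,\mu^{(m)}$. Since the levels $N_1\ge N_2\ge\dots\ge N_m$ already sit in decreasing order, Theorem \ref{Theorem_HO_expectations}, applied with the $N_i$'s repeated $\ell(\mu^{(i)})$ times each, yields this expectation as $\bigl(\prod_i H(y_i;\alpha,M)\bigr)^{-1}$ times a nested application of the operators $\D^{\mu^{(i)}_j}_{N_i}$ to $\prod_i H(y_i;\alpha,M)$, with the operators at the smallest $N_i$ applied first.

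Inside each block of operators sharing a common level $N_i$, the commutativity of $\D^{\mu^{(i)}_1}_{N_i},\dots,\D^{\mu^{(i)}_{\ell(\mu^{(i)})}}_{N_i}$ (pointed out in the remark following \eqref{eq_P_through_Mac}, and inherited from the corresponding commutativity of the $\Mac^k_{N_i}$ which are all simultaneously diagonalized by Macdonald polynomials) allows me to freely replace the product by its symmetrization $\frac{1}{\ell(\mu^{(i)})!}\sum_{\sigma\in\mathfrak{S}_{\ell(\mu^{(i)})}}\prod_j \D^{\mu^{(i)}_{\sigma(j)}}_{N_i}$. Weighting by $PE(k_i,\mu^{(i)})$ and summing over $\mu^{(i)}$ reconstructs precisely $\P^{k_i}_{N_i}$ as defined by \eqref{eq_P_through_Mac}. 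Reassembling the blocks in the outer order of decreasing $N_i$ then produces the right-hand side of \eqref{eq_HO_expect_2}.

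The proof is essentially a bookkeeping exercise, so there is no serious obstacle. The main points to verify are that the nesting of operators produced by Theorem \ref{Theorem_HO_expectations} indeed matches the ordering $\P^{k_m}_{N_m}\cdots\P^{k_1}_{N_1}$ appearing in \eqref{eq_HO_expect_2}, and that when several $N_i$'s coincide one applies the commutativity uniformly, so that the factors $\P^{k_i}_{N_i}$ corresponding to equal levels may be placed in any mutual order without affecting the value of the right-hand side.
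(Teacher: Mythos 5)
Your argument is correct and follows the same route the paper intends: the paper records this result with the single sentence that Theorem \ref{Theorem_HO_expectations} ``immediately implies'' it, and your writeup (expand $p_k$ in the $e_\mu$-basis via $PE(k,\mu)$, apply Theorem \ref{Theorem_HO_expectations} with each $N_i$ repeated $\ell(\mu^{(i)})$ times, symmetrize using the commutativity of the $\D^{\cdot}_{N}$ at each fixed level, and reassemble to recover $\P^{k_i}_{N_i}$ from \eqref{eq_P_through_Mac}) is precisely that implication made explicit.
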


\section{Integral operators}
\label{Section_integral}

The aim of this section is to express expectations of certain observables with respect to measure
$\Pr^{\alpha,M,\theta}$ as contour integrals. This was done in \cite{BigMac} for certain
expectations of Macdonald processes; however, the approach of \cite{BigMac} fails in our case (the
contours of integration required in that paper do not exist) and we have to proceed differently.

Since in \eqref{eq_HO_expect}, \eqref{eq_HO_expect_2} the expectations of observables are
expressed in terms of the action of difference operators on products of univariate functions, we
will produce integral formulas for the latter.

\bigskip

Let $\S_n$ denote the set of all \emph{set partitions} of $\{1,\dots,n\}$. An element $s\in\S_N$
is a collection $S_1,\dots, S_k$ of disjoint subsets of $\{1,\dots,n\}$ such that
$$
\bigcup_{m=1}^k S_m=\{1,\dots,n\}.
$$
The number of non-empty sets in $s\in\S_n$ will be called the \emph{length} of $s$ and denoted as
$\ell(s)$. The parameter $n$ itself will be called the \emph{size} of $s$ and denoted as $|s|$. We
will also denote by $[n]$ the set partition of $\{1,\dots,n\}$ consisting of the single set
$\{1,\dots,n\}$.

Let $g(z)$ be a meromorphic function of a complex variable $z$, let $y=(y_1,\dots,y_N)\in\mathbb
C^N$, and let $d>0$ be a parameter. The system of closed positively oriented contours $\mathcal
C_{1}(y;g),\dots,\mathcal C_{k}(y;g)$ in the complex plane is called $(g,y,d)$-admissible ($d$
will be called \emph{distance parameter}), if
\begin{enumerate}
\item For each $i$, $\mathcal C_{i+1}(y;g)$ is inside the inner boundary of the $d$--neighborhood of $\mathcal
C_i(y;g)$. (Hence, $\mathcal C_k(y;g)$ is the smallest contour.)
\item All points $y_m$ are inside the smallest contour $\mathcal C_k(y;g)$ (hence, inside all contours) and
 $\frac{g(z-1)}{g(z)}$ is analytic inside the
 largest contour $\mathcal C_1(y;g)$. (Thus, potential singularities of $\frac{g(z-1)}{g(z)}$ have
to be outside all the contours.)
\end{enumerate}
From now on we assume that such contours do exist for every $k$ (and this will be indeed true for
our choices of $g$ and $y_m$'s).

Let $G^{(k)}$ be the following formal expression (which can be viewed as a $k$--dimensional
differential form)
$$
 G^{(k)}(v_1,\dots,v_k; y_1,\dots,y_N;g)= \prod_{i<j} \frac{(v_i-v_j)^2 }{ (v_i-v_j)^2-\theta^2}
 \prod_{i=1}^k\left(\prod_{m=1}^N \frac{v_i-y_m-\theta}{v_i-y_m}\cdot  \frac{g(v_i-1)}{g(v_i)}
  dv_i\right)
$$
(the dependence on $\theta$ is omitted from the notations). Note that $G^{(k)}$ is
\emph{symmetric} in $v_i$; this will be important in what follows.

Take any set partition $s=(S_1,\dots,S_{\ell(s)})\in\S_k$ and let $G_s$ denote the expression in
$\ell(s)$ variables $w_1,\dots, w_{\ell(s)}$ obtained by taking for each $h=1,\dots,\ell(s)$
 the \emph{residue} of $G^{(k)}$ at
\begin{equation}
\label{eq_x25}
 v_{i_1}=v_{i_2}-\theta=\dots=v_{i_r}-\theta(r-1),\quad S_h=\{i_1,\dots,i_r\},
\end{equation}
in variables $v_{i_2}$,\dots, $v_{i_r}$ (more precisely, we first take the residue in variable
$v_{i_r}$ at the pole $v_{i_r}=v_{i_{r-1}}+\theta$, then the residue in variable $v_{i_{r-1}}$ at
the pole $v_{i_{r-1}}=v_{i_{r-2}}+\theta$, etc) and renaming the remaining variable $v_{i_1}$ by
$w_h$. Here $i_1,\dots, i_r$ are all elements of the set $S_h$. Note that the symmetry of
$G^{(k)}$ implies that the order of elements in $S_h$ as well as the ordering of the resulting
variables $w_h$ are irrelevant. However, we need to specify \emph{some} ordering of $w_h$; let us
assume that the ordering is by the smallest elements of sets of the partitions, i.e.\ if $S_i$
corresponds to $w_i$ and $S_j$ corresponds to $w_j$ then the order in pair $(i,j)$ is the same as
that of the minimal elements in $S_i$ and $S_j$.

Observe that, in particular, $G_{1^k}=G^{(k)}$ with $v_i=w_i$, $i=1,\dots,k$.

\bigskip

\begin{definition}
\label{definition_Integral_operator}
 An admissible integral operator $\mathcal I_N$ is an operator which acts on the functions
 of the form $\prod_{i=1}^N g(y_i)$ via
 a $p$--dimensional integral
 $$
  \mathcal I_N \left[\prod_{i=1}^N g(y_i)\right] = \prod_{i=1}^N g(y_i) \cdot c\cdot \oint
  \prod_{i<j} \frac{\Cr_{i,j}(w_i-w_j)}{(w_i-w_j)^{d_{ij}}}
   \prod_{j=1}^{p} G_{[k_j]}(w_j;y_1,\dots,y_N;g),
 $$
 where $p$ is the dimension of integral, $\{k_j\}_{j=1}^p$ is a sequence of positive integral parameters, $d_{ij}$ are non-negative integral parameters,
  $\Cr_{i,j}(z)$ are analytic functions of $z$ which have limits as $z\to\infty$, and $c$ is a constant. The integration goes over nested
 admissible contours with large enough distance parameter, and $g(z)$ is assumed to be such that the admissible contours exist. We call $\sum_{i,j} d_{ij}$ the degree
 of operator $\mathcal I_N$.
\end{definition}
{\bf Remark.} When we have a series of integral operators $\mathcal I_N$, $N=1,2,\dots$, we
additionally assume that all the above data ($p$, $\{k_j\}$, $d_{ij}$, $\Cr_{i,j}(z)$) does not
depend on $N$. When $N$ is irrelevant, we sometime write simply $\mathcal I$.

\smallskip

A subclass of admissible integral operators is given by the following definition.

\begin{definition}
\label{def_Integral_operator}
 For a set partition $s\in\S_n$ define the integral operator $\DI^{s}_N$ acting on the product functions
 $\left(\prod_{i=1}^N g(y_i)\right)$ via
 \begin{equation}
 \label{eq_DI_operator}
  \DI^{s}_N \left(\prod_{i=1}^N g(y_i)\right)= \left(\prod_{i=1}^N g(y_i)\right)
  \frac{1}{(2\pi\i)^{\ell(s)}} \oint G_s(w_1,\dots, w_{\ell(s)};y_1,\dots,y_N;g),
 \end{equation}
 where
 each variable $w_i$, $1\le i\le \ell(s)$ is integrated over the (positively oriented, encircling
 $\{y_m\}_{m=1}^N$) contour
 $\mathcal C_i(y;g)$, and the system of contours $\mathcal C_i(y;g)$ is $(g,y,\theta
 |s|)$--admissible, as defined above.
\end{definition}
{\bf Remark.} The dimension of $\DI^s_N$ is $\ell(s)$ and its degree is $0$.

\smallskip

Now we are ready to state the main theorem of this section.

\begin{theorem}
\label{theorem_decomposition_of_P} The action of the difference operator $\P^k_N$ (defined by
\eqref{eq_P_through_Mac}) on any product function $\prod_{i=1}^N g(y_i)$ can be written as a
(finite) sum of admissible integral operators
$$
 \P^k_N=(-\theta)^{-k} \DI^{[k]}_N + \sum_{G\in\mathcal G} \mathcal I_N
(G),
$$
where the summation goes over a certain finite set $\mathcal G$ (independent of $N$). All integral
operators $\mathcal I_N(G)$, $G\in \mathcal G$ are such that the differences of their dimensions
and degrees are non-positive.
\end{theorem}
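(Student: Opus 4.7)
The plan is to build the decomposition from the bottom up: first represent each building block $\D^k_N$ as a contour integral, then compose them according to \eqref{eq_P_through_Mac}, and finally deform the resulting contours into admissible nested form while collecting residues.

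\textbf{Integral representation of the blocks.} A direct residue calculation should yield
$$\D^k_N \;=\; \frac{1}{(-\theta)^k\,k!}\,\DI^{1^k}_N,$$
where $1^k\in\S_k$ is the partition into singletons. To verify this, compute $\frac{1}{(2\pi\i)^k}\oint G^{(k)}(v;y;g)$ with all contours admissibly nested of spacing exceeding $\theta k$, which prevents residues at $v_i - v_j = \pm\theta$; the only surviving residues are at $v_i = y_{m_i}$ with pairwise distinct $m_i$, and the factor $\prod_{i<j}\frac{(v_i-v_j)^2}{(v_i-v_j)^2-\theta^2}$ conspires with the cross pieces of $\prod_i\prod_m\frac{v_i-y_m-\theta}{v_i-y_m}$ to produce exactly the coefficient $B_I(y;\theta)$ for $I=\{m_1,\dots,m_k\}$. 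In particular this gives the theorem's $k=1$ case: $\P^1_N = \D^1_N = (-\theta)^{-1}\DI^{[1]}_N$.

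\textbf{Composition and contour deformation.} Since the $\D^j_N$ commute, \eqref{eq_P_through_Mac} collapses to $\P^k_N = \sum_{\mu\vdash k}PE(k,\mu)\prod_i\D^{\mu_i}_N$, so substituting the integral representation reduces the problem to writing products $\DI^{1^{\mu_1}}_N\cdots\DI^{1^{\mu_{\ell(\mu)}}}_N$ as admissible integral operators. Each product unfolds into a single $k$-dimensional contour integral, but the natural nesting forces the outer operators' contours to enclose the inner ones (because each intermediate output acquires poles at the previously integrated variables). To bring the contours into admissibly nested form with spacing exceeding $\theta k$, I deform each contour inward one at a time, crossing the simple poles of $\prod_{i<j}((v_i-v_j)^2-\theta^2)^{-1}$ at $v_i - v_j = \theta$. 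Each such crossing is an iterated-residue identification of the shape \eqref{eq_x25}, and iterating produces a sum indexed by set partitions $s\in\S_k$; within each surviving term, the block-by-block residues reproduce precisely the integrand $G_s$ of Definition \ref{def_Integral_operator}, while leftover interactions between distinct blocks (pairs of variables whose contours were deformed past one another) are captured by cross-factors $\Cr_{i,j}(w_i-w_j)/(w_i-w_j)^{d_{ij}}$ as in Definition \ref{definition_Integral_operator}.

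\textbf{Main term and dimension bookkeeping.} This is the step I expect to be the main obstacle: one must verify that the only contribution with strictly positive difference between dimension and degree is the fully collapsed piece $s=[k]$. For this partition the $k$ original variables merge into a single $w$, no cross-block factors survive, and the integrand is exactly $G_{[k]}(w;y;g)$, so $\DI^{[k]}_N$ alone has dimension $1$ and degree $0$. For every other set partition $s$ with $\ell(s)\ge 2$, one has to show that the cross-block interactions produce enough denominator degree $\sum d_{ij}$ to force $\dim-\deg = \ell(s) - \sum d_{ij}\le 0$. The heart of the argument is to match, for each pair of unmerged blocks, the denominator factors $(w_i-w_j)^{-d_{ij}}$ inherited from the Vandermonde-type piece of $G^{(k)}$ before collapse against any numerator factors produced by the residue evaluations, and to show the balance favors the denominator by at least one unit per extra block beyond the first. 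I will carry this out pair-by-pair by tracking what each contour crossing contributes during the deformation in the previous step.

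\textbf{The leading coefficient.} Finally, gathering the coefficient of $\DI^{[k]}_N$ amounts to computing, for each $\mu\vdash k$, the iterated residue of $G^{(k)}$ along the cascade $v_1 = v_2-\theta = \dots = v_k-\theta(k-1)$, weighted by $PE(k,\mu)/((-\theta)^k\prod_i\mu_i!)$. Each such iterated residue for $\mu$ yields a factor $\prod_i\mu_i!$ times a universal constant independent of $\mu$, and the Newton identity $p_k = \sum_\mu PE(k,\mu)\,e_\mu$ collapses the resulting weighted sum into the single coefficient $(-\theta)^{-k}$, consistent with the $k=1$ verification and completing the decomposition.
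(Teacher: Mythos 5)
Your Step 1 is incorrect, and this propagates through the rest of the argument. The claimed identity $\D^k_N = \frac{1}{(-\theta)^k k!}\DI^{1^k}_N$ is false. When you evaluate $\DI^{1^k}_N$ by iterated residues, the residues are \emph{not} confined to pairwise distinct $y_{m_i}$: after taking the innermost residue at $v_k = y_{m_0}$, the Vandermonde factor $\prod_{i<k}\frac{(v_i - y_{m_0})^2}{(v_i - y_{m_0})^2 - \theta^2}$ creates a new pole at $v_{k-1} = y_{m_0} - \theta$, and since admissible contours have all the $y_m$ (and hence also $y_m - c\theta$ for small $c$) inside each contour, this residue is picked up. Iterating produces strings $y_m, y_m-\theta, y_m-2\theta,\dots$; large contour spacing does not suppress them because these poles are generated at each residue step, sitting near the $y_m$'s inside all contours. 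The paper's Proposition \ref{proposition_e_operator} handles this by expressing $\D^k_N$ as $\frac{(-\theta)^{-k}}{k!}\sum_{s\in\S_k}\left[\prod_h(|S_h|-1)!\right]\DI^s_N$ — a signed sum over \emph{all} set partitions of $\{1,\dots,k\}$ — and proves via a combinatorial lemma (Lemma \ref{lemma_summation_mim_residues}) that this signed sum cancels exactly the unwanted string residues, leaving only the residues at distinct $y$'s that define $\D^k_N$.

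Even granting a corrected Step 1, your Step 3 does not contain the mechanism that actually makes the theorem true. After expanding cross-factors in powers of $(z_i - z_j)^{-1}$, the terms are indexed by labeled graphs, and what kills the ones with too few edges is not contour bookkeeping but a combinatorial cancellation: the multiplicity of each graph is a symmetric polynomial in $\mu_1,\dots,\mu_{\ell(\mu)}$ of controlled degree (Lemma \ref{Lemma_order_of_polynomial}), and Proposition \ref{proposition_sum_is_zero} shows that $\sum_{\mu\vdash k}\frac{PE(k,\mu)}{\prod_i\mu_i!} f(\mu)=0$ for any such symmetric function $f\in\mathbb C[p_1,\dots,p_{k-1}]$ of degree at most $k$. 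This vanishing — which relies on the structure of the transition coefficients $PE(k,\mu)$ and a clever use of the homomorphism $p_1\mapsto 1$, $p_j\mapsto 0$ — is the heart of the proof. Your plan to track pole crossings "pair-by-pair" has no clear way to see these cancellations, since they occur only after summing over all partitions $\mu$ weighted by $PE(k,\mu)/\prod\mu_i!$. Finally, your last step's claim that each $\mu$ contributes a universal-constant multiple of the full cascade residue is also wrong: only $\mu = (k)$ produces $\DI^{[k]}_N$, because the cross-block factors $\Cr(i,j)$ tend to $1$ at infinity and contribute no new poles at the leading order, so iterated strings cannot form across blocks coming from different $\D^{\mu_i}_N$'s.
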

{\bf Remark. } Theorem \ref{theorem_decomposition_of_P} implies that as $N\to\infty$, the leading
term of $\P^k_N$ is given by $(-\theta)^{-k} \DI^{[k]}_N$ as can be seen by dilating the
integration contours by a large parameter. Moreover, as we will see in the next section the same
is true for the compositions of $\P^k_N$ (as in Theorem \ref{Theorem_HO_expectations_p}), their
leading term will be given by the compositions of $(-\theta)^{-k} \DI^{[k]}_N$. This property is
crucial for the proof of the Central Limit Theorem that we present in the next section.

\medskip

The rest of this section is devoted to the proof of Theorem \ref{theorem_decomposition_of_P} and
is subdivided into three steps. In Step 1 we find the decomposition of operators $\D^k_N$ (defined
by \eqref{eq_definition_of_dif_operators}) into the linear combination of admissible integral
operators $\DI^{s}_N$. In Step 2 we substitute the result of Step 1 into the definition of
operators $\P^k_N$ \eqref{eq_P_through_Mac} and obtain the expansion of $\P^k_N$ as a big sum of
admissible integral operators. We also encode each term in this expansion by a certain
\emph{labeled graph}. Finally, in Step 3 we observe massive cancelations in sums of Step 2,
showing that all integral operators in the decomposition of $\P^k_N$, whose difference of the
dimension and degree is positive (except for $(-\theta)^{-k} \DI^{[k]}_N$) vanish.

\subsection{Step 1: Integral form of operators $\D^k_N$.}

\begin{proposition} Fix integers $N\ge k \ge 1$. When applied to product functions of the form $\left(\prod_{i=1}^N
g(y_i)\right)$, the following operator identity holds:
$$
 \D^k_N=\frac{(-\theta)^{-k}}{k!} \sum_{s=(S_1,S_2,\dots)\in \S_k} \left[\prod_{h=1}^{\ell(s)} (|S_h|-1)!\right]
 \DI^s_N,
$$
where $\D^k_N$ is given by \eqref{eq_definition_of_dif_operators} and $\DI^s_N$ is given by
\eqref{eq_DI_operator}. \label{proposition_e_operator}
\end{proposition}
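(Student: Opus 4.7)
My plan is to establish the operator identity by writing both sides as multivariate contour integrals and matching them via iterated residue calculus. The guiding principle is that the symmetric kernel $G^{(k)}$ integrated over a coincident system of contours reproduces $\D^k_N$, while integrated over a strictly nested one (with separation $\theta k$) it reproduces $\DI^{s}_N$ for $s$ the finest set partition $\{\{1\},\dots,\{k\}\}$; the contour deformation from the coincident to the fully nested configuration generates the rest of the family $\{\DI^s_N\}_{s\in\S_k}$ with the claimed multiplicities. The first step is to apply $\D^k_N$ to $\prod_{i=1}^N g(y_i)$ directly from \eqref{eq_definition_of_dif_operators}, getting
$$
\D^k_N\prod_{i=1}^N g(y_i)=\prod_{i=1}^N g(y_i)\cdot\sum_{I\subset\{1,\dots,N\},\,|I|=k} B_I(y;\theta)\prod_{i\in I}\frac{g(y_i-1)}{g(y_i)}.
$$

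Next, I would recognize this sum as a $k$-fold coincident contour integral of $G^{(k)}$. Taking all $k$ variables on a common tight loop $\mathcal{C}$ encircling $\{y_1,\dots,y_N\}$ (and avoiding both the shifts $y_m\pm\theta$ and the singularities of $g(z-1)/g(z)$), the only poles of $G^{(k)}$ inside $\mathcal{C}^k$ are the simple poles $v_i=y_{a_i}$; the numerator factor $(v_i-v_j)^2$ in the cross-ratio kills all configurations with $a_i=a_j$; each residue contributes a factor $-\theta$ from $\frac{v_i-y_{a_i}-\theta}{v_i-y_{a_i}}$; and the ``internal'' cross-ratios $\prod_{i<j\in I}\frac{(y_i-y_j)^2}{(y_i-y_j)^2-\theta^2}$ combine with the internal part of $\prod_{i\in I}\prod_{m\ne i}\frac{y_i-y_m-\theta}{y_i-y_m}$ to reproduce precisely $B_I$. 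Summing over ordered $k$-tuples of distinct indices and dividing by $k!$ to pass to subsets yields
$$
\sum_{|I|=k} B_I\prod_{i\in I}\frac{g(y_i-1)}{g(y_i)}=\frac{(-\theta)^{-k}}{k!}\cdot\frac{1}{(2\pi\i)^k}\oint_{\mathcal{C}^k}G^{(k)}(v_1,\dots,v_k;y_1,\dots,y_N;g)\,dv_1\cdots dv_k.
$$

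The substantive step is then to deform this coincident integral into the strictly nested configuration $\mathcal{C}_1\supset\cdots\supset\mathcal{C}_k$ required in Definition~\ref{def_Integral_operator}. Proceeding inductively (first pulling $v_k$'s contour inside all others, then $v_{k-1}$, etc.), each crossing of a pole of $((v_i-v_j)^2-\theta^2)^{-1}$ contributes a residue at $v_j=v_i+\theta$ which ``ties'' the two variables; iterating such tyings within a group of indices $S_h=\{i_1<\cdots<i_r\}$ produces the chain $v_{i_1}=v_{i_2}-\theta=\cdots=v_{i_r}-(r-1)\theta$, which is exactly the iterated residue used to define $G_s$ for the block $S_h$. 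After all deformations the residue pattern is encoded by a set partition $s\in\S_k$, the remaining $\ell(s)$ ``chain-head'' variables are integrated against nested contours, and by the symmetry of $G^{(k)}$ the choice of which element in each block plays the role of the chain head is immaterial. A bookkeeping equivalent to counting permutations of $k$ elements by their cycle-type set partition shows that the multiplicity with which a partition $s$ arises in this deformation is $\prod_{h=1}^{\ell(s)}(|S_h|-1)!$, which combined with the prefactor $(-\theta)^{-k}/k!$ yields the claimed formula.

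The main obstacle is making the combinatorial-geometric step of the contour deformation rigorous: one must verify that (i) only the intended poles $v_j=v_i+\theta$ of $\prod_{i<j}((v_i-v_j)^2-\theta^2)^{-1}$ are crossed, (ii) no singularities of $g(z-1)/g(z)$ are picked up during the deformation (which is precisely what condition (2) of admissibility in Definition~\ref{definition_Integral_operator} is designed to prevent), and (iii) the multi-step residues building a chain of length $r$ can be identified, up to the symmetry factor $(|S_h|-1)!$ for cyclic orderings within each block, with the single iterated residue $G_s$ described in the text. The identity $\sum_{s\in\S_k}\prod_h(|S_h|-1)!=k!$ serves as the natural sanity check: it matches the overall $k!$ in the denominator and, combined with the inductive construction, pins down the multiplicities unambiguously.
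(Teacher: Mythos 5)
Your overall strategy runs in the opposite direction from the paper's proof: the paper starts by expanding each $\DI^s_N$ as an iterated residue sum over the nested contours (picking out "strings" $y_m, y_m-\theta, y_m-2\theta,\dots$), and then proves a combinatorial identity (Lemma~\ref{lemma_summation_mim_residues}, reduced to $\sum_{\sigma\in\mathfrak S(n)}(-1)^{\mathrm{sgn}(\sigma)}=0$ via the cycle-type map) showing that the weighted sum over $s\in\S_k$ collapses to residues at distinct points $y_{\sigma(1)},\dots,y_{\sigma(k)}$, i.e.\ to $\D^k_N$. You instead represent $\D^k_N$ directly as a coincident $k$-fold contour integral and propose to deform it into the nested configuration, reading off the $\DI^s_N$ from the residues crossed. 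The first step of your argument — identifying the coincident integral of $G^{(k)}$ (over contours avoiding $\{y_m\pm\theta\}$) with $k!(-\theta)^k\sum_{|I|=k}B_I\prod_{i\in I}T_i$ — is correct and well argued: the $(v_i-v_j)^2$ factor kills coincidences, each simple pole at $y_m$ contributes $-\theta$, and the residual cross-ratios reassemble into $B_I$.

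The gap is precisely the step you flag as the "main obstacle," and it is not minor: you never carry out the contour deformation, and the multiplicity $\prod_h(|S_h|-1)!$ is asserted from a "cycle-type" analogy rather than derived. The deformation bookkeeping is genuinely delicate. First, the direction is backwards as stated: from a common tight contour you must \emph{expand} $\mathcal C_1,\dots,\mathcal C_{k-1}$ outward to reach an admissible nested family (successive separation $\geq\theta k$); "pulling $v_k$'s contour inside all others" starting from a tight coincident loop does not cross any pole. Second, crossing $v_i-v_j=\pm\theta$ picks up residues of both signs, and one must track which of the two is crossed, with what orientation, at each stage of a multi-variable deformation — this is exactly what produces the $(-1)^{k-\ell(s)}$ sign that the paper handles explicitly (the residue strings go downward, $y_m, y_m-\theta, \dots$, whereas $G_s$ is defined by upward chains $v_{i_1}, v_{i_1}+\theta,\dots$). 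Third, identifying the iterated chain residues you obtain with the specific $G_s$ of Definition~\ref{def_Integral_operator}, and showing that the number of distinct deformation histories producing a given $s$ is $\prod_h(|S_h|-1)!$, is a combinatorial statement of the same strength as Lemma~\ref{lemma_summation_mim_residues}; the sanity check $\sum_{s\in\S_k}\prod_h(|S_h|-1)!=k!$ is necessary but far from sufficient to pin down the multiplicities term by term. Without an argument here, the proof is incomplete: the residue calculus that you defer is not a routine verification but the substance of the proposition.
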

\begin{proof}
 Let us evaluate $\DI^s_N$ as a sum of residues. First, assume that $s=1^k$, i.e.\
 this is the partition of $\{1,\dots,k\}$ into singletons.
 We will first integrate over the smallest contour, then the second smallest one, etc.
 When we integrate over the smallest contour we get the sum of residues at points $y_1,\dots, y_N$.
 If we pick the $y_m$--term at this first step, then at the second one we could either take the residue at
 $y_j$ with $j\ne m$ or at $y_m-\theta$; there is no residue at $y_m$ thanks to $(v_i-v_j)^2$ in
 the definition of $G^{(k)}$, and there is no residue at $y_m+\theta$ thanks to the factor
 $v_i-y_m-\theta$ in $G^{(k)}$. When we continue, we see the formation of strings of residue locations of
 the form
 \begin{equation}
 \label{eq_x24}
  y_m,\, y_m -\theta,\, y_m - 2\theta,\dots.
 \end{equation}
 Observe that the sum of the residues in the decomposition of $\DI^{1^k}_N$ can be mimicked by the decomposition of the product
 $$
  \prod_{i=1}^k(x^i_1 +x^i_2+\dots +x^i_N)
 $$
 into the sum of monomials in variables $x^i_j$ (here "$i$" is the upper index, not an exponent).
 A general monomial
 $$
  x^1_{i_1} x^2_{i_2} \cdots x^k_{i_k}
 $$
 is identified with the residue of $G^{(k)}$ at
 $$
  y_1,\, y_1 -\theta,\,\dots,\, y_1-(m_1-1)\theta,\quad  y_2,\, y_2 -\theta,\,\dots,\, y_2-(m_2-1)\theta,\, \dots,
 $$
 where $m_c$ is the multiplicity of $c$'s in $(i_1,\dots, i_k)$.

 More generally, when we evaluate $\DI^s_N$ for a general $s\in\S_k$ we obtain sums of
 similar residues, and now the decomposition is mimicked by the product
 $$
  \prod_{h=1}^{\ell(s)} \left(x_1^{i_1(h)} x_1^{i_2(h)} \cdots x_1^{i_r(h)}+ x_2^{i_1(h)} x_2^{i_2(h)} \dots
  x_2^{i_r(h)} + \dots + x_N^{i_1(h)} x_N^{i_2(h)} \cdots x_N^{i_r(h)}\right),
 $$
 where $i_1(h),\dots, i_r(h)$ are all elements of the set $S_h$ in $s$.

 Now we will use the following combinatorial lemma which will be proved a bit later.
 \begin{lemma} We have
 \label{lemma_summation_mim_residues}
 \begin{multline}
 \label{eq_summation_mim_res}
  \sum_{s=(S_1,S_2,\dots)\in \S_k} (-1)^{k-\ell(s)}  \prod_{h=1}^{\ell(s)} (|S_h|-1)! \left(x_1^{i_1(h)} \cdots x_1^{i_r(h)}+
  \dots + x_N^{i_1(h)} \cdots x_N^{i_r(h)}\right)
  \\=
  \sum_\sigma x^1_{\sigma(1)} x^2_{\sigma(2)}\cdots x^k_{\sigma(k)},
 \end{multline}
 where $\sigma$ runs over all injective maps from $\{1,\dots,k\}$ to $\{1,\dots,N\}$.
 \end{lemma}

 Applying Lemma \ref{lemma_summation_mim_residues} and noting that the factor $(-1)^{k-\ell(s)}$ appears because the strings
 in \eqref{eq_x25} and \eqref{eq_x24} have different directions (which results in different signs when taking the
 residues),
   we conclude that
 $$
  \sum_{s=(S_1,S_2,\dots)\in \S_k} \left[ \prod_j (|S_j|-1)!\right] \DI^s_N
 $$
 is the sum of residues of $G^{(k)}$ at collections of \emph{distinct} points $y_{\sigma(1)},\dots,
 y_{\sigma(k)}$ as in \eqref{eq_summation_mim_res}. Computing the residues explicitly, comparing with the definition of
 $\D^k_N$,
 and noting that the factor $k!$ appears
 because of the ordering of $\sigma(1),\dots, \sigma(k)$ (we need to sum over $k$--point subsets, not over ordered $k$--tuples), we are done.
\end{proof}

We now prove Lemma \ref{lemma_summation_mim_residues}.
\begin{proof}[Proof of Lemma \ref{lemma_summation_mim_residues}]
 Pick $m_1,\dots, m_k$ and compare the coefficient of $x^1_{m_1}\cdots
 x^k_{m_k}$ in both sides of \eqref{eq_summation_mim_res}. Clearly, if all $m_i$ are distinct, then
 the coefficient is $1$ in the right side. It is also $1$ in left side, because such monomial
 appears only in the decomposition of $\DI^{1^k}_N$ and with coefficient $1$. If some of $m_i$'s
 coincide, then the corresponding coefficient in the right side of \eqref{eq_summation_mim_res} is
 zero. Let us prove that it also vanishes in the left side.

 Let $\hat s=(\hat S_1,\hat S_2 \dots)\in \S_k$ denote the partition of $\{1,\dots
k\}$ into sets formed by equal $m_i$'s (i.e.\ $a$ and $b$ belong to the same set of partition iff
$m_a=m_b$). Then the coefficient of $x^1_{m_1}\cdots
 x^k_{m_k}$ in the left side of \eqref{eq_summation_mim_res} is
\begin{equation}
\label{eq_x5} \sum_{s=(S_1,S_2\dots)\in \S_k} (-1)^{k-\ell(s)}  \prod_{h=1}^{\ell(s)} (|S_h|-1)!,
\end{equation}
where the summation goes over $s$ such that $s$ is a refinement of $\hat s$, i.e.\ sets of $\hat
s$ are unions of the sets of $s$. Clearly, \eqref{eq_x5} is equal to
$$
 \prod_{i=1}^{\ell(\hat s)}  \sum_{\substack{s \text{ runs over set partitions of } \hat S_i\\s=(S_1,S_2,\dots)}} (-1)^{|\hat S_i|-\ell(s)}  \prod_{h=1}^{\ell(s)}
 (|S_h|-1)!.
$$
Now it remains to prove that for any $n\ge 1$
\begin{equation}
\label{eq_x6} \sum_{s=(S_1,S_2,\dots)\in\S_n} (-1)^{n-\ell(s)} \prod_{h=1}^{\ell(s)}
 (|S_h|-1)! =0.
\end{equation} For that consider the well-known summation over symmetric group $\mathfrak S(n)$
\begin{equation}
\label{eq_x7}
 \sum_{\sigma\in\mathfrak S(n)} (-1)^{{\rm sgn}(\sigma)} =0.
\end{equation}
Under the map $\phi: \mathfrak S(n)\to \S_n$ mapping a permutation $\sigma$ into the set partition
that corresponds to the cyclic structure of $\sigma$, \eqref{eq_x7} turns into \eqref{eq_x6}.
\end{proof}

\subsection{Step 2: Operators $\P^k_N$ as sums over labeled graphs}
\label{Section_step2}

Let us substitute the statement of Proposition \ref{proposition_e_operator} into
\eqref{eq_P_through_Mac}. We obtain
\begin{multline}
\label{eq_P_through_Integral}
 \P^k_N=(-\theta)^{-k} \sum_{|\mu|=k} \frac{PE(k,\mu)}{\prod_i \mu_i!}\\ \times \frac{1}{\ell(\mu)!} \sum_{\sigma\in\mathfrak S_{\ell(\mu)}} \prod_{i=1}^{\ell(\mu)}
\left( \sum_{s=(S_1,S_2,\dots)\in \S_{\mu_{\sigma(i)}}}\left[  \prod_j (|S_j|-1)!\right] \DI^s_N
\right)
\end{multline}

The aim of this section is to understand the combinatorics of the resulting expression.

We start with the following proposition.

\begin{proposition}\label{proposition_iteration_of_DI} Take any $p\ge 1$ set partitions
$s^1,\dots, s^p$, and let $s^i=(S_1^i,S_2^i,\dots)$, $i=1,\dots,p$. Then
\begin{multline}
\label{eq_product_of_integral_operators}
  \DI^{s^p}_N \cdots \DI^{s^1}_N \left(\prod_{m=1}^N g(y_i)\right)=
  \left(\prod_{m=1}^N g(y_i)\right)
  \frac{1}{(2\pi\i)^{\sum_{j=1}^p \ell(s^j)}}\\\times \oint \prod_{1\le i<j \le p} \Cr(i,j) \cdot
   \prod_{j=1}^{p} G_{s^j}(w_1^j,\dots, w_{\ell(s^j)}^j;y_1,\dots,y_N;g),
\end{multline}
 where  each variable $w_i^j$ is integrated over the (positively oriented, encircling
  $\{y_m\}$)
contour
 $\mathcal C^j_i(y;g)$. The contours are nested by lexicographical order on $(j,i)$ (the $(1,1)$
 contour is the largest one) and are admissible with large enough distance parameter; the function $g(\cdot)$ is assumed
 to be such that the contours exist. Furthermore,
 $$
  \Cr(i,j)=
\prod_{a=1}^{\ell(s^i)} \prod_{b=1}^{\ell(s^j)} \prod_{c=1}^{|S^j_b|} \frac
 {w_{a}^i-w^j_b-\theta c+\theta |S_a^i|}{w_{a}^i-w^j_b-\theta c} \cdot
 \frac{w_{a}^i-w^j_b-\theta c+1}
 {w_{a}^i-w^j_b-\theta c+\theta |S_a^i|+1}.
 $$
\end{proposition}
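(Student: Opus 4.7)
The plan is to proceed by induction on $p$. The base case $p=1$ is the defining formula \eqref{eq_DI_operator}. Suppose the claim holds for $p-1$ operators, so
$$\DI^{s^{p-1}}_N \cdots \DI^{s^1}_N \Bigl[\prod_{m=1}^N g(y_m)\Bigr] = \Bigl(\prod_{m=1}^N g(y_m)\Bigr) J(y),$$
where $J(y)$ is the $(p-1)$-fold contour integral produced by the inductive hypothesis in the variables $w^1,\ldots,w^{p-1}$. The task is to apply $\DI^{s^p}_N$ to this function and reorganize the result into the claimed $p$-fold integral formula.

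The subtlety is that $\DI^{s^p}_N$ was defined only on pure product functions. However, by Proposition~\ref{proposition_e_operator} and the residue calculus underlying its proof, $\DI^{s^p}_N$ acting on $\prod_m g(y_m)$ can equivalently be written as a sum $\sum_{\tau} c_\tau(y)\, T_\tau$, where $\tau$ ranges over assignments of the blocks of $s^p$ to indices in $\{1,\ldots,N\}$, $T_\tau$ is the corresponding multishift of the $y_m$'s (by integer amounts equal to the block sizes), and $c_\tau$ is an explicit rational coefficient. This residue/shift description extends to an operator on arbitrary functions. Applying it to $\prod_m g(y_m)\cdot J(y)$, and pulling the $g$-prefactor back out through the shifts via the ratios $g(y_m - k)/g(y_m)$, yields $\prod_m g(y_m)$ times a sum of evaluations of $J$ at the shifted arguments. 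Re-encoding this sum as residues of a new integrand in variables $w^p_1,\ldots,w^p_{\ell(s^p)}$, with contours placed around the $y_m$'s and inside all the existing contours, produces $G_{s^p}(w^p;y;g)$ as the new factor in the integrand, multiplied by a cross factor that records how $J$ was altered by the shifts.

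The main obstacle is showing that this cross factor is exactly $\prod_{i<p} \Cr(i,p)$. This is a careful bookkeeping step: the $\prod_c$-products over $c\in\{1,\ldots,|S_a^i|\}$ appearing in $G_{s^i}$ after the inner residues in variables $v_{i_2},\ldots,v_{i_r}$ have already been taken, together with the leftover $(w_a^i-w_b^j)^2/[(w_a^i-w_b^j)^2-\theta^2]$-type factors, must be tracked under each shift $y_m\mapsto y_m-k$ and then re-expressed as poles in the new $w^p_b$-variables at locations of the form $y_m = w^p_b + \theta c - \theta|S_a^i|$. The product over $c\in\{1,\ldots,|S_b^p|\}$ in $\Cr(i,p)$ then reflects precisely the string of $|S_b^p|$-many poles encircled by the new $w^p_b$-contour, while the $\theta|S_a^i|$-shifts in the numerators reflect how the existing string of $|S_a^i|$-many pole contributions in $G_{s^i}$ combines with the new shifts. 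Admissibility of the nested contours with a sufficiently large distance parameter guarantees that only the intended poles are captured and that no additional residues intrude, so that the bookkeeping produces exactly the factors in $\Cr(i,p)$. This closes the induction and proves the proposition.
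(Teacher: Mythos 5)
The proposed ``residue/shift'' decomposition is where your argument breaks down. You claim that $\DI^{s^p}_N$ on a product function ``can equivalently be written as a sum $\sum_\tau c_\tau(y)\,T_\tau$'' where $T_\tau$ shifts the $y_m$'s ``by integer amounts equal to the block sizes.'' This is false. Evaluating $\DI^s_N$ by residues, a block of size $r$ anchored at $y_m$ contributes a ratio $\prod_{c=0}^{r-1} g(y_m - c\theta - 1)/g(y_m - c\theta)$ — the arguments are shifted by mixtures of $1$ and $\theta$, and this does not collapse to a single shift $y_m \mapsto y_m - r$ unless $\theta = 1$. So the proposed extension of $\DI^{s^p}_N$ to non-product functions is not well-posed as stated, and the induction cannot get off the ground. (Also, you cite Proposition~\ref{proposition_e_operator} as justifying such a decomposition, but that proposition writes $\D^k_N$ as a linear combination of $\DI^s_N$'s, not an individual $\DI^s_N$ as a sum of shifts.)

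You also missed the much simpler route that the paper takes, which dissolves the ``subtlety'' you flagged. One should not apply $\DI^{s^p}_N$ to the integrated function $\prod_m g(y_m)\cdot J(y)$; rather, apply it under the integral sign to the \emph{integrand}. For each fixed value of the variables $w^1,\ldots,w^{p-1}$, the $y$-dependent part of the integrand is again a pure product $\prod_m g'(y_m)$ with
$g'(y_m) = g(y_m)\prod_i \prod_{a} \frac{w^i_a - y_m - \theta}{w^i_a - y_m + \theta(|S^i_a|-1)}$,
so Definition~\ref{def_Integral_operator} applies verbatim with $g$ replaced by $g'$. The cross-factor $\Cr(i,p)$ then drops out in one line by computing $g'(v-1)/g'(v)$: the $w^i$-dependent rational pieces of this ratio are exactly the factors in $\Cr(i,p)$. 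Your ``main obstacle'' of careful bookkeeping is thereby reduced to this single ratio computation, which you never carry out; the nested admissible contours ensure the new singularities introduced by $g'$ stay outside the inner contours, exactly as you correctly anticipated.
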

\begin{proof}
The formula is obtained by iterating the application of operators $\DI^s_N$. First, we apply
$\DI^{s^1}_N$ using Definition \ref{def_Integral_operator} and renaming the variable $w_a$,
$a=1,\dots,\ell(s^1)$ into $w_a^1$. The $y$--dependent part in the right-hand side of
\eqref{eq_DI_operator} is $\prod_{m=1}^N g'(y_m)$ with
\begin{multline*}
g'(y_m):= g(y_m) \prod_{a=1}^{\ell(s^1)} \frac{w_{a}^1-y_m-\theta}
 {w_{a}^1-y_m}\cdot
 \frac{w_{a}^1-y_m} {w_{a}-y_m+\theta} \cdots
\frac{w_{a}^1-y_m+\theta(|S_a^1|-2)} {w_{a}^1-y_m+\theta(|S_a^1|-1)} \\=
 g(y_m) \prod_{a=1}^{\ell(s^1)} \frac{w_{a}^1-y_m-\theta}
 {w_{a}^1-y_m+\theta(|S_a^1|-1)},
\end{multline*}
where $s^1=(S_1^1,S_2^1,\dots)$.  We have
\begin{equation}
\label{eq_x22}
 \frac{g'(v-1)}{g'(v)}= \frac{g(v-1)}{g(v)}
 \prod_{a=1}^{\ell(s^1)}  \frac
 {w_{a}^1-v+\theta(|S_a^1|-1)}{w_{a}^1-v-\theta} \cdot \frac{w_{a}^1-v+1-\theta}
 {w_{a}^1-v+1+\theta(|S_a^1|-1)}.
\end{equation}
 In order to iterate, this function  must not have
poles inside the contour of integration. To achieve this it suffices to choose on the next step
the contours which are much closer to $y_m$ (i.e.\ at the first step the contours are large, on
the second step they are smaller, etc).

When we further apply $\DI^{s^2}_N$, the product
\begin{equation}
\label{eq_x21}
 \prod_{b=1}^{\ell(s^2)} \frac{g'(w^2_b-1)}{g'(w^2_b)} \cdots \frac{g'(w^2_b-1+\theta(|S^2_b|-1))}{g'(w^2_b+\theta(|S^2_b|-1) )}
\end{equation}
appears, where $s^2=(S^2_1,S^2_2,\dots)$. Substituting the $g$--independent part of the right side
of \eqref{eq_x22} into \eqref{eq_x21} we get $\Cr(1,2)$. Further applying operators
$\DI^{s^3}$,\dots, $\DI^{s^p}$ we arrive at \eqref{eq_product_of_integral_operators}.
\end{proof}

Our next step is to expand the products in \eqref{eq_P_through_Integral} using Proposition
\ref{proposition_iteration_of_DI} to arrive at a big sum. Each term in this sum involves an
integral encoded by a collection of set partitions $s^1,\dots, s^p$. The dimension of integral
equals $\sum_{j=1}^p \ell(s^j)$ and each integration variable corresponds to one of the sets in
one of partitions $s^j$. Let us enumerate the nested contours of integration by numbers from $1$
to $\sum_j \ell(s^j)$ (contour number $1$ is the largest one as above) and rename the variable on
contour number $i$ by $z_i$. Then the integrand has the following product structure:

\begin{enumerate}
\item For each $i$ there is a factor, which is a function of $z_i$.
The exact form of this function depends only on the size of the corresponding set in one of $s^j$.
We denote this ``multiplicative factor'' by $M_{r}(z_i)$ (where $r$ is the size of the
corresponding set).
\item For each pair of indices $i<j$ corresponding to two sets from
 the same set partition there is a factor, which is a function of $z_i$ and $z_j$. The exact
form of this factor depends on the sizes of the corresponding sets. We call is ''cross-factor of
type I'' and denote $Cr_{r,r'}^I(z_i,z_j)$ (where $r$ and $r'$ are the sizes of the corresponding
sets).
\item For each pair of indices $i<j$ corresponding to two sets from \emph{distinct} set
partitions there is a factor, which is a function of $z_i$ and $z_j$. The exact form of this
factor depends on the sizes of the corresponding sets. We say that this is the ''cross-factor of
type II'' and denote in by $Cr_{r,r'}^{II}(z_i,z_j)$ (where $r$ and $r'$ are the sizes of the
corresponding sets).
\end{enumerate}
Altogether there are $\sum_{j=1}^p\ell(s^j)$ ``multiplicative factors'' and
${\sum_{j=1}^p\ell(s^j)}\choose 2$ ``cross factors''.

Now we expand each cross-factor in power series in $(z_i-z_j)^{-1}$. We have
\begin{multline}
\label{eq_cross_I}
 Cr_{r,r'}^I(z_i,z_j)=\prod_{a=1}^r\prod_{b=1}^{r'} \frac{
 \big(z_i-z_j+\theta(a-b)\big)^2}{\big(z_i-z_j+\theta(a-b)\big)^2-\theta^2}=
  1+\sum_{m=2}^{k-1} \frac{a^I_{r,r'}(m)}{(z_i-z_j)^m} +  \frac{A^I_{r,r'}(z_i-z_j)}{(z_i-z_j)^k},
\end{multline}
where $A^I_{r,r'}(z_i-z_j)$ tends to a finite limit as $(z_i-z_j)\to\infty$.
Also
\begin{multline}
\label{eq_cross_II}
 Cr_{r,r'}^{II}(z_i,z_j)=
\prod_{c=1}^{r'}\left( \frac
 {z_i-z_j-\theta c+\theta r }{z_i-z_j-\theta c} \cdot
 \frac{z_i-z_j-\theta c+1}
 {z_i-z_j-\theta c+\theta r+1}\right)\\=
 1+\sum_{m=2}^{k-1} \frac{a^{II}_{r,r'}(m)}{(z_i-z_j)^m} +
 \frac{A^{II}_{r,r'}(z_i-z_j)}{(z_i-z_j)^k},
\end{multline}
where $A^{II}_{r,r'}(z_i-z_j)$ tends to a finite limit as $(z_i-z_j)\to\infty$. It is crucial for
us that both series have no first order term.

We again substitute the above expansions into \eqref{eq_P_through_Integral} and expand everything
into even bigger sum of integrals. Our next aim is to provide a description for a general term in
this sum. The general term is related to several choices that we can make:
\begin{equation}
\begin{array}{ll}
\label{List_of_choices}
\text{1.}& \text{Young diagram } \mu;\\
\text{2.}& \text{Permutation }\sigma\text{ of the set }\{1,\dots,\ell(\mu)\};\\
\text{3.}& \text{For each }1\le i\le\ell(\mu)\text{, the set partition }s^i\in \S_{\mu_{\sigma(i)}};\\
\text{4.}& \text{For each pair of sets of (the same or different) set partitions, one of }\\
&\text{the terms in expansions \eqref{eq_cross_I}, \eqref{eq_cross_II}.}
\end{array}
\end{equation}

It is convenient to illustrate all these choices graphically. For that purpose we take $k$
vertices enumerated by numbers $1,\dots,k$. They are united into groups (``clusters'') of lengths
$\mu_{\sigma(i)}$, i.e.\
 the first group has the vertices $1,\dots,\mu_{\sigma(1)}$, the second one has
 $\mu_{\sigma(1)}+1,\dots,\mu_{\sigma(1)}+\mu_{\sigma(2)}$, etc. This symbolizes the choices of
 $\mu$ and $\sigma$. Some of the vertices inside clusters are united into multivertices --- this
 symbolizes the set partitions. Finally, each pair of multivertices can be either not joined by an
 edge, which means the choice of term $1$ in \eqref{eq_cross_I} or \eqref{eq_cross_II}, or joined
 by a red edge with label $m$, which means the choice of term with $(z_i-z_j)^m$ in
 \eqref{eq_cross_II}, or joined by a black edge with label $m$, which means the choice of term with
 $(z_i-z_j)^m$ in \eqref{eq_cross_I}. An example of such a graphical illustration is shown at
 Figure \ref{Fig_graph_full}.
\begin{figure}[h]
\begin{center}
{\scalebox{1.0}{\includegraphics{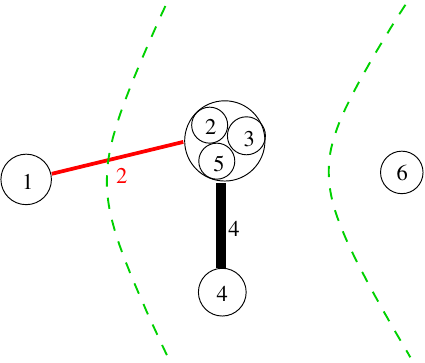}}} \caption{Graphical illustration for a
general integral term. Here $\mu=(4,1,1)$, so we have $3$ clusters (separated by green dashed
contours), the set partition from $\S_4$ has one set of size $3$ and one set of size $1$. The edges
are shown in thin red and thick black. $b_1$, $b_2$, $b_3$ label three black connected components
of the underlying graph. \label{Fig_graph_full} }
\end{center}
\end{figure}
 An integral is reconstructed by the picture via the following
 procedure: Each integration variable $z_i$ corresponds to one of the multivertices; the variables are
 ordered by the minimal numeric labels of vertices they contain and are integrated over admissible nested
 contours with respect to this order. As described above, for each variable $z_i$ we have a multiplicative factor
$M_r(z_i)$, where $r$ is the number of vertices in the corresponding multivertex (this number $r$
will be called the \emph{rank} of multivertex). For each pair of variables $z_i$, $z_j$, if
corresponding vertices are joined by an edge, then we also have a cross--term depending on the
color and label of the edge:
\begin{enumerate}
\item The edge is black if the term comes from $Cr_{r,r'}^I(z_i,z_j)$ and red if the
term comes from $Cr_{r,r'}^{II}(z_i,z_j)$.
\item The number $m\ge 2$ indicates the power of $(z_i-z_j)^{-1}$ in
\eqref{eq_cross_I} or \eqref{eq_cross_II}.
\end{enumerate}

Next, we note that many features of the picture are irrelevant for the resulting integral (in
other words, different pictures might give the same integrals). These features are:
\begin{enumerate}
\item Decomposition into clusters;
\item All numbers on each multivertex except for the minimal one;
\item The order (i.e.\ nesting of integration contours) between different connected components; i.e.\ only the order inside each component
matters.
\end{enumerate}

So let us remove all of the above irrelevant features of the picture. After that we end up with
the following object that we denote by $G$: We have a collection of multivertices; each
multivertex should be viewed as a set of $r$ ordinary vertices (we call $r$ the \emph{rank}). Some
of the multivertices are joined by red or black edges with labels. Each connected component of the
resulting graph has a linear order on its multivertices. In other words, there is a partial order
on multivertices of $G$ such that only vertices in the same connected component can be compared.
We call the resulting object \emph{labeled graph}. The fact that a labeled graph appeared from the
above graphical illustration of the integral implies the following properties:
\begin{enumerate}
\item Sum of the ranks of all multivertices is $k$;
\item Multivertices of one black connected component can not be joined by a red edge (because they
came from the same cluster);
\item Suppose that $A,B,C$ are three multivertices of one (uncolored) connected component and, moreover, $A$ and $B$
belong to the same black connected component. In this case $A<C$ if and only if $B<C$ (thus, also
$A>C$ iff $B>C$).
\end{enumerate}
The labeled graph obtained after removing all irrelevant features of the picture of Figure
\ref{Fig_graph_full} is shown in Figure \ref{Fig_graph}.

\begin{figure}[h]
\begin{center}
{\scalebox{1.0}{\includegraphics{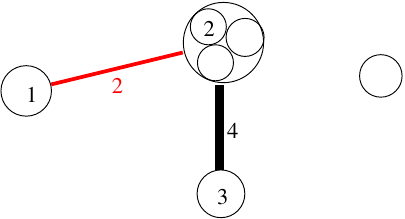}}} \caption{Graph with one multivertex of rank
$3$ and three multivertices of rank $1$ \label{Fig_graph} }
\end{center}
\end{figure}

Given a labeled graph $G$ we can reconstruct the integral by the same procedure as before, we
denote the resulting integral by $\mathcal I (G).$ In particular, graph $G$ with a single
multivertex of rank $k$ corresponds to $\DI^{[k]}_N$ appearing in Theorem
\ref{theorem_decomposition_of_P}.

Note that in our sum each integral corresponding to a given graph $G$ comes with a prefactor
$$
  \prod_{v \text{ is a multivertex of }G} (r(v)-1)!.
$$
It is important for us that this prefactor depends only on the labeled graph $G$ (and not on the
data we removed to obtain $G$). Because of that property we can forget about the prefactor when
analyzing the sum of the integrals corresponding to a given graph.

\subsection{Step 3: Cancelations in terms corresponding to a given labeled graph}

Our aim now is to compute the total coefficient of $ \mathcal I (G)$ for a given graph $G$, i.e.\
we want to compute the (weighted) sum of all integrals corresponding to $G$. As we will see, for
many graphs $G$ this sum vanishes.

\medskip

First, fix some $\mu$ with $\ell(\mu)=n$ and a permutation $\sigma\in \mathfrak S(n)$
(equivalently fix two out of four choices in \eqref{List_of_choices}). Let $W(G,\mu,\sigma)$
denote the number of integral terms corresponding to a graph $G$ and these two choices; in other
words, this is the number of ways to make the remaining two choices in \eqref{List_of_choices} in
such a way as to get the integral term of the type $\mathcal I (G)$.

Let $\widehat G$ denote the subgraph of $G$ whose multivertices either have rank at least $2$ or
has at least one edge attached to it (i.e.\ we exclude
 multivertices of rank $1$ that have no adjacent red or black edges).
Let $B$ denote the set of all black connected components of $\widehat G$\footnote{Up to now we
were considering graphs $G$ and $\widehat G$ up to isomorphism. But here, in order to define $B$
and then $\phi$, we \emph{fix} some representative of the isomorphism class of $\widehat G$. This
choice is also responsible for the factor $|Aut(\widehat G)|^{-1}$ in \eqref{eq_sum_over_phi}.}.
Note that each black component arises from one of the clusters (which correspond to the
coordinates of $\mu$) according to our definitions. Thus, each element of $B$ corresponds to one
of the coordinates of $\mu$, i.e.\ we have a map $\phi$ from $B$ into $\{1,\dots,n\}$ (each $i$
will further correspond to $\mu_{\sigma(i)}$). This map must satisfy the following property: if
the partial order on $G$ is such that members of a black component $b_1$ precede members of a
black component $b_2$, then $\phi(b_1)<\phi(b_2)$. In other words, $B$ is equipped with a partial
order (which is a projection of the partial order on $G$) and $\phi$ is an order-preserving map.
Different $\phi$'s correspond to different cluster configurations that $G$ may have originated
from. Let $\Phi=\Phi(G,n)$ denote the set of such maps $\phi$.

{\bf Example:} Consider the graph of Figure \ref{Fig_graph}. It has three connected black
components $b_1,b_2,b_3$ shown in Figure \ref{Fig_graph_full}: $b_1$ has 2 multivertices of ranks
$3$ and $1$, $b_2$ has one multivertex of rank $1$ (joined by a red edge), $b_3$ has one isolated
multivertex of rank $1$. Therefore, $B$ consists of two elements $B=\{b_1,b_2\}$ (note that the
isolated multivertex $b_3$ or rank $1$ was excluded). The partial order has the only inequality
$b_2 < b_1$. Also take $\mu$ to be a partition with two nonzero parts. Now $\phi$ should be a map
from $\{b_1,b_2\}$ to $\{1,2\}$ such that $\phi(b_2)<\phi(b_1)$. This means that $\phi(b_2)=1$ and
$\phi(b_1)=2$. Therefore, there is a single such $\phi$, see Figure \ref{Fig_graph_clusters}.

\begin{figure}[h]
\begin{center}
{\scalebox{1.0}{\includegraphics{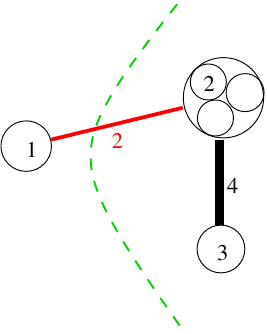}}} \caption{Graphical illustration for
the unique  possible $\phi$ when  $\mu$ has two parts. Equivalently, this is a unique
 decompositions of graph of Figure \ref{Fig_graph} (without single isolated vertices) into two clusters.
 \label{Fig_graph_clusters} }
\end{center}
\end{figure}

Now suppose that $\phi$ is fixed, and let $W(G,\mu,\sigma,\phi)$ denote the number of integrals
corresponding to it. We have
\begin{equation}
\label{eq_sum_over_phi} W(G,\mu,\sigma)=\frac{1}{|Aut(\widehat G)|}\sum_{\phi\in\Phi(G,n)}
W(G,\mu,\sigma,\phi),\quad \quad n=\ell(\mu),
\end{equation}
where $Aut(\widehat G)$ is the group of all automorphisms of graph $\widehat G$.

 Let us call a
vertex $v$ of $G$ \emph{simple} if $v$ is isolated and the corresponding multivertex has rank $1$.
\begin{lemma}
\label{lemma_non_sym_poly_phi}
 $W(G,\mu,\sigma,\phi)$ is a polynomial in $n$ variables $\mu_{\sigma(i)}$ (with coefficients depending on $G$ and $\phi$ only)
 of degree equal to the number of
 non--simple vertices in $G$, i.e.\ the number of vertices in $\widehat G$.
\end{lemma}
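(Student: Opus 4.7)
The plan is to factor $W(G,\mu,\sigma,\phi)$ as a product over clusters, with each factor a binomial coefficient in $\mu_{\sigma(i)}$ times a combinatorial constant determined by $G$ and $\phi$, so that summing degrees across clusters yields the claimed total degree.

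For the factorization, first observe that in the full picture the labels $\{1,\ldots,k\}$ are assigned to clusters in consecutive blocks: cluster $i$ occupies the $\mu_{\sigma(i)}$ integers situated between the blocks of clusters $i-1$ and $i+1$, so every label in cluster $i$ is strictly less than every label in cluster $j$ whenever $i<j$. Since every edge inside a single cluster is black (a type~I cross-factor) while every edge between distinct clusters is red (a type~II cross-factor), and since $\phi$ is order-preserving on the black components of $\widehat G$, every partial-order constraint in $G$ that relates multivertices in different clusters is automatically consistent with the cluster ordering and imposes no extra restriction. Hence the label-assignment choices decouple across clusters:
\[
W(G,\mu,\sigma,\phi)=\prod_{i=1}^{n}W_i(\mu_{\sigma(i)}),
\]
where $W_i(m)$ counts valid assignments inside a cluster of size $m$.

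To compute $W_i(m)$, let $R_i$ denote the total rank of the non-simple multivertices in $\phi^{-1}(i)$, which depends only on $G$ and $\phi$. Cluster $i$ holds these non-simple multivertices (distinguishable, equipped with the partial order inherited from $G$) together with $m-R_i$ indistinguishable simple multivertices of rank~$1$. Building the cluster's full picture splits into three independent steps: (i) choose the $R_i$ labels going into non-simple multivertices, contributing $\binom{m}{R_i}$; (ii) distribute these labels among the non-simple multivertices so as to respect ranks and partial order, contributing a positive integer $N_i$ depending only on $(G,\phi)$; (iii) place the remaining $m-R_i$ labels into the simple multivertices, which is unique since they are indistinguishable. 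Therefore $W_i(m)=N_i\binom{m}{R_i}$, a polynomial of degree exactly $R_i$ in $m$ with coefficients independent of $\mu$. Multiplying across clusters yields
\[
W(G,\mu,\sigma,\phi)=\prod_{i=1}^{n}N_i\binom{\mu_{\sigma(i)}}{R_i},
\]
a polynomial of total degree $\sum_{i=1}^{n}R_i$. Since this sum equals the total rank of all non-simple multivertices of $G$, which is precisely the number of ordinary vertices in $\widehat G$, the lemma follows.

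The delicate point is the decoupling claim: one must verify that the partial order on $G$ imposes no joint constraint on label choices across different clusters. The block structure of cluster labels combined with the order-preserving property of $\phi\in\Phi(G,n)$ is the key input, and a short case analysis distinguishing within-cluster (black) from between-cluster (red) edges makes it explicit.
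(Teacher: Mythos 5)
Your proof is correct and follows essentially the same route as the paper's: factor $W(G,\mu,\sigma,\phi)$ as a product over clusters, with each factor of the form (constant depending on $G,\phi$) $\times \binom{\mu_{\sigma(i)}}{d_i}$ where $d_i$ is the total rank of the non-simple multivertices assigned to cluster $i$, and then sum the degrees. Your $R_i$ and $N_i$ are the paper's $d$ and $c(\phi^{-1}(i))$, and your slightly more explicit justification of the cross-cluster decoupling (via the consecutive block labelling, the black/red dichotomy, and the order-preserving property of $\phi$) is exactly the content the paper compresses into ``all the choices for different $i$ are independent, and there is always a unique way to add red edges.''
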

\begin{proof}

 For
each coordinate $\mu_{\sigma(i)}$ of $\mu$ we have chosen (via $\phi$) which black components of
$B$ belong to it. After that, we claim that the total number number of ways to choose a set
partition $s\in\S_{\mu_{\sigma(i)}}$ and factors from the integral corresponding to it in such a
way as to get factors corresponding to these black components, is equal to a polynomial in
$\mu_{\sigma(i)}$ depending on the set $\phi^{-1}(i)$; we use the notation
$P_{\phi^{-1}(i)}(\mu_{\sigma(i)})$ for it. Indeed, if the black components of $\phi^{-1}(i)$ have
$d$ vertices altogether (recall that all these vertices are not simple), then we  choose $d$
(unordered) elements out of the set with $\mu_{\sigma(i)}$ elements; there are $\mu_{\sigma(i)}
\choose d$ ways to do this. After that there is a fixed (depending solely on the set $\phi^{-1}(i)$
and $G$) number of ways to do all the other choices, i.e.\ to choose a set partition of these $d$
elements which would agree with $G$ on $\phi^{-1}(i)$. Therefore,
\begin{equation}
\label{eq_x8}
 P_{\phi^{-1}(i)}(\mu_{\sigma(i)})=  c(\phi^{-1}(i)) {\mu_{\sigma(i)} \choose d}.
\end{equation}
Note that this polynomial automatically vanishes if $\mu_{\sigma(i)}<d$. It is also convenient to
set $P$ to be $1$ if $\phi^{-1}(i)=\emptyset$. Since all the choices for different $i$ are
independent, and there is always a unique way to add red edges required by $G$ to the picture, we
conclude that the total number of integrals is
$$
 \prod_i P_{\phi^{-1}(i)} (\mu_{\sigma(i)}).
$$
Note that this is a polynomial of $\mu_i$ of degree equal to the total number of non-simple
vertices in $G$. \end{proof}

{\bf Example:} Continuing the above example with the graph of Figure \ref{Fig_graph}, for the
unique
 $\phi$, multivertertices of $b_1$ are in one cluster (corresponding to
$\mu_{2}$) and multivertex of $b_2$ is in another cluster (corresponding to $\mu_1$). In the first
cluster of size $\mu_2$ we choose 4 elements (corresponding to 4 vertices of $b_1$); there are
$\mu_2(\mu_2-1)(\mu_2-2)(\mu_2-3)/24$ ways to do this. Having chosen these 4 elements we should
subdivide them into those $3$ corresponding to the rank 3 multivertex and $1$ corresponding to the
rank $1$ multivertex in $b_1$. When doing this we have a restriction: rank $1$ multivertex should
have a greater number, than the minimal number among the vertices of the rank $3$ multivertex.
This simply means that the rank $1$ multivertex is either number $2$, number $3$ or number $4$ of
our set with $4$ elements. Thus,
$$
 P_{\phi^{-1}(2)}(\mu_2)=3{\mu_2\choose 4}.
$$
In the second cluster of size $\mu_1$ we choose one element corresponding to $b_2$. There are
$\mu_1$ ways to do this. Note that we ignore all simple vertices. Indeed, there is no need to
specify what's happening with simple vertices: the parts of set partitions corresponding to them
are just a decomposition of a set into singletons, which is automatically uniquely defined as soon
as we do all the choices for non-simple vertices. We conclude that $\phi$ yields the following
polynomial of degree $5$
$$
 P_{\phi^{-1}(1)}(\mu_1)P_{\phi^{-1}(2)}(\mu_2)=\frac{3}{24} \mu_1 \mu_2(\mu_2-1)(\mu_2-2)(\mu_2-3).
$$

We proceed summing over all $\phi\in\Phi(G,n)$, $n=\ell(\mu)$.
\begin{lemma}
\label{lemma_non_sym_poly}
 $W(G,\mu,\sigma)$ is a polynomial in $n$ variables $\mu_{\sigma(i)}$ (with coefficients depending on $G$)
 of degree equal to the number of
 non--simple vertices in $G$.

 Moreover, if $G$ has an isolated multivertex of degree $r>1$, then the highest
 order component of $W(G,\mu,\sigma)$ is divisible by $\sum_{i=1}^n (\mu_i)^r$.
\end{lemma}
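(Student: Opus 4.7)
The plan is to bootstrap from Lemma \ref{lemma_non_sym_poly_phi}, which already establishes that each summand $W(G,\mu,\sigma,\phi)$ is a polynomial in $\mu_{\sigma(1)},\dots,\mu_{\sigma(n)}$ of degree equal to the number of non-simple vertices of $G$. Since by \eqref{eq_sum_over_phi} $W(G,\mu,\sigma)$ is (up to the constant factor $|Aut(\widehat G)|^{-1}$) a sum of such polynomials over the finite set $\Phi(G,n)$, polynomiality and the upper bound on degree are immediate. For the lower bound, I would inspect the explicit formula $P_{\phi^{-1}(i)}(\mu_{\sigma(i)}) = c(\phi^{-1}(i))\binom{\mu_{\sigma(i)}}{d_i}$ from \eqref{eq_x8} and note that the leading coefficient of $W(G,\mu,\sigma,\phi)$ is a product of positive combinatorial factors. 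Hence summing over $\phi\in\Phi(G,n)$ produces no sign cancellations in the top-degree homogeneous component, which yields equality of the degree with the number of non-simple vertices as soon as $\Phi(G,n)$ is nonempty (otherwise the statement is vacuous since $W\equiv 0$).

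For the divisibility claim, let $b^*$ be an isolated multivertex of rank $r>1$, and write $G=G'\sqcup\{b^*\}$. Since $b^*$ is incomparable with every other element of $B$ (it is isolated in $\widehat G$), the order-preserving maps decompose as a product: $\Phi(G,n)\cong \Phi(G',n)\times\{1,\dots,n\}$, via $\phi\mapsto(\phi|_{B\setminus\{b^*\}},\,\phi(b^*))$. I would then establish the multiplicative identity
\begin{equation*}
P_{(\phi')^{-1}(j)\cup\{b^*\}}(\mu_{\sigma(j)}) \;=\; P_{(\phi')^{-1}(j)}(\mu_{\sigma(j)})\cdot \binom{\mu_{\sigma(j)}-d_j}{r},
\end{equation*}
where $d_j$ is the total number of vertices inside $(\phi')^{-1}(j)$. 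This follows by the same counting used to derive \eqref{eq_x8}: once the $d_j$ positions for the components in $(\phi')^{-1}(j)$ have been chosen inside the cluster of size $\mu_{\sigma(j)}$, one still needs to choose $r$ extra positions for $b^*$ among the $\mu_{\sigma(j)}-d_j$ remaining ones, and there is a unique way to realize $b^*$ as a single multivertex on them (using that $b^*$ has no edges, so it imposes no constraints on the others).

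With this identity in hand, summation over the placement $j$ of $b^*$ yields
\begin{equation*}
\sum_{j=1}^{n}W(G,\mu,\sigma,\phi'\cup\{b^*\mapsto j\}) \;=\; W(G',\mu,\sigma,\phi')\cdot \sum_{j=1}^{n}\binom{\mu_{\sigma(j)}-d_j}{r}.
\end{equation*}
The leading (in total degree) term of $\binom{\mu_{\sigma(j)}-d_j}{r}$ equals $\mu_{\sigma(j)}^{r}/r!$ irrespective of $d_j$, so the top-degree part of $\sum_{j}\binom{\mu_{\sigma(j)}-d_j}{r}$ is $\frac{1}{r!}\sum_{j=1}^{n}\mu_{\sigma(j)}^{r}=\frac{1}{r!}\sum_{i=1}^{n}\mu_i^r$, which is the symmetric power sum. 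Summing further over $\phi'\in\Phi(G',n)$ and dividing by $|Aut(\widehat G)|$, the top-degree component of $W(G,\mu,\sigma)$ factors as $\frac{1}{r!}\bigl(\sum_{i=1}^{n}\mu_i^r\bigr)$ times a nonzero homogeneous polynomial (its non-vanishing is ensured by the positivity of leading coefficients argued in the first paragraph), proving the desired divisibility.

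The main obstacle I anticipate is making the factorization $P_{S\cup\{b^*\}}=P_S\cdot\binom{\mu-d}{r}$ completely airtight: one must verify that the choices of set partitions and of the accompanying integral residues decouple cleanly for an isolated $b^*$, which requires checking both that no red edges arise (guaranteed by isolation in $\widehat G$) and that the partial-order compatibility inside $\phi^{-1}(j)$ is unaffected by the addition of the incomparable element $b^*$. Once that bookkeeping is confirmed, the rest of the argument is a clean degree count combined with the positivity of all leading coefficients.
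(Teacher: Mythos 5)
Your proof is correct and follows essentially the same route as the paper: peel off the isolated multivertex $v=b^*$, identify $\Phi(G,n)\cong\Phi(G\setminus v,n)\times\{1,\dots,n\}$, establish the multiplicative factor $\binom{\mu_{\sigma(t)}-d}{r}$, and sum over the $n$ placements to extract $p_r$ from the top-degree component. You are somewhat more explicit than the paper about the absence of sign cancellations when summing over $\phi$ (which the paper treats as self-evident, relying on the positivity of the combinatorial constants $c(\cdot)$ in \eqref{eq_x8}), and you rightly flag the factorization bookkeeping as the point needing care; both observations are accurate and usefully fill in what the paper leaves implicit.
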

\begin{proof}
 The first part is an immediate consequence of \eqref{eq_sum_over_phi} and Lemma \ref{lemma_non_sym_poly_phi}.

As for the second part, fix an isolated multivertex $v$ of $G$ of degree $r$.
 Consider the process of constructing the polynomial for $W(G\setminus
v;\mu,\sigma)$ through \eqref{eq_sum_over_phi} and Lemma \ref{lemma_non_sym_poly_phi}. Note that
any order-preserving $\phi_{G\setminus v}\in\Phi(G\setminus v,n)$ for the graph $G\setminus v$
corresponds to exactly $n=\ell(\mu)$ order-preserving $\phi_G$'s for $G$: they differ by the image
of $v$, while images of all other black components are the same as in $\phi_{G\setminus v}$.
Moreover, if $\phi_G(v)=t$, then the polynomial corresponding to $\phi_G$ is the one for
$\phi_{G\setminus v}$
 times
 $$
  {{\mu_{\sigma(t)}-d} \choose r },
$$
 where $d$ is
 the total number of non-simple vertices in the black components of
$\phi_{G\setminus v}^{-1}(t)$, as in Lemma \ref{lemma_non_sym_poly_phi}. We conclude that the
highest degree term of the sum of all polynomials corresponding to fixed $\phi_{G\setminus v}$ and
various choices of $t=1,\dots,n$ is divisible by $\sum \mu_i^r$. Clearly, this property survives
when we further sum over all $\phi_{G\setminus v}$.
\end{proof}

Now let us also sum over $\sigma$. For a graph $G$, let $U(G,\mu)$ denote $\frac{1}{\ell(\mu)!}$
times the total number of the integrals given by graph $G$ in the decomposition of
$$\sum_{\sigma\in\mathfrak S_{\ell(\mu)}}
\prod_{i=1}^{\ell(\mu)} \D^{\mu_{\sigma(i)}}_N.$$

\begin{lemma}
\label{Lemma_order_of_polynomial} For any labeled graph $G$ there exists a symmetric function
$f_G(x_1,x_2,\dots)\in\Lambda$ of degree equal to the number of non--simple vertices in $G$, such
that
$$
 U(G,\mu)=f_G(\mu_1,\dots,\mu_{\ell(\mu)},0,0,\dots).
$$
Moreover, if $G$ has an isolated multivertex of rank $r$ then the highest degree component of
$f_G$ is divisible by $p_r=\sum_i x_i^r$.
\end{lemma}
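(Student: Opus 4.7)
The plan is to derive the lemma as a direct consequence of Lemma \ref{lemma_non_sym_poly}. By the very definition of $U(G, \mu)$,
\[
 U(G, \mu) = \frac{1}{\ell(\mu)!} \sum_{\sigma \in \mathfrak S_{\ell(\mu)}} W(G, \mu, \sigma),
\]
so the first step is simply to observe that averaging a polynomial in $\mu_{\sigma(1)}, \dots, \mu_{\sigma(n)}$ of degree $d$ (which is what Lemma \ref{lemma_non_sym_poly} provides, with $d$ equal to the number of non-simple vertices of $G$) over $\sigma \in \mathfrak S_n$, $n=\ell(\mu)$, produces a symmetric polynomial $u_n(x_1, \dots, x_n)$ of degree $d$. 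Asserting the existence of $f_G \in \Lambda$ with the prescribed degree then amounts to checking the stability property $u_n(x_1, \dots, x_{n-1}, 0) = u_{n-1}(x_1, \dots, x_{n-1})$ for all $n$ large enough.

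To establish stability I would use the explicit formula
\[
 W(G, \mu, \sigma) = \frac{1}{|Aut(\widehat G)|} \sum_{\phi \in \Phi(G, n)} \prod_{i=1}^n P_{\phi^{-1}(i)}(\mu_{\sigma(i)})
\]
from the proof of Lemma \ref{lemma_non_sym_poly_phi}. The key observation is that $P_\emptyset \equiv 1$, while $P_S(0) = c(S)\binom{0}{d(S)} = 0$ whenever $S$ is non-empty (here $d(S)\ge 1$ is the number of non-simple vertices in the black components indexed by $S$, as in \eqref{eq_x8}). Consequently, after substituting $0$ for one of the coordinates, only those $\phi \in \Phi(G, n)$ whose image avoids the corresponding position survive. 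Grouping the sum over $\sigma \in \mathfrak S_n$ by the value $k = \sigma^{-1}(n)$ and using the order-isomorphism $\{1, \dots, n\} \setminus \{k\} \cong \{1, \dots, n-1\}$, the contributing $\phi$'s for each $k$ are in bijection with $\Phi(G, n-1)$. The $n$ choices of $k$ together with the $(n-1)!$ relabelings of the remaining coordinates give a factor of $n \cdot (n-1)! = n!$, which exactly cancels the $1/n!$ and delivers $u_{n-1}(\mu_1, \dots, \mu_{n-1})$, as required.

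For the divisibility statement, if $G$ has an isolated multivertex of rank $r > 1$ then the second half of Lemma \ref{lemma_non_sym_poly} asserts that the top-degree component of $W(G, \mu, \sigma)$ is divisible by $\sum_i \mu_i^r = p_r(\mu)$. Since $p_r$ is already symmetric, this divisibility factors through the averaging over $\sigma$ and, combined with stability, transfers to the top-degree component of $f_G$, viewed as an element of $\Lambda = \mathbb C[p_1, p_2, \dots]$.

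The main technical point is the verification of stability: although conceptually straightforward, it requires careful bookkeeping of the normalization $1/|Aut(\widehat G)|$ and of the order-preservation conditions defining $\Phi(G, n)$ versus $\Phi(G, n-1)$, in particular checking that partial-order-preserving maps $B\to\{1,\dots,n\}\setminus\{k\}$ correspond under relabeling to elements of $\Phi(G,n-1)$ without introducing spurious factors. Everything else in the lemma is immediate from the preceding combinatorial work in Section \ref{Section_step2}.
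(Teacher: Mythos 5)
Your proof is correct and follows the same path as the paper: it derives the result from Lemma \ref{lemma_non_sym_poly} by averaging over $\sigma$, then verifies the stability property $f_G^{n}(x_1,\dots,x_{n-1},0)=f_G^{n-1}(x_1,\dots,x_{n-1})$ using the vanishing $P_S(0)=0$ for non-empty $S$, exactly as the paper does. You merely make explicit the bijection underlying stability (grouping permutations by $k=\sigma^{-1}(n)$ and tracking the $1/n!$ normalization), which the paper's terser phrasing ``these are effectively $\phi$'s from $\Phi(G,n)$'' leaves implicit.
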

\begin{proof} Combining Lemma \ref{lemma_non_sym_poly} and identity
$$
\sum_{\sigma\in \mathfrak S(n)} \frac{1}{n!} W(G,\mu,\sigma) = U(G,\mu), \quad\quad n=\ell(\mu)
$$
we get a symmetric polynomial $f^n_G(x_1,\dots,x_N)$ (of desired degree and with the desired
divisibility of highest degree component) defined by
$$
 f_G^n(\mu_1,\dots,\mu_n)=U(G,\mu).
$$
Note that if we now add a zero coordinate to $\mu$ (thus, extending its length by $1$), then the
polynomial does not change, i.e.
\begin{equation}
\label{eq_stability}
 f_G^{n+1}(x_1,\dots,x_n,0)=f_G^n(x_1,\dots,x_n).
\end{equation}
Indeed, when $\mu_{n+1}=0$, \eqref{eq_x8} vanishes unless $\phi^{-1}(\sigma^{-1}(n+1))=\emptyset$,
therefore, the summation over $\Phi(G,n+1)$ is in reality over $\phi$ such that
$\phi^{-1}(\sigma^{-1}(n+1))=\emptyset$. These are effectively $\phi$'s from $\Phi(G,n)$  and the
sum remains stable.

The property \eqref{eq_stability} implies that the sequence $f_G^n$, $n=1,2,\dots$ defines an
element $f_G\in\Lambda$, cf.\ \cite[Chapter I, Section 2]{M}.
\end{proof}

\bigskip

The next crucial cancelation step is given in the following statement.

\begin{proposition}
\label{proposition_sum_is_zero}
 Take $k>1$ and let $f\in\Lambda$ be a symmetric function of degree at most $k$
 such that $f\in\mathbb C [p_1,\dots,p_{k-1}]$. We have
 $$
   \sum_{\mu\in\Y_k} \frac{PE(k,\mu)f(\mu_1,\mu_2,\dots) }{\prod_i \mu_i!}=0,
 $$
 where $\Y_k$ is the set of all partitions $\mu$ with $|\mu|=k$.
\end{proposition}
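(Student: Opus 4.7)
The plan is to express the sum as a $[z^k]$-coefficient of an explicit generating function and then exhibit the vanishing via a degree count combined with a Stirling-number identity. The starting point is the closed form
\[
PE(k,\mu)=(-1)^{k-\ell(\mu)}\,\frac{k\,(\ell(\mu)-1)!}{\prod_i m_i(\mu)!},
\]
which follows by taking $[z^k]$ in Newton's identity $\log\!\bigl(1+\sum_{n\ge 1}e_n z^n\bigr)=\sum_{m\ge 1}(-1)^{m-1}p_m z^m/m$ and reading off the $e_\mu$-expansion. Using the orbit count $\ell(\mu)!/\prod_i m_i(\mu)!$ to convert the sum over partitions into a sum over ordered compositions, one rewrites
\[
\sum_{\mu\in\Y_k}\frac{PE(k,\mu)}{\prod_i\mu_i!}f(\mu)\;=\;k(-1)^k\sum_{n\ge 1}\frac{(-1)^n}{n}[z^k]\,\tilde T_n(f,z),
\]
where $\tilde T_n(f,z):=\sum_{a_1,\dots,a_n\ge 1}\frac{f(a_1,\dots,a_n)}{a_1!\cdots a_n!}z^{a_1+\cdots+a_n}$.

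By linearity it suffices to take $f=p_\lambda=p_{\lambda_1}\cdots p_{\lambda_r}$ with each $\lambda_i\le k-1$ and $|\lambda|\le k$. The trivial case $f=1$ gives $\tilde T_n(1,z)=(e^z-1)^n$, and the inner series telescopes to $-\log(e^z)=-z$, whose $z^k$-coefficient vanishes for $k>1$. For $r\ge 1$, I expand $p_{\lambda_i}(a)=\sum_j a_j^{\lambda_i}$, regroup the resulting tuples $(j_1,\dots,j_r)\in\{1,\dots,n\}^r$ by the set partition $P\in\S_r$ they induce via their fibers, and assign each block $B\in P$ the weight $\Lambda(B):=\sum_{i\in B}\lambda_i$. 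The Touchard identity $\sum_{a\ge 1}a^m z^a/a!=\varphi_m(z)\,e^z$ for $m\ge 1$, with $\varphi_m$ a monic polynomial of degree $m$, combined with the elementary evaluation
\[
\sum_{n\ge s}\frac{(-1)^n}{n}\frac{n!}{(n-s)!}(e^z-1)^{n-s}=(-1)^s(s-1)!\,e^{-sz},\qquad s\ge 1,
\]
(obtained from $\sum_{w\ge 0}\binom{w+s-1}{w}(-u)^w=(1+u)^{-s}$ with $u=e^z-1$), gives the cancellation $e^{-|P|z}\cdot (e^z)^{|P|}=1$ and collapses the entire inner sum to
\[
\sum_{n\ge 1}\frac{(-1)^n}{n}\tilde T_n(p_\lambda,z)\;=\;\sum_{P\in\S_r}(-1)^{|P|}(|P|-1)!\prod_{B\in P}\varphi_{\Lambda(B)}(z),
\]
which is a \emph{polynomial} in $z$ of degree exactly $|\lambda|$.

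Extracting $[z^k]$ is now a short argument. If $|\lambda|<k$, the polynomial has degree $<k$ and the coefficient is automatically zero. If $|\lambda|=k$, the constraint $\lambda_i\le k-1$ forces $r\ge 2$, and monicity of $\varphi_m$ makes the $z^k$-coefficient equal to the leading coefficient $\sum_{s=1}^r(-1)^s(s-1)!\,S(r,s)$, where $S(r,s)$ is a Stirling number of the second kind. The exponential generating function identity $\sum_{s\ge 1}(-1)^s(s-1)!\frac{(e^z-1)^s}{s!}=-\log(e^z)=-z$ --- the very same identity that handled $f=1$ --- shows this sum equals $-\delta_{r,1}$, which vanishes for $r\ge 2$.

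The main obstacle is the combinatorial bookkeeping in the middle step: one has to track the product structure of $\tilde T_n(p_\lambda,z)$ across blocks of the set partition $P$ and recognize that the factor $e^{-|P|z}$ precisely absorbs the $|P|$ copies of $e^z$ coming from the Touchard factorizations, leaving a polynomial $\prod_B\varphi_{\Lambda(B)}(z)$ of controlled degree. Once this cancellation is in hand, the vanishing reduces to the single Stirling identity above, which is elementary and provides a pleasing consistency check between the $f=1$ case and the leading-coefficient calculation.
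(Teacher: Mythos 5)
Your proof is correct, and it takes a genuinely different route from the paper's. The paper argues structurally: it introduces the specialization homomorphism $\Psi:\Lambda\to\mathbb C$ with $p_1\mapsto 1$, $p_k\mapsto 0$ for $k\ge 2$, together with the differential operators $d[i_1,\dots,i_m]=\prod_j(-1)^{i_j}i_j\,\partial/\partial p_{i_j}$ acting on $\Lambda$, and shows that the functions $\Psi\bigl(d[i_1,\dots,i_m]E_\mu\bigr)$ with all $i_j<k$ span the space of symmetric polynomials of degree $\le k$ in $\mathbb C[p_1,\dots,p_{k-1}]$; the vanishing then follows by applying $\Psi\circ d[i_1,\dots,i_m]$ to the identity $p_k=\sum_\mu PE(k,\mu)\,e_\mu$, whose left side dies because $i_j<k$. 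Your argument instead rewrites the left-hand sum as a $[z^k]$-coefficient via the explicit closed form of $PE(k,\mu)$, converts partitions to compositions, expands $p_\lambda(a)$, groups by set partitions of the index set, and kills the $z^k$-coefficient with the Touchard-polynomial factorization and the binomial/Stirling identity $\sum_{s\ge 1}(-1)^s(s-1)!\,S(r,s)=-\delta_{r,1}$. Both proofs ultimately rest on the same analytic germ ($\log e^z=z$ has no $z^k$ term for $k>1$), but the paper's version buys a cleaner structural statement at the level of $\Lambda$, while yours is more self-contained and elementary, making the cancellation visible in explicit coefficients. One small imprecision you should fix: the intermediate expression $\sum_{P\in\S_r}(-1)^{|P|}(|P|-1)!\prod_{B\in P}\varphi_{\Lambda(B)}(z)$ has degree \emph{at most} $|\lambda|$, not ``exactly $|\lambda|$'' (the leading coefficient is exactly $-\delta_{r,1}$, which vanishes for $r\ge 2$); this does not affect the conclusion, since you only need the $z^k$-coefficient to vanish, which it does in both the $|\lambda|<k$ and the $|\lambda|=k$, $r\ge 2$ cases.
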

\begin{proof}
 Let $\Psi:\Lambda\to \mathbb C$
 be an (algebra-) homomorphism sending $p_1\mapsto 1$ and $p_k\mapsto 0$, $k\ge
2$.
 With the notation
$$
 e_\mu=\prod_{i=1}^{\ell(\mu)} e_{\mu_i},
$$
where $e_m,$ is the elementary symmetric function of degree $m$, we have
 $$
 \Psi(e_\mu)= \frac{1}{\prod_i \mu_i!},
 $$
as follows from the identity of generating series (see e.g.\ \cite[Chapter I, Section 2]{M})
$$
 \sum_{k=0}^\infty e_k z^k =\exp\left(-\sum_{k=1}^{\infty} \frac{p_k(-z)^k}{k}\right).
$$
 Let $d[i_1,\dots,i_m]$ denote the differential operator on $\Lambda$ (which is viewed as an algebra of
polynomials in $p_k$ here) of the form
 $$
  d[i_1,\dots,i_m]=\prod_{j=1}^m (-1)^{i_j} i_j \frac{\partial}{\partial p_{i_j}}, \quad
  i_1,\dots,i_j\ge 1.
 $$
 Then for any $n\ge 1$ (see \cite[Chapter I, Section 5, Example 3]{M})
$$
 d[n]e_r=(-1)^{n} n \frac{\partial}{\partial p_{n}} (e_r)=\begin{cases} e_{r-n},& r\ge n,\\ 0,&\text{
 otherwise.} \end{cases}
$$
Let $E_\mu=\prod_{i=1}^{\ell(\mu)} \left({e_{\mu_i}}{\mu_i!}\right)$ (note that $\Psi(E_\mu)=1$).
Then
$$
 d[n]E_\mu=\sum_{j\ge 1} E_{\mu-n v_j} \mu_j(\mu_j-1)\cdots (\mu_j-n+1),
$$
where $v_i$ is the vector with the $i$th coordinate equal to $1$ and all other coordinates equal
to $0$, and we view $\mu$ as vector $(\mu_1,\mu_2,\dots)$. More generally,
$$
 d[i_1,\dots,i_m] E_\mu= \sum_{j_1,\dots,j_m\ge 1} E_{\mu-i_1 v_{j_1}-i_2 v_{j_2}-\dots} \prod_{q=1}^m
 \mu^{(q)}_{j_q}(\mu_{j_q}^{(q)}-1)\cdots (\mu^{(q)}_{j_q}-i_q+1),
$$
where
$$
\mu^{(q)}= \mu-i_{q+1} v_{j_{q+1}}-i_{q+2} v_{j_{q+2}}-\dots,\quad 1\le q\le m,
$$
and, in particular, $\mu^{(m)}=\mu$. Observe that $\Psi (d[i_1,\dots,i_m] E_\mu)$ is a symmetric
polynomial in $\mu_i$ with highest degree part being $p_{i_1}\cdots p_{i_m}$. Therefore, any
symmetric function $f\in\Lambda$ in variables $\mu_i$ of degree at most $k$ and such that
$f\in\mathbb C[p_1,\dots,p_{k-1}]$ can be obtained as a linear combination of $\Psi
(d[i_1,\dots,i_m] E_\mu)$ with $i_j<k$. (Indeed, after we subtract a linear combination of $\Psi
(d[i_1,\dots,i_m] E_\mu)$ that agrees with the highest degree part of $f$, we get a polynomial $f'$
of degree at most $k-1$, which is automatically in $\mathbb C[p_1,\dots,p_{k-1}]$. After that we
repeat for $f'$, etc.). It remains to apply this linear combination of $\Psi \circ
d[i_1,\dots,i_m]$ to the identity
$$
p_k=\sum_{\mu\in\Y_k} PE(k,\mu) \frac{E_\mu}{\prod_i \mu_i!}.\qedhere
$$
\end{proof}

Now we are ready to prove the Theorem \ref{theorem_decomposition_of_P}.

\begin{proof}[Proof of Theorem \ref{theorem_decomposition_of_P}]
As we explained above, labeled graph $G$ defines an admissible integral operator $\mathcal I (G)$.
The dimension of this operator is equal to the number of multivertices in $G$ and the degree
equals the sum of labels of all edges of $G$. We will show that the sum in Theorem
\ref{theorem_decomposition_of_P} is over the set $\mathcal G$ of all labeled graphs with $k$
vertices and such that the dimension minus the degree of the corresponding integral operator is a
non-positive number.

\smallskip

We start from the decomposition \eqref{eq_P_through_Integral} of $\P^k_N$ into the sum of
integrals. Note that the coefficient of $\DI^{[k]}_N$  is $(-\theta)^{-k}\cdot PE(k,k)/k$. The
second factor is $PE(k,k)=(-1)^{k-1}k$, therefore, the coefficient of $\DI^{[k]}_N$ is
$-(\theta)^{-k}$.

We further use Proposition \ref{proposition_iteration_of_DI} and then expand crossterms in the
resulting integrals as in \eqref{eq_cross_I}, \eqref{eq_cross_II}. Note that $A^I_{r,r'}(z_i-z_j)$
and $A^{II}_{r,r'}(z_i-z_j)$ in these expansions by the very definition have limits as
$z_i-z_j\to\infty$. Since all the integrals in our expansion are at most $k$--dimensional, all
terms where $A^I_{r,r'}(z_i-z_j)$ or $A^{II}_{r,r'}(z_i-z_j)$ are present satisfy the assumption
that dimension minus degree is non-positive.

As is explained above, all other terms in the expansion are enumerated by certain labeled graphs.
Our aim is to show that if the contribution of a given graph is non-zero, then the dimension minus
degree of the corresponding integral operator is non-positive. If a graph $G$ has less than $k$
non-simple vertices, then combining Lemma \ref{Lemma_order_of_polynomial} with Proposition
\ref{proposition_sum_is_zero} we conclude that the contribution of this graph vanishes. On the
other hand, if a graph $G$ has $k$ non-simple vertices which form $M\le k$ multivertices, then the
corresponding integral is $M$--dimensional. If $G$ has no isolated multivertices, then it has at
least $\lceil M/2 \rceil$ edges. Since each edge increases the degree at least by $2$, we conclude
that the dimension minus degree of the corresponding integral operator is non-positive. Finally, if
$G$ has $k$ vertices and an isolated multivertex of degree less than $k$ (isolated multivertex of
degree $k$ corresponds precisely to $\DI^{[k]}_N$), then we can again use Lemma
\ref{Lemma_order_of_polynomial} and Proposition \ref{proposition_sum_is_zero} concluding that the
contribution of this graph vanishes.
\end{proof}

\section{Central Limit Theorem}

\label{Section_CLT_general}

\subsection{Formulation of GFF-type asymptotics}

The main goal of this section is to prove the following statement.

\begin{theorem}
\label{theorem_joint_CLT}
 Suppose that we have a large parameter $L$, and parameters $M\ge 1$, $\alpha>0$,
  $N_1\ge N_2\ge \dots \ge N_h\ge 1$
  grow linearly in it, i.e.
 $$
  M \sim L \hat M,\quad \alpha \sim L \hat\alpha ,\quad N_i\sim L\hat N_i,\quad
  L\to\infty;\quad\quad \hat M\ge 0,\, \hat\alpha \ge 0,\, N_h>0.
 $$
 Let $\r\in\St^M$ be distributed according to $\Pr^{\alpha,M,\theta}$ of Definition \ref{Def_distrib}.
 Then for any integers $k_i\ge 1$, $i=1,\dots,h$, the random vector
 $\left(p_{k_i}(N_i;\r)-\E p_{k_i}(N_i;\r)\right)_{i=1}^h$ converges, in the sense of joint
 moments, thus weakly, to the Gaussian vector with mean $0$ and covariance given by
\begin{multline}
\label{eq_limit_covariance} \lim_{L\to\infty} \E\bigg(\left[ p_{k_1}(N_1;\r)-\E
p_{k_1}(N_1;\r)\right]\left[p_{k_2}(N_2;\r)-\E p_{k_2}(N_2;\r)\right]\bigg)\\=
 \frac{\theta^{-1}}{(2\pi \i)^2}
 \cdot \oint\oint \frac{du_1 du_2}{(u_1-u_2)^2}
 \prod_{r=1}^2 \left( \frac{u_r}{(u_r+\hat N_r)} \cdot \frac{u_r-{\hat\alpha}}{(u_r-\hat \alpha-\hat
 M)}
 \right)^{k_r},
\end{multline}
where both integration contours are closed and positively oriented, they enclose the poles of the
integrand at $u_1=-\hat N_1$, $u_2=-\hat N_2$ ($\hat N_1\ge \hat N_2,$ as above), but not at
$u_r=\hat\alpha+\hat M$, $r=1,2$, and $u_2$--contour is contained in the $u_1$ contour.
\end{theorem}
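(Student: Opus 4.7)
The plan is to use the moment method: Theorem \ref{Theorem_HO_expectations_p} expresses the joint moments $\E\prod_{i=1}^h p_{k_i}(N_i;\r)$ as a composition of difference operators $\P^{k_m}_{N_m}\cdots\P^{k_1}_{N_1}$ applied to $\prod_{i=1}^{N_1} H(y_i;\alpha,M)$, evaluated at $y_i=\theta(1-i)$. By Theorem \ref{theorem_decomposition_of_P} each $\P^{k_i}_{N_i}$ splits as $(-\theta)^{-k_i}\DI^{[k_i]}_{N_i}$ plus operators of strictly smaller ``dimension minus degree.'' Iterating this expansion through Proposition \ref{proposition_iteration_of_DI}, the joint moment becomes a finite sum of $h$-fold nested contour integrals, with integrand a product of single-variable factors $G_{[k_i]}$ and pairwise cross-terms $\Cr(i,j)$.

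Next I would perform the asymptotic rescaling. Substitute $w_r = \theta L\,u_r$ and use that $y_m = \theta(1-m)$ fills the interval $[-\theta L\hat N_1,\,0]$, so that the telescoped $y$-product and the $H$-ratio converge to $\bigl(u/(u+\hat N)\bigr)^{k}$ and $\bigl((u-\hat\alpha)/(u-\hat\alpha-\hat M)\bigr)^{k}$ respectively after one computes $G_{[k_i]}$ by taking iterated residues along the string $v_1=v_2-\theta=\cdots=v_k-\theta(k-1)$. Each integration variable contributes a Jacobian factor $L$, while each cross-term expands as
\[
\Cr(i,j)=1+\frac{\theta k_i k_j}{(w_i-w_j)^2}+O\!\left(\frac{1}{(w_i-w_j)^3}\right),
\]
with the crucial absence of a $1/(w_i-w_j)$ term (which can be checked by direct computation from the formula of Proposition \ref{proposition_iteration_of_DI}). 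Under the rescaling each $(\Cr-1)$ is of order $L^{-2}$, so in the $m$-th centered moment only those terms survive in which exactly $m/2$ cross-terms are ``activated'' at their leading $1/(u_i-u_j)^2$-term and the rest are replaced by $1$. This is the Wick structure: odd moments vanish, even moments factor over pairings. Meanwhile, subleading operators from Theorem \ref{theorem_decomposition_of_P}, and the higher-order tails $A_{r,r'}^I,A_{r,r'}^{II}$ from \eqref{eq_cross_I}--\eqref{eq_cross_II}, all have dimension--degree deficit and hence vanish after the same rescaling. This reduction of the moment calculation to pairings is the content of Lemma \ref{lemma_limit_gaussianity}.

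For the covariance itself, specialize to $h=2$. The ``disconnected'' contribution, corresponding to $\Cr(1,2)=1$, gives exactly $\E p_{k_1}\cdot\E p_{k_2}$ and is cancelled by centering. The remaining contribution comes from the leading $\theta k_1 k_2/(w_1-w_2)^2$ in $\Cr(1,2)-1$. After rescaling, the Jacobian $\theta^2 L^2\,du_1\,du_2$ cancels the $L^{-2}$ decay of the cross-term, producing a finite integral of the form
\[
\frac{\theta^{-1}}{(2\pi\i)^2}\oint\oint \frac{du_1\,du_2}{(u_1-u_2)^2}\prod_{r=1}^{2}\left(\frac{u_r}{u_r+\hat N_r}\cdot\frac{u_r-\hat\alpha}{u_r-\hat\alpha-\hat M}\right)^{k_r},
\]
provided one tracks constants from the residue computation of $G_{[k_r]}$ together with the $(-\theta)^{-k_1-k_2}$ prefactor from Theorem \ref{theorem_decomposition_of_P}; the integer powers $k_r$ here arise because the telescoping of the $y$-product and of $H(v_i-1)/H(v_i)$ along the string $v_i=w+\theta(i-1)$ preserves $k_r$ identical ratios in the limit. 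The contour conditions in the statement—$u_2$ inside $u_1$, both enclosing the poles at $-\hat N_r$ but not at $\hat\alpha+\hat M$—are inherited from the nested admissible contours of Definition \ref{def_Integral_operator} after rescaling.

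The main obstacle is the second paragraph: rigorously justifying the reduction to pairings. One must combine the graph-combinatorial cancellation mechanism of Section \ref{Section_step2} (to eliminate the admissible operators beyond the leading $\DI^{[k_i]}$) with a careful bookkeeping of the orders in $L$ of every term arising from fully expanding each $\Cr(i,j)$ into a power series in $(w_i-w_j)^{-1}$, then showing that contour deformation is legal throughout and that the cancellations are uniform. This amounts to proving the abstract statement of Lemma \ref{lemma_limit_gaussianity} and then verifying its hypotheses for the specific integrand produced here. Everything else—the explicit asymptotics of $H(v-1)/H(v)$, the telescoping, and the identification of the covariance integral—is a routine, if lengthy, computation.
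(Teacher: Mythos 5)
Your proposal follows essentially the same approach as the paper: moment method via Theorem \ref{Theorem_HO_expectations_p}, operator decomposition from Theorem \ref{theorem_decomposition_of_P}, rescaling $w=\theta L u$, observing the absence of a first-order term in the cross-factor expansion, reduction to Wick pairings by Lemma \ref{lemma_limit_gaussianity}, and finally the explicit covariance computation via the leading operator $(-\theta)^{-k}\DI^{[k]}_N$. The only presentational difference is that the paper makes the bookkeeping of subleading terms precise by introducing auxiliary formal ``random variables'' $p_k(N)\{j\}$ for each term in the decomposition and verifying the hypotheses of Lemma \ref{lemma_limit_gaussianity} in Lemma \ref{lemma_vars_satisfy}, whereas you describe the same outcome informally; this is not a different route, just the expected tightening of the argument you already flag as the ``main obstacle.''
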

{\bf Remark 1.} We can change $p_k(N;\r)$ in the statement of theorem by removing $N-M$ ones from
its definition (cf.\ Definition \ref{definition_observables}) when $N>M$.

{\bf Remark 2.} The limit covariance depends on $\theta$ only via the prefactor $\theta^{-1}$.

{\bf Remark 3.} Our methods also give the asymptotics of $\E p_{k_i}(N_i;\r)/L$, which provides a
limit shape theorem (or law of large numbers) for $\beta$--Jacobi ensemble. We do not pursue this
further as this was already done in \cite{DP}, \cite{Killip}, \cite{Jiang}.

\medskip

In Sections \ref{section_warm_up}, \ref{Section_Gauss_lemma}, \ref{section_proof_of_CLT} we prove
Theorem \ref{theorem_joint_CLT}, and in Section \ref{section_identification} we identify the limit
covariance with that of a pullback of the Gaussian Free Field (whose basic properties are
discussed in Section \ref{Section_GFF}) and also give an alternative expression for the covariance
in terms of Chebyshev polynomials.

\subsection{A warm up: Moments of $p_1$.}
\label{section_warm_up}

In order to see what kind of objects we are working with take $N_1=N=\hat N L$ as in Theorem
\ref{theorem_joint_CLT} and consider the limit distribution of $p_1(N ;\r)$.

For $p_1$ the situation is simplified by the fact that
$$
 \P^1_N=\D^1_N=\DI^{[1]}_N.
$$

We study $\E[p_1(N;\r)]^m$ using Theorem \ref{Theorem_HO_expectations_p}. Applying $m\ge 1$ times
the operator $\P^1_N$, using Proposition \ref{proposition_iteration_of_DI} and changing the
variables $w^i_1=L u_i$ we arrive at the following formula
\begin{equation}
\label{eq_expectation_of_power}
 \E[p_1(N;\r)]^m = \frac{L^m(-\theta)^{-m}}{(2\pi\i)^m} \oint \dots\oint \prod_{i<j} Cr_L(u_i,u_j)
 \prod_{i=1}^m F_L(u_i) d u_i,
\end{equation}
where
$$
F_L(u)=\frac{H(Lu-1;\alpha,M)}{H(Lu;\alpha,M)}  \prod_{m=1}^N
\frac{Lu+(m-2)\theta}{Lu+(m-1)\theta},
$$
with $H$ from \eqref{eq_definition_of_H} given by
$$
 H(y;\alpha,M)= \prod_{i=1}^N \frac{\Gamma(-y+\theta\alpha)}{\Gamma(-y+\theta\alpha+M\theta)},
$$
so that
\begin{multline*}
F_L(u)=
  \frac{\Gamma(-Lu+\theta\alpha+M\theta)}{\Gamma(-Lu+1+\theta\alpha+M\theta)}
 \frac{\Gamma(-Lu+1+\theta\alpha)}{\Gamma(-Lu+\theta\alpha)}
 \prod_{m=1}^N \frac{Lu+(m-2)\theta}{Lu+(m-1)\theta}\\=
 \frac{Lu-{\theta}}{Lu+{(N-1)\theta}}\cdot
\frac{Lu- {\theta\alpha}}{Lu-{\theta\alpha-M\theta} } =
 \frac{u-\frac{\theta}{L}}{u+\frac{(N-1)\theta}{L}}\cdot
\frac{u-\frac{\theta\alpha}{L}}{u-\frac{\theta\alpha+M\theta}{L}}.
\end{multline*}
Also we have
$$
 Cr_L(u_1,u_2)=\frac{(u_1-u_2+\frac{1-\theta}{L})(u_1-u_2)}{(u_1-u_2+\frac{1}{L})(u_1-u_2-\frac{\theta}{L})},
$$
and the integration in \eqref{eq_expectation_of_power} is performed over nested contours (the
smallest index corresponds to the largest contour) enclosing the singularity of $F_L$ in
$-\frac{(N-1)\theta}{L}$.

Note that as $L\to\infty$, $F_L$ converges to an analytic limit $F$ given by
$$
 F(u)=\frac{u}{u+\theta\hat N}\cdot
\frac{u-\theta\hat \alpha}{u-\theta\hat \alpha-\theta\hat M}.
$$

Define the function $V_L(u_1,u_2)$ through
$$
 V_L(u_1,u_2)=Cr_L(u_1,u_2)-1.
$$
Note that as $L\to\infty$
$$
 V_L(u_1,u_2)\sim \frac{1}{L^2}\cdot \frac{\theta}{(u_1-u_2)^2}.
$$
Therefore, as $L\to\infty$,
\begin{multline}
\label{eq_limit_variance}
 \E[p_1(N;\r)]^2-[\E(p_1(N;\r))]^2= \frac{L^2(-\theta)^{-2}}{(2\pi\i)^2} \oint\oint V_L(u_1,u_2)
  F_L(u_1) F_L(u_2) d u_1 d u_2 \\ \to \frac{ \theta^{-1}}{(2\pi\i)^2} \oint\oint \frac{F(u_1) F(u_2)}{(u_1-u_2)^2} d
u_1 d
  u_2.
\end{multline}
Changing the variables $u_i=\theta u'_i$, $i=1,2$, we arrive at the limit covariance formula
\eqref{eq_limit_covariance} for $p_1$.

The proof of the fact that $p_1(N;\r)-\E p_1(N;\r)$ is asymptotically Gaussian follows from a
general lemma that we present in the next section.

\subsection{Gaussianity lemma}
\label{Section_Gauss_lemma}

Let us explain the features of the formula \eqref{eq_expectation_of_power} that are important for
us. The integration in \eqref{eq_expectation_of_power} goes over $m$ contours
$\boldsymbol{\gamma}=(\gamma_1,\dots,\gamma_m)$, such that $\boldsymbol{\gamma}$ belongs to a
certain class $\aleph_m$. As long as $\boldsymbol{\gamma}\in\aleph_m$, the actual choice of
$\boldsymbol{\gamma}$ is irrelevant. The crucial property of classes $\aleph_m$ is that if
$(\gamma_1,\dots,\gamma_m)\in\aleph_m$ and $1\le i_1<\dots<i_l\le m$, then
$(\gamma_{i_1},\gamma_{i_2},\dots,\gamma_{i_l})\in\aleph_l$. Further, if
$(\gamma_1,\dots,\gamma_m)\in\aleph_m$, then $F_L(u)$ converges to a limit function $F(u)$
uniformly over $u\in\gamma_i$, $i=1,\dots,m$, as $L\to\infty$, and also $V_L(u_1,u_2)\sim
\frac{1}{L^2}\cdot \frac{\theta}{(u_1-u_2)^2}$ uniformly over $(u_1,u_2)\in \gamma_i \times
\gamma_j$, $1\le i<j \le m$.

Let us now generalize the above properties. We will need this generalization when dealing with
$p_k(N;\r)$, $k\ge 2$, see Section \ref{section_proof_of_CLT}. Fix an integral parameter $q>0$ (in
the above example with $p_1$, $q=1$) and take $q$ ``random variables''\footnote{Throughout this
section random variable is just a name for the collection of moments, in other words, it is not
important whether moments that we specify indeed define a conventional random variable. To avoid
the confusion we write ``random variables'' and ``moments'' with the quotation marks when speaking
about such \emph{virtual} random variables.} $\xi_1(L),\dots,\xi_q(L)$ depending on an auxiliary
parameter $L$. Suppose that the following data is given. (In what follows we use the term
``multicontour'' for a finite collection of closed positively oriented contours in $\mathbb C$,
and we call the number of elements of a multicontour its dimension.)

\begin{enumerate}
\item For each $k=1,\dots,q$, we have an integer $l(k)>0$. In the above
example with $p_1$, $l(1)=1$

\item For any $n\ge 1$ and any $n$--tuple of integers $K=(1\le k_1\le k_2\le\dots\le k_n \le q)$,
we have a class of multicontours $\aleph_K$, such that $\boldsymbol{\gamma}\in\aleph_K$ is a
family of $n$ multicontours $\boldsymbol{\gamma}=(\gamma_1,\dots,\gamma_n)$ and $\gamma_i$ is a
$l(k_i)$--dimensional multicontour.

\item If $\boldsymbol{\gamma}=(\gamma_1,\dots,\gamma_n)\in\aleph_K$ and $1\le i_1<\dots<i_t\le n$, then
$(\gamma_{i_1},\gamma_{i_2},\dots,\gamma_{i_t})\in\aleph_{K'}$, where
$$
 K'=(k_{i_1}\le k_{i_2} \le \dots\le k_{i_t}).
$$

\item For each $k=1,\dots,q$, and each value of $L$ we have a continuous function of $l(k)$
variables: ${\mathfrak F}_L^k(u_1,\dots,u_{l(k)})$. If
$\boldsymbol{\gamma}=(\gamma_1,\dots,\gamma_n)\in\aleph_K$, $1\le i\le n$ and $k=k_{i}$, then
${\mathfrak F}_L^k(u_1,\dots,u_{l(k)})$ converges as $L\to\infty$ to a (continuous)
 function ${\mathfrak F}^k(u_1,\dots,u_{l(k)})$
uniformly over $(u_1,\dots,u_{l(k)})\in\gamma_i$.

\item For each pair $1\le k,r \le q$ and each value of $L$ we have a continuous function
 ${\mathfrak Cr}_L^{k,r}(u_1,\dots,u^i_{l(k)}; v_1,\dots,v_{l(r)})$.
 If $\boldsymbol{\gamma}=(\gamma_1,\dots,\gamma_n)\in\aleph_K$, $1\le i<j\le n$, and $a=k_i$, $b=k_j$, then
${\mathfrak Cr}_L^{a,b}(u_1,\dots,u_{l(a)}; v_1,\dots,v_{l(b)})$ converges as $L\to\infty$ to a
(continuous) function ${\mathfrak Cr}^{a,b}(u_1,\dots,u^i_{l(a)}; v_1,\dots,v_{l(b)})$ uniformly
over $(u_1,\dots,u_{l(a)})\in\gamma_i$, $(v_1,\dots,v_{l(b)})\in\gamma_j$.

\item For each $k=1,\dots,q$, we have certain ($L$--dependent) constants $c_L(k)$.

\item An additional real parameter $\gamma>0$ is fixed.
\end{enumerate}
Suppose now that for any $n\ge 1$ and any $n$--tuple of integers $K=(1\le k_1\le k_2\le\dots\le
k_n \le q)$, there exists $L(K)>0$ such that for $L>L(K)$ the joint ``moments'' of $\xi_i(L)$
corresponding to $K$ have the form
$$
 \E(\xi_{k_1}(L)\cdots\xi_{k_n}(L)) =\prod_{i=1}^n c_L(k_i)  \oint\cdots \oint \prod_{i<j} Cr_L(k_i,i;k_j,j) \prod_{i=1}^n F_L(k_i,i),
$$
where
$$
 F_L(k,i)={\mathfrak F}_L^k(u^i_1,\dots,u^i_{l(k)}) du^i_1\cdots du^i_{l(k)},
$$
$$
 Cr_L(k,i;r,j)=1+L^{-\gamma} V_L(k,i;r,j)=1+L^{-\gamma} {\mathfrak Cr}_L^{k,r}(u^i_1,\dots,u^i_{l(k)};
 u^j_1,\dots,u^j_{l(r)}),
$$
and the integration goes over \emph{any} set of contours $\boldsymbol{\gamma}\in \aleph_K$.

\begin{lemma}
\label{lemma_limit_gaussianity} In the above settings, as $L\to\infty$, the ``random vector''
$$\frac{\xi_1(L)-\E\xi_1(L)}{c_L(1)L^{-\gamma/2}},\dots,\frac{\xi_q(L)-\E\xi_q(L)}{c_L(k_i)L^{-\gamma/2}}
$$ converges
(in the sense of ``moments'') to the Gaussian random vector $\zeta_1,\dots,\zeta_q$ with mean $0$
and covariance (here $k\le r$)
\begin{multline*}
 \E\zeta_k\zeta_r= \oint\cdots\oint {\mathfrak Cr}^{k,r}(u^1_1,\dots,u^1_{l(k)};
 u^2_1,\dots,u^2_{l(r)})\\ \times {\mathfrak F}^k(u^1_1,\dots,u^1_{l(k)}) {\mathfrak F}^r(u^2_1,\dots,u^2_{l(r)})
 du^1_1\cdots du^1_{l(k)} du^2_1\cdots du^2_{l(r)},
\end{multline*}
where the integration goes over $\boldsymbol{\gamma}\in\aleph_{(k,r)}$. The answer does not depend
on the choice of $\boldsymbol{\gamma}\in\aleph_{(k,r)}$.
\end{lemma}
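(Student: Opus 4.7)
The plan is to use the method of moments, computing the joint moments of the centered and rescaled variables and identifying the limit with Wick's formula for a centered Gaussian vector. The central observation is that expanding the product $\prod_{i<j}\bigl(1 + L^{-\gamma} V_L(k_i,i;k_j,j)\bigr)$ turns the integral formula into a sum indexed by graphs $G$ on $\{1,\dots,n\}$, where an edge $(i,j)\in E(G)$ records the choice of the $L^{-\gamma}V_L$ factor. Because each integration variable $u^i_\bullet$ lives on its own multicontour $\gamma_i$ and, by admissibility property (3), any subcollection of contours inherited from $\boldsymbol{\gamma}\in\aleph_K$ is itself admissible for the corresponding subfamily of indices, the $n$-fold integral factorizes over the connected components $C$ of $G$ into connected integrals
$$J_C = \prod_{i \in V(C)} c_L(k_i) \oint \prod_{(i,j)\in E(C)} V_L(k_i,i;k_j,j) \prod_{i\in V(C)} F_L(k_i,i).$$
For an isolated vertex $\{i\}$ one recovers $J_{\{i\}} = \E \xi_{k_i}$, while for a single edge $\{i,j\}$ one obtains $L^{-\gamma} J_{\{i,j\}} = \E(\xi_{k_i}\xi_{k_j}) - \E\xi_{k_i}\E\xi_{k_j}$.

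The second step is a Möbius inversion on the Boolean lattice of subsets of $[n]$ applied to $\E\prod_i (\xi_{k_i}-\E\xi_{k_i})$, which cancels the contribution of every graph having at least one isolated vertex. The resulting identity reads
$$\E \prod_{i=1}^n (\xi_{k_i} - \E\xi_{k_i}) \;=\; \sum_{\substack{G \text{ on } [n] \\ G \text{ has no isolated vertex}}} L^{-\gamma |E(G)|} \prod_{C \text{ component of } G} J_C .$$
I would first verify this for $n=2,3$ by direct computation and then extend by the standard inclusion-exclusion argument.

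The third step is the asymptotic analysis. The uniform convergence $V_L \to \mathfrak{Cr}^{k,r}$ and $F_L \to \mathfrak{F}^k$ along the contours, guaranteed by hypotheses (4)--(5), shows that $\tilde J_C := J_C / \prod_{i \in V(C)} c_L(k_i)$ is $O(1)$ as $L\to\infty$. Dividing the centered moment by $\prod_i c_L(k_i) L^{-\gamma/2}$, the contribution of a graph $G$ scales as $L^{n\gamma/2 - \gamma |E(G)|}$. Since every vertex has degree $\ge 1$ in a graph without isolated vertices, $|E(G)| \ge n/2$ with equality if and only if $G$ is a perfect matching. Hence odd moments vanish in the limit, while even moments converge to $\sum_{M} \prod_{\{i,j\}\in M} \lim_{L\to\infty}\tilde J_{\{i,j\}}$ summed over perfect matchings $M$; the pairwise factor is precisely the covariance $\E\zeta_k\zeta_r$ in the statement. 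This is exactly Wick's formula, so the rescaled centered vector converges in the sense of moments to the claimed Gaussian.

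The main obstacle I anticipate is step one: making the factorization over connected components fully rigorous while keeping a fixed $\boldsymbol{\gamma}\in\aleph_K$, and carefully invoking property (3) to recognize each restricted integral as a legitimate "moment" of the appropriate subfamily (so that, e.g., $J_{\{i\}}$ really equals $\E\xi_{k_i}$ computed via the lemma's own formula with $n=1$, not merely by definition). Once the factorization and the Möbius cancellation of isolated-vertex graphs are established, the counting of perfect matchings and the identification with Wick's formula are automatic, and the independence of the limit from the choice of $\boldsymbol{\gamma}\in\aleph_{(k,r)}$ follows from the uniform convergence.
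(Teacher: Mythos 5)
Your proposal is correct and follows essentially the same route as the paper: the paper writes the centered moment as a single nested-contour integral (eq.\ (\ref{eq_x1})), expands the product $\prod(1+L^{-\gamma}V_L)$ into a sum over edge sets $B$, and uses the alternating sum $\sum_{A\supset S(B)}(-1)^{n-|A|}$ to kill every $B$ with $S(B)\neq\{1,\dots,n\}$, which is precisely your Möbius-inversion cancellation of graphs with isolated vertices; the degree count $|B|\ge\lceil n/2\rceil$ and the identification of the surviving terms with perfect matchings (Wick's formula) then coincide with yours. Your intermediate step of factorizing the integral over connected components into $J_C$'s is not needed in the paper's presentation (it stays with one big integral throughout), but it is valid by Fubini together with property (3), and the obstacle you flag at the end is indeed resolved exactly as you say.
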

{\bf Remark.} Note that Lemma \ref{lemma_limit_gaussianity} is merely a manipulation with
integrals and their asymptotics, and we use ``moments'' just as names for these integrals.
\begin{proof}[Proof of Lemma \ref{lemma_limit_gaussianity}]
Take any $K=1\le k_1\le k_2\le\dots\le k_n \le q$ and let us compute the corresponding
\emph{centered} ``moment''. We have
\begin{multline}
\label{eq_x1} \E
\prod_{i=1}^n\frac{\xi_{k_i}-\E\xi_{k_i}}{c_L(k_i)L^{-\gamma/2}}\\=L^{n\gamma/2}\oint\cdots\oint\prod_{i=1}^n
F_L(k_i,i) \sum_{A\subset\{1,\dots,n\}} (-1)^{n-|A|} \prod_{i,j\in A,\,i<j}(1+L^{-\gamma}\cdot
V_L(k_i,i;k_j,j)),
\end{multline}
with integration over $\boldsymbol{\gamma}\in\aleph_K$ (here we use the hypothesis that classes
$\aleph$ are closed under the operation of taking subsets). For a set $A\subset\{1,\dots,n\}$ let
$A^{(2)}$ denote the set of all pairs $i<j$ with $i,j\in A$,
$$
 A^{(2)}=\{(i,j)\mid i,j\in A,\, i<j\}.
$$
Also for any set $B$ of \emph{pairs} of numbers, let $S(B)$ denote the set of all first and second
coordinates of elements from $B$ (support of $B$). With this notation \eqref{eq_x1} transforms
into
\begin{multline}
\label{eq_x2} L^{n\gamma/2} \oint \dots\oint
 \left(\prod_{i=1}^n F_L(k_i,i)\right) \sum_{A\subset\{1,\dots,n\}}(-1)^{n-|A|}
  \sum_{B\subset A^{(2)}} L^{-\gamma|B|}\prod_{(i,j)\in B} V_L(k_i,i;k_j,j)
\\= L^{n\gamma/2} \oint \dots\oint
 \left(\prod_{i=1}^n F_L(k_i,i)\right) \sum_{B\subset\{1,\dots,n\}^{(2)}} \prod_{(i,j)\in B} L^{-\gamma|B|} V_L(k_i,i;k_j,j)
 \sum_{A\mid\, S(B)\subset A} (-1)^{n-|A|}.
\end{multline}
Note that for any two finite sets $I_1\subset I_2$ we have
$$
 \sum_{A: \, I_1\subset A\subset I_2} (-1)^{|I_2|-|A|}= \begin{cases} 1, & I_1=I_2,\\
 0,&\text{otherwise.} \end{cases}
$$
Hence, \eqref{eq_x2} is
\begin{equation}
\label{eq_x3} L^{n\gamma/2} \oint \dots\oint
 \left(\prod_{i=1}^n F_L(k_i,i)\right) \sum_{B\subset\{1,\dots,n\}^{(2)} \mid\, S(B)=\{1,\dots,n\}}
 L^{-\gamma|B|}
 \prod_{(i,j)\in B} V_L(k_i,i;k_j,j).
\end{equation}
Note that the set of pairs $B$ such that $S(B)=\{1,\dots,n\}$ must have at least $\lceil
\frac{n}{2}\rceil$ elements. Therefore, if $n$ is odd, then the factor $L^{n\gamma/2}
L^{-\gamma|B|}$ in \eqref{eq_x3} converges to $0$. If $n$ is even, then similarly, the products
with $|B|>n/2$ are negligible. If $|B|=n/2$ then $B$ is just a perfect matching of the set
$\{1,\dots,n\}$. We conclude that for even $n$, \eqref{eq_x3} converges as $L\to\infty$ to
\begin{multline*}
 \sum_{B\in \text{perfect matchings of }\{1,\dots,n\}}  \prod_{(i,j)\in B}
 \oint\cdots\oint {\mathfrak Cr}^{k_i,k_j}(u^1_1,\dots,u^1_{l(k_i)};
 u^2_1,\dots,u^2_{l(r_i)})\\ \times {\mathfrak F}^{k_i}(u^1_1,\dots,u^1_{l(k_i)}) {\mathfrak F}^r(u^2_1,\dots,u^2_{l(r_i)}) du^1_1\cdots du^1_{l(k_i)} du^2_1\cdots
 du^2_{l(r_i)}.
\end{multline*}
This is precisely Wick's formula (known also as Isserlis's theorem, see \cite{Is}) for the joint
moments of Gaussian random variables $\zeta_1,\dots,\zeta_q$.
\end{proof}

\subsection{Proof of Theorem \ref{theorem_joint_CLT}}
\label{section_proof_of_CLT}

Throughout this section we fix  $k_1,\dots,k_h\ge 1$ and $N_1\ge N_2\ge\dots\ge N_h\ge 1$. Our aim
is to prove that the moments of the vector
$$
 (p_{k_1}(N_1;\r)-\E p_{k_1}(N_1;\r),\,  p_{k_2}(N_2;\r)-\E p_{k_2}(N_2;\r),\dots, p_{k_h}(N_h;\r)-\E p_{k_h}(N_h;\r) )
$$
converge to those of the Gaussian vector with covariance given by \eqref{eq_limit_covariance}.
Clearly, this would imply Theorem \ref{theorem_joint_CLT}.

 The proof is a combination of Theorem \ref{Theorem_HO_expectations_p}, Theorem
\ref{theorem_decomposition_of_P} and Lemma \ref{lemma_limit_gaussianity}.

Theorem \ref{theorem_decomposition_of_P} yields that operator $\P^k_N$ is a sum of $R=R(k)$ terms
with leading term being $(-\theta)^{-k}\DI^{[k]}_N$. Let us denote through $\P^k_N\{j\}$,
$j=1,\dots,R(k)$, all the terms in $\P^k_N$, with $j=1$ corresponding to
$(-\theta)^{-k}\DI^{[k]}_N$.

Now the joint moments of random variables $p_k(N;\r)$ (with varying $k$ and $N$) can be written as
(cf.\ Theorem \ref{Theorem_HO_expectations_p})
\begin{equation}
\label{eq_HO_expect_expanded}
 \E \left(\prod_{i=1}^m p_{k_i}(N_i;\r) \right) =  \dfrac{ \prod\limits_{i=1}^m \left(\sum_{j=1}^{R(k_j)} \P^{k_i}_{N_i}\{j\} \right)
 \left[\prod\limits_{i=1}^{N_m} H(y_i;\alpha,M)\right]}{\prod_{i=1}^{N_m} H(y_i;\alpha,M)} \rule[-5mm]{0.9pt}{17mm}_{\,
y_i=\theta(1-i)}.
\end{equation}

Introduce \emph{formal} ``random variables'' $p_{k}(N)\{j\}$, $j=1,\dots,R(k)$, such that
\begin{equation}
\label{eq_auxiliary_moments}
 \E \left(\prod_{i=1}^m p_{k_i}(N_i)\{j_i\} \right) =  \dfrac{ \prod\limits_{i=1}^m  \P^{k_i}_{N_i}\{j_i\}
 \left[\prod\limits_{i=1}^{N_m} H(y_i;\alpha,M)\right]}{\prod_{i=1}^{N_m} H(y_i;\alpha,M)} \rule[-5mm]{0.9pt}{17mm}_{\,
y_i=\theta(1-i)}.
\end{equation}
The word formal here means that at this time a set of ``random variables'' for us is just a
collection of numbers --- their joint ``moments''.

 Clearly, we have (formally, in the sense of ``moments'')
$$p_k(N;\r)=\sum_{j=1}^{R(k)} p_{k}(N)\{j\}.$$

\begin{lemma}
\label{lemma_vars_satisfy}
 ``Random variables''
 $p_{k}(N)\{j\}$ satisfy the assumptions of Lemma \ref{lemma_limit_gaussianity} with $\gamma=2$ and
 coefficients $c_L(k;N;j)$ corresponding to $p_{k}(N)\{j\}$ being of order $L^{d(k,N,j)}$ as $L\to\infty$, where
 $d(k,N,1)=1$ and $d(k,N,j)$ is a non-positive integer for $j>1$.
\end{lemma}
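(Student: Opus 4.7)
The plan is to make the joint ``moments'' \eqref{eq_auxiliary_moments} explicit as nested contour integrals and then match them with the framework of Lemma \ref{lemma_limit_gaussianity}. By Theorem \ref{theorem_decomposition_of_P}, each operator $\P^{k_i}_{N_i}\{j_i\}$ is a single admissible integral operator $\mathcal I_{N_i}$ in the sense of Definition \ref{definition_Integral_operator}: for $j_i=1$ it is the one-dimensional $(-\theta)^{-k_i}\DI^{[k_i]}_{N_i}$ (dimension $1$, degree $0$); for $j_i>1$ it corresponds to a graph $G\in\mathcal G$ of dimension $M_G$ (number of multivertices) and degree $D_G$ (sum of edge labels) satisfying $M_G-D_G\le 0$.

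The first task is a direct generalization of Proposition \ref{proposition_iteration_of_DI} to the composition of arbitrary admissible integral operators. The only $y$-dependent factors inside $\mathcal I_N$ come from the $G$-factors (the internal $\Cr_{i,j}(w_i-w_j)$ in Definition \ref{definition_Integral_operator} depend on $w$-differences only), so the telescoping argument behind $\Cr(i,j)$ in the proof of Proposition \ref{proposition_iteration_of_DI} applies verbatim. The composition $\mathcal I_{N_n}\cdots\mathcal I_{N_1}\left[\prod_m g(y_m)\right]$ thus equals $\prod_m g(y_m)$ times an integral whose integrand is the product of the single-operator integrands together with external cross factors $\Cr(i,j)$ analogous to those in Proposition \ref{proposition_iteration_of_DI}, now indexed by pairs of multivertices from distinct operator applications.

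Next, I would rescale every integration variable by $L$, writing $w=Lu$. At $y_m=\theta(1-m)$ the $y$-dependent ratios simplify by telescoping to
$$\prod_{m=1}^N \frac{v-y_m-\theta}{v-y_m}=\frac{v-\theta}{v+(N-1)\theta},\qquad \frac{H(y-1;\alpha,M)}{H(y;\alpha,M)}=\frac{\theta\alpha-y}{\theta\alpha+M\theta-y},$$
and substituting $v=Lu$ with $N\sim L\hat N$, $\alpha\sim L\hat\alpha$, $M\sim L\hat M$ both factors converge uniformly on compact subsets of the rescaled contours. The measure contributes $L^{M_G}$ (one factor of $L$ per integration variable), the explicit $(w_i-w_j)^{-d_{ij}}$ factors contribute $L^{-D_G}$, and the functions $\Cr_{i,j}(w_i-w_j)$ internal to each operator have finite limits at infinity by Definition \ref{definition_Integral_operator}. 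Therefore the single-operator contribution scales like $L^{M_G-D_G}$, giving $c_L(k,N,j)=L^{d(k,N,j)}(1+o(1))$ with $d(k,N,1)=1$ and $d(k,N,j)\le 0$ for $j>1$, exactly as claimed. The limits of the rescaled integrands play the role of $\mathfrak F^k$ in Lemma \ref{lemma_limit_gaussianity}, and $l(k,N,j)=M_G$.

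Finally, to verify that $\gamma=2$ I would analyze the external cross factors $\Cr(i,j)$. Using the explicit form from Proposition \ref{proposition_iteration_of_DI}, each elementary factor in $\Cr(i,j)$ has the shape $(1+s/B)(1-s/D)$ with $s=\theta|S_a^i|$ and $D-B=s+1$, and the algebraic identity
$$\left(1+\frac{s}{B}\right)\left(1-\frac{s}{D}\right)=1+\frac{s(s+1)}{BD}-\frac{s^2}{BD}=1+\frac{s}{BD}$$
combined with $B,D$ being of order $L$ after rescaling gives $\Cr(i,j)=1+L^{-2}V_L(i,j)$ with $V_L$ converging on compact subsets of $\{u^i_a\neq u^j_b\}$. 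The class $\aleph_K$ is defined as families of nested positively oriented contours separated by the appropriate distance parameter so that each individual operator remains admissible; the hereditary property (condition 3 of the setup of Lemma \ref{lemma_limit_gaussianity}) is immediate since a subfamily of nested admissible contours is itself nested and admissible. The main obstacle is the bookkeeping for compositions of many operators and checking that the precise cancellation $1/B-1/D=O(1/BD)$ survives in this generality rather than only in the $\DI^s_N$ case of Proposition \ref{proposition_iteration_of_DI}; this cancellation is essential, since otherwise $\gamma$ would be $1$ and Lemma \ref{lemma_limit_gaussianity} would not yield a Gaussian limit.
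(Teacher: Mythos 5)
Your proof takes essentially the same route as the paper's: identify each $\P^k_N\{j\}$ as a single admissible integral operator via Theorem~\ref{theorem_decomposition_of_P}, compose operators using (the natural generalization of) Proposition~\ref{proposition_iteration_of_DI}, rescale $w=Lu$, and read off the data for Lemma~\ref{lemma_limit_gaussianity}: dimension equals the number of $w$-variables, $c_L\sim L^{\dim-\deg}$, and $\gamma=2$ from the cancellation of first-order terms in the external cross factors. Your explicit identity $(1+s/B)(1-s/D)=1+s/(BD)$ when $D-B=s+1$ makes visible what the paper only asserts ("one can easily see that \dots\ the first order terms cancel out"), which is a genuine improvement in clarity.

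The closing concern you flag, namely whether the cancellation "survives in this generality," is already resolved by your own earlier observation: the only $y$-dependent pieces of a general admissible integral operator are the $G$-factors, so composing two such operators produces external cross factors of exactly the same shape as in Proposition~\ref{proposition_iteration_of_DI}, one for each pair of $G$-factors, and a product of terms that are each $1+O(L^{-2})$ is again $1+O(L^{-2})$. The internal $\Cr_{i,j}(w_i-w_j)$ prefactors of Definition~\ref{definition_Integral_operator} are $y$-independent and do not generate new cross factors. One small thing worth stating explicitly, which the paper does, is that the distance parameter of the admissible contour systems should be taken to grow linearly in $L$, so that after the substitution $w=Lu$ the admissibility condition becomes $L$-independent and the classes $\aleph_K$ are genuinely fixed; this also makes the uniform-convergence requirements in the hypotheses of Lemma~\ref{lemma_limit_gaussianity} straightforward to verify.
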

\begin{proof}
 We want to compute the ``moments'' of $p_{k}(N)\{j\}$. For that we
 use Theorem \ref{theorem_decomposition_of_P} and Definition \ref{definition_Integral_operator}
 to write $p_{k}(N)\{j\}$ as an integral operator, then apply the formula \eqref{eq_auxiliary_moments}
 and Proposition \ref{proposition_iteration_of_DI}. Finally, we  change the variables $w^j=L u^j$. Let us specialize all the data required for the application of
Lemma \ref{lemma_limit_gaussianity}.

\begin{enumerate}
\item The dimension $l$ corresponding to $p_{k}(N)\{j\}$ is the dimension of $j$th integral
operator in the decomposition of $\P^k_N$ (see Theorem \ref{theorem_decomposition_of_P}).

\item The contours of integration are (scaled by $L$) nested admissible contours arising in Proposition
\ref{proposition_iteration_of_DI}. We further assume that the distance parameter grows linearly in
$L$, thus, after rescaling by $L$, the admissibility condition does not depend on $L$.

\item The definition of admissible contours (see beginning of Section \ref{Section_integral})
readily implies this property.

\item The functions $\mathcal F^k_L$ are integrands in Definition
\ref{definition_Integral_operator} after the change of variables $w^j=L u^j$. If we extract the
prefactor $L^{dim-deg}$ (where $dim$ is the dimension of the integral and $deg$ is its
 degree), which will be absorbed by contants $C_L(k)$, then the functions $\mathcal F^k_L$ clearly
 converge to analytic limits.
\item The cross-terms in the formulas for joint ``moments'' were explicitly computed in Proposition
 \ref{proposition_iteration_of_DI} and one can easily see that after change of variables $w^j=Lu^j$
 and expansion in power series in $L^{-1}$, the first order terms cancel out and we get the
 required expansion with $\gamma=2$.
\item Each constant $C_L(k)$ is the product of $c$ from Definition \ref{def_Integral_operator} and
$L^{dim-deg}$ from property 4. Note that Theorem \ref{theorem_decomposition_of_P} yields that this
$dim-deg$ corresponding to $p_{k}(N)\{j\}$ is $1$ for $j=1$ and is
 less than $1$ for $j>1$.
\item $\gamma=2$.\qedhere
\end{enumerate}
\end{proof}

Now we apply Lemma \ref{lemma_limit_gaussianity} to the random variables $p_{k_t}(N_t)\{j\}$,
$t=1,\dots, h$, $j=1,\dots R(k_t)$, and conclude that their moments converge (after rescaling) to
those of Gaussian random variables. Since
\begin{equation}
 p_{k_t}(N_t;\r)-\E p_{k_t}(N_t;\r)=\sum_{j=1}^{R(k_t)} (p_{k_t}(N_t)\{j\}-\E p_{k_t}(N_t)\{j\}),
\end{equation}
and by Lemma \ref{lemma_vars_satisfy} in the last sum the $j$th term is of order
$L^{d(k_t,N_t,j)-1}$ (where $-1=-\gamma/2$), we conclude that as $L\to\infty$ all the terms except
for $j=1$ vanish. Therefore, the moments of random vector $(p_{k_t}(N_t)-\E p_{k_t}(N_t))_{t=1}^h$
converge to those of the Gaussian random vector with mean $0$ and the same covariance as the limit
(centered) covariance of variables $p_{k_t}(N_t)\{1\}$.

In the rest of the proof we compute this limit covariance. By the definition, $\P_{k}(N)\{1\}$ is
$-(\theta)^{-k}\DI^{[k]}_N$, and the operator $\DI^{[k]}_N$ acts on product functions via (we are
using Definition \ref{def_Integral_operator})
\begin{multline}
 \DI^{[k]}_N \prod_{i=1}^N g(y_i) = \left(\prod_{i=1}^N g(y_i)\right) \frac{\prod_{1\le i<j\le k}
\theta^2(j-i)^2}{\prod_{1\le i<j\le k} \theta(j-i+1) \prod_{1\le i<j-1\le k} \theta(j-i-1)} \\
\times \frac{1}{2\pi \i} \oint
 \prod_{m=1}^N \frac{v-y_m-\theta}{v+(k-1)\theta-y_m} \prod_{i=0}^{k-1} \frac{ g(v+\theta
 i-1)}{g(v+\theta i)} dv
\\=
 \left(\prod_{i=1}^N g(y_i)\right) \frac{\theta^{k-1}}{2 k\pi \i } \oint
 \prod_{m=1}^N \frac{v-y_m-\theta}{v+(k-1)\theta-y_m} \prod_{i=0}^{k-1} \frac{ g(v+\theta
 i-1)}{g(v+\theta i)} dv,
\label{eq_P_operator_refined}
\end{multline}
where the contour encloses $\{y_m-(k-1)\theta\}_{m=1}^N$. Therefore, as in Proposition
\ref{proposition_iteration_of_DI},

\begin{flalign}
 \notag \E \bigg( p_{k_1}(N_1)\{1\}  p_{k_2}(N_2)\{1\} \bigg) - \E (
 p_{k_1}(N_1)\{1\})\E(p_{k_2}(N_2)\{1\})\hskip 3.5cm\\ =
 \frac{\theta^{-2}}{k_1 k_2} \cdot \frac{1}{(2\pi \i)^2}
 \oint\oint dv_1 dv_2   \prod_{r=1}^2 \left(\prod_{m=1}^{N_r} \frac{v_r-y_m-\theta}{v_r+(k_r-1)\theta-y_m} \prod_{i=0}^{k_r-1} \frac{ g(v_r+\theta
 i-1)}{g(v_r+\theta i)}\right)
 \label{eq_x23}\\
 \times \left(\prod_{i=0}^{k_2-1} \frac{v_1-v_2-\theta i-\theta+1}{v_1-v_2+{(k_1-1)\theta-\theta
i+1}} \cdot \frac{v_1-v_2+{(k_1-1)\theta-\theta i}}{v_1-v_2-\theta i-\theta} -1 \right),
 \label{eq_x4}
\end{flalign}

where $y_m=\theta(1-m)$, contours are nested ($v_1$ is larger) and enclose the poles at
$y_m-(k_r-1)\theta$. Using $g(z)=H(z;\alpha,M)$ from \eqref{eq_definition_of_H} we have
$$
 \frac{g(z-1)}{g(z)}= \frac{z-{\theta\alpha}}{z{-\theta\alpha-M\theta}}.
$$
The part \eqref{eq_x23} of the integrand   simplifies  to
$$
 \prod_{r=1}^2 \left( \frac{(v_r-\theta)v_r(v_r+\theta)\cdots
(v_r+(k_r-2)\theta)}{(v_r+\theta(N_r-2+1))\cdots(v_r+\theta(N_r-2+k_r))} \prod_{i=0}^{k_r-1}
\frac{v_r+\theta i-{\theta\alpha}}{v_r+\theta i{-\theta\alpha-M\theta}} \right)
$$
Elementary computations reveal that the part \eqref{eq_x4} of the integrand as a power series in
$(v_1-v_2)^{-1}$ is
\begin{multline}
 -1 +\prod_{i=0}^{k_2-1}\left(1-\frac{-\theta i -\theta}{v_1-v_2}+\frac{\theta^2
 (i+1)^2}{(v_1-v_2)^2}+O\left((v_1-v_2)^{-3}\right)\right)\left(1+\frac{-\theta i -\theta +1}{v_1-v_2}\right)\\ \times
 \left(1-\frac{(k_1-1-i)\theta+1}{v_1-v_2}+\frac{((k_1-1-i)\theta+1)^2}{(v_1-v_2)^2}+O\left((v_1-v_2)^{-3}\right)\right)\\
 \times \left(1+\frac{(k_1-1-i)\theta}{v_1-v_2}\right)
 = \frac{\theta
 k_1 k_2 }{(v_1-v_2)^2} +O\left((v_1-v_2)^{-3}\right).
\end{multline}
Changing the variables $v_i=\theta L u_i$  transforms \eqref{eq_x23}, \eqref{eq_x4} into
\begin{equation}
  \frac{\theta^{-1}}{(2\pi \i)^2}
 \oint\oint \frac{du_1 du_2}{(u_1-u_2)^2}
 \prod_{r=1}^2 \left( \frac{u_r}{(u_r+\hat N_r)} \cdot \frac{u_r-{\hat\alpha}}{(u_r-\hat \alpha-\hat M)}
 \right)^{k_r} + O(L^{-1}),
\end{equation}
where contours are nested ($u_2$ is smaller) and enclose the singularities at $-\hat N_1$, $-\hat
N_2$ (but not at  $\hat \alpha+\hat M$) . Sending $L$ to infinity completes the proof.

\subsection{Preliminaries on the two--dimensional Gaussian Free Field}
\label{Section_GFF} In this section we briefly recall what is the 2d Gaussian Free Field. An
interested reader is referred to \cite{Sh}, \cite[Section 4]{Dubedat}, \cite[Section 2]{HMP} and
references therein for a more detailed discussion.

\begin{definition} The Gaussian Free Field with Dirichlet boundary conditions in the upper half--plane $\mathbb H$
is a (generalized) centered Gaussian random field $\mathcal F$  on $\mathbb H$ with covariance
given by
\begin{equation}
\label{eq_GFF_cov}
 \mathbb E (\mathcal F(z) \mathcal F(w)) = -\frac{1}{2\pi} \ln\left|\frac{z-w}{z-\overline
 w}\right|, \quad z,w\in\mathbb H.
\end{equation}
\end{definition}
We note that although $\mathcal F$ can be viewed as a random element of a certain functional
space,
 there is no such thing as a value of $\mathcal F$ at a given point $z$ (this is related to the
singularity of \eqref{eq_GFF_cov} at $z=w$).

Nevertheless, $\mathcal F$ inherits an important property of conventional functions: it can be
integrated with respect to (smooth enough) measures. Omitting the description of the required
smoothness of measures, we record this property in the following two special cases that we present
without proofs.

\begin{lemma}
\label{Lemma_GFF_by_smooth_measure}
 Let $\mu$ be an absolutely continuous (with respect to the Lebesgue measure) measure on $\mathbb H$ whose density is a smooth function $g(z)$ with
compact support. Then
$$\int_{\mathbb H} \mathcal F d\mu=\int_{\mathbb H} \mathcal F(u) g(u) du $$ is a well-defined centered Gaussian random variable.
Moreover, if we have two such measures $\mu_1$, $\mu_2$ (with densities $g_1$, $g_2$), then
\begin{multline*}
 \mathbb E \left[ \Big(\int_{\mathbb H}    \mathcal F(u) g_1(u) du\Big) \cdot \Big(\int_{\mathbb H}   \mathcal
   F(u) g_2(u)
 du\Big) \right]\\
  =\int_{\mathbb H^2} g_1(z) \left(-\frac{1}{2\pi} \ln\left|\frac{z-w}{z-\overline
 w}\right|\right) g_2(w) dz dw = \int_{\mathbb H} g_1(u) \Delta^{-1} g_2(u) du,
\end{multline*}
where $\Delta^{-1}$ is the inverse of the Laplace operator with Dirichlet boundary conditions in
$\mathbb H$.
\end{lemma}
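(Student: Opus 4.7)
The plan is to construct the linear functional $g \mapsto \int_{\mathbb H} \mathcal F(u) g(u)\, du$ via an orthonormal basis expansion of the GFF, and then verify the covariance identity by recognizing the kernel $K(z,w) := -\tfrac{1}{2\pi}\ln|(z-w)/(z-\overline w)|$ as the Dirichlet Green's function of $-\Delta$ on $\mathbb H$.

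First I would check that $\iint_{\mathbb H^2} g_1(z) K(z,w) g_2(w)\, dz\, dw$ converges absolutely. The kernel $K(z,w)$ is smooth off the diagonal, positive on $\mathbb H \times \mathbb H$, and has only a logarithmic singularity at $z=w$, which is locally integrable in two dimensions; boundedness and compact support of $g_1, g_2$ do the rest. This simultaneously shows that the right-hand side of the stated covariance formula is well-defined.

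Next I would construct $\int \mathcal F(u) g(u)\, du$ as an $L^2(\mathbb P)$ limit. Fix an orthonormal basis $\{e_n\}$ of the Sobolev space $H^1_0(\mathbb H)$ with respect to the Dirichlet inner product $\langle u, v\rangle_\nabla = \int \nabla u \cdot \nabla v\, du$, and let $\{\xi_n\}$ be i.i.d.\ standard Gaussians. The formal series $\mathcal F = \sum_n \xi_n e_n$ realizes the GFF as a random element of a negative Sobolev space; the crucial point is that for any smooth compactly supported $g$ one has $\|g\|_{H^{-1}}^2 = \sum_n |\langle e_n, g\rangle_{L^2}|^2 < \infty$, so the pairing $\langle \mathcal F, g\rangle := \sum_n \xi_n \langle e_n, g\rangle_{L^2}$ converges in $L^2(\mathbb P)$ and defines a centered Gaussian with variance $\|g\|_{H^{-1}}^2$. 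Applied jointly to $g_1$ and $g_2$, linearity and joint Gaussianity of the $\xi_n$ force $(\langle \mathcal F, g_1\rangle, \langle \mathcal F, g_2\rangle)$ to be a centered Gaussian vector with covariance $\sum_n \langle e_n, g_1\rangle_{L^2} \langle e_n, g_2\rangle_{L^2} = \langle g_1, g_2\rangle_{H^{-1}}$.

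Finally I would identify this abstract covariance with the two concrete expressions in the statement by recalling that $K$ is the Green's function: for smooth compactly supported $g_2$, standard elliptic theory shows that $h(z) := \int_{\mathbb H} K(z,w) g_2(w)\, dw$ solves $-\Delta h = g_2$ in $\mathbb H$ with $h|_{\partial \mathbb H} = 0$, so in the paper's notation $h = \Delta^{-1} g_2$ (with the convention that $\Delta^{-1}$ denotes the inverse of the Dirichlet Laplacian as a positive operator). Then $\langle g_1, g_2\rangle_{H^{-1}} = \int_{\mathbb H} g_1(z) h(z)\, dz = \int g_1 \Delta^{-1} g_2\, du$, and Fubini rewrites this as $\iint g_1(z) K(z,w) g_2(w)\, dz\, dw$. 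The only genuine technical step is the rigorous interpretation of the series $\sum_n \xi_n e_n$ as a random generalized function; once that is granted (I would simply cite \cite{Sh} or \cite{Dubedat}), the lemma reduces to the elementary facts that compactly supported smooth $g$ lies in $H^{-1}$ and that $K$ is the Dirichlet Green's kernel. The main conceptual obstacle, therefore, is not the computation but the setup of a functional-analytic framework in which all three objects (the random distribution $\mathcal F$, the pairing $\langle \mathcal F, g\rangle$, and the operator $\Delta^{-1}$) live coherently.
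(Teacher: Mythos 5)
The paper states Lemma \ref{Lemma_GFF_by_smooth_measure} without proof, explicitly saying ``we record this property in the following two special cases that we present without proofs'' and deferring to the GFF references \cite{Sh}, \cite[Section~4]{Dubedat}, \cite[Section~2]{HMP}. Your construction --- expanding $\mathcal F=\sum_n\xi_n e_n$ over an orthonormal basis of the Dirichlet energy space, pairing against $g\in H^{-1}$, computing the variance as $\|g\|_{H^{-1}}^2=\sum_n|\langle e_n,g\rangle_{L^2}|^2$, and identifying $K(z,w)=-\tfrac{1}{2\pi}\ln|(z-w)/(z-\overline w)|$ as the Dirichlet Green's kernel on $\mathbb H$ --- is precisely the standard argument found in those references and is correct; your reading of $\Delta^{-1}$ as the (positive) inverse of the Dirichlet Laplacian correctly resolves the paper's slightly loose sign convention.
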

\begin{lemma}
\label{Lemma_GFF_by_curve_measure}
 Let $\mu$ be a measure on $\mathbb H$ whose support is a smooth curve $\gamma$
 and whose density with respect to the natural (arc-length) measure on $\gamma$ is
 given by a smooth function $g(z)$ such that
\begin{equation}
\label{eq_x13} \int\int_{\gamma\times\gamma} g(z) \left(-\frac{1}{2\pi}
\ln\left|\frac{z-w}{z-\overline
 w}\right|\right) g(w) dz dw <\infty.
\end{equation}
Then
$$\int_{\mathbb H} \mathcal F d\mu=\int_{\gamma} \mathcal F(u) g(u) du $$ is a well-defined Gaussian centered random
variable of variance given by \eqref{eq_x13}. Moreover, if we have two such measures $\mu_1$,
$\mu_2$ (with two curves $\gamma_1$, $\gamma_2$ and two densities $g_1$, $g_2$), then
\begin{multline*}
 \mathbb E \left[ \Big(\int_{\gamma_1}    \mathcal F(u) g_1(u) du\Big) \cdot \Big(\int_{\gamma_2}   \mathcal
   F(u) g_2(u)
 du\Big) \right]
  =\int\int_{\gamma_1\times \gamma_2} g_1(z) \left(-\frac{1}{2\pi} \ln\left|\frac{z-w}{z-\overline
 w}\right|\right) g_2(w) dz dw.
\end{multline*}
\end{lemma}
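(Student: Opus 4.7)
The plan is to define $\int_\gamma \mathcal F(u) g(u) du$ by approximation from the bulk case already handled by Lemma \ref{Lemma_GFF_by_smooth_measure}, and to extract its Gaussianity and variance from the limit. The guiding principle is that $\mathcal F$ extends by continuity to a centered isonormal Gaussian process indexed by the Hilbert completion $\mathcal H$ of smooth compactly supported signed measures on $\mathbb H$ under the quadratic form
$$ \|\mu\|_{\mathcal H}^2 \;=\; \int_{\mathbb H\times\mathbb H} K(z,w)\, d\mu(z)\, d\mu(w), \qquad K(z,w) \;=\; -\frac{1}{2\pi}\ln\!\left|\frac{z-w}{z-\overline w}\right|, $$
and the finiteness assumption \eqref{eq_x13} is exactly the statement that the curve measure $d\mu = g\, d|\gamma|$ belongs to $\mathcal H$.

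Concretely, I would fix a smooth radially symmetric bump $\phi$ on $\mathbb R^2$ with $\int\phi = 1$, set $\phi_\varepsilon(z) = \varepsilon^{-2}\phi(z/\varepsilon)$, and define the bulk density
$$ g_\varepsilon(z) \;=\; \int_\gamma g(u)\, \phi_\varepsilon(z-u)\, d|\gamma|(u), $$
truncated smoothly away from $\partial\mathbb H$ when necessary so that $g_\varepsilon$ is compactly supported in the open upper half-plane. By Lemma \ref{Lemma_GFF_by_smooth_measure} each $X_\varepsilon := \int_{\mathbb H} \mathcal F(u) g_\varepsilon(u)\, du$ is a centered Gaussian random variable with
$$ \mathbb E\bigl(X_{\varepsilon_1} X_{\varepsilon_2}\bigr) \;=\; \int_{\mathbb H\times\mathbb H} g_{\varepsilon_1}(z)\, K(z,w)\, g_{\varepsilon_2}(w)\, dz\, dw. $$
Using Fubini, the local $L^p$-integrability of $K$ for every finite $p$, and the hypothesis \eqref{eq_x13}, one verifies by dominated convergence that, as $\varepsilon_1,\varepsilon_2\to 0$, the right-hand side converges to the double curve integral in \eqref{eq_x13}. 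Hence $(X_\varepsilon)$ is Cauchy in $L^2$; I would define $\int_\gamma \mathcal F(u) g(u)\, du$ as its $L^2$-limit, whence it is automatically centered Gaussian with the claimed variance. The same mollification carried out simultaneously on two curves, together with the polarization identity and the bilinearity of the covariance form, yields the covariance formula for a pair $(\mu_1,\mu_2)$.

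The only genuine technical issue is the convergence of the covariance integrals, because $K(z,w)$ has a logarithmic singularity on the diagonal while $g_\varepsilon$ concentrates onto $\gamma$. This is however harmless: the singularity is $L^1_{\mathrm{loc}}$ against two-dimensional area measure, and integrating one variable against $g$ followed by convolution with $\phi_\varepsilon$ produces bounds uniform in $\varepsilon$, controlled by the finite quantity $\iint_{\gamma\times\gamma} g(z) K(z,w) g(w)\, dz\, dw$. The Dirichlet boundary condition built into $K$ (it vanishes as $z \to \mathbb R$) keeps the integrals finite if $\gamma$ happens to approach $\partial\mathbb H$, so one may truncate $g$ near the real axis without loss and recover the full statement by a further monotone passage to the limit.
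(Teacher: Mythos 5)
The paper does not prove this lemma: it is stated, together with Lemma \ref{Lemma_GFF_by_smooth_measure}, explicitly ``without proofs'' as a standard fact about the GFF, with references to \cite{Sh}, \cite{Dubedat}, \cite{HMP}. So there is no proof in the paper for direct comparison; what follows is an assessment of your argument on its own terms.

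Your mollification strategy---realize the curve measure as an $L^2$ limit of bulk test measures, invoke Lemma \ref{Lemma_GFF_by_smooth_measure} for each mollified density, and pass to the limit inside the Gaussian Hilbert space---is exactly the standard way this fact is established, and the overall plan is sound. The one step I would press you on is the claim that the covariance integrals $\iint g_{\varepsilon_1} K\, g_{\varepsilon_2}$ converge to $\iint_{\gamma\times\gamma} g\, K\, g$ ``by dominated convergence'' using local $L^p$-integrability of $K$ and Fubini. That justification is not quite right as stated: the smoothed kernel $(\phi_\varepsilon\otimes\phi_\varepsilon)*K$ blows up like $-\log\varepsilon$ on the diagonal, so there is no single pointwise dominating function over the curve. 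The clean route is to use the spectral/Fourier description of the energy form (write $\|\mu\|_{\mathcal H}^2$ in terms of $\widehat\mu$ against the multiplier $|\xi|^{-2}$, or equivalently use the positive definiteness of $K=-\Delta^{-1}$): then $\widehat{g_\varepsilon}=\widehat{\phi_\varepsilon}\,\widehat\mu$ with $|\widehat{\phi_\varepsilon}|\le 1$ and $\widehat{\phi_\varepsilon}\to 1$ pointwise, and dominated convergence in \emph{frequency space}, with dominating function $|\widehat\mu(\xi)|^2|\xi|^{-2}$ (finite by hypothesis \eqref{eq_x13}), gives both the uniform bound and the convergence. With that replacement the argument is complete; the handling of the truncation near $\partial\mathbb H$ via the Dirichlet decay of $K$ and a final monotone limit is fine.
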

 In principle, the above two lemmas can be taken as an alternative
definition of the Gaussian Free Field as a random functional on (smooth enough) measures.

Another property of functions that $\mathcal F$ inherits is the notion of pullback.
\begin{definition}
\label{Definition_GFF_pullback} Given a domain $D$ and a map $\Omega: D\to \mathbb
H$, the pullback $\mathcal F \circ \Omega$ is a generalized centered Gaussian Field
on $D$ with covariance
$$
 \mathbb E (\mathcal F(\Omega(z)) \mathcal F(\Omega(w))) = -\frac{1}{2\pi} \ln\left|\frac{\Omega(z)-\Omega(w)}{\Omega(z)-\overline
 \Omega(w)}\right|, \quad z,w\in D.
$$
Integrals of $\mathcal F \circ \Omega$ with respect to measures can be computed through
$$
 \int_{D} \left(\mathcal F \circ \Omega\right) d\mu = \int_{\mathbb H} \mathcal F d\Omega(\mu),
$$
where $d\Omega(\mu)$ stands for the pushforward of measure $\mu$.
\end{definition}
The above definition immediately implies the following analogue of Lemma
\ref{Lemma_GFF_by_curve_measure} (there is a similar analogue of Lemma
\ref{Lemma_GFF_by_smooth_measure}).
\begin{lemma}
\label{Lemma_GFF_pullback_by_curve_measure}
 In notation of Definition \ref{Definition_GFF_pullback}, let $\mu$ be a measure on $D$ whose support is a smooth curve $\gamma$ and whose density with respect to the natural (length) measure on $\gamma$ is
 given by a smooth function $g(z)$ such that
\begin{equation}
\label{eq_x11} \int\int_{\gamma\times\gamma} g(z) \left(-\frac{1}{2\pi}
\ln\left|\frac{\Omega(z)-\Omega(w)}{\Omega(z)-\overline
 \Omega(w)}\right|\right) g(w) dz dw <\infty.
\end{equation}
  Then
$$\int_{D} \left(\mathcal F\circ\Omega\right) d\mu=\int_{\gamma} \mathcal F(\Omega(u)) g(u) du $$ is a well-defined Gaussian centered random variable of variance given by \eqref{eq_x11}.
Moreover, if we have two such measures $\mu_1$, $\mu_2$ (with two curves $\gamma_1$, $\gamma_2$
and two functions $g_1$, $g_2$), then
\begin{multline*}
 \mathbb E \left[ \Big(\int_{\gamma_1}    \mathcal F(\Omega(u)) g_1(u) du\Big) \cdot \Big(\int_{\gamma_2}   \mathcal
   F(\Omega(u)) g_2(u)
 du\Big) \right]\\
  =\int\int_{\gamma_1\times \gamma_2} g_1(z) \left(-\frac{1}{2\pi} \ln\left|\frac{\Omega(z)-\Omega(w)}{\Omega(z)-\overline
 \Omega(w)}\right|\right) g_2(w) dz dw.
\end{multline*}
\end{lemma}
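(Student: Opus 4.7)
The plan is to reduce this lemma directly to Lemma \ref{Lemma_GFF_by_curve_measure} via Definition \ref{Definition_GFF_pullback} together with a change of variables along the curve. By the pullback definition,
$$
 \int_{\gamma} \mathcal{F}(\Omega(u)) g(u)\,du = \int_D (\mathcal{F}\circ\Omega)\,d\mu = \int_{\mathbb{H}} \mathcal{F}\,d\Omega(\mu),
$$
so all one needs to understand is the pushforward measure $\Omega(\mu)$ on $\mathbb{H}$ and then invoke the already-established half-plane statement.

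Next, I would identify $\Omega(\mu)$ explicitly. Since $\Omega$ is a bijection (and, in the applications intended in this paper, a smooth/holomorphic one), the image $\tilde{\gamma}:=\Omega(\gamma)$ is again a smooth curve in $\mathbb{H}$. If $\gamma$ is parametrized by arc length $s$, then $\tilde{\gamma}$ is parametrized by $\tilde{s}$ with $d\tilde{s}/ds=|D\Omega(\gamma(s))\cdot\gamma'(s)|$, and the pushforward $\Omega(\mu)$ is the measure on $\tilde{\gamma}$ whose density with respect to arc length on $\tilde{\gamma}$ is $\tilde{g}(z):=g(\Omega^{-1}(z))/|D\Omega(\Omega^{-1}(z))\cdot \gamma'(\Omega^{-1}(z))|$. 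Equivalently, the change of variables $z=\Omega(u)$ rewrites $\int_{\gamma}\mathcal{F}(\Omega(u))g(u)\,du$ as $\int_{\tilde{\gamma}}\mathcal{F}(z)\tilde{g}(z)|dz|$.

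Then I apply Lemma \ref{Lemma_GFF_by_curve_measure} to $\tilde{g}$ on $\tilde{\gamma}$. The finiteness hypothesis of that lemma for $\tilde{g}$ on $\tilde{\gamma}\times\tilde{\gamma}$ is exactly the pullback of \eqref{eq_x11}, because the same change of variables simultaneously converts the arc-length elements $|dz||dw|$ on $\tilde{\gamma}\times\tilde{\gamma}$ into $g(u)g(u')\,du\,du'/(\tilde{g}(\Omega(u))\tilde{g}(\Omega(u')))$-compensated expressions that cancel after multiplying by $\tilde{g}\tilde{g}$, and it converts the kernel $-\frac{1}{2\pi}\ln|(z-w)/(z-\overline w)|$ on $\tilde{\gamma}\times\tilde{\gamma}$ into $-\frac{1}{2\pi}\ln|(\Omega(u)-\Omega(u'))/(\Omega(u)-\overline{\Omega(u')})|$ on $\gamma\times\gamma$. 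Lemma \ref{Lemma_GFF_by_curve_measure} then immediately yields that $\int_{\gamma}\mathcal{F}(\Omega(u))g(u)\,du$ is a well-defined centered Gaussian random variable whose variance equals the right-hand side of \eqref{eq_x11}. The bilinear (covariance) statement is obtained identically by applying the two-measure version of Lemma \ref{Lemma_GFF_by_curve_measure} to the pair $(\tilde{g}_1,\tilde{g}_2)$ on $(\tilde{\gamma}_1,\tilde{\gamma}_2)$ and then substituting back.

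No genuine obstacle arises: the content is essentially a bookkeeping change of variables combined with the previous lemma. The only points deserving care are the mild regularity assumptions implicit in the phrase ``smooth curve/density'' (one must assume $\Omega$ is sufficiently smooth near $\gamma$ so that $\tilde{\gamma}$ is smooth and $\tilde{g}$ inherits the smoothness needed for Lemma \ref{Lemma_GFF_by_curve_measure}), and the compatibility of orientations/parametrizations under pushforward; both are automatic for the applications in the sequel where $\Omega$ will be a conformal map.
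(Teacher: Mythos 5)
Your proposal is correct and follows essentially the route the paper intends: the lemma is stated in the text as an immediate consequence of Definition \ref{Definition_GFF_pullback} together with Lemma \ref{Lemma_GFF_by_curve_measure}, and you make this explicit by identifying the pushforward $\Omega(\mu)$ as a curve measure on $\Omega(\gamma)$ with the appropriately rescaled density and then changing variables back. The bookkeeping of arc-length Jacobians is right (the $|D\Omega\cdot\gamma'|$ factors cancel), and one could alternatively obtain the bilinear formula even more directly from the covariance formula in Definition \ref{Definition_GFF_pullback} by Fubini, but the two readings amount to the same thing.
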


As a final remark of this section, we note that the Gaussian Free Field is a conformally invariant
object: if $\phi$ is a conformal automorphism of $\mathbb H$ (i.e.\ a real Moebius
transformation), then the distributions of $\mathcal F$ and $\mathcal F \circ \phi$ are the same.

\subsection{Identification of the limit object}
\label{section_identification}

The aim of this section is to interpret Theorem \ref{theorem_joint_CLT} as convergence of a random
height function to a pullback of the Gaussian Free Field.

\begin{lemma} \label{lemma_Maple}
We have
$$
 \frac{u}{u+a}\cdot \frac{u-b}{u-b-c} = q_1 + q_2 \frac{u-b-c}{u+a} + q_3 \frac{u+a}{u-b-c},
$$
where
$$
 q_1=\frac{2ac+ba+b^2+bc}{(a+b+c)^2},\quad q_2 =\frac{(b+a)a}{(a+b+c)^2},\quad q_3=\frac{c(b+c)}{(a+b+c)^2}.
$$
\end{lemma}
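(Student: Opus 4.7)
The plan is to treat this as a purely algebraic identity in the rational function field $\mathbb{C}(u)$ (with $a,b,c$ treated as parameters) and verify it by matching poles, residues, and behavior at infinity. No randomness, no asymptotics — this is a bookkeeping lemma, almost certainly introduced so that the integrand appearing in the covariance formula \eqref{eq_limit_covariance}, namely $\bigl(\tfrac{u}{u+\hat N}\cdot \tfrac{u-\hat\alpha}{u-\hat\alpha-\hat M}\bigr)^{k}$, can be rewritten as a linear combination of three simpler rational functions whose contour integrals admit a clean geometric (GFF) interpretation under the map $\Omega$.

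The first step is to observe that, as functions of $u$, both sides are rational with at most simple poles at $u=-a$ and $u=b+c$. I would compute the three defining data of each side: the limit at $u\to\infty$, the residue at $u=-a$, and the residue at $u=b+c$. On the left, the limit at infinity is $1$, the residue at $u=-a$ is $-a\cdot\frac{-a-b}{-a-b-c}=-\frac{a(a+b)}{a+b+c}$, and the residue at $u=b+c$ is $(b+c)\cdot\frac{c}{a+b+c}=\frac{c(b+c)}{a+b+c}$. On the right, only the $q_2$ term contributes to the pole at $u=-a$ (with residue $-q_2(a+b+c)$), only the $q_3$ term contributes at $u=b+c$ (with residue $q_3(a+b+c)$), and the limit at infinity is $q_1+q_2+q_3$.

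Matching these three data gives three scalar equations in $q_1,q_2,q_3$, namely
\begin{equation*}
  q_1+q_2+q_3=1,\quad q_2(a+b+c)=\frac{a(a+b)}{a+b+c},\quad q_3(a+b+c)=\frac{c(b+c)}{a+b+c}.
\end{equation*}
The last two immediately give the stated formulas for $q_2$ and $q_3$, and then $q_1=1-q_2-q_3$ simplifies, using $(a+b+c)^2=a^2+b^2+c^2+2ab+2ac+2bc$, to $\frac{2ac+ab+b^2+bc}{(a+b+c)^2}$, which is exactly the claimed value of $q_1$.

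Since two rational functions with the same set of (at most) simple poles, the same residues at those poles, and the same value at infinity must coincide, the identity follows. The only thing that could go wrong is an arithmetic slip in the residue or in the numerator of $q_1$, so the ``hard'' part is just being careful with signs in the residue at $u=-a$; as a sanity check one can equivalently clear denominators, multiply through by $(u+a)(u-b-c)$, and verify that the coefficients of $u^2$, $u^1$, $u^0$ on both sides agree — this gives exactly the same three linear conditions on $q_1,q_2,q_3$ and is essentially a one-line calculation.
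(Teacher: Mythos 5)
Your proposal is correct: matching the value at infinity and the residues at $u=-a$ and $u=b+c$ is exactly the partial-fraction bookkeeping behind the identity, and your arithmetic checks out. The paper's proof is simply the phrase ``Straightforward computations,'' so your argument is just a clean, explicit way of carrying out what the paper leaves to the reader.
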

\begin{proof} Straightforward computations. \end{proof}

\begin{lemma}
\label{lemma_circles_of_real} For $a,b,c>0$ the functon
\begin{equation}
\label{eq_bijection}
 u\mapsto\frac{u}{u+a}\cdot \frac{u-b}{u-b-c}
\end{equation} is real on the circle with center $\frac{a(b+c)}{a-c}$ and radius
$\frac{\sqrt{ac(a+b)(c+b)}}{|a-c|}$. When $a=c$, this circle becomes a vertical line $\Re u=b/2$.
\end{lemma}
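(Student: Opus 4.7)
The plan is to use the decomposition from Lemma \ref{lemma_Maple} to reduce the question to a standard Apollonius-circle computation. Setting
\[
 w := \frac{u-b-c}{u+a},
\]
Lemma \ref{lemma_Maple} writes $f(u) = q_1 + q_2 w + q_3/w$ with $q_1,q_2,q_3\in\mathbb{R}$ and, crucially, $q_2, q_3 > 0$ for $a,b,c>0$. Since $q_1$ is real, $f(u)\in\mathbb{R}$ if and only if $\operatorname{Im}(q_2 w + q_3/w) = 0$. Writing $w = re^{i\phi}$ with $r>0$, the imaginary part equals $(q_2 r - q_3/r)\sin\phi$, which vanishes either when $\sin\phi = 0$ (equivalently $w\in\mathbb{R}$, i.e., $u\in\mathbb{R}$) or when $|w|^2 = q_3/q_2 = c(b+c)/(a(a+b))$. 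The real-axis solution is a trivial source of real values of $f$; the nontrivial locus is the one to identify with the asserted circle.

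Set $\lambda := q_3/q_2 = c(b+c)/(a(a+b))$. The equation $|u-(b+c)|^2 = \lambda\,|u+a|^2$ is an Apollonius circle when $\lambda\neq 1$ and a straight line when $\lambda = 1$. I will first handle $a=c$: here $\lambda = c(b+c)/(c(c+b)) = 1$, and expansion yields
\[
 -2(a+b+c)\,\Re u + (b+c)^2 - a^2 = 0,
\]
which with $a=c$ simplifies to $\Re u = b/2$, matching the claimed vertical line.

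For $a\neq c$, completing the square in $|u-(b+c)|^2 = \lambda|u+a|^2$ gives a circle centered at $x_0 = \bigl((b+c)+\lambda a\bigr)/(1-\lambda)$ with $r^2 = x_0^2 - \bigl((b+c)^2 - \lambda a^2\bigr)/(1-\lambda)$. The key factorizations
\[
 1 - \lambda = \frac{(a-c)(a+b+c)}{a(a+b)}, \qquad (b+c)+\lambda a = \frac{(b+c)(a+b+c)}{a+b}
\]
make the $(a+b+c)$ factors cancel and produce $x_0 = a(b+c)/(a-c)$ immediately. An analogous short manipulation, using $(b+c)^2 - \lambda a^2 = b(b+c)(a+b+c)/(a+b)$, yields $r^2 = ac(a+b)(b+c)/(a-c)^2$, matching the stated radius.

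The only potential obstacle is the algebra in the last paragraph, but the shared factor $(a+b+c)$ in both $1-\lambda$ and $(b+c)+\lambda a$ (and in $(b+c)^2 - \lambda a^2$) guarantees that the cancellations collapse everything to the compact closed-form expressions in the statement; no genuine difficulty remains.
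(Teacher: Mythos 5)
Your proof is correct and follows the same route as the paper: decompose $f(u)=q_1+q_2w+q_3/w$ via Lemma~\ref{lemma_Maple} with $w=(u-b-c)/(u+a)$, observe that off the real axis the expression is real precisely on $|w|^2=q_3/q_2$, and translate that back to an Apollonius circle in the $u$-plane. The paper stops after identifying $|w|=\sqrt{q_3/q_2}$ and says "reinterpreting this as a condition on $u$ completes the proof"; you carry out that reinterpretation explicitly, verifying the center $a(b+c)/(a-c)$ and radius $\sqrt{ac(a+b)(b+c)}/|a-c|$ and the degenerate line $\Re u = b/2$ when $a=c$, and the algebra checks.
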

\begin{proof}
Lemma \ref{lemma_Maple} yields that expression \eqref{eq_bijection} is real when
$\frac{u-b-c}{u+a}$ belongs to the circle of radius
$$
 \sqrt{\frac{q_3}{q_2}}=\sqrt{\frac{c(b+c)}{a(b+a)}}.
$$
with center at $0$. Reinterpreting this as a condition on $u$ completes the proof.
\end{proof}

We use Lemma \ref{lemma_circles_of_real} to give the following definition.

\begin{definition} \label{def_Omega} Define the map $\Omega$ from the subset $D$ of $[0,1]\times
[0,+\infty]$ in $(x,\hat N)$ plane defined by the inequalities
$$
 C_1(\hat N,\hat \alpha, \hat M)-2\sqrt{C_2(\hat N,\hat \alpha, \hat M)}\le x \le C_1(\hat N,\hat \alpha, \hat M)+2\sqrt{C_2(\hat N,\hat \alpha, \hat M)},
$$
$$
C_1(\hat N,\hat \alpha, \hat M)=\frac{\hat  N \hat M + (\hat N+ \hat \alpha)(\hat M+\hat
\alpha)}{(\hat N+\hat \alpha+\hat M)^2},
$$
$$
C_2(\hat N,\hat \alpha, \hat M)=\frac{\hat M( \hat M+\hat \alpha) \hat N (\hat
N+\hat \alpha)}{(\hat N+\hat \alpha+\hat M)^4},
$$
to the upper half-plane $\mathbb H$ through the requirement that the horizontal section at height
$\hat N$ is mapped to the upper half-plane part of the circle with center
$$
 \frac{ \hat N (\hat \alpha +\hat M)}{\hat N - \hat M}
$$ and radius $$ \frac{\sqrt{\hat M(\hat M+\hat \alpha) \hat N (\hat N+\hat \alpha)}}{|\hat N-\hat M|} $$
(when $\hat N=\hat M$ the circle is replaced by the vertical line $\Re(u)=\hat
\alpha/2$)
 in
such a way that point $u\in\mathbb H$ is the image of point
$$
 \left(\frac{u}{u+\hat N}\cdot \frac{u- \hat \alpha }{u- \hat\alpha-\hat M};\hat
 N\right)\in D.
$$
\end{definition}

Note that for small $\hat N$, the radius of the circle is very small, and its center
approaches $0$. As $\hat N$ grows, so is the radius, while the center escapes to
$-\infty$.  The radius becomes infinite when $\hat N=\hat M$ and then the circle is
replaced by the vertical line $\Re(u)=\hat \alpha/2$. If we further grow $\hat N$,
then the circle reappears, with position of its center now starting at $+\infty$ and
decreasing as $\hat N$ grows; simultaneously the radius decreases starting at
$\infty$. As $\hat N\to\infty$, the radius becomes $\sqrt{\hat M (\hat M+\hat
\alpha)}$ and the center approaches $\hat \alpha +\hat M$. One then shows that the
image of $\Omega$ is $\mathbb H$ without (the upper half of) the ball with radius
$\sqrt{\hat M (\hat M+\hat \alpha)}$ and centered at $\hat \alpha +\hat M$. It is
also straightforward to show that $\Omega$ is injective.

The boundary of the set $D$ for some values of parameters is shown in Figure
\ref{Fig_frozen}.

\begin{figure}[h]
\begin{center}
{\scalebox{0.24}{\includegraphics{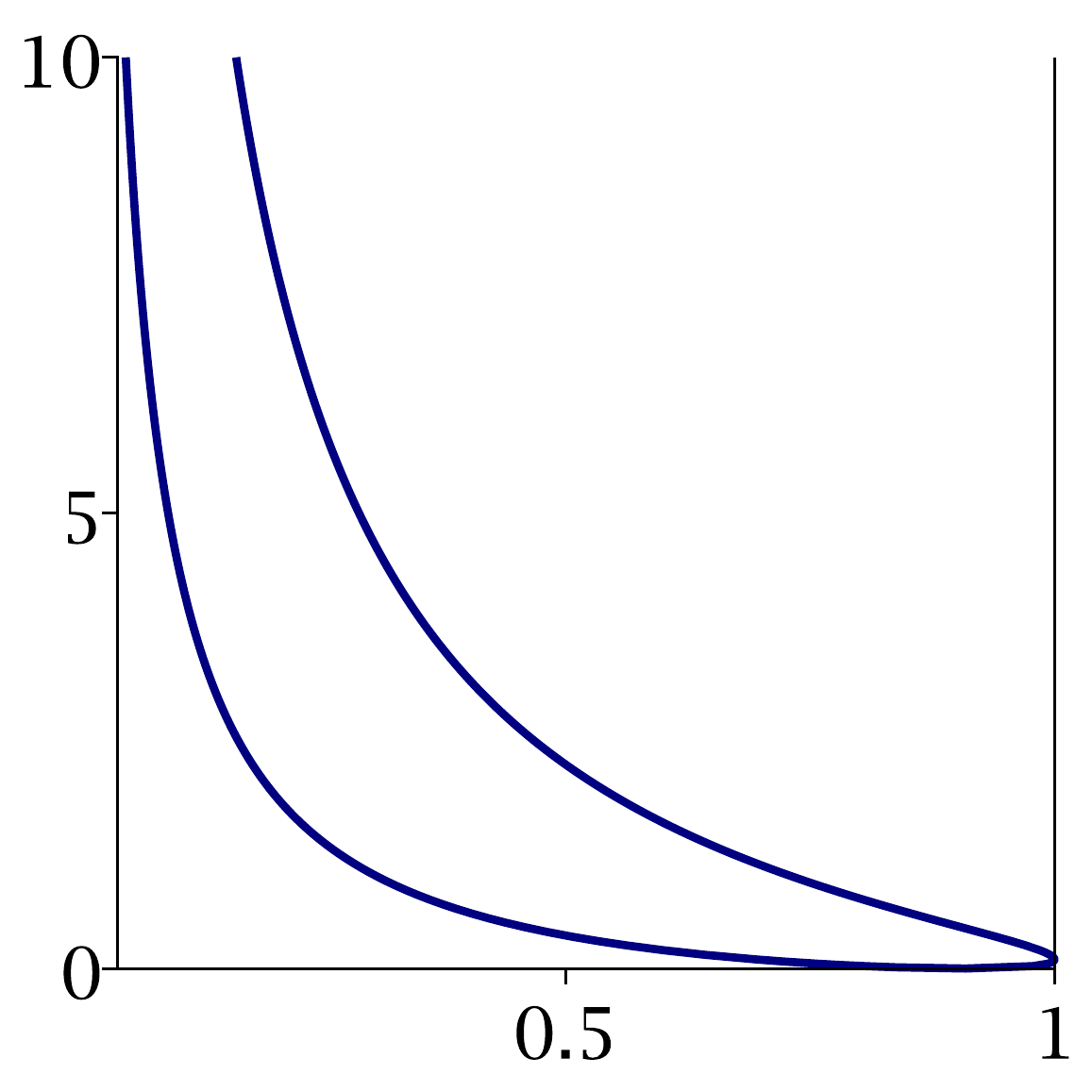}}}
{\scalebox{0.24}{\includegraphics{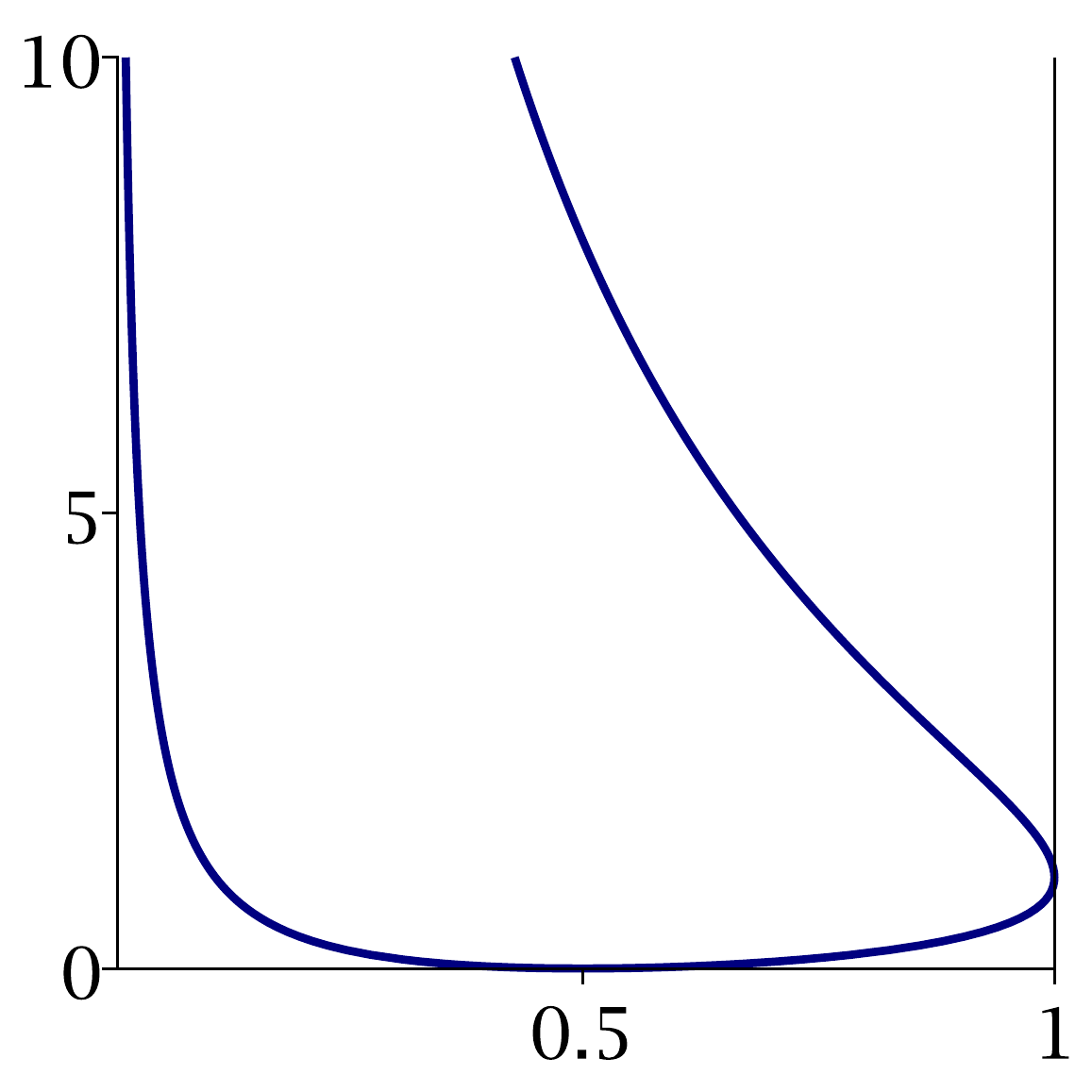}}}
{\scalebox{0.24}{\includegraphics{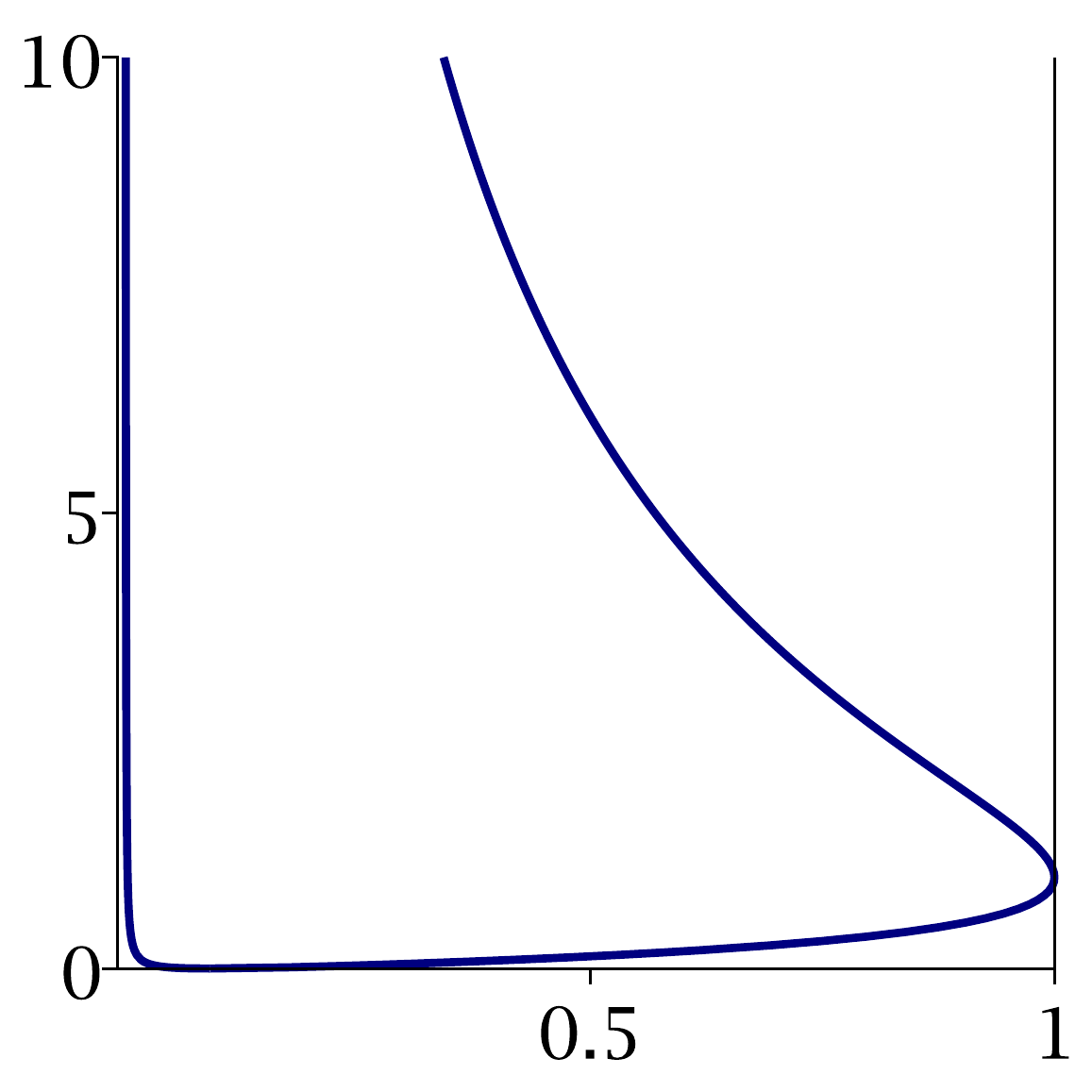}}} \caption{The boundary of domain $D$ (i.e.\
\emph{frozen} boundary) for parameters (from left to right) $\hat M=0.1$, $\hat \alpha= 1$; $\hat
M=1$, $\hat \alpha= 1$; $\hat M=1$, $\hat \alpha= 0.1$ \label{Fig_frozen}  }
\end{center}
\end{figure}

\begin{definition} Suppose that $\r\in\St^M$ is distributed according to $\Pr^{\alpha,M,\theta}$.
 For a pair of numbers $(x,N)\in[0,1]\times{\mathbb R}_{>0}$ define the height function ${\mathcal H}(x,N)$ as the (random) number of points $\r^{\lfloor N\rfloor}_i$ which are
 less than $x$.
\end{definition}

\begin{theorem}
\label{theorem_GFF}
 Suppose that as our large parameter $L\to\infty$, parameters $M$, $\alpha$ grow linearly in $L$:
 $$
  M \sim L \hat M,\quad \alpha \sim L \hat\alpha;\quad \hat M>0, \quad \hat \alpha>0.
 $$
 Then the centered random (with respect to measure $\Pr^{\alpha,M,\theta}$) height function
 $$
  \sqrt{\theta\pi}({\mathcal H}(x,L\hat N)-\E {\mathcal H}(x,L\hat N))
 $$
 converges to the pullback of the Gaussian Free Field on $\mathbb H$ with respect to map $\Omega$ of Definition \ref{def_Omega}
 in
 the following sense:
 For any set of polynomials $R_1,\dots,R_k\in \mathbb C[x]$ and positive numbers $\hat N_1,\dots,\hat N_k$,
 the joint distribution of
 $$
  \int_0^1 ({\mathcal H}(x,L\hat N_i)-\E {\mathcal H}(x,L\hat N_i)) R_i(x) dx,\quad i=1,\dots, k,
 $$
 converges to the joint distribution of the similar averages of the pullback of GFF given by
 $$
  \int_0^1 {\mathcal F}(\Omega(x,\hat N_i)) R_i(x) dx,\quad i=1,\dots, k.
 $$
\end{theorem}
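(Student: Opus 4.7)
The plan is to reduce the theorem to the joint moment convergence of the centered power sums $p_k(N_i;\r) - \E p_k(N_i;\r)$, already supplied by Theorem \ref{theorem_joint_CLT}, and then to identify the resulting limit covariance with the pullback-GFF covariance of Lemma \ref{Lemma_GFF_pullback_by_curve_measure}. For the reduction, integration by parts will yield the identity
\begin{equation*}
\int_0^1 x^{k-1}{\mathcal H}(x,N)\,dx = \frac{N - p_k(N;\r)}{k}
\end{equation*}
for all integers $N\ge 1$ (both in the regime $N\le M$ and in $N>M$, using that the summand $N-M$ is built into $p_k(N;\r)$ by Definition \ref{definition_observables} and that ${\mathcal H}(1,N) = \min(N,M)$). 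Expanding each test polynomial $R_i$ in monomials then writes $\int R_i({\mathcal H} - \E {\mathcal H})\,dx$ as a linear combination of centered quantities $p_k(N_i;\r) - \E p_k(N_i;\r)$, so Theorem \ref{theorem_joint_CLT} delivers joint Gaussianity of the random vector in question together with an explicit contour-integral formula for its limit covariance (the moment-convergence-implies-weak-convergence step is standard, since the Gaussian limit is moment-determinate).

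It remains to match limit covariances. Specializing to the monomial test functions $R_r(x) = x^{k_r-1}$ and multiplying by $\theta\pi$ (the square of the normalization in the statement), the $p_k$-limit from Theorem \ref{theorem_joint_CLT} will take the form
\begin{equation*}
-\frac{1}{4\pi k_1 k_2}\oint\oint \frac{x_1(u_1)^{k_1}\, x_2(u_2)^{k_2}}{(u_1 - u_2)^2}\,du_1\,du_2,\qquad x_r(u) := \frac{u}{u+\hat N_r}\cdot\frac{u-\hat\alpha}{u-\hat\alpha-\hat M},
\end{equation*}
while Lemma \ref{Lemma_GFF_pullback_by_curve_measure}, applied to the horizontal-slice curves $\{(x,\hat N_i):\mathfrak l(\hat N_i)\le x\le\mathfrak r(\hat N_i)\}$ with densities $R_i$, gives the GFF side as
\begin{equation*}
-\frac{1}{2\pi}\int\!\!\int x_1^{k_1-1}\, x_2^{k_2-1}\, \ln\left|\frac{\Omega(x_1,\hat N_1) - \Omega(x_2,\hat N_2)}{\Omega(x_1,\hat N_1) - \overline{\Omega(x_2,\hat N_2)}}\right| dx_1\,dx_2.
\end{equation*}

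To identify these two expressions, I will deform both integration contours to the circles $C_r$ (Lemma \ref{lemma_circles_of_real} / Definition \ref{def_Omega}) on which $x_r(u)$ is real, and pass to the $x$-variables through the 2-to-1 parametrization $u=\Omega(x,\hat N_r)$ on the upper semicircle, $u=\overline{\Omega(x,\hat N_r)}$ on the lower. Using $\frac{1}{(u_1-u_2)^2} = -\partial_{u_1}\partial_{u_2}\ln(u_1-u_2)$ and integrating by parts along the closed contours $C_r$ (so boundary terms vanish) converts the double pole into a logarithm; splitting each contour into its upper and lower halves produces four terms involving $\ln\bigl(\Omega^{\epsilon_1}(x_1,\hat N_1) - \Omega^{\epsilon_2}(x_2,\hat N_2)\bigr)$, $(\epsilon_1,\epsilon_2)\in\{\pm\}^2$, with $\Omega^+ = \Omega$ and $\Omega^- = \bar\Omega$. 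Their signed combination $\sum\epsilon_1\epsilon_2\ln(\cdot) = 2\ln\bigl|(\Omega_1-\Omega_2)/(\Omega_1-\bar\Omega_2)\bigr|$ reproduces the GFF kernel, while the Jacobians $x_r'(u_r)\,du_r = dx_r$ combined with the $k_r\,x_r^{k_r-1}$ coming from integration by parts against $x_r(u_r)^{k_r}$ produce the correct numerical matching $-1/(4\pi k_1 k_2)\leftrightarrow -1/(2\pi)$.

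The hard part will be the contour-deformation step: the contour in Theorem \ref{theorem_joint_CLT} encloses $-\hat N_r$ but excludes $\hat\alpha+\hat M$, whereas the circle $C_r$ may or may not enclose each of these poles depending on the arithmetic relationship between $\hat M,\hat\alpha,\hat N_r$, as already visible in Lemma \ref{lemma_circles_of_real}. Any residues collected during the deformation must be shown either to cancel between the two nested contours or to yield only deterministic boundary contributions consistent with the single-level covariance result of Dumitriu--Paquette \cite{DP}. A secondary technicality is the multivaluedness of $\ln(u_1-u_2)$ in the integration-by-parts step; the cleanest workaround is to differentiate first and interpret the identification as an equality of exact differential forms on $C_1\times C_2$, which bypasses branch-cut bookkeeping altogether.
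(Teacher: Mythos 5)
Your proposal follows essentially the same route as the paper's proof: integrating by parts to convert $\int_0^1 x^{k-1}{\mathcal H}\,dx$ into $\bigl(N - p_k(N;\r)\bigr)/k$, invoking Theorem \ref{theorem_joint_CLT} for Gaussianity and the contour-integral covariance, deforming both contours to the circles on which $x_r(u)$ is real, parametrizing the upper and lower semicircles by $\Omega$ and $\overline{\Omega}$, and integrating by parts twice to convert the double pole into the logarithmic GFF kernel. The prefactor bookkeeping $-1/(4\pi k_1 k_2)\leftrightarrow -1/(2\pi)$ matches what the paper does, and noting moment-determinacy of the Gaussian limit is a fair way to make the weak-convergence step explicit.

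One remark on the step you flag as "the hard part." You are right that the contour deformation deserves care, but your proposed fallback (that residues collected in the deformation "cancel between the two nested contours or yield only deterministic boundary contributions") is not what the paper's argument needs or uses. The paper's point is simpler: when $\hat N_i,\hat N_j<\hat M$, each circle encloses $-\hat N_r$ and stays entirely to the left of $\Re u=\hat\alpha/2$, hence it encloses exactly the same pole ($-\hat N_r$ and not $\hat\alpha+\hat M$) as the original contour, and the nesting $C_2\subset C_1$ is preserved, so the deformation is residue-free. The other ranges of $\hat N_r$ relative to $\hat M$ are handled in the paper by a brief "similar" (one can also view this as analytic continuation in the parameters once the identity is established on an open set); they are not resolved by residue cancellation. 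Your observation that the circle may enclose different poles for $\hat N_r>\hat M$ is a legitimate technicality, but the resolution is to re-examine where the poles of the integrand sit relative to both contours case-by-case, not to argue cancellation. Aside from this, the argument is sound and the identification with the pullback GFF is the intended one.
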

{\bf Remark.} One might wonder why the Gaussian Free Field should appear in
such a context. We were originally guided by two observations: First, the
limiting covariance after a simple renormalization turns out to independent of
$\theta$. Second, at $\theta=1$ the random matrix corners processes are known
to yield GFF fluctuations, either through direct moment computations
\cite{B-CLT}, or through a suitable degenerations of $2d$ models of random
surfaces for which the appearance of the Gaussian Free Field is widely
anticipated, cf.\ the discussion after Theorem \ref{Theorem_main_intro} above.
We were thus led to believe that it should be possible to identify our
covariance with that of a pullback of the Gaussian Free Field. The
exact form of the desired map $\Omega$
was prompted by the integral formula \eqref{eq_limit_covariance}. To our
best knowledge, more conceptual reasons for the appearance of the GFF in
general beta ensembles are yet to be discovered.

\begin{proof}[Proof of Theorem \ref{theorem_GFF}]
 We assume that $\hat N_1\ge \hat N_2\ge \dots \ge \hat N_k>0$ and set $N_i=\lfloor L \hat N_i\rfloor$. Clearly, it
 suffices to consider monomials $R_i(x)=x^{m_i}$. For those we have
$$
  \int_0^1 ({\mathcal H}(x,L\hat N_i)-\E {\mathcal H}(x,L\hat N_i)) x^{m_i} dx=  \int_0^1 {\mathcal H}(x,L\hat N_i) x^{m_i} dx - \E \int_0^1 {\mathcal H}(x,L\hat N_i)
  x^{m_i}
  dx.
$$
Integrating by parts, we get
\begin{multline*}
 \int_0^1 {\mathcal H}(x,L\hat N_i) x^{m_i} dx= {\mathcal H}(1,L\hat N_i) \frac{1^{m_i+1}}{m_i+1}-\int_0^1 {\mathcal H}'(x, L\hat N_i)
 \frac{x^{m_i+1}}{m_i+1} dx\\= \frac{\min(N_i,M)}{m_i+1} -\sum_{j=1}^{ \min (N_i,M)} \dfrac{(\r^{N_i}_j)^{m_i+1}}{m_i+1}.
\end{multline*}
Therefore,
$$
\int_0^1 ({\mathcal H}(x,L\hat N_i)-\E {\mathcal H}(x,L\hat N_i)) x^{m_i} dx = -\frac{1}{m_i+1}
\bigg( p_{m_i+1}(N_i;\r) -\E p_{m_i+1}(N_i;\r)\bigg).
$$
Applying Theorem \ref{theorem_joint_CLT}, we conclude, that random variables $$
\sqrt{\theta\pi}\int_0^1 ({\mathcal H}(x,L\hat N_i)-\E {\mathcal H}(x,L\hat N_i)) x^{m_i} dx$$ are
asymptotically Gaussian with the covariance of $i$th and $j$th ($i<j$) given by
\begin{multline}
\label{eq_covcov}
 \frac{\pi}{(2\pi \i)^2 (m_i+1)(m_j+1)}
 \oint\oint \frac{du_1 du_2}{(u_1-u_2)^2} \\ \times
  \left( \frac{u_1}{(u_1+\hat N_i)} \cdot \frac{u_1-{\hat\alpha}}{(u_1-\hat \alpha-\hat
 M)}
 \right)^{m_i+1} \left( \frac{u_2}{(u_2+\hat N_j)} \cdot \frac{u_2-{\hat\alpha}}{(u_2-\hat \alpha-\hat
 M)}
 \right)^{m_j+1}
\end{multline}
where contours are nested ($u_1$ is larger) and enclose the singularities at $-\hat N_i$, $-\hat
N_j$. We claim that we can deform the contours, so that $u_1$ is integrated over the circle with
center $
 \frac{ \hat N_i (\hat \alpha +\hat M)}{\hat N_i - \hat M}
$ and radius $ \frac{\sqrt{\hat M(\hat M+\alpha) \hat N_i (\hat N_i+\alpha)}}{|\hat N_i-\hat M|}
$, while $u_2$ is integrated over the circle with center $
 \frac{ \hat N_j (\hat \alpha +\hat M)}{\hat N_j - \hat M}
$ and radius $ \frac{\sqrt{\hat M(\hat M+\alpha) \hat N_j (\hat N_j+\alpha)}}{|\hat N_j-\hat M|}$.
Indeed, if $\hat N_i,\hat N_j<M$, then the first contour is larger and both contours are to the
left from the vertical line $\Re u=\hat\alpha/2$ and, thus, do not enclose the singularity at
$\hat\alpha+\hat M$ that could have potentially been an obstacle for the deformation. Cases $\hat
N_i > M >\hat N_j$ and $\hat N_i,\hat N_j>M$ are similar and when $\hat N_i=\hat M$ or $\hat
N_j=\hat M$, then the circles are replaced by vertical lines $\Re(u)=\alpha/2$ as in Definition
\ref{def_Omega}.

The top halves of the above two circles can be parameterized via images of the horizontal segments
with respect to the map $\Omega$; to parameterize the whole circles we also need the conjugates
$\bar\Omega$. Hence, \eqref{eq_covcov} transforms into
\begin{multline}
\label{eq_covcov2}
 \frac{\pi}{(2\pi \i)^2 (m_i+1)(m_j+1)}
 \int\int \frac{x_1^{m_i+1} x_2^{m_j+1} dx_1 dx_2
 }{(\Omega(x_1,\hat N_i)-\Omega(x_2,\hat N_j))^2} \frac{\partial
 \Omega}{\partial x_1} (x_1,\hat N_j) \frac{\partial
 \Omega}{\partial x_2} (x_2,\hat N_j)
 \\
 -\frac{\pi}{(2\pi \i)^2 (m_i+1)(m_j+1)}
 \int\int \frac{x_1^{m_i+1} x_2
 ^{m_j+1} dx_1 dx_2 }{(\overline \Omega(x_1,\hat N_i)-\Omega(x_2,\hat N_j))^2} \frac{\partial
 \overline \Omega}{\partial x_1} (x_1,\hat N_j) \frac{\partial
  \Omega}{\partial x_2} (x_2,\hat N_j)
 \\
 -\frac{\pi}{(2\pi \i)^2 (m_i+1)(m_j+1)}
 \int\int \frac{x_1^{m_i+1} x_2
 ^{m_j+1}dx_1 dx_2 }{(\Omega(x_1,\hat N_i)-\overline\Omega(x_2,\hat N_j))^2} \frac{\partial
 \Omega}{\partial x_1} (x_1,\hat N_j) \frac{\partial
  \overline\Omega}{\partial x_2} (x_2,\hat N_j)
 \\
 +\frac{\pi}{(2\pi \i)^2 (m_i+1)(m_j+1)}
 \int\int \frac{x_1^{m_i+1} x_2
 ^{m_j+1}dx_1 dx_2 }{(\overline\Omega(x_1,\hat N_i)-\overline\Omega(x_2,\hat N_j))^2} \frac{\partial
 \overline\Omega}{\partial x_1} (x_1,\hat N_j) \frac{\partial
 \overline\Omega}{\partial x_2} (x_2,\hat N_j)
\end{multline}
with $x_1$, $x_2$ integrated over the horizontal slices of domain $D$ at heights $\hat N_i$ and
$\hat N_j$, respectively. Integrating twice by parts in \eqref{eq_covcov2} and noting that
boundary terms cancel out (since $\Omega$ is real at the ends of the integration interval) we
arrive at
\begin{multline}
\label{eq_cov_final}
 -\frac{1}{4\pi}
 \int\int x_1^{m_i} x_2^{m_j} dx_1 dx_2 \bigg(\ln(\Omega(x_1,\hat N_i)-\Omega(x_2,\hat N_j))\\-\ln(\overline \Omega(x_1,\hat N_i)-\Omega(x_2,\hat N_j))
 -\ln(\Omega(x_1,\hat N_i)-\overline \Omega(x_2,\hat N_j))+\ln(\overline\Omega(x_1,\hat N_i)-\overline\Omega(x_2,\hat
 N_j))\bigg).
\end{multline}
Since we know that the expression \eqref{eq_cov_final} is real, the choice of branches of $\ln$ is
irrelevant here. Real parts in \eqref{eq_cov_final} give
\begin{equation}
\label{eq_vwery_final}
 -\frac{1}{2\pi}
 \int\int \ln\left|\frac{\Omega(x_1,\hat N_i)-\Omega(x_2,\hat N_j)}{\Omega(x_1,\hat N_i)-\overline \Omega(x_2,\hat N_j)}\right|x_1^{m_i} x_2^{m_j} dx_1
 dx_2,
 \end{equation}
which (by definition) is the desired covariance of the averages over the pullback of the Gaussian
Free Field with respect to $\Omega$.
\end{proof}

An alternative way to write down the limit covariance is through the classical Chebyshev
polynomials:

\begin{definition}
Define Chebyshev polynomials $T_n$, $n\ge 0$, of the first kind through
$$
 T_n\left(\frac{z+z^{-1}}{2}\right)=\frac{z^n+z^{-n}}2.
$$
Equivalently,
$$
 T_n(x)=\cos(n\arccos(x)).
$$
\end{definition}

\begin{proposition}
\label{prop_Chebyshev_ortho}  Let $\r\in\St^M$ be distributed according to $\Pr^{\alpha,M,\theta}$
and suppose that as the large parameter $L\to\infty$, parameters $M$, $\alpha$ grow linearly in
$L$:
 $$
  M \sim L \hat M,\quad \alpha \sim L \hat\alpha;\quad \quad \quad \hat M>0,\,\, \hat \alpha \ge 0.
 $$
If we set (with constant $C_1, C_2$ given in Definition \ref{def_Omega})
$$
\hat T^{\hat N,\hat \alpha,\hat M}_n(x) = T_n\left(\frac{x-C_1(\hat N,\hat \alpha, \hat
M)}{2\sqrt{C_2(\hat N,\hat \alpha, \hat M)}}\right),
$$
then the limiting Gaussian random variables
$$
 {\mathcal T}_{\hat \alpha,\hat M}(n,\hat N)=
 \lim_{L\to\infty}\left( \sum_{i=1}^{\min(\lfloor L \hat
 N\rfloor,M)}\hat T^{\hat N,\hat \alpha,\hat M}_n\left(\r^{\lfloor L \hat
 N\rfloor}_i\right) -\E  \sum_{i=1}^{\min(\lfloor L \hat
 N\rfloor,M)} \hat T^{\hat N,\hat \alpha,\hat M}_n\left(\r^{\lfloor L \hat
 N\rfloor}_i\right)\right)
$$
have the following covariance ($\hat N_1\ge \hat N_2$)
\begin{multline}
 \E\left( {\mathcal T}_{\hat \alpha,\hat M}(n_1,\hat N_1) {\mathcal T}_{\hat \alpha,\hat M}(n_2,\hat N_2)\right)=
  \frac{n_1!}{4\theta (n_2-1)!(n_1-n_2)!}
\\ \times \left(\frac{\hat N_2-\hat N_1}{\hat N_2+\hat \alpha+\hat M} \sqrt{\frac{\hat M(\hat\alpha+\hat M)}{\hat
N_1(\hat \alpha+\hat N_1)}}\right)^{n_1-n_2}
 \left(\frac{(\hat N_1+\hat \alpha+\hat M)\sqrt{\hat N_2(\alpha+\hat N_2)}}{(\hat N_2+\hat \alpha+\hat
M)\sqrt{\hat N_1(\alpha+\hat N_1)}}\right)^{n_2},
\end{multline}
in $n_1\ge n_2$, and the covariance is $0$ when $n_1<n_2$.
\end{proposition}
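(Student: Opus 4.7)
The strategy is as follows. We first observe that each $\mathcal{T}_{\hat\alpha,\hat M}(n,\hat N)$ is, in the prelimit, a finite linear combination of centered power sums $p_k(N;\r) - \E p_k(N;\r)$, obtained by expanding the polynomial $\hat T^{\hat N,\hat\alpha,\hat M}_n$ in monomials $x^k$. Consequently, both the existence of the Gaussian limit and the computation of the covariance reduce to Theorem \ref{theorem_joint_CLT}. By the linearity of the right-hand side of \eqref{eq_limit_covariance} in the polynomial variable,
\begin{equation*}
\E\bigl(\mathcal{T}_{\hat\alpha,\hat M}(n_1,\hat N_1)\mathcal{T}_{\hat\alpha,\hat M}(n_2,\hat N_2)\bigr) = \frac{\theta^{-1}}{(2\pi\i)^2}\oint\oint \frac{\hat T^{\hat N_1,\hat\alpha,\hat M}_{n_1}(F_1(u_1))\,\hat T^{\hat N_2,\hat\alpha,\hat M}_{n_2}(F_2(u_2))}{(u_1-u_2)^2}\,du_1\,du_2,
\end{equation*}
where $F_r(u) = \frac{u}{u+\hat N_r}\cdot\frac{u-\hat\alpha}{u-\hat\alpha-\hat M}$ and the contours are as in Theorem \ref{theorem_joint_CLT}; it remains only to evaluate this integral in closed form.

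To expose the Chebyshev structure we will introduce a rational change of variables. Applying Lemma \ref{lemma_Maple} with $(a,b,c)=(\hat N_r,\hat\alpha,\hat M)$ identifies $q_1 = C_1(\hat N_r,\hat\alpha,\hat M)$ and $q_2 q_3 = C_2(\hat N_r,\hat\alpha,\hat M)$, which motivates the substitution
\begin{equation*}
\tau_r z_r = \frac{u_r-\hat\alpha-\hat M}{u_r+\hat N_r},\qquad \tau_r = \sqrt{\frac{\hat M(\hat\alpha+\hat M)}{\hat N_r(\hat\alpha+\hat N_r)}}.
\end{equation*}
Under this substitution $F_r(u_r(z_r)) = C_1(\hat N_r,\hat\alpha,\hat M) + \sqrt{C_2(\hat N_r,\hat\alpha,\hat M)}(z_r + z_r^{-1})$, and the Chebyshev identity $T_n((z+z^{-1})/2) = (z^n+z^{-n})/2$ then gives $\hat T_{n_r}^{\hat N_r,\hat\alpha,\hat M}(F_r(u_r)) = (z_r^{n_r}+z_r^{-n_r})/2$. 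Composing further with $w_r = 1/z_r$, computing the Jacobian, and using the factorization $u_1(z_1)-u_2(z_2) = \frac{\alpha_1 z_1 - \alpha_2 z_2 + (\hat N_2-\hat N_1)\tau_1\tau_2 z_1 z_2}{(1-\tau_1 z_1)(1-\tau_2 z_2)}$ with $\alpha_r = \tau_r(\hat N_r+\hat\alpha+\hat M)$, the full integrand will collapse to
\begin{equation*}
\frac{\alpha_1\alpha_2\,(w_1^{-n_1}+w_1^{n_1})(w_2^{-n_2}+w_2^{n_2})}{4\bigl(\alpha_1 w_2 - \alpha_2 w_1 + \beta\bigr)^2}\,dw_1\,dw_2,\qquad \beta = (\hat N_2-\hat N_1)\tau_1\tau_2.
\end{equation*}

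The main obstacle will be the correct identification of the new contours. The composite $u\mapsto w$ is a M\"obius transformation sending $-\hat N_r\mapsto 0$, $\hat\alpha+\hat M\mapsto\infty$, $\infty\mapsto\tau_r$; a direct local parametrization near $u=-\hat N_r$ shows that a small positively oriented loop around $u=-\hat N_r$ maps to a small positively oriented loop around $w_r=0$. The subtlety is that the requirement that the $u_1$-contour enclose the $u_2$-contour (and hence also the point $u_1=-\hat N_2$) forces the image $w_1$-contour to enclose not only $w_1=0$ but also the second point $w_1 = \beta/\alpha_2 = \tau_1(\hat N_2-\hat N_1)/A_2$, with $A_r = \hat N_r+\hat\alpha+\hat M$; overlooking this second pole gives a wrong, $\beta$-divergent answer.

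Finally we will evaluate the iterated residue integral. Only $w_2=0$ lies inside the small $w_2$-contour, and only the $w_2^{-n_2}$ term contributes there, yielding the factor $(-1)^{n_2-1}n_2\alpha_1^{n_2-1}/(\beta-\alpha_2 w_1)^{n_2+1}$. The outer $w_1$-integral then receives contributions from both enclosed poles: at $w_1=0$ (only through $w_1^{-n_1}$, expanded against the geometric series for $(\beta-\alpha_2 w_1)^{-(n_2+1)}$) and at $w_1=\beta/\alpha_2$ (a pole of order $n_2+1$ evaluated via the $n_2$-th derivative formula, to which both $w_1^{-n_1}$ and $w_1^{n_1}$ contribute). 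A short computation will show that the contributions from $w_1^{-n_1}$ at the two poles are exact negatives of each other and cancel, while the $w_1^{n_1}$ contribution at $w_1=\beta/\alpha_2$ vanishes when $n_1<n_2$ and otherwise produces $\frac{n_1!}{4\theta(n_2-1)!(n_1-n_2)!}\,(\alpha_1/\alpha_2)^{n_2}\,(\beta/\alpha_2)^{n_1-n_2}$. Recognizing $\beta/\alpha_2 = (\hat N_2-\hat N_1)\tau_1/A_2$ and $\alpha_1/\alpha_2 = (A_1/A_2)\sqrt{\hat N_2(\hat\alpha+\hat N_2)/(\hat N_1(\hat\alpha+\hat N_1))}$ as exactly the two factors displayed in the proposition then completes the proof.
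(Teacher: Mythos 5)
Your proposal is correct and follows essentially the same route as the paper: the same M\"obius substitution (your $w_r$ equals the paper's $v_r = A_r\frac{u_r+\hat N_r}{u_r-\hat\alpha-\hat M}$), the same Chebyshev reduction to $(w_r^{n_r}+w_r^{-n_r})/2$, and the same residue evaluation of the resulting double contour integral. The only real difference is in the final step: you sum residues at the two finite poles $w_1=0$ and $w_1=\beta/\alpha_2$ and show the $w_1^{-n_1}$ contributions cancel exactly, while the paper expands the integrand in powers of $1/v_1$ and reads off the coefficient of $v_1^{-1}$ -- these are the same computation phrased via interior residues versus the residue at infinity, and your explicit identification of the second pole as the image of $u_1=-\hat N_2$ is a useful clarification of contour bookkeeping that the paper passes over by simply asserting that $v_1$ runs over a ``large circle.''
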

{\bf Remark 1. } When $\hat N_1=\hat N_2$, the formula above simplifies to $\frac{n_1}{4\theta}
\delta_{n_1,n_2}$, which agrees with  \cite[Theorem 1.2]{DP}.

{\bf Remark 2.} When both $\hat N_1$ and $\hat N_2$ are infinitesimally small, the formula matches
the one from  \cite[Proposition 3]{B-CLT} proved there for $\theta=1/2,1$ (the cases of GOE and
GUE).

\begin{proof}[Proof of Proposition \ref{prop_Chebyshev_ortho}] Theorem \ref{theorem_joint_CLT} yields
\begin{multline}
\label{eq_x9}
 \E\left( {\mathcal T}_{\hat \alpha,\hat M}(n_1,\hat N_1) {\mathcal T}_{\hat \alpha,\hat M}(n_2,\hat N_2)\right)\\=
 \frac{\theta^{-1}}{(2\pi \i)^2}
 \oint\oint \frac{du_1 du_2}{(u_1-u_2)^2}
 \prod_{r=1}^2 T^{\hat N_r,\hat \alpha,\hat M}_{n_r}\left( \frac{u_r}{(u_r+\hat N_r)} \cdot \frac{u_r-{\hat\alpha}}{(u_r-\hat \alpha-\hat
 M)}
 \right).
\end{multline}
Using Lemma \ref{lemma_Maple} and changing the variables
$$
 v_r= \sqrt{\frac{\hat M(\hat\alpha+\hat M)}{\hat N_r(\hat \alpha+\hat N_r)}} \cdot \frac{u_r+\hat N_r}{u_r-\hat \alpha-\hat
 M},
$$
we have
$$
 \frac{u_r}{(u_r+\hat N_r)} \cdot \frac{u_r-{\hat\alpha}}{(u_r-\hat \alpha-\hat
 M)} = C_1(\hat N_r,\hat \alpha, \hat M)+ 2\sqrt{C_2(\hat N_r,\hat \alpha, \hat M)} \frac{v_r+v_r^{-1}}{2}.
$$
Thus,
$$
 T^{\hat N_r,\hat \alpha,\hat M}_{n_r}\left( \frac{u_r}{(u_r+\hat N_r)} \cdot \frac{u_r-{\hat\alpha}}{(u_r-\hat \alpha-\hat
 M)}\right)=\frac{v_r^{n_r}+v_r^{-n_r}}{2}.
$$
Also with the notation $A_r=\sqrt{\frac{\hat M(\hat\alpha+\hat M)}{\hat N_r(\hat \alpha+\hat
N_r)}} $, $B_r=\hat N_r$, $C=\hat \alpha+\hat M$, we have
$$
v_r=A_r\frac{u_r+B_r}{u_r-C}=A_r\left(1+\frac{B_r+C}{u_r-C}\right),\quad
\frac{A_r(B_r+C)}{v_r-A_r}=u_r-C,\quad u_r=C+\frac{A_r(B_r+C)}{v_r-A_r},
$$
$$
d u_r= -\frac{A_r(B_r+C)}{(v_r-A_r)^2},
$$
$$
 u_1-u_2=\frac{ v_2 A_1 (B_1+C)-v_1 A_2(B_2+C)+A_1A_2(B_2-B_1)} {(v_1-A_1)(v_2-A_2)}.
$$
In particular, if $\hat N_1=\hat N_2$, then $A_1=A_2$, $B_1=B_2$ and
$$
 \frac{du_1 du_2}{(u_1-u_2)^2}=\frac{dv_1 dv_2}{(v_1-v_2)^2}.
$$
Therefore, for $\hat N_1=\hat N_2$ \eqref{eq_x9} transforms into
$$
 \frac{\theta^{-1}}{(2\pi \i)^2}
 \oint\oint \frac{dv_1 dv_2}{(v_1-v_2)^2} \frac{v_1^{n_1}+v_1^{-n_1}}{2} \cdot \frac{v_2^{n_2}+v_2^{-n_2}}{2}
$$
with nested contours surrounding zero ( $v_2$--contour is smaller ).  We get
\begin{multline*}
 \frac{\theta^{-1}}{(2\pi \i)^2} \oint\oint dv_1
 dv_2\frac{1}{v_1^2}\left(1+2\frac{v_2}{v_1}+3\left(\frac{v_2}{v_1}\right)^2+\dots\right) \frac{v_1^{n_1}+v_1^{-n_1}}{2} \cdot \frac{v_2^{n_2}+v_2^{-n_2}}{2}
 \\=\frac{\theta^{-1}}{2\pi \i} \oint \frac{dv_1}{v_1^2} \frac{v_1^{n_2}+v_1^{-n_2}}{2} \cdot
 \frac{n_1}{2} v_1^{1-n_1}=\frac{\theta^{-1} n_1}{4} \delta_{n_1,n_2}.
\end{multline*}
If $\hat N_1>\hat N_2$, then \eqref{eq_x9} transforms into
$$
 \frac{\theta^{-1}}{(2\pi \i)^2}
 \oint\oint \frac{A_1 A_2 (B_1+C) (B_2+C)dv_1 dv_2}{(v_2 A_1(B_1+C) - v_1 A_2 (B_2+C)+A_1A_2(B_2-B_1))^2} \frac{v_1^{n_1}+v_1^{-n_1}}{2} \cdot
 \frac{v_2^{n_2}+v_2^{-n_2}}{2},
$$
where $v_2$ is integrated over a small circle containing the origin and $v_1$ --- over a large
circle. Since
\begin{multline*}
\frac{1}{(v_2 A_1(B_1+C) - v_1 A_2 (B_2+C)+A_1A_2(B_2-B_1))^2}\\=\frac{1+2\frac{v_2 A_1(B_1+C)}{
v_1 A_2 (B_2+C)-A_1A_2(B_2-B_1)}+3 \left(\frac{v_2 A_1(B_1+C)}{ v_1 A_2
(B_2+C)-A_1A_2(B_2-B_1)}\right)^2+\dots}{( v_1 A_2 (B_2+C)-A_1A_2(B_2-B_1))^2},
\end{multline*}
the integral over $v_2$ gives
\begin{equation}
\label{eq_x10}
  \frac{\theta^{-1}n_2}{8\pi \i} \oint \frac{v_1^{n_1}+v_1^{-n_1}}{(v_1 A_2
  (B_2+C)-A_1A_2(B_1-B_2))^{n_2+1}} A_2  (B_2+C)  \left(A_1(B_1+C)\right)^{n_2} dv_1
\end{equation}
with $v_1$ to be integrated over a large circle. If $n_1<n_2$, then this is $0$ because of the
decay of the integrand at infinity; otherwise we can use
\begin{multline*}
 \frac{1}{(v_1 A_2
  (B_2+C)-A_1A_2(B_2-B_1))^{n_2+1}}=
\frac{(1-A_1\frac{B_2-B_1}{B_2+C} \frac{1}{v_1})^{-n_2-1}}{ v_1^{n_2+1} A_2^{n_2+1}
(B_2+C)^{n_2+1}}
\\=
\frac{1 + (n_2+1)A_1\frac{B_2-B_1}{B_2+C}\frac{1}{v_1}+ \frac{(n_2+1)(n_2+2)}{2!}
\left(A_1\frac{B_2-B_1}{B_2+C}\frac{1}{v_1}\right)^2+\dots  }{ v_1^{n_2+1} A_2^{n_2+1}
(B_2+C)^{n_2+1}}
\end{multline*}
and \eqref{eq_x10} gives
$$
 \frac{\theta^{-1}}{4} \frac{n_2 (n_2+1)\cdots n_1}{(n_1-n_2)!} \left(A_1\frac{B_2-B_1}{B_2+C}\right)^{n_1-n_2}
 \left(\frac{A_1(B_1+C)}{A_2(B_2+C)}\right)^{n_2}.\qedhere
$$
\end{proof}




\section{CLT as $\beta\to\infty$}

\label{section_CLT_beta}

Throughout this section parameters $\alpha$ and $M$ of measure $\Pr^{\alpha;M;\theta}$ are fixed,
while $\theta$ changes. We aim to study what happens when $\theta\to\infty$.

Let $\Jac_N$ be Jacobi orthogonal polynomial of degree
 $\min(N,M)$ corresponding to the weight function $x^{\alpha-1} (1-x)^{|M-N|}$, see e.g.\
 \cite{Szego}, \cite{KLS}  for the general information on Jacobi
 polynomials. $\Jac_N$ has $\min(N,M)$ real roots on the interval $(0,1)$ that we enumerate in the
 increasing order. Let $\jr^N_i$ denote the $i$th root of $\Jac_N$.

\begin{theorem} \label{Theorem_CLT_in_beta}
 Let $\r\in\St^M$ be distributed according to $\Pr^{\alpha;M;\theta}$ of Definition \ref{Def_distrib}.
 As $\theta\to\infty$, $\r^N_i$ converges (in probability) to $\jr^N_i$. Moreover,
  the random vector
 $$
 \sqrt{\theta}(\r^N_i-\jr^N_i),\quad i=1,\dots,\min(N,M),\quad N=1,2,\dots,
$$
converges (in finite--dimensional distributions) to a centered Gaussian random vector $\xi^N_i$,
$i=1,\dots,\min(N,M)$, $N=1,2,\dots$.
\end{theorem}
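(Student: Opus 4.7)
The strategy parallels Sections \ref{section_warm_up}--\ref{section_proof_of_CLT} with the role of the large parameter $L$ now played by $\theta$. I would first reduce to the corresponding joint limit theorem for the power sums $p_k(N;\r)$, $k \ge 1$, $N \ge 1$: since the Jacobi roots $\jr^N_i$ are simple, the map from particle positions to power sums is a local diffeomorphism near $(\jr^N_i)_i$ with Vandermonde Jacobian, and the asymptotic Gaussianity of $\sqrt\theta(\r^N_i - \jr^N_i)$ follows from that of $\sqrt\theta(p_k(N;\r) - p_k(N;\jr))$ for the finitely many $k$ and $N$ involved.

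Substituting the decomposition of Theorem \ref{theorem_decomposition_of_P} into Theorem \ref{Theorem_HO_expectations_p} and rescaling $v = \theta u$ in the explicit contour integral \eqref{eq_P_operator_refined}, together with $y_m = \theta(1-m)$ and the exact identity $g(z-1)/g(z) = (\theta\alpha - z)/(\theta\alpha + \theta M - z)$ for $g = H(\cdot;\alpha,M)$, every rational factor of the integrand of $\DI^{[k]}_N$ becomes independent of $\theta$ after rescaling. Combined with the $\theta^{k-1}/k$ prefactor in \eqref{eq_P_operator_refined}, the Jacobian $\theta$ from $dv = \theta\,du$, and the $(-\theta)^{-k}$ coming from $\P^k_N$, this gives a $\theta$-independent contribution of size $O(1)$, while the remaining admissible integral operators in the decomposition of $\P^k_N$ contribute at lower order in $\theta$ by the same dimension-minus-degree argument as in the proof of Lemma \ref{lemma_vars_satisfy}. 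Together with the known single-level result of Szeg\H{o} \cite{Szego} and Kerov \cite{K-LLN}, which applies level-by-level since each marginal is a single-level Jacobi ensemble (Remark 2 after Definition \ref{Def_distrib}), this identifies $\lim_{\theta\to\infty}\E p_k(N;\r) = p_k(N;\jr)$ and yields the law-of-large-numbers conclusion.

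For the Gaussian fluctuations I would apply Lemma \ref{lemma_limit_gaussianity} with $\gamma = 1$. Iterating the leading operators $(-\theta)^{-k_r}\DI^{[k_r]}_{N_r}$ via Proposition \ref{proposition_iteration_of_DI} and rescaling $w_r = \theta u_r$ in the cross-factor
$$
\Cr(1,2) = \prod_{c=1}^{k_2}\frac{(w_1 - w_2 - \theta c + \theta k_1)(w_1 - w_2 - \theta c + 1)}{(w_1 - w_2 - \theta c)(w_1 - w_2 - \theta c + \theta k_1 + 1)}
$$
yields, after an elementary expansion,
$$
\Cr(1,2) = 1 + \theta^{-1}\sum_{c=1}^{k_2}\frac{k_1}{(u_1 - u_2 - c)(u_1 - u_2 - c + k_1)} + O(\theta^{-2}),
$$
which is exactly the structure demanded by Lemma \ref{lemma_limit_gaussianity} with $\gamma = 1$. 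All other contributions remain negligible on the $\theta^{-1/2}$ scale, and the lemma delivers joint Gaussianity of $\sqrt\theta(p_k(N;\r) - \E p_k(N;\r))$ with an explicit double-contour integral covariance, which via the Vandermonde change of variables gives the announced Gaussian vector $\xi^N_i$.

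The delicate point is the replacement of centering by $\E p_k(N;\r)$ with centering by the deterministic value $p_k(N;\jr)$, which requires the sharper bound $\E p_k(N;\r) - p_k(N;\jr) = O(\theta^{-1})$ so that the bias is of smaller order than the $\theta^{-1/2}$ fluctuation scale. This follows by refining the asymptotic analysis above: the $\DI^{[k]}_N$ contribution to $\E p_k(N;\r)$ is in fact $\theta$-independent to all orders (as shown by the exact rescaling identities used in the LLN step), so the entire $\theta$-dependence of $\E p_k(N;\r)$ comes from the admissible operators of Theorem \ref{theorem_decomposition_of_P} with strictly negative dimension-minus-degree, each of which contributes at order $\theta^{-1}$ or smaller after the rescaling.
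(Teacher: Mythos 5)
Your overall architecture is correct and matches the paper: reduce to a joint CLT for a family of symmetric polynomial observables via Theorem~\ref{Theorem_HO_expectations_p} and Lemma~\ref{lemma_limit_gaussianity} with $\gamma=1$, then transfer back to the eigenvalues by linearizing the map from the Weyl chamber to symmetric polynomials near the Jacobi roots. However, you choose the power sums $p_k$ and Theorem~\ref{theorem_decomposition_of_P}, whereas the paper deliberately works with the elementary symmetric polynomials $e_k$ and Proposition~\ref{proposition_e_operator}, and this choice is where a genuine gap appears.

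The gap is your assertion that ``the remaining admissible integral operators in the decomposition of $\P^k_N$ contribute at lower order in $\theta$ by the same dimension-minus-degree argument as in the proof of Lemma~\ref{lemma_vars_satisfy}.'' Theorem~\ref{theorem_decomposition_of_P} and Lemma~\ref{lemma_vars_satisfy} are calibrated to the regime $L\to\infty$ with $\theta$ \emph{fixed}: there, after substituting $w=Lu$, an operator of dimension $D$ and degree $d$ carries $L^{D-d}$, and all constants (including the expansion coefficients $a^I_{r,r'}(m)$, $a^{II}_{r,r'}(m)$ from \eqref{eq_cross_I}, \eqref{eq_cross_II}) are $L$-independent. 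In the present regime $\theta\to\infty$ with $M,N,\alpha$ fixed, these same constants are $\theta$-dependent: $a^I_{r,r'}(m)=O(\theta^m)$ while $a^{II}_{r,r'}(m)=O(\theta^{m-1})$, and each rank-$r$ multivertex carries a $\theta^{r-1}$ factor from the residue chain. Tracking all powers, a surviving graph $G$ contributes at order $\theta^{-(\text{number of red edges of }G)}$; the black-edge-only graphs (those arising from $\mu=(k)$, i.e.\ from a single $\D^k_N$) contribute at order $\theta^{0}$, exactly the same as $(-\theta)^{-k}\DI^{[k]}_N$. A concrete failure already occurs at $k=2$: $\P^2_N=-\theta^{-2}\DI^{[2]}_N+\theta^{-2}\big(\DI^{[1]}_N\DI^{[1]}_N-\DI^{1^2}_N\big)$, and after $w=\theta u$ the second piece tends to a nonzero $\theta$-independent double integral (its cross-factor difference $\Cr(1,2)-\frac{(w_1-w_2)^2}{(w_1-w_2)^2-\theta^2}$ has $(w_1-w_2)^{-2}$-coefficient $\theta-\theta^2\sim-\theta^2$, so after dividing by $\theta^2$ the contribution is $O(1)$). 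Hence: (i) your claimed covariance, computed from $\DI^{[k]}$ alone, is wrong; and (ii) your justification of $\E p_k(N;\r)-p_k(N;\jr)=O(\theta^{-1})$ via ``strictly negative dim minus deg'' is not valid, even though the bound itself is correct once one observes that the entire $\theta$-dependence of $\E p_k$ sits in the red (type II) cross-factors, each of which is $1+O(\theta^{-1})$ after rescaling.

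The paper sidesteps all of this by working with $e_k$. Proposition~\ref{proposition_e_operator} expresses $\D^k_N$ as a sum of $\theta^{-k}\DI^s_N$, $s\in\S_k$, and after $w=\theta u$ every one of these summands is \emph{exactly} $\theta$-independent (the $\theta^{-k}$ prefactor, the $\theta^{\ell(s)}$ Jacobian, and the $\theta$-powers coming from the residue chains in $G_s$ cancel identically). Consequently $\E e_k(N;\r)$ is $\theta$-independent with no error term, so no delicate centering estimate is needed, and Lemma~\ref{lemma_limit_gaussianity} can be applied to the full family $e_k(N)\{s\}$ with $\gamma=1$ since the only $\theta$-dependence lies in the red cross-factors between different $\DI^{s_i}_{N_i}$. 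Your approach can be salvaged by applying Lemma~\ref{lemma_limit_gaussianity} to \emph{all} the terms $p_k(N)\{j\}$ simultaneously and summing (which still yields Gaussianity), but then the covariance formula would have to include the black-edge-only graphs, and you would lose the clean single-integral covariance you wrote down; the paper's $e_k$-based route is genuinely simpler here.
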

We do not have any simple formulas for the covariance of $\{\xi^N_i\}$. \emph{Some} formulas, in
principle, could be obtained from our argument below, see also \cite{DE-CLT} where a different
form of the covariance (for single--level distribution for the Hermite and Laguerre ensembles
which are degenerations of the Jacobi ensemble) is given.

\medskip

In the rest of this section we give a sketch of the proof of Theorem \ref{Theorem_CLT_in_beta}.

\smallskip

We start by proving that vector $\sqrt{\theta}\big(e_k(N;r)-\E e_k(N;r)\big)$,
$k=1,\dots,\min(N,M)$, $N=1,2,\dots$, is asymptotically Gaussian. The proof is another application
of Lemma \ref{lemma_limit_gaussianity} and is similar to that of Theorem \ref{theorem_joint_CLT}.
Our starting point is Theorem \ref{Theorem_HO_expectations} which expresses joint moments of
random variables $e_k(N,\r)$ in terms of applications of operators $\D^k_N$. Proposition
\ref{proposition_e_operator} gives an expansion of $\D^k_N$ in terms of integral operators. We
further define formal ``random variables'' $e_k(N)\{s\}$, $s\in\S_k$, through their joint
``moments'' given by (here $N_1\ge N_2\ge\dots\ge N_m\ge 1$)
\begin{equation}
\label{eq_auxiliary_moments_2}
 \E \left(\prod_{i=1}^m e_{k_i}(N_i)\{s_i\} \right) =  \dfrac{ \prod\limits_{i=1}^m  \theta^{-|s_i|}\DI^{s_i}_{N_i}
 \left[\prod\limits_{i=1}^{N_1} H(y_i;\alpha,M)\right]}{\prod_{i=1}^{N_1} H(y_i;\alpha,M)} \rule[-5mm]{0.9pt}{17mm}_{\,
y_i=\theta(1-i)}.
\end{equation}
Clearly, the joint distribution of $e_k(N,\r)$ (understood in the sense of moments) coincides with
that of the sums
\begin{equation}
\label{eq_x14}
 e_k(N,\r)= \frac{(-1)^{k}}{k!} \sum_{s=(S_1,\dots)\in \S_k} \left[(-1)^{k-\ell(s)} \prod_j (|S_j|-1)!\right]
e_k(N)\{s\}.
\end{equation}

\begin{lemma}
\label{lemma_vars_satisfy_2}
 ``Random variables''
 $e_{k}(N)\{s\}$ satisfy the assumptions of Lemma \ref{lemma_limit_gaussianity}
 with $\theta=\theta(L)=L$ (parameters $M$, $N$ and $\alpha$ do not depend on $L$ here),  with $\gamma=1$ and
 coefficients $c_L(k;N;j)$ corresponding to $e_{k}(N)$ being of order $1$ as $L\to\infty$.
\end{lemma}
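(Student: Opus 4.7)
The plan is to repeat the argument of Lemma \ref{lemma_vars_satisfy}, but now with $\theta = L$ as the sole large parameter (and $M$, $N_i$, $\alpha$ fixed). Combining \eqref{eq_auxiliary_moments_2} with Proposition \ref{proposition_iteration_of_DI}, the ``moment'' $\E\prod_i e_{k_i}(N_i)\{s_i\}$ becomes a $\sum_i \ell(s_i)$--dimensional nested contour integral with overall prefactor $\prod_i \theta^{-|s_i|} = \prod_i L^{-|s_i|}$. The natural substitution $w_a^i = L u_a^i$ sends the poles at $y_m = \theta(1-m) = L(1-m)$ to the $L$-independent points $u=1-m$ and shrinks the admissibility distance $\theta|s_i| = L|s_i|$ to $|s_i|$; after this rescaling the whole family of nested admissible multicontours can be chosen $L$-independent, and the associated class $\aleph$ is trivially closed under taking subfamilies.

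It remains to verify the hypotheses of Lemma \ref{lemma_limit_gaussianity} with $l(s) = \ell(s)$ and $\gamma = 1$. The single-block integrand $G_{s^j}$ rescales cleanly: $(v_i-v_j)^2/((v_i-v_j)^2-\theta^2) \to (u_i-u_j)^2/((u_i-u_j)^2-1)$; the product $\prod_{m=1}^N (v-y_m-\theta)/(v-y_m)$ telescopes to $(u-1)/(u+N-1)$; and from $g = H(\cdot;\alpha,M)$ one computes $g(v-1)/g(v) = (-v+\theta\alpha)/(-v+\theta\alpha+M\theta) \to (\alpha-u)/(\alpha+M-u)$. All these limits are achieved uniformly on the rescaled contours, so the multiplicative factor $\mathcal F^s_L$ has the required analytic limit. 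Each ``random variable'' $e_k(N)\{s\}$ carries coefficient $c_L(k;N;s) = L^{-|s|}\cdot L^{\ell(s)} = L^{\ell(s)-|s|} \le 1$, where the $L^{\ell(s)}$ comes from the Jacobian $dw = L\,du$ over the $\ell(s)$ integration variables; since $\ell(s)\le |s|$ always, this is $O(1)$, and equals exactly $1$ when $s = 1^{|s|}$ (which will turn out to be the sole surviving term in the ensuing CLT).

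The one substantive computation is the cross-term expansion. After the rescaling $w=Lu$, each individual factor in the product $\Cr(i,j)$ of Proposition \ref{proposition_iteration_of_DI} becomes
\begin{equation*}
 \frac{z+s'}{z}\cdot\frac{z+1/L}{z+s'+1/L},\qquad z = u_a^i-u_b^j-c,\quad s' = |S_a^i|,
\end{equation*}
which elementary algebra rewrites as $1+\dfrac{s'}{L\,z(z+s')}+O(L^{-2})$. Multiplying the finite collection of such factors yields $\Cr(i,j) = 1+L^{-1}V_L+O(L^{-2})$ with $V_L$ converging to an explicit limit uniformly on the rescaled contours, which is exactly the form required by hypothesis~(5) of Lemma \ref{lemma_limit_gaussianity} with $\gamma = 1$. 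The only mild obstacle is the bookkeeping in this expansion: unlike the fixed-$\theta$ proof of Lemma \ref{lemma_vars_satisfy} (where $\theta$ stays bounded while the integration variables grow, producing a cancellation of \emph{two} orders and hence $\gamma=2$), here $\theta$ and $w$ scale together so only a single order cancels, and one must verify carefully that it does cancel.
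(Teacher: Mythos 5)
Your contour setup, the rescaling of the multiplicative factors, and the expansion of $\Cr(i,j)$ to $1+L^{-1}V_L+O(L^{-2})$ (hence $\gamma=1$) are all correct. The genuine error is in your computation of $c_L$. You write $c_L = L^{-|s|}\cdot L^{\ell(s)}$, attributing $L^{\ell(s)}$ solely to the Jacobian of $w=Lu$ over the $\ell(s)$ integration variables. But this forgets the residue prefactors built into $G_s$. Taking the residue of the factor $(v_i-v_j)^2/\big((v_i-v_j)^2-\theta^2\big)$ at $v_j=v_i+\theta$ produces a factor $\theta/2$, and in the present regime $\theta=L$; so a block of size $r$ contributes an additional $\theta^{r-1}=L^{r-1}$, and altogether the residues supply $\theta^{|s|-\ell(s)}=L^{|s|-\ell(s)}$. (For $s=[k]$ this is visible in \eqref{eq_P_operator_refined}, whose prefactor is $\theta^{k-1}$.) Thus after the change of variables $G_s(Lu;y;g)$ scales like $L^{|s|}$, not $L^{\ell(s)}$, and $c_L(k;N;s)=\theta^{-|s|}\cdot L^{|s|}$ is of order $1$ for \emph{every} set partition $s$, exactly as the lemma claims.

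This matters because your $c_L=L^{\ell(s)-|s|}$ led you to the incorrect conclusion that $s=1^{|s|}$ is the sole surviving term. In this $\beta\to\infty$ regime \emph{all} set partitions $s$ contribute to the limit Gaussian vector; compare \eqref{eq_limit_decomp_es}. That is precisely the structural difference from Lemma \ref{lemma_vars_satisfy}: there, $\theta$ is fixed while $N,M,\alpha\sim L$, so the residue factors stay $O(1)$ and only the integral of maximal $\dim-\deg$ (the single-block partition) survives. You correctly diagnosed the change from $\gamma=2$ to $\gamma=1$ in the cross-terms, but missed the parallel change in the $c_L$ scaling, which is where the distinction between the two regimes really shows up.
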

\begin{proof}
 This follows from Proposition \ref{proposition_iteration_of_DI}. After change of variables $w_j=L
u_j$ the integrand in Definition \ref{definition_Integral_operator} converges to an analytic
limit, while the prefactor $L^{|s_i|}$ arising as we change variables, cancels with
$\theta^{-|s_i|}$ in the definition of variables $e_k(N)\{s\}$. As for the cross-terms, after our
change of variables they behave like $1+O(L^{-1})$.
\end{proof}

Now Lemma \ref{lemma_limit_gaussianity} implies that ``random vector''
$\sqrt{\theta}(e_{k}(N)\{s\}-\E e_{k}(N)\{s\})$ converges (in the sense of ``moments'') to the
centered Gaussian vector $\widehat e_{k}(N)\{s\}$ (in the sense that the limit moments satisfy the
Wick formula). Therefore, $\sqrt{\theta}(e_k(N;\r)-\E e_k(N;\r))$ converges (in the sense of
``moments'') to the limit Gaussian vector $\widehat e_k(N)$ such that
\begin{equation}
\label{eq_limit_decomp_es}
 \widehat e_k(N)=\frac{(-1)^{k}}{k!} \sum_{s=(S_1,\dots)\in \S_k} \left[(-1)^{k-\ell(s)} \prod_j (|S_j|-1)!\right]
\widehat e_k(N)\{s\}.
\end{equation}
One can also compute the covariance of $\widehat e_k(N)\{s\}$ (thus also of $\widehat e_k(N)$,
$k=1,\dots,N$) similarly to the covariance computation in the proof of Theorem
\ref{theorem_joint_CLT}.

Let us now explain that the Cental Limit Theorem for $e_k(N;\r)$ implies the Central Limit Theorem
for $\r^i_j$.

\begin{lemma} For any $N\ge 1$ and $k\le\min(N,M)$ the expectation $\E(e_k(N;\r))$ does not depend on
$\theta$.
\end{lemma}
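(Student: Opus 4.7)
The plan is to apply Theorem~\ref{Theorem_HO_expectations} with $m=1$, $k_1=k$, $N_1=N$, and to observe that the right-hand side becomes manifestly $\theta$-free once evaluated at $y_i=\theta(1-i)$. Expanding the action of $\D^k_N$ on a product function gives
\[
 \frac{\D^k_N\left[\prod_{i=1}^N H(y_i;\alpha,M)\right]}{\prod_{i=1}^N H(y_i;\alpha,M)}
 = \sum_{|I|=k} B_I(y_1,\ldots,y_N;\theta)\,\prod_{i\in I}\frac{H(y_i-1;\alpha,M)}{H(y_i;\alpha,M)},
\]
so it suffices to show that each factor on the right is independent of $\theta$ at the evaluation point.

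For the coefficients $B_I$, at $y_i=\theta(1-i)$ we have $y_i-y_j=\theta(j-i)$ and $y_i-y_j-\theta=\theta(j-i-1)$, so the factors of $\theta$ cancel pairwise and
\[
 B_I(y;\theta)\Big|_{y_i=\theta(1-i)}=\prod_{i\in I}\prod_{j\notin I}\frac{j-i-1}{j-i},
\]
which depends only on $I$ and $N$. For the ratio of $H$'s, the definition \eqref{eq_definition_of_H} together with the functional equation $\Gamma(z+1)=z\Gamma(z)$ yields
\[
 \frac{H(y-1;\alpha,M)}{H(y;\alpha,M)}=\frac{-y+\theta\alpha}{-y+\theta\alpha+M\theta},
\]
which at $y=\theta(1-i)$ collapses to $(i-1+\alpha)/(i-1+\alpha+M)$, again $\theta$-free. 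Combining these two observations shows that $\E(e_k(N;\r))$ is a finite sum of products of rational numbers built solely out of $\alpha$, $M$, $N$, $k$, and thus does not depend on $\theta$.

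As a sanity check and useful byproduct, note that the factor $(j-i-1)/(j-i)$ vanishes whenever $j=i+1\le N$, so the only surviving subset in the sum is the right-aligned block $I=\{N-k+1,\ldots,N\}$; an elementary telescoping then gives the closed form
\[
 \E\bigl(e_k(N;\r)\bigr)=\binom{N}{k}\prod_{i=N-k+1}^{N}\frac{i-1+\alpha}{i-1+\alpha+M}
\]
when $N\le M$, with an analogous formula for $N>M$. There is no substantial obstacle; the only minor point is that for $N>M$ the observable $e_k(N;\r)$ absorbs the $N-M$ deterministic ones from Definition~\ref{definition_observables}, but since Theorem~\ref{Theorem_HO_expectations} is stated uniformly in the relative size of $N$ and $M$, the same substitution argument applies verbatim.
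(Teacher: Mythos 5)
Your proof is correct, and it is genuinely different from the paper's. The paper's proof is a one-liner that invokes the integral-operator machinery of Section~\ref{section_CLT_beta}: it uses the decomposition $e_k(N,\r)=\frac{(-1)^k}{k!}\sum_{s\in\S_k}[\cdots]\,e_k(N)\{s\}$ together with the contour-integral representation of the operators $\DI^s_N$, and notes that the change of variables $w_j=Lu_j$ (with $L=\theta$ in that section) eliminates $\theta$ from the integrand. Your route bypasses all of that: you apply Theorem~\ref{Theorem_HO_expectations} directly with $m=1$, expand $\D^k_N$ as a sum over $k$-subsets $I$, and check algebraically that at $y_i=\theta(1-i)$ both the coefficients $B_I$ (because $y_i-y_j=\theta(j-i)$ makes every $\theta$ cancel) and the ratios $H(y_i-1)/H(y_i)=\frac{-y_i+\theta\alpha}{-y_i+\theta\alpha+M\theta}$ become $\theta$-free. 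This is more elementary and self-contained, since it needs nothing beyond Theorem~\ref{Theorem_HO_expectations} and the $\Gamma$-functional equation, whereas the paper leans on Proposition~\ref{proposition_e_operator} and the residue bookkeeping of Section~\ref{Section_integral}. Your approach also yields something the paper's does not: the observation that $(j-i-1)/(j-i)$ kills every $I$ except the right-aligned block $\{N-k+1,\dots,N\}$, giving the closed form $\E(e_k(N;\r))=\binom{N}{k}\prod_{i=N-k+1}^{N}\frac{i-1+\alpha}{i-1+\alpha+M}$ (I verified that the telescoping indeed produces the binomial coefficient), which makes the $\theta$-independence completely explicit.
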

\begin{proof}
 Observe that $\E(e_k(N;\r))$ can be computed
 using formulas \eqref{eq_x14} and \eqref{eq_auxiliary_moments_2}. When we change the variables
 $w_j=Lu_j$ in integral representations for the operators $\DI^s_N$ we find that the resulting
 expression does not depend on $\theta$.
\end{proof}

\begin{corollary}
\label{cor_LLN_beta}
 For any $N\ge 1$ and $1\le i\le \min(N,M)$, $\r^N_i$ converges (in probability) as $\theta\to\infty$ to a deterministic
 limit.
\end{corollary}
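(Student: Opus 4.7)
The plan is to combine the Central Limit Theorem for the elementary symmetric observables $e_k(N;\r)$, just established in Lemma \ref{lemma_vars_satisfy_2} and the surrounding discussion, with the preceding lemma asserting that $\E(e_k(N;\r))$ is independent of $\theta$, in order to conclude that each $e_k(N;\r)$ converges in probability to a deterministic constant. Continuity of the roots of a polynomial in its coefficients then upgrades this into convergence of the individual particles $\r^N_i$.

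First, I would extract from the moment convergence of $\sqrt{\theta}\bigl(e_k(N;\r)-\E e_k(N;\r)\bigr)$ to a centered Gaussian with finite variance the fact that $\mathrm{Var}(e_k(N;\r)) = O(\theta^{-1})$. Combined with $\E(e_k(N;\r)) = c_k(N)$ for some constant $c_k(N)$ not depending on $\theta$, Chebyshev's inequality immediately gives $e_k(N;\r)\to c_k(N)$ in probability as $\theta\to\infty$, for every $1\le k\le m:=\min(N,M)$. Since marginal convergence in probability to constants entails joint convergence in probability, the random vector $(e_1(N;\r),\ldots,e_m(N;\r))$ converges in probability to the deterministic vector $(c_1(N),\ldots,c_m(N))$.

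Next, I would observe that the ordered particles $\r^N_1<\cdots<\r^N_m$ are precisely the roots of the random monic polynomial
$$
P_N(x)=\prod_{j=1}^m(x-\r^N_j)=x^m+\sum_{k=1}^m(-1)^k e_k(N;\r)\,x^{m-k},
$$
whose coefficients converge in probability to those of the deterministic polynomial
$$
\widehat P_N(x)=x^m+\sum_{k=1}^m(-1)^k c_k(N)\,x^{m-k}.
$$
Being an a.s.\ limit of monic polynomials of fixed degree with all roots in $(0,1)$, the polynomial $\widehat P_N$ has all its roots in $[0,1]$. Since the map sending the coefficients of a monic degree-$m$ polynomial to its ordered tuple of roots is continuous (a classical consequence of Rouch\'e's theorem), the continuous mapping theorem applied to this map yields convergence in probability of $(\r^N_1,\ldots,\r^N_m)$ to the ordered root vector of $\widehat P_N$, which is deterministic.

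I do not expect any serious obstacle: the needed variance bound is built into the already-proven CLT, and continuity of roots in the coefficients is standard. Note that this corollary does \emph{not} require identifying the limit constants $c_k(N)$, or equivalently the roots of $\widehat P_N$, with the Jacobi-polynomial roots $\jr^N_i$ from Theorem \ref{Theorem_CLT_in_beta}; that identification is a separate step, which would be carried out using the known characterization of Jacobi polynomial roots as the $\beta\to\infty$ equilibrium points of the log-gas with Jacobi weight (cf.\ \cite{K-LLN}, \cite[Section 6.7]{Szego}).
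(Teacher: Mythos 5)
Your proposal is correct and takes essentially the same route as the paper: both arguments deduce $e_k(N;\r)\to\E e_k(N;\r)$ from the $\sqrt{\theta}$-CLT together with the $\theta$-independence of the mean, then pass from elementary symmetric polynomials to individual particles via continuity of roots in the coefficients. You merely spell out the Chebyshev/continuous-mapping steps that the paper leaves implicit.
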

\begin{proof}
 Since moments of $\sqrt{\theta}(e_k(N;\r)-\E(e_k(N;\r))$ converge to those of a Gaussian random
 variable, and $\E(e_k(N;\r))$ does not depend on $\theta$, $e_k(N;\r)\to \E(e_k(N;\r))$.
 It remains to note that $\r^N_1,\dots,\r^N_{\min(N,M)}$ can be reconstructed as roots of polynomial
 $$
  \prod_{i=1}^{\min(N,M)} (x-\r^N_i)=\sum_{j=0}^{\min(N,M)} x^{\min(N,M)-j} (-1)^j e_j(N;\r),
 $$
 therefore, they converge to the roots of polynomial
 $$
  \sum_{j=0}^{\min(N,M)} x^{\min(N,M)-j} (-1)^j \E e_j(N;\r).\qedhere
 $$
\end{proof}
\begin{proof}[Another proof of Corollary \ref{cor_LLN_beta}]
There is another way to see that  as $\theta\to\infty$ the random vector $\r\in\St^M$ distributed
according to $\Pr^{\alpha;M;\theta}$ exhibits a law of large numbers. Indeed,
\eqref{eq_general_beta} implies
 that as $\theta\to\infty$ the numbers $\r^N_1,\dots,\r^N_{\min(N,M)}$ (for foxed $N$) concentrate
 near the vector $z_1<z_2<\dots<z_{\min(N,M)}$ which maximizes
 \begin{equation}
 \label{eq_density_maximization}
  \prod_{1\le i<j \le \min(N,M)} (z_j-z_i)
 \prod_{i=1}^{\min(N,M)} z_i^{\alpha/2} (1-z_i)^{|M-N|/2+1/2}
 \end{equation}
 The maximum of \eqref{eq_density_maximization} is known to be unique, and the minimizing
 configuration is precisely the set of \emph{roots} of Jacobi orthogonal polynomial of degree
 $\min(N,M)$ corresponding to the weight function $x^{\alpha-1} (1-x)^{|M-N|}$. This statement
 dates back to the work of T.~Stieltjes, cf.\ \cite[Section 6.7]{Szego}, \cite{K-LLN}.
\end{proof}

 Now we are ready to prove that $\sqrt{\theta}(\r^N_i-\jr^N_i)$ converges to a Gaussian vector. For
 any $K\ge 1$ consider the map
 $$
  \Phi^K: W_K \to \mathbb R^K:\quad \Phi^K(x_1,\dots,x_K)=(e_1(x_1,\dots,x_K),\dots,
  e_K(x_1,\dots,x_K)),
 $$
 where $W_K$ is the Weyl chamber of rank $K$
 $$
  W_K=\{x_1\le x_2\le\dots\le x_K\}.
 $$
 Observe that $\Phi^K$ is invertible ($x_1,\dots,x_K$ are reconstructed as roots of polynomial
 $\prod_{i=1}^K (x-x_i)$, whose coefficients are $(-1)^k e_k$), moreover, $\Phi^K$ is a (local)
 diffeomorphism in the neighborhood of every point in the interior of $W_K$. For $x_1<x_2<\dots<x_k$ let
 $\phi[x_1,\dots,x_K]$ denote the differential of $\Phi^{-1}$ near the point
 $(x_1,\dots,x_k)\rightleftarrows(e_1,\dots,e_k)$. $\phi[x_1,\dots,x_K]$ is a linear map which can
 be represented via $K\times K$ matrix, a straightforward computation gives an explicit formula for
this
 matrix.

 For small enough numbers $\Delta x_1$,\dots $\Delta x_k$ we have
 \begin{multline}
 \label{eq_linearization}
  (x_1+\Delta x_1,\dots, x_K+\Delta x_K)
  \\= (x_1,\dots,x_K) + \phi[x_1,\dots,x_K](\Delta
  e_1,\dots,\Delta e_K) + o\left(\sqrt{(\Delta e_1)^2+\dots+ (\Delta e_K)^2}\right),
 \end{multline}
 where $\Delta e_i$ are defined through
 $$
  (e_1(x_1,\dots,x_K)+\Delta e_1,\dots, e_k(x_1,\dots,x_K)+\Delta e_K)=\Phi^K(x_1+\Delta x_1,\dots, x_K+\Delta
  x_K).
 $$
 Now take in \eqref{eq_linearization} $K=\min(N,M)$, $x_i=\jr_i^N$, $\Delta x_i = \r^N_i -\jr_i^N$
  and send $\theta\to\infty$.  Corollary \ref{cor_LLN_beta} implies that the vectors $(\Delta x_1,\dots,\Delta
  x_K)$ and $(\Delta e_1,\dots,\Delta e_K)$ tend to zero vector in probability. Therefore, with
  probability tending to $1$ the estimate of the remainder in \eqref{eq_linearization} becomes uniform.
 Now Central Limit Theorem for $\sqrt{\theta}(\Delta e_1,\dots, \Delta e_K)$ implies through \eqref{eq_linearization} the Central
 Limit Theorem (in the sense of weak limits) for $\sqrt{\theta}(\Delta x_1,\dots, \Delta x_K)$.

\section{Heckman-Opdam hypergeometric functions.}

Let $\GTH_N$ denote the set of all decreasing $N$--tuples of non-negative reals. Our notation is
explained by the fact that the limiting transition below realizes this set as a scaling limit of
$\GTP_N$ from the previous sections.

\begin{definition}
 For any $r\in\GTH_N$ and $\theta>0$ the function $\HO_r(y_1,\dots,y_N; \theta)$ is defined through integral
representation:
\begin{multline}
\label{eq_integral_rep}
 \HO_r(y_1,\dots,y_N; \theta)=\frac{1}{\Gamma(\theta)^{N(N-1)/2}}\int_{\ell^1\prec \ell^2\prec\dots\prec \ell^N=r}
 \exp\left(\sum_{k=1}^N y_k \bigg(\sum_{i=1}^k \ell^k_i -\sum_{i=1}^{k-1} \ell^{k-1}_i\bigg) \right)\\ \times
\prod_{i<j} \left|\exp(-\ell_i^N)-\exp(-\ell_j^N)\right|^{1-\theta}
  \prod_{k=1}^{N-1} \Bigg(\prod_{1\le i<j\le k} \left|\exp(-\ell_i^k)-\exp(-\ell_j^k)\right|^{2-2\theta}\\ \times
 \prod_{a=1}^k \prod_{b=1}^{k+1}
\left|\exp(-\ell^k_a)-\exp(-\ell^{k+1}_b)\right|^{\theta-1} \prod_{i=1}^k \Big(\exp((\theta-1)
\ell^k_i)d\ell^k_i \Big) \Bigg),
\end{multline}
where the integration goes over the Lebesgue measure on $N(N-1)/2$ dimensional polytope
$\ell^1\prec \ell^2\prec\dots\prec \ell^N=r$,  each $\ell^k$ is $k$--dimensional vector
$\ell^{k}_1> \ell^k_2>\dots>\ell^k_k$, and all the coordinates interlace, i.e.\ $\ell^k_i >
\ell^{k-1}_i>\ell^k_{i+1}$ for all meaningful $k$ and $i$.
\end{definition}

We are interested in functions $\HO_r$ because they are limits of Macdonald polynomials
$P_\lambda(x_1,\dots,x_N; q,t)$ in  limit regime  of Theorem \ref{Theorem_convergence_to_Jacobi}.

\begin{proposition}
\label{prop_convergence_to_HO}
 Suppose that
\begin{equation}
\label{eq_limit_params}
 t=q^{\theta},\quad q=\exp(-\varepsilon),\quad \lambda = \lfloor \varepsilon^{-1}
 (r_1,\dots,r_N) \rfloor ,\quad
 x_i=\exp(\varepsilon y_i),
\end{equation}
where $r_1>r_2>\dots>r_N$. Then there exists a limit
$$
 \HO_r(y_1,\dots,y_N;\theta)= \lim_{\varepsilon\to 0} \varepsilon^{\theta N(N-1)/2}
 P_\lambda(x_1,\dots,x_N;q,t),
$$
and this limit is uniform over $r$ and $x$ belonging to compact sets.
\end{proposition}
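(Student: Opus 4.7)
The plan is to start from the branching (Gelfand--Tsetlin-type) expansion of Macdonald polynomials, rescale the summation variables so that the sum becomes a Riemann sum, and then identify the limiting integrand with the one appearing in \eqref{eq_integral_rep}. Iterating the identity \eqref{eq_Skew_Macdonald} (equivalently the branching rule \cite[Chapter VI, Section 7]{M}) we may write
\begin{equation*}
 P_\lambda(x_1,\dots,x_N;q,t) = \sum_{\mu^1\prec\mu^2\prec\cdots\prec\mu^N=\lambda}\; \prod_{k=1}^{N} \psi_{\mu^k/\mu^{k-1}}(t^{k-1})\, x_k^{|\mu^k|-|\mu^{k-1}|},
\end{equation*}
where $\psi_{\mu/\nu}$ is the explicit rational function in $q,t$ reviewed in \eqref{eq_psi} and we use the substitution $P_{\mu^k/\mu^{k-1}}(x_k;q,t)=\psi_{\mu^k/\mu^{k-1}}(x_k)$ combined with the homogeneity; each $\mu^k$ is a signature of length $k$ (using the convention from Section~\ref{Section_Macdonald} of appending zeros).

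Under the scaling \eqref{eq_limit_params}, set $\ell^k_i=\varepsilon\mu^k_i$ so that the summation over the discrete interlacing patterns turns into a Riemann sum over the polytope $\ell^1\prec\ell^2\prec\cdots\prec\ell^N=r$ with spacing $\varepsilon$ in each of the $N(N-1)/2$ free coordinates. The exponential factor behaves in an obvious way:
\begin{equation*}
 \prod_{k=1}^{N} x_k^{|\mu^k|-|\mu^{k-1}|} = \exp\!\left(\sum_{k=1}^{N} y_k\Big(\sum_{i=1}^{k}\ell^k_i - \sum_{i=1}^{k-1}\ell^{k-1}_i\Big)\right) + o(1).
\end{equation*}
The asymptotics of each $\psi$-weight is already essentially computed in the proof of Theorem~\ref{Theorem_convergence_to_Jacobi}: using Lemma~\ref{lemma_q_poch_conv} for ratios $f(q^a u;q)_\infty/f(q^b u;q)_\infty$ with $0<u<1$, and Lemma~\ref{lemma_q_Gamma_conv} for the singular factors $f(1)$ coming from $i=j$, one obtains
\begin{equation*}
 \psi_{\mu^k/\mu^{k-1}}(t^{k-1}) \sim \frac{\varepsilon^{(k-1)(1-\theta)}}{\Gamma(\theta)^{k-1}}\, e^{(\theta-1)\sum_i \ell^k_i + \text{(lower-order)}} \cdot \Psi^k(\ell^k,\ell^{k-1}),
\end{equation*}
where $\Psi^k$ is the product of the Vandermonde-type factors $|e^{-\ell^k_i}-e^{-\ell^k_j}|^{\pm(\theta-1)}$ and cross-terms $|e^{-\ell^k_a}-e^{-\ell^{k-1}_b}|^{\theta-1}$ that appear in \eqref{eq_integral_rep}. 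The total power of $\varepsilon$ coming from all the $\Gamma$-prefactors is $\varepsilon^{-\theta N(N-1)/2}$, which compensates the rescaling of the Riemann sum measure (each free coordinate contributes a factor of $\varepsilon$, and one additional factor of $\varepsilon$ per Vandermonde difference $\exp(-\ell^k_i)-\exp(-\ell^k_j)$ that is made explicit) and matches the normalization $\varepsilon^{\theta N(N-1)/2} P_\lambda$ on the left-hand side.

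The main obstacle is to upgrade this pointwise identification into uniform convergence of the Riemann sum to the integral. Two issues need care: first, the integrand has (in general locally integrable) singularities of order $\theta-1$ on the boundary of the polytope where interlacing inequalities degenerate; second, one must ensure the sum is dominated uniformly in $\varepsilon$ and in $r,y$ varying over compact sets. For the boundary singularities one can either cut off a neighbourhood of codimension-one faces of the polytope (on which, using the explicit product formulas, $\psi_{\mu^k/\mu^{k-1}}$ is easily bounded by a constant times the natural Selberg-type integrand) and control the contribution of the cut-off region by absolute integrability of the limit density, or, more concretely, invoke the positivity of $\psi$-weights \cite[Chapter~VI, Section~7]{M} to get a uniform upper bound of the form $C\cdot \bigl(\text{discretized integrand}\bigr)$ and appeal to dominated convergence for Riemann sums. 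Uniformity in $(r,y)$ on compact sets follows from the uniformity of Lemmas~\ref{lemma_q_poch_conv} and~\ref{lemma_q_Gamma_conv} (cf.\ the Remark after Lemma~\ref{lemma_q_poch_conv}) together with the compactness of the polytope of interlacing arrays with top row $r$.
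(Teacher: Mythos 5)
Your approach is essentially the same as the paper's: both expand $P_\lambda$ via the Macdonald branching rule, rescale, and identify the Riemann-sum limit with the integral \eqref{eq_integral_rep}. The paper packages the iteration as a one-step induction on $N$, while you unroll all $N$ layers at once — conceptually the same computation. Your comments on uniformity and the integrability of the $|\cdot|^{\theta-1}$ boundary singularities reasonably flesh out details that the paper's ``Sketch of the proof'' leaves implicit.

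Two bookkeeping slips should, however, be corrected. First, in the iterated branching formula the $k$th factor is $P_{\mu^k/\mu^{k-1}}(x_k;q,t)=\psi_{\mu^k/\mu^{k-1}}(x_k)=x_k^{|\mu^k|-|\mu^{k-1}|}\,\psi_{\mu^k/\mu^{k-1}}(1)$, not $x_k^{|\mu^k|-|\mu^{k-1}|}\,\psi_{\mu^k/\mu^{k-1}}(t^{k-1})$. Writing $\psi_{\mu^k/\mu^{k-1}}(t^{k-1})$ double-counts a factor $t^{(k-1)(|\mu^k|-|\mu^{k-1}|)}$, which under \eqref{eq_limit_params} converges to $\exp\!\big(-\theta(k-1)(|\ell^k|-|\ell^{k-1}|)\big)$ — a spurious contribution that would spoil the match with \eqref{eq_integral_rep}. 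The specialization $t^{k-1}$ belongs to the Macdonald-process computation (Theorem \ref{Theorem_convergence_to_Jacobi}, where $a_k=t^{k-1}$), not here, where the $k$th argument is $x_k=e^{\varepsilon y_k}$. Second, the $\varepsilon$-power accounting is muddled: the $f(1)$-prefactors contribute $\varepsilon^{(1-\theta)N(N-1)/2}$ (not $\varepsilon^{-\theta N(N-1)/2}$); combining with $\varepsilon^{-N(N-1)/2}$ from converting the sum over the $\tfrac{N(N-1)}{2}$ free integer coordinates into a Riemann integral, and with the normalization $\varepsilon^{\theta N(N-1)/2}$ in the statement, all powers cancel cleanly. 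Your ``one additional factor of $\varepsilon$ per Vandermonde difference that is made explicit'' is not a real contribution: when $r_1>\cdots>r_N$ is held strictly, those differences are $O(1)$ on the interior of the polytope, and since the change of variables $\ell=\varepsilon\mu$ is linear its only effect is the $\varepsilon^{N(N-1)/2}$ volume factor already accounted for.
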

\begin{proof}[Sketch of the proof]
 Induction in $N$. The combinatorial formula (i.e.\ branching rule, see \cite[Chapter VI]{M}) for Macdonald polynomials
yields
\begin{equation}
\label{eq_Mac_branching}
  P_\lambda(x_1,\dots,x_N;q,t)=\sum_{\mu\prec\lambda} \psi_{\lambda/\mu}(x_N)
  P_\mu(x_1,\dots,x_{N-1};q,t),
\end{equation}
where (with notation $f(u)=\frac{(tu;q)_\infty}{(qu; q)_\infty}$)
$$
 \psi_{\lambda/\mu}(x)=x^{|\lambda|-|\mu|}
f(1)^{N-1} \prod_{1\le i<j<N} f(q^{\mu_i-\mu_j}
 t^{j-i})
  \prod_{1\le i\le j<N}
\frac{f(q^{\lambda_i-\lambda_{j+1}}t^{j-i})}{f(q^{\mu_i-\lambda_{j+1}}t^{j-i}){f(q^{\lambda_i-\mu_j}t^{j-i})}}.
$$
Suppose that as $\eps\to 0$, $\mu = \varepsilon^{-1}
 (m_1,\dots,m_{N-1})$. Then by Lemma \ref{lemma_q_poch_conv} and \ref{lemma_q_Gamma_conv}
$$
 x^{|\lambda|-|\mu|}\sim \exp(y (|r|-|m|)),\quad f(1)\sim \frac{\varepsilon^{1-\theta}}{\Gamma(\theta)}, \quad
 f(q^{\mu_i-\mu_j}
 t^{j-i})\sim (1-\exp(-(m_i-m_j)))^{1-\theta},
$$
$$
\frac{f(q^{\lambda_i-\lambda_{j+1}}t^{j-i})}{f(q^{\mu_i-\lambda_{j+1}}t^{j-i}){f(q^{\lambda_i-\mu_j}t^{j-i})}}\sim
\left[\frac{1-\exp(-(r_i-r_{j+1}))}{\big(1-\exp(-(m_i-r_{j+1}))\big)\big(1-\exp(-(r_i-m_j))\big)}\right]^
{1-\theta}.
$$
Therefore,
$$
 \psi_{\lambda/\mu}(x)\sim g_{r/m}(y) \varepsilon^{(N-1)(1-\theta)},
$$
where
\begin{multline}
 g_{r/m}(y)= \frac{\exp(y (|r|-|m|))}{\Gamma(\theta)^{N-1}} \prod_{1\le i<j<N}
 (1-\exp(-(m_i-m_j)))^{1-\theta} \\ \times \prod_{1\le i\le j<N}
\left[\frac{1-\exp(-(r_i-r_{j+1}))}{\big(1-\exp(-(m_i-r_{j+1}))\big)\big(1-\exp(-(r_i-m_j))\big)}\right]^
{1-\theta}
\end{multline}
As $\eps\to 0$ the summation in \eqref{eq_Mac_branching} turns into a Riemannian sum (with step
$\eps$) for an integral. Therefore, omitting uniformity estimates,
\begin{equation*}
 \lim_{\varepsilon\to 0} \varepsilon^{\theta N(N-1)/2}
 P_\lambda(x_1,\dots,x_N;q,t)=\int_{m\prec r} g_{r/m}(y_N)
 \HO_{m}(y_1,\dots,y_{N-1};\theta). \qedhere
\end{equation*}
%
\end{proof}

There are several ways to link the functions $\HO_r$ to known objects. One way is through the fact
that these functions are analytic continuations (in index) of Jack polynomials
$J_\mu(\cdot;\theta)$ (see e.g.\ \cite[Chapter VI, Section 10]{M}) as seen from the formula
\begin{equation}
\label{eq_HO_and_Jack}
 \frac{\HO_r(-\mu_1-(n-1)\theta,\dots,-\mu_n;\theta)}{\HO_r(-(n-1)\theta,\dots,-\theta,0;\theta)}=
\frac{J_\mu(\exp(-r_1),\dots,\exp(-r_n);\theta)}{J_\mu(1,\dots,1;\theta)},
\end{equation}
which is a limit of the well-known argument-index symmetry relation, cf.\ \cite[Chapter VI,
(6.6)]{M}.
$$
 \frac{P_\lambda(q^{\mu_1}t^{n-1},\dots,q^{\mu_n}t^0;q,t)}{P_\lambda(t^{n-1},\dots,1;q,t)}=\frac{P_\mu(q^{\lambda_1}t^{
n-1},\dots,q^{\lambda_n}t^0;q,t)}{P_\mu(t^{n-1},\dots,1;q,t)}
$$
for Macdonald polynomials.

Another way is through the connection to the Calogero--Sutherland Hamiltonian:
\begin{proposition}
 For every $y_1,\dots,y_N\in\mathbb C$,  $\HO_r(y_1,\dots,y_N;\theta)$ is an eigenfunction of the differential operator
(acting in $r_i$'s)
\begin{equation}
\label{eq_Calogero}
 \sum_{i=1}^N \left(\frac{\partial}{\partial r_i}\right)^2 +
\sum_{i<j}\frac{\theta(1-\theta)}{2\sinh^2\left(\frac{r_i-r_j}{2}\right)}
\end{equation}
with eigenvalue $\sum_{i=1}^N (y_i)^2$.
\end{proposition}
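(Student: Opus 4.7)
The plan is to derive the claimed eigenvalue equation as a scaling limit of the eigenvalue equation for Macdonald polynomials, this time acting by the \emph{dual} (index--side) Macdonald difference operators rather than the operators $\Mac_N^k$ in the $x$--variables. The starting point is the argument--index symmetry of Macdonald polynomials,
$$
\frac{P_\lambda(q^{\mu_1}t^{N-1},\dots,q^{\mu_N};q,t)}{P_\lambda(t^{N-1},\dots,1;q,t)} = \frac{P_\mu(q^{\lambda_1}t^{N-1},\dots,q^{\lambda_N};q,t)}{P_\mu(t^{N-1},\dots,1;q,t)},
$$
which immediately gives rise to difference operators acting in the index $\lambda$ (obtained from $\Mac_N^k$ by swapping the roles of $\lambda$ and $\mu$), under which $P_\mu(q^{\lambda_1}t^{N-1},\dots,q^{\lambda_N};q,t)$ is an eigenfunction with eigenvalue $e_k(q^{\mu_1}t^{N-1},\dots,q^{\mu_N})$. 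These dual operators are built from shifts $\lambda_i\mapsto\lambda_i\pm 1$ with rational coefficients in $q^{\lambda_i-\lambda_j}t^{j-i}$.

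Next I would specialize to the Heckman--Opdam regime of Proposition \ref{prop_convergence_to_HO} with a compatible encoding of $\mu$:
$$
\lambda_i=\varepsilon^{-1}r_i,\qquad q=e^{-\varepsilon},\qquad t=q^\theta,\qquad \mu_i=-y_i-\theta(N-i),
$$
so that $q^{\mu_i}t^{N-i}=e^{\varepsilon y_i}$. In this regime Proposition \ref{prop_convergence_to_HO} (applied with the roles of index and argument swapped as above) identifies the common limit of both sides of the symmetry identity, after removal of the normalizing factors, with $\HO_r(y;\theta)$. The dual eigenvalue is
$$
e_1\!\left(q^{\mu_i}t^{N-i}\right)=\sum_{i=1}^N e^{\varepsilon y_i}=N+\varepsilon\sum_i y_i+\frac{\varepsilon^2}{2}\sum_i y_i^2+O(\varepsilon^3),
$$
while the shift $\lambda_i\mapsto \lambda_i-1$ translates into $r_i\mapsto r_i-\varepsilon$, i.e.\ $1-\varepsilon\,\partial/\partial r_i+(\varepsilon^2/2)(\partial/\partial r_i)^2-\cdots$. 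Substituting these expansions into the dual eigenvalue equation produces an identity between formal power series in $\varepsilon$ applied to $\HO_r(y;\theta)$.

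The idea is then to extract order by order. The $\varepsilon^0$ coefficient yields a trivial identity ($N=N$); the $\varepsilon^1$ coefficient yields a first-order equation that can be arranged to hold automatically after dividing $\HO_r$ by the explicit factor $\prod_i\prod_{j<i}\Gamma(\theta)^{-1}(\sinh((r_i-r_j)/2))^{1-\theta}$ (the Heckman--Opdam/Vandermonde weight) that appears in \eqref{eq_integral_rep}; in effect this is a ground-state conjugation converting the operator from Sutherland form to Schr\"odinger form. The $\varepsilon^2$ coefficient then matches, on the operator side, $\sum_i\partial_{r_i}^2$ plus a potential coming from expanding the rational coefficients of the dual Macdonald operator: the factors
$(1-q^{\lambda_i-\lambda_j}t^{j-i\mp 1})/(1-q^{\lambda_i-\lambda_j}t^{j-i})$
generate, to order $\varepsilon^2$, precisely the Calogero--Sutherland potential $\sum_{i<j}\theta(1-\theta)/(2\sinh^2((r_i-r_j)/2))$, while on the right-hand side the $\varepsilon^2$ coefficient of the eigenvalue is $\sum_i y_i^2$.

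The main technical obstacle is the $\varepsilon^2$ expansion of the dual Macdonald difference operator: one has to write out its explicit form, expand every factor $(1-q^{\lambda_i-\lambda_j}t^{j-i\pm\kappa})$ in $\varepsilon$, collect the terms coming from the two shifts $\lambda_i\to \lambda_i\pm 1$, and verify that the first-order terms combine into the total derivative that is killed by the ground-state conjugation, leaving exactly the stated hyperbolic Calogero--Sutherland Hamiltonian. An alternative, at this step, is to invoke directly the known degeneration of Macdonald--Ruijsenaars operators to the trigonometric/hyperbolic Calogero--Sutherland operator (see the discussion in \cite{Cherednik-Quantum}, \cite{Cherednik-Harish} referred to above); this shortcuts the bookkeeping but requires importing external results.
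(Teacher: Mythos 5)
Your overall plan --- degenerate an index--side eigenvalue identity for Macdonald polynomials to the Heckman--Opdam regime and read off the Calogero--Sutherland equation from the $\varepsilon^2$ coefficient --- is the same skeleton the paper uses, but the specific route you take has a real gap in the handling of the low--order terms. The paper works with the $e_1$ Pieri rule
$P_\mu \, e_1 = \sum_{\lambda=\mu\cup\square}\psi'_{\lambda/\mu}\,P_\lambda$,
and the key computational fact (which you do not use) is that the Pieri coefficients $\psi'_{\lambda/\mu}$ expand as $1 + O(\varepsilon^2)$ with \emph{no} $\varepsilon^1$ term. Consequently the only first--order operator that appears is the total derivative $\sum_i\partial_{m_i}$ from the shift, which is matched on the eigenvalue side by $\sum_i y_i$. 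No conjugation is needed, and the statement is about $\HO_r$ itself, not a rescaled version. By contrast, the raw dual Macdonald operator coefficients $A_I(q^\lambda t^\delta;t)$ that arise from the index--argument swap you invoke do have nonvanishing $\varepsilon^1$ parts (of coth type); they differ from the Pieri coefficients by the normalization $P_\lambda(t^\delta)/P_{\lambda+\delta_i}(t^\delta)$, whose limit is precisely the ground--state factor you allude to. So the "ground--state conjugation" step you propose is not eliminating a first--order operator so much as silently converting your operator into the Pieri operator --- but it would then act on $\HO_r/\HO_r(\text{principal})$ rather than $\HO_r$, contradicting the statement you want to prove unless you carefully undo the conjugation at the end, which you do not address.

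The second gap is the rigor of "extract order by order." Matching coefficients of a formal $\varepsilon$--expansion requires that the $\varepsilon^0$ and $\varepsilon^1$ relations are established independently so that the $\varepsilon^2$ term is truly the leading nontrivial one. The paper does exactly this: the trivial $\varepsilon^0$ identity and the $\varepsilon^1$ identity $\sum_i\partial_{m_i}\HO_m=\bigl(\sum_i y_i\bigr)\HO_m$ both follow from the $e_N$ Pieri rule $P_\mu\,e_N=P_{\mu+1^N}$, and the CS equation is then obtained by applying the \emph{combination} $2\varepsilon^{-2}\bigl(e_1 - N - (e_N-1) + \tfrac12(e_N-1)^2\bigr)$, which is manifestly $O(\varepsilon^2)$. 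You acknowledge this is "the main technical obstacle," but the $e_N$ identity is the missing ingredient, and without it your argument remains a formal manipulation rather than a proof. Finally, the single ratio $(1-q^{\lambda_i-\lambda_j}t^{j-i\mp 1})/(1-q^{\lambda_i-\lambda_j}t^{j-i})$ does \emph{not} by itself produce a pure $\theta(1-\theta)/\sinh^2$ term at order $\varepsilon^2$; its $\varepsilon^1$ part is nonzero, and it is only the carefully balanced product of four such factors in the Pieri coefficient that cancels the $\varepsilon^1$ contribution and leaves the CS potential at order $\varepsilon^2$.
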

\begin{proof}[Sketch of the proof]
 The Pieri formula for the Macdonald polynomials (see \cite[Chapter VI, Section 6]{M}) yields
\begin{multline}
\label{eq_Pierri}
 P_{\mu}(x_1,\dots,x_N;q,t) e_1(x_1,\dots,x_N)= P_{\mu}(x_1,\dots,x_N;q,t) (x_1+\dots+x_N)\\=\sum_{\lambda=\mu\cup(a,b)}
\prod_{i=1}^{a-1} \frac{(1-q^{\mu_i-\mu_a-1}
t^{a-i+1})(1-q^{\mu_i-\mu_a}t^{a-i-1})}{(1-q^{\mu_i-\mu_a} t^{a-i})(1-q^{\mu_i-\mu_a-1} t^{a-i})}
P_{\lambda}(x_1,\dots,x_N;q,t),
\end{multline}
where the summation goes over Young diagrams $\lambda$ which differ from $\mu$ by adding a single
box $(a,b)$ (in row $a$ and column $b$). In the limit regime of Proposition
\ref{Prop_convergence_to_HO_with_zeros}, $P_\mu$ turns into $\HO_m$, where $\mu_i=\lfloor m_i
\eps^{-1} \rfloor$, and when $\lambda=\mu\cup(a,b)$, $P_\lambda$ turns into
$$
\HO_m+\eps \frac{\partial}{\partial m_a} \HO_m+\frac{\eps^2}{2} \left(\frac{\partial}{\partial
m_a}\right)^2\HO_m+\dots.
$$
On the other hand,
$$
\frac{(1-q^{\mu_i-\mu_a} t^{a-i+1})(1-q^{\mu_i-\mu_a-1}t^{a-i+1})}{(1-q^{\mu_i-\mu_a}
t^{j-i})(1-q^{\mu_i-\mu_a-1} t^{a-i})}=1+ \eps^2 \theta(1-\theta)
\frac{\exp(m_a-m_i)}{(1-\exp(m_a-m_i))^2}+O(\eps^3)
$$
Therefore, up to terms of order $\eps^3$, the identity \eqref{eq_Pierri} gives (omitting
$(y_1,\dots,y_N)$ from the argument of $\HO_m$)
\begin{multline}
\label{eq_difference_relation}
 \left(N+\eps \sum_{i=1}^N y_i +\frac{\eps^2}{2} \sum_{i=1}^N (y_i)^2\right)\HO_m =\sum_{i=1}^N\left(1+\eps
\frac{\partial}{\partial m_i} \HO_m+\frac{\eps^2}{2} \left(\frac{\partial}{\partial
m_a}\right)^2\HO_m\right)\\+\eps^2\sum_{i<j}
\frac{\theta(1-\theta)\exp(m_j-m_i)}{(1-\exp(m_j-m_i))^2}\HO_m.
\end{multline}
Note that on the level of formal computation one could just compare the coefficient of $\eps^2$ in
both sides of \eqref{eq_difference_relation} and get the right answer. However, in order to give a
proof, one needs to subtract from \eqref{eq_difference_relation} certain operators which will
cancel low order terms. For that we also need another identity for Macdonald polynomials, which is
$$
 P_{\mu}(x_1,\dots,x_N;q,t) e_N(x_1,\dots,x_N)=P_\lambda(x_1,\dots,x_N;q,t) x_1\cdots x_N=P_{\lambda+1^N}(x_1,\dots,x_N;q,t).
$$
As $\eps\to 0$, up to the terms of order $\eps^3$ this gives
\begin{equation}
\label{eq_x15}
 \left(1+\eps \sum_{i=1}^N y_i + \frac{\eps^2}{2} \left(\sum_{i=1}^N
 y_i\right)^2\right)\HO_m =\HO_m+\eps \sum_{i=1}^N \frac{\partial}{\partial m_i} \HO_m +\frac{\eps^2}{2}
 \left(\sum_{i=1}^N \frac{\partial}{\partial m_i}\right)^2 \HO_m.
\end{equation}
Combining \eqref{eq_difference_relation} and \eqref{eq_x15} we conclude that the first order (in
$\eps$) of the operator of multiplication by
$$
 2\eps^{-2}\left(e_1-N -(e_N-1) + \frac{1}{2}(e_N-1)^2\right)
$$
gives
$$
 \left(\sum_{i=1}^N y_i^2 \right)\HO_m = \sum_{i=1}^N \left(\frac{\partial}{\partial m_i}\right)^2
 \HO_m +\sum_{i<j}
\frac{2\theta(1-\theta)\exp(m_j-m_i)}{(1-\exp(m_j-m_i))^2}\HO_m. \qedhere
$$
\end{proof}

In principle, neither \eqref{eq_HO_and_Jack}, nor \eqref{eq_Calogero} uniquely define the
functions $\HO_m$. Indeed, the analytic continuation in \eqref{eq_HO_and_Jack} might be not
unique, while some of the eigenvalues in \eqref{eq_Calogero} coincide. However, with additional
arguments one shows that $\HO_m(y_1,\dots,y_N)$ can be identified with \emph{Heckman--Opdam
hypergeometric functions} for root system of type $A$, see \cite{HO}, \cite{Op}, \cite{HS}.

Yet another link to well-understood objects is obtained through the limit $\theta\to\infty$.
Straightforward computations show that in this limit transition \eqref{eq_integral_rep} converges
to the Givental integral formula \cite{Gi} for $\mathfrak{gl}_N$--Whittaker functions. From a
different point of view the convergence of Heckman--Opdam hypergeometric functions to Whittaker
functions is established in \cite{Shi}.

\bigskip

If some of $r_i$'s coincide, then the integral in \eqref{eq_integral_rep} will be identically zero.
But if we do a rescaling then we still get a nontrivial object, which is also a limit of Macdonald
polynomials. One example is given by the following statement in which $M-N$ coordinates of $r$
become equal to zero.

\begin{proposition}
\label{Prop_convergence_to_HO_with_zeros}
 Let $M\ge N$ and suppose that
 \begin{equation}
\label{eq_limit_params_with_zeros}
 t=q^{\theta},\quad q=\exp(-\varepsilon),\quad \lambda = \lfloor h \varepsilon^{-1}
 (r_1,\dots,r_N,0,\dots,0)\rfloor,\quad
 x_i=\exp(\varepsilon y_i),
\end{equation}
where $r_1>r_2>\dots>r_N>0$ and $\lambda\in\GTP_M$. Then there exists a limit
$$
 \HO_r(y_1,\dots,y_M;\theta)= \lim_{\varepsilon\to 0} \varepsilon^{\theta (N(N-1)/2+N(M-N))}
 P_\lambda(x_1,\dots,x_M;q,t),
$$
and this limit is uniform over $r$ and $x$ belonging to compact sets.
\end{proposition}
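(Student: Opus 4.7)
I would mimic the proof of Proposition \ref{prop_convergence_to_HO}, but iterate the branching rule \eqref{eq_Mac_branching} a total of $M-N$ times to peel variables from $M$ down to $N$. Writing
\begin{equation*}
P_\lambda(x_1,\dots,x_M;q,t)=\sum_{\lambda=\mu^{(M)}\succ\mu^{(M-1)}\succ\dots\succ\mu^{(N)}}\left(\prod_{k=N+1}^{M}\psi_{\mu^{(k)}/\mu^{(k-1)}}(x_k)\right)P_{\mu^{(N)}}(x_1,\dots,x_N;q,t),
\end{equation*}
one observes that because the last $M-N$ coordinates of $\lambda$ vanish, the interlacing forces $\mu^{(k)}_i=0$ for every $i>N$ and $N\le k\le M$. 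The only free parameters are $\mu^{(k)}_i$ with $1\le i\le N$ and $N\le k\le M-1$, which I would parameterize as $\mu^{(k)}_i=\lfloor\varepsilon^{-1}m^{(k)}_i\rfloor$, subject to the (slightly modified) interlacing $m^{(N)}\prec m^{(N+1)}\prec\dots\prec m^{(M-1)}\prec(r_1,\dots,r_N)$ (with the understanding that the $(k{-}N)$th coordinate of $m^{(k)}$ is allowed to touch $0$).

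Next I would compute the $\varepsilon\to 0$ asymptotics of each $\psi_{\mu^{(k)}/\mu^{(k-1)}}(x_k)$ for $k=N+1,\dots,M$. In the formula \eqref{eq_psi}, every $f$-factor has an argument of the form $q^a t^{j-i}$ where $a$ is a difference of two coordinates of $\mu^{(k)}$ or $\mu^{(k-1)}$; I would split these factors into three cases according to whether the two relevant positions are both $\le N$, mixed, or both $>N$. By Lemma \ref{lemma_q_poch_conv}, the first case produces finite factors of the form $(1-\exp(-(m^{(\cdot)}_i-m^{(\cdot)}_j)))^{\pm(1-\theta)}$ (as in the proof of Proposition \ref{prop_convergence_to_HO}); the second (boundary) case produces finite factors $(1-\exp(-m^{(\cdot)}_i))^{\pm(1-\theta)}$, since one argument of $f$ approaches $\exp(-m^{(\cdot)}_i)\in(0,1)$; the third case produces a genuinely divergent factor $\varepsilon^{1-\theta}$ times a ratio of Gamma functions by Lemma \ref{lemma_q_Gamma_conv}. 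Tallying all these together with the prefactor $f(1)^{k-1}$ shows that $\psi_{\mu^{(k)}/\mu^{(k-1)}}(x_k)\sim\varepsilon^{N(1-\theta)}\,\mathfrak{g}_k(m^{(k-1)},m^{(k)};y_k,r)$ for an explicit finite function $\mathfrak{g}_k$.

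Multiplying these asymptotics together, invoking Proposition \ref{prop_convergence_to_HO} on the residual factor $P_{\mu^{(N)}}(x_1,\dots,x_N;q,t)\sim\varepsilon^{-\theta N(N-1)/2}\HO_{m^{(N)}}(y_1,\dots,y_N;\theta)$, and interpreting the sum over intermediate partitions as a Riemann sum of mesh $\varepsilon$ over the $N(M-N)$ free coordinates (which contributes an overall factor $\varepsilon^{-N(M-N)}$), I would obtain
\begin{equation*}
\varepsilon^{\theta(N(N-1)/2+N(M-N))}P_\lambda(x_1,\dots,x_M;q,t)\longrightarrow\int\prod_{k=N+1}^{M}\mathfrak{g}_k(m^{(k-1)},m^{(k)};y_k,r)\cdot\HO_{m^{(N)}}(y_1,\dots,y_N;\theta)\,dm,
\end{equation*}
the integration being over the polytope described above. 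This formula defines $\HO_r(y_1,\dots,y_M;\theta)$. The exponents balance because $N(M-N)(1-\theta)-N(M-N)-\theta N(N-1)/2=-\theta(N(N-1)/2+N(M-N))$.

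The main obstacle is the careful bookkeeping of the three case types across the $f$-factors in each of the $M-N$ branching weights, and verifying that the case-(b) boundary factors from successive branching levels recombine into a single, well-defined integrand $\prod_k\mathfrak{g}_k$. A secondary issue is the behavior near the boundary of the integration polytope (in particular where $m^{(N)}_N$ vanishes or where some $m^{(k)}_i$ coincide), since Proposition \ref{prop_convergence_to_HO} does not apply pointwise there: on the interior, convergence is uniform on compacta by Lemmas \ref{lemma_q_poch_conv} and \ref{lemma_q_Gamma_conv} together with Proposition \ref{prop_convergence_to_HO}, and one would extend to the full polytope via dominated convergence with an integrable majorant obtained from the same estimates.
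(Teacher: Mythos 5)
Your proposal is correct and follows essentially the same route as the paper's proof. The paper proves Proposition \ref{Prop_convergence_to_HO_with_zeros} by induction on $M$: it applies the branching rule \eqref{eq_Mac_branching_2} once to peel off $x_M$, notes that the interlacing forces the coordinates beyond position $N$ of the intermediate partition $\mu$ to vanish (so the summation is effectively $N$-dimensional), computes the asymptotics $\psi_{\lambda/\mu}(x_M)\sim g_{r/m}\,\varepsilon^{N(1-\theta)}$ after the cancellations coming from the zero coordinates, and then recognizes the sum as a Riemann sum feeding into the inductive hypothesis for $P_\mu(x_1,\dots,x_{M-1};q,t)$. Your version unrolls this induction: you iterate the branching rule $M-N$ times in one stroke, apply the same asymptotic analysis of each $\psi_{\mu^{(k)}/\mu^{(k-1)}}(x_k)$, and land directly on Proposition \ref{prop_convergence_to_HO} at level $N$. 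The three-case bookkeeping of $f$-factors you describe is precisely what underlies the paper's one-line claim about the reduced form of $\psi_{\lambda/\mu}$ and its $\varepsilon^{N(1-\theta)}$ prefactor, and your exponent tally matches. Your closing remarks on boundary behavior and dominated convergence address details the paper's sketch glosses over; they do not change the approach, but they would be needed to make the argument fully rigorous. In short, the two proofs differ only in packaging (direct iteration vs. recursion on $M$); the key lemmas and the Riemann-sum mechanism are identical.
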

\begin{proof}[Sketch of the proof]
 Induction in $M$. When $M=N$ the statement coincides with that of Proposition
 \ref{prop_convergence_to_HO}. For $M>N$ we again use the branching rule:
\begin{equation}
\label{eq_Mac_branching_2}
  P_\lambda(x_1,\dots,x_M;q,t)=\sum_{\mu\prec\lambda} \psi_{\lambda/\mu}(x_M)
  P_\mu(x_1,\dots,x_{M-1};q,t),
\end{equation}
where we note that since $\lambda$ has $N$ non-zero coordinates, $\mu$ also has $N$ non-zero
coordinates. In other words, the summation is $N$--dimensional. The asymptotics of
$\psi_{\lambda/\mu}(x)$ also changes because of zeros in $\lambda$ and $\mu$. We have
$$
 \psi_{\lambda/\mu}(x)=x^{|\lambda|-|\mu|}
f(1)^{N} \prod_{i<j<N+1} f(q^{\mu_i-\mu_j}
 t^{j-i})
  \prod_{i\le j<N+1}
\frac{f(q^{\lambda_i-\lambda_{j+1}}t^{j-i})}{f(q^{\mu_i-\lambda_{j+1}}t^{j-i}){f(q^{\lambda_i-\mu_j}t^{j-i})}}.
$$
Thus,
$$
 \psi_{\lambda/\mu}(x)\sim g_{r/m} \eps^{N(1-\theta)},
$$
where $g_{r/m}$ is as in Proposition \ref{prop_convergence_to_HO}, but with the agreement that $r$
is of length $N+1$ with last coordinate being zero. As $\eps\to 0$ the summation in
\eqref{eq_Mac_branching_2} turns into a Riemannian sum (with step $\eps$) for an integral.
Therefore,
\begin{equation}
\label{eq_HO_branching_2}
 \lim_{\varepsilon\to 0} \varepsilon^{\theta(N(N-1)/2+N(M-N))}
 P_\lambda(x_1,\dots,x_M;q,t)=\int_{m\prec r} g_{r/m}(y_M)
 \HO_{m}(y_1,\dots,y_{M-1};\theta,h),
\end{equation}
where $r$ is thought of as having length $N+1$ with last coordinate being zero and $m$ has length
$N$.
\end{proof}

Similarly one can pass to the limit in the Macdonald $Q$--polynomials (which differ from $P$ by
multiplication by an explicit constant see \cite[Chapter 6]{M}).

\begin{proposition}
\label{Prop_convergence_to_QHO_with_zeros}
 Let $M\ge N$ and suppose that
 \begin{equation}
\label{eq_limit_params_with_zerosQ}
 t=q^{\theta},\quad q=\exp(-\varepsilon),\quad \lambda = \lfloor \varepsilon^{-1}
 (r_1,\dots,r_N,0,\dots,0)\rfloor,\quad
 x_i=\exp(\varepsilon y_i),
\end{equation}
where $r_1>r_2>\dots>r_N>0$ and $\lambda$ is a signature of size $M$. Then there exists a limit
$$
 \QHO_r(y_1,\dots,y_M;\theta, h)= \lim_{\varepsilon\to 0} \varepsilon^{N(\theta-1)+\theta (N(N-1)/2+N(M-N))}
 Q_\lambda(x_1,\dots,x_M;q,t),
$$
this limit is uniform over $r$ and $x$ belonging to compact sets.
\end{proposition}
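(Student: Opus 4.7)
The natural plan is to write $Q_\lambda = b_\lambda \, P_\lambda$ using the identity $Q_\lambda = b_\lambda P_\lambda$ from \cite[Chapter VI, (6.19)]{M} and reduce the claim to Proposition \ref{Prop_convergence_to_HO_with_zeros} combined with an asymptotic analysis of the scalar $b_\lambda(q,t)$. Since Proposition \ref{Prop_convergence_to_HO_with_zeros} gives
$$
 \varepsilon^{\theta(N(N-1)/2 + N(M-N))} P_\lambda(x_1,\dots,x_M;q,t) \longrightarrow \HO_r(y_1,\dots,y_M;\theta),
$$
uniformly on compacts, it suffices to produce a matching limit for $\varepsilon^{N(\theta-1)} b_\lambda(q,t)$ and to set
$$
 \QHO_r(y_1,\dots,y_M;\theta) := \left[\lim_{\varepsilon\to 0}\varepsilon^{N(\theta-1)} b_\lambda\right] \cdot \HO_r(y_1,\dots,y_M;\theta).
$$

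The next step is a direct computation of the asymptotics of $b_\lambda$, which is a mild extension of the cancellation argument already carried out in the proof of Theorem \ref{Theorem_convergence_to_Jacobi}. Recall (formula \eqref{eq_blambda}) that
$$
 b_\lambda = \prod_{1\le i\le j\le \ell(\lambda)} \frac{f(q^{\lambda_i-\lambda_j}t^{j-i})}{f(q^{\lambda_i-\lambda_{j+1}}t^{j-i})},\qquad f(u) = \frac{(tu;q)_\infty}{(qu;q)_\infty}.
$$
Because $\ell(\lambda) = N$ under our scaling, the product runs over $1\le i\le j\le N$, just as in Theorem \ref{Theorem_convergence_to_Jacobi}; the $M-N$ trailing zeros of $\lambda$ enter only through the convention $\lambda_{N+1}=0$. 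For each fixed $i$, the product over $j=i,i+1,\dots,N$ telescopes \emph{asymptotically}: for every $k\ge 1$ the denominator $f(q^{\lambda_i-\lambda_{i+k}}t^{k-1})$ coming from $j=i+k-1$ and the numerator $f(q^{\lambda_i-\lambda_{i+k}}t^{k})$ coming from $j=i+k$ have the same limit by Lemma \ref{lemma_q_poch_conv} (since $q^{\lambda_i-\lambda_{i+k}}\to e^{-(r_i-r_{i+k})}\in(0,1)$ and $t\to 1$), so their ratio tends to $1$. Only the boundary factors survive:
$$
 b_\lambda \sim \prod_{i=1}^N \frac{f(1)}{f(q^{\lambda_i} t^{N-i})}.
$$
Applying Lemma \ref{lemma_q_Gamma_conv} to $f(1) = (q^\theta;q)_\infty/(q;q)_\infty \sim \varepsilon^{1-\theta}/\Gamma(\theta)$, and Lemma \ref{lemma_q_poch_conv} to $f(q^{\lambda_i}t^{N-i}) \to (1-e^{-r_i})^{1-\theta}$ (which uses $r_i>0$ so $e^{-r_i}$ is bounded away from $1$), we obtain
$$
 b_\lambda \sim \frac{\varepsilon^{N(1-\theta)}}{\Gamma(\theta)^N}\prod_{i=1}^N (1-e^{-r_i})^{\theta-1}.
$$

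Combining the two asymptotics gives the claimed convergence with
$$
 \QHO_r(y_1,\dots,y_M;\theta) = \frac{1}{\Gamma(\theta)^N}\prod_{i=1}^N(1-e^{-r_i})^{\theta-1}\cdot \HO_r(y_1,\dots,y_M;\theta),
$$
and the uniformity statement is inherited from Proposition \ref{Prop_convergence_to_HO_with_zeros} together with the fact that the convergences in Lemmas \ref{lemma_q_poch_conv} and \ref{lemma_q_Gamma_conv} are uniform on compact subsets of the parameter region. I expect no serious obstacle: the only delicate point is verifying that the asymptotic cancellations in $b_\lambda$ are genuinely uniform over compact sets of $r_1>\dots>r_N>0$, which requires the standard uniform version of Lemma \ref{lemma_q_poch_conv} (noted in the remark after its statement) applied on a compact set of arguments bounded away from $1$; this is automatic once $r_i - r_{i+1}$ and $r_N$ are bounded below.
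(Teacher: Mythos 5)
Your proof is correct and takes exactly the route the paper intends: the paper omits the argument for Proposition \ref{Prop_convergence_to_QHO_with_zeros}, noting only that $Q_\lambda$ ``differs from $P_\lambda$ by multiplication by an explicit constant,'' and your computation of $\varepsilon^{N(\theta-1)}b_\lambda$ is precisely the same asymptotic telescoping of \eqref{eq_blambda} that the paper already carries out in the proof of Theorem \ref{Theorem_convergence_to_Jacobi} (where it yields $b_\lambda\sim \eps^{N(1-\theta)}\Gamma(\theta)^{-N}\prod_i(1-\r_i)^{\theta-1}$). Your resulting formula $\QHO_r=\Gamma(\theta)^{-N}\prod_i(1-e^{-r_i})^{\theta-1}\HO_r$ is also consistent with the principal specializations \eqref{eq_principal_HO}--\eqref{eq_principal_QHO}, which is a useful sanity check you could mention.
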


\bigskip

The functions $\HO_r$ inherit various properties of Macdonald polynomials. Let us summarize some
of those (we agree that $\HO$--function vanishes when some of the indexing coordinates coincide).

\begin{proposition} \label{prop_HO_propeties} Functions $\HO_r$ and $\QHO_r$ have the following properties:
\begin{enumerate}[I.]
\item $\HO_r$ and $\QHO_r$ are symmetric functions of its arguments.
\item Homogeneity:
 $$
 \HO_r(y_1+A,\dots,y_N+A;\theta)=\exp(A|r|) \HO_r(y_1,\dots,y_N;\theta),
 $$
 $$
 \QHO_r(y_1+A,\dots,y_N+A;\theta)=\exp(A|r|) \QHO_r(y_1,\dots,y_N;\theta).
 $$
\item Cauchy--type identity:
 Take $N$ parameters $a_1,\dots,a_N$ and $M$ parameters $b_1,\dots,b_M$ such that $a_i+b_j<0$ for
 all $i,j$. Then
 $$
  \int_{r\in \GTH_{\min(N,M)}} \QHO_r(a_1,\dots,a_N;\theta) \HO_r(b_1,\dots,b_M;\theta) \prod_{i=1}^{\min(N,M)} d r_i = \prod_{i,j}
  \frac{\Gamma(-a_i-b_j)}{\Gamma(\theta-a_i-b_j)}.
 $$
\item Principal specialization: Let $r\in\GTH_N$ and $M\ge N$, then
\begin{multline}
\label{eq_principal_HO}
 \HO_r(0,-\theta,\dots, (1-M)\theta;\theta)\\=\prod_{i<j}^{i\le N;\, j\le M}
\frac{\Gamma(\theta(j-i))}{\Gamma(\theta(j-i+1))} \prod_{i<j} (e^{-r_j}-e^{-r_i})^{\theta}
\prod_{i=1}^N (1-e^{-r_i})^{\theta(M-N)}
\end{multline}
and
\begin{multline}
\label{eq_principal_QHO}
 \QHO_r(0,-\theta,\dots, (1-M)\theta;\theta)\\=\frac{1}{(\Gamma(\theta)^N}\prod_{i<j}^{i\le N;\, j\le M}
\frac{\Gamma(\theta(j-i))}{\Gamma(\theta(j-i+1))} \prod_{i<j} (e^{-r_j}-e^{-r_i})^{\theta}
\prod_{i=1}^N (1-e^{-r_i})^{\theta(M-N)+(\theta-1)}.
\end{multline}
\item Difference operators:
For any subset $I\subset\{1,\dots,N\}$ define
$$
 B_I(y_1,\dots,y_N;\theta) = \prod_{i\in I} \prod_{j\not \in I} \frac{y_i -y_j-\theta}{y_i-y_j}.
$$
Define shift operator $T_i$ through
$$
 [T_i f](y_1,\dots,y_N) = f(y_1,\dots,y_{i-1}, y_i-1,y_{i+1},\dots,y_N).
$$
For any $k\le N$ define $k$th difference operator $\D^k_N$ acting on symmetric functions in
variables $y_1,\dots,y_N$ through
$$
\D^k_N = \sum_{|I|=k} B_I(y_1,\dots,y_N;\theta) \prod_{i\in I} T_i.
$$
Then for any $r\in\GTH_n$ with $n\le N$ we have
$$
 \D^k_N \HO_r(y_1,\dots,y_N;\theta) = e_k(\exp(-r_1),\dots,\exp(-r_n), \, \underbrace{1,\dots, 1}_{N-n}\,)
 \HO_r(y_1,\dots,y_N;\theta),
$$
where $e_k$ is $k$th elementary symmetric polynomial (in $N$ variables).
\end{enumerate}
\end{proposition}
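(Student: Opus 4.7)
All five properties will be derived by taking the $q,t\to 1$ scaling limit of the corresponding identities for Macdonald polynomials, leveraging the uniform-on-compacts convergence of Propositions \ref{prop_convergence_to_HO}, \ref{Prop_convergence_to_HO_with_zeros}, \ref{Prop_convergence_to_QHO_with_zeros} together with the elementary asymptotic Lemmas \ref{lemma_q_poch_conv} and \ref{lemma_q_Gamma_conv}. Property (I) is immediate: the symmetry of $P_\lambda$ and $Q_\lambda$ in their variables is preserved under pointwise limits. Property (II) follows from the homogeneity $P_\lambda(cx_1,\dots,cx_N;q,t)=c^{|\lambda|}P_\lambda(x_1,\dots,x_N;q,t)$ applied with $c=\exp(\eps A)$: since $|\lambda|\sim\eps^{-1}|r|$, one has $c^{|\lambda|}\to\exp(A|r|)$, and the same argument applies to $Q_\lambda$.

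Property (IV) is obtained by specializing the explicit product expansion of $P_\lambda(1,t,\dots,t^{M-1};q,t)$ that was already analyzed in the proof of Theorem \ref{Theorem_convergence_to_Jacobi}; applying Lemmas \ref{lemma_q_poch_conv} and \ref{lemma_q_Gamma_conv} to each $q$--Pochhammer factor directly yields \eqref{eq_principal_HO}, and the companion formula \eqref{eq_principal_QHO} follows upon multiplying by the asymptotics $b_\lambda\sim\eps^{N(1-\theta)}\Gamma(\theta)^{-N}\prod_i(1-e^{-r_i})^{\theta-1}$ extracted in that same proof. Property (V) is the limit of the eigenfunction identity \eqref{eq_Macdonald_operator_eigen}: as was observed in the proof of Theorem \ref{Theorem_HO_expectations}, under $q=e^{-\eps}$, $t=q^\theta$, $z_i=\exp(\eps y_i)$ one has the pointwise convergence $A_I(z;t)\to B_I(y;\theta)$, and the multiplicative shift $T^q_i$ corresponds to the additive shift $T_i$, so $\Mac^k_N$ converges to $\D^k_N$ when applied to sequences of functions that converge uniformly on compact sets together with their finitely many required shifts. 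Choosing $\lambda_i=\lfloor r_i/\eps\rfloor$ for $i\le n$ and $\lambda_i=0$ for $n<i\le N$, the eigenvalue $e_k(q^{\lambda_1}t^{N-1},\dots,q^{\lambda_N})$ satisfies $q^{\lambda_i}t^{N-i}\to\exp(-r_i)$ for $i\le n$ and $\to 1$ for $i>n$, producing exactly the eigenvalue claimed in (V).

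The main obstacle will be Property (III). Starting from the Macdonald Cauchy identity \eqref{eq_normalizing_Macdonald} and performing the substitution $a_i\mapsto\exp(\eps a_i)$, $b_j\mapsto\exp(\eps b_j)$ (with $a_i+b_j<0$ to guarantee $a_ib_j<1$), the right-hand side factors as $\prod_{i,j}(q^{\theta-a_i-b_j};q)_\infty/(q^{-a_i-b_j};q)_\infty$, each factor of which is asymptotic to $\eps^\theta\Gamma(-a_i-b_j)/\Gamma(\theta-a_i-b_j)$ by Lemma \ref{lemma_q_Gamma_conv}. On the left-hand side, the sum over $\lambda\in\GTP_{\min(N,M)}$ becomes, after multiplying every summand by the powers of $\eps$ dictated by Propositions \ref{Prop_convergence_to_HO_with_zeros} and \ref{Prop_convergence_to_QHO_with_zeros}, a Riemann sum of mesh $\eps$ in each of the $\min(N,M)$ coordinates for the integral $\int_{\GTH_{\min(N,M)}}\QHO_r(a;\theta)\HO_r(b;\theta)\prod dr_i$; a direct book-keeping of exponents confirms that the surviving powers of $\eps$ on the two sides match. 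The delicate step is to interchange the limit with the infinite sum: one needs a uniform-in-$\eps$ tail bound showing that the contribution of $\lambda$ with $\lambda_1>C/\eps$ becomes negligible as $C\to\infty$. Such a bound should follow from positivity of each summand combined with the factorized product formula for the total sum (which remains bounded in the limit) and exponential decay of individual terms enforced by the constraint $a_i+b_j<0$. Once the tails are controlled uniformly, standard convergence of Riemann sums closes the argument.
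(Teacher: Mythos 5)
Your proposal follows exactly the paper's approach: each property is obtained as a $q,t\to 1$ limit of the corresponding Macdonald-polynomial identity (symmetry, homogeneity, the Cauchy identity \eqref{eq_Cauchy_Mac}, the evaluation formula together with the $b_\lambda$ asymptotics, and the eigen-relation \eqref{eq_Macdonald_operator_eigen}), using the degenerations of Propositions~\ref{prop_convergence_to_HO}--\ref{Prop_convergence_to_QHO_with_zeros} and the asymptotic Lemmas~\ref{lemma_q_poch_conv}, \ref{lemma_q_Gamma_conv}. The paper's own proof is essentially a pointer to these facts with no details, so you supply substantially more explicit bookkeeping than the paper does.

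One small slip in your treatment of (III): each factor of the Cauchy-identity right-hand side is asymptotic to $\eps^{-\theta}\,\Gamma(-a_i-b_j)/\Gamma(\theta-a_i-b_j)$, not $\eps^{+\theta}$; this follows from Lemma~\ref{lemma_q_Gamma_conv} since $\frac{(q^x;q)_\infty}{(q^y;q)_\infty}\sim(1-q)^{y-x}\Gamma(y)/\Gamma(x)$ and $1-q\sim\eps$. The sign does not affect your conclusion because you correctly assert (and one can verify: with $K=\min(N,M)$, the total power on the left is $\eps^{-\theta K(N+M-K)}=\eps^{-\theta NM}$, matching the right) that the exponents balance. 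You also rightly flag that interchanging the $\eps\to 0$ limit with the infinite sum over $\lambda$ in the Cauchy identity requires a uniform tail estimate; this step is glossed over in the paper, and your proposed fix via positivity, the factorized total mass, and the decay enforced by $a_i+b_j<0$ is the standard and correct way to close it.
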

{\bf Remark. } The difference operators $\D^k_N$ of property V provide another way to identify
functions $\HO_r$ with Heckman--Opdam hypergeometric functions as shown in
\cite{Cherednik-Harish}.
\begin{proof}
 All the properties are straightforward limits of similar statements for Macdonald polynomials. For
 instance, III is a limit of the Cauchy identity for Macdonald polynomials
\begin{equation}
\label{eq_Cauchy_Mac}
 \sum_{\ell(\lambda)\le N} P_\lambda(\alpha_1,\dots,\alpha_N;q,t) Q_\lambda(\beta_1,\dots,\beta_M;q,t) =
 \prod_{i,j} \frac{(t\alpha_i \beta_j;q)_{\infty}}{(\alpha_i\beta_j;q)_{\infty}},
\end{equation}
IV is a limit of the evaluation formula for Macdonald polynomials
\begin{multline*}
 P_\lambda(1,\dots,t^{M-1};q,t)=t^{\sum (i-1)\lambda_i} \prod_{i<j\le M} \frac{(q^{\lambda_i-\lambda_j}
 t^{j-i};q)_\infty} {(q^{\lambda_i-\lambda_j}
 t^{j-i+1};q)_\infty} \cdot \frac{(
 t^{j-i+1};q)_\infty} {(
 t^{j-i};q)_\infty}
 \\= t^{\sum (i-1)\lambda_i} \prod_{i<j\le N} \frac{(q^{\lambda_i-\lambda_j}
 t^{j-i};q)_\infty} {(q^{\lambda_i-\lambda_j}
 t^{j-i+1};q)_\infty} \prod_{i=1}^N \prod_{j=N+1}^M \frac{(q^{\lambda_i}
 t^{j-i};q)_\infty} {(q^{\lambda_i}
 t^{j-i+1};q)_\infty}
 \prod_{i<j}^{i\le N;\, j\le M} \frac{(
 t^{j-i+1};q)_\infty} {(
 t^{j-i};q)_\infty}
\end{multline*}
and formula
$$
 \frac{Q_\lambda}{P_\lambda}=b_\lambda= \prod_{1\le i\le j \le \ell(\lambda)}
 \frac{f(q^{\lambda_i-\lambda_j} t^{j-i})}{f(q^{\lambda_i-\lambda_{j+1}} t^{j-i})}.
$$
 Finally, V is a limit of the Macdonald difference operators.
\end{proof}





\section*{Acknowledgements}

The authors are very grateful to I.~Corwin, A.~Edelman and G.~Olshanski for many
invaluable discussions, to A.~Ahn for pointing out an inaccuracy in our description
of $\Omega$, and to anonymous referees for reading the manuscript and many useful
suggestions.

 A.~B.
was partially supported by the NSF grant DMS-1056390. V.~G. was partially supported by RFBR-CNRS
grant 11-01-93105.



\frenchspacing
\bibliographystyle{plain}

\begin{thebibliography}{AHM2}


\bibitem[A]{Anderson} G.~W.~Anderson, A short proof of Selberg's generalized beta formula,
\textit{Forum Math.} \textbf{3} (1991), 415--417.

\bibitem[AAR]{AAR} G.~E.~Andrews, R.~Askey, and R.~Roy, \textit{Special functions}, by
Encyclopedia of Mathematics and Its Applications, The University Press, Cambridge, 1999.


\bibitem[AGZ]{AGZ} G.~W.~Anderson, A.~Guionnet, O.~Zeitouni, \textit{An Introduction to Random
Matrices,}
Cambridge University Press, 2010.

\bibitem[AHM1]{AHM1} Y.~Ameur,H.~Hedenmalm, N.~Makarov,  Fluctuations of eigenvalues of random normal matrices, \textit{Duke
Math. J.} \textbf{159} (2011), 31--81,  arXiv:0807.0375.

\bibitem[AHM2]{AHM2} Y.~Ameur, H.~Hedenmalm, N.~Makarov, Random normal matrices and Ward identities,
arXiv:1109.5941.


\bibitem[AMW]{AMW} M.~Adler, P.~van~Moerbeke, D.~Wang, Random matrix minor processes related to percolation
theory, arXiv:1301.7017.

\bibitem[Bar]{Bar} Y.~Baryshnikov, GUEs and queues, \textit{Probability Theory and Related Fields}, \textbf{119}, no.\ 2 (2001),
256--274.

\bibitem[BFG]{BFG} F.~Bekerman, A.~Figalli, A.~Guionnet, Transport maps for Beta-matrix models and
Universality.  arXiv:1311.2315.

\bibitem[B1]{B-CLT} A.~Borodin, CLT for spectra of submatrices of Wigner random matrices,
\textit{Moscow Mathematical Journal}, 14, no.\ 1 (2014), 29--38,
  arXiv:1010.0898.

\bibitem[B2]{B-CLT2} A.~Borodin, CLT for spectra of submatrices of Wigner random matrices II. Stochastic evolution,
arXiv:1011.3544.


\bibitem[BB]{BB} A.~Borodin, A.~Bufetov, Plancherel representations of $U(\infty)$ and correlated Gaussian Free
Fields,  \textit{Duke Mathematical Journal}, 163, no.\ 11 (2014), 2109--2158, arXiv:1301.0511.

\bibitem[BC]{BigMac} A.~Borodin, I.~Corwin, Macdonald processes,  \textit{Probability Theory and Related
Fields,}  \textbf{158}, no.\ 1-2 (2014), 225--400, arXiv:1111.4408.

\bibitem[BCGS]{BCGS} A.~Borodin, I.~Corwin, V.~Gorin, S.~Shakirov, Observables of Macdonald processes, to appear in \textit{Transactions
of American Mathematical Society}, arxiv:1306.0659.

\bibitem[BF]{BF} A.~Borodin, P.~Ferrari, Anisotropic growth of random
surfaces in 2 + 1 dimensions, \textit{Communications in Mathematical Physics}, \textbf{325}, no.\
2 (2014),603--684, arXiv:0804.3035.

\bibitem[BG]{BG} A.~Borodin, V.~Gorin, Lectures on Integrable Probability.  arXiv:1212.3351

\bibitem[BP]{BP} A.~Borodin, S.~Peche, Airy kernel with two sets of parameters in directed percolation and random matrix
theory, \textit{Journal of Statistical Physics,} \textbf{132}, no.\ 2 (2008), 275--290,
arXiv:0712.1086

\bibitem[BP2]{BP2} A.~Borodin, L.~Petrov, Integrable probability: From representation theory to Macdonald processes,
\textit{Probab. Surveys}, \textbf{11} (2014), 1--58, arXiv:1310.8007

\bibitem[BEY1]{BEY1} P.~Bourgade, L.~Erdos, H.-T.~Yau, Universality of general beta ensembles.
arXiv:1104.2272.

\bibitem[BEY2]{BEY2} P.~Bourgade, L.~Erdos, H.-T.~Yau, Edge universality of beta ensembles.
arXiv:1306.5728.



\bibitem[Ch1]{Cherednik-Quantum} I.~Cherednik, Quantum Knizhnik--Zamolodchikov Equations
and Affine Root Systems, \textit{Commun. Math. Phys.} \textbf{150} (1992), 109--136.


\bibitem[Ch2]{Cherednik-Harish} I.~Cherednik, Inverse Harish--Chandra transform
and difference operators, \textit{Int. Math. Res. Notices} (1997), 733--750, arXiv:q-alg/9706010.

\bibitem[CJY]{CJY} S.~Chhita, K.~Johansson, B.~Young, Asymptotic domino statistics in the Aztec
diamond, arXiv:1212.5414.


\bibitem[Co]{Collins} B.~Collins, Product of random projections, Jacobi ensembles and universality problems arising from free
probability, \textit{Probability Theory and Related Fields,} \textbf{133}, no.\ 3 (2005),
315--344, arXiv:math/0406560.


\bibitem[DW]{DW} A.~B.~Dieker, J.~Warren, On the largest-eigenvalue process
for generalized Wishart random matrices, \textit{ALEA}, \textbf{6} (2009), 369--376,
arXiv:0812.1504

\bibitem[Di]{Dixon} A.~L.~Dixon, Generalizations of Legendre's formula $KE'-(K-E)K' =
\frac{1}{2}\pi$, \textit{Proc. London Math. Soc.} \textbf{3} (1905), 206--224.

\bibitem[Dub]{Dubedat} J.~Dubedat, SLE and the free field: Partition functions and couplings,
\textit{Journal of the American Mathematical Society,} \textbf{22}, no.\ 4 (2009), 995--1054,
arXiv:0712.3018.

\bibitem[Due]{Due} E.~Duenez, Random Matrix Ensembles Associated to Compact
Symmetric Spaces, \textit{Commun. Math. Phys.} \textbf{244}, 29--61 (2004), arXiv:math-ph/0111005

\bibitem[Dui]{Duits} M.~Duits, The Gaussian free field in an interlacing particle system with two jump
rates, \textit{Communications on Pure and Applied Mathematics,} \textbf{66}, no.\ 4 (2013),
600--643, arXiv:1105.4656.

\bibitem[DE1]{DE} I.~Dumitriu, A.~Edelman, Matrix Models for Beta Ensembles, \textit{Journal of Mathematical Physics,} \textbf{43}, no.\ 11
(2002), 5830--5847, arXiv:math-ph/0206043.

\bibitem[DE2]{DE-CLT} I.~Dumitriu, A.~Edelman, Eigenvalues of Hermite and Laguerre ensembles: Large Beta
Asymptotics, \textit{Annales de l'Institut Henri Poincare (B) Probability and Statistics,}
\textbf{41}, no.\ 6, (2005), 1083--1099, arXiv:math-ph/0403029

\bibitem[DP]{DP} I.~Dumitriu, E.~Paquette, Global Fluctuations for Linear Statistics of $\beta$-Jacobi Ensembles,
arXiv:1203.6103.

\bibitem[Ed]{Ed} A.~Edelman. Personal communication, 2013.

\bibitem[ES]{ES}  A.~Edelman and B.~D.~Sutton. The beta-Jacobi matrix model, the CS decomposition, and generalized singular value
problems, \textit{Foundations of Computational Mathematics,} \textbf{8}, no.\ 2 (2008), 259--285.


\bibitem[FFR]{FlForNor} B.~J.~Fleming, P.~J.~Forrester, E.~Nordenstam, A finitization of the bead
process, \textit{Probability Theory and Related Fields,}  \textbf{152}, no.\ 1-2 (2012), 321--356,
arXiv:0902.0709.


\bibitem[F]{For} P.~J.~Forrester, \textit{Log-gases and Random Matrices}. Princeton University Press, 2010.

\bibitem[FN]{FN} P.~J.~Forrester, T.~Nagao, Determinantal correlations for classical
projection processes, \textit{J. Stat. Mech.} (2011) P08011,  arXiv:0801.0100.

\bibitem[FR]{FR} P.~J.~Forrester, E.~M.~Rains, Interpretations of some parameter dependent generalizations of classical
matrix ensembles, \textit{Probab. Theory Relat. Fields} \textbf{131}, 1--61 (2005), arxiv:
math-ph/0211042.

\bibitem[FW]{FW} P.~J.~Forrester, S.~O.~Warnaar, The importance of the Selberg Integral,
 \textit{Bulletin (New Series) of the American Mathematical Society}, \textbf{45}, no.\ 4
(2008), 489--534.

\bibitem[GN]{GN} I.~M.~Gelfand, M.~A.~Naimark,
Unitary representations of the classical groups, \textit{Trudy Mat.~Inst.~Steklov}, Leningrad,
Moscow (1950) (in Russian). (German transl.: Academie-Verlag, Berlin, 1957.)


\bibitem[Gi]{Gi} A.~Givental. Stationary phase integrals, quantum Toda lattices, flag manifolds and
the mirror conjecture. \textit{Topics in Singularity Theory.}, AMS Transl. Ser. 2, 180, 103--115,
AMS, Providence, RI, 1997.

\bibitem[G]{G} V.~Gorin, Noncolliding Jacobi processes as limits of Markov chains on the Gelfand- Tsetlin graph.
\textit{Journal of Mathematica Sciences (New York)} \textbf{158} (2009), no. 6, 819--837
(translated from \textit{Zapiski Nauchnykh Seminarov POMI}, \textbf{360} (2008), pp. 91--123).
arXiv: 0812.3146

\bibitem[G2]{G2} V.~Gorin, From Alternating Sign Matrices to the Gaussian Unitary Ensemble,
\textit{Communications in Mathematical Physics}, 332, no.\ 1 (2014), 437--447, arXiv:1306.6347.

\bibitem[GP]{GP} V.~Gorin, G.~Panova, Asymptotics of symmetric polynomials with applications to statistical mechanics and representation
theory, to appear in \textit{Annals of Probability}, arXiv:1301.0634.

\bibitem[GS]{GS} V.~Gorin, M.~Shkolnikov,
Multilevel Dyson Brownian motions via Jack polynomials, to appear in \textit{Probability Theory and
Related Fields}, arXiv:1401.5595

\bibitem[HO]{HO} G.~J.~Heckman, E.~M.~Opdam,  Root systems and hypergeometric functions. I, II, \textit{Compositio Mathematica}\textbf{64}, no.\ 3 (1987),
329--352 and 353--373.

\bibitem[HS]{HS} G.~J.~Heckman, H.~Schlichtkrull, \textit{Harmonic Analysis and Special Functions on Symmetric Spaces}, Acad.
Press, 1994.

\bibitem[HMP]{HMP} X.~Hu, J.~Miller and Y.~Peres, Thick points of the Gaussian free
field, \textit{Annals of Probability} 2010, \textbf{38}, no. 2, 896--926, arXiv:0902.3842.

\bibitem[Is]{Is} L.~Isserlis, On a formula for the product-moment coefficient of any
order of a normal frequency distribution in any number of variables, \textit{Biometrika}
\textbf{12} (1918), 134--139.


\bibitem[Jo]{J} K.~Johansson. On fluctuations of eigenvalues of random Hermitian matrices. \textit{Duke Math. J.},
\textbf{91} (1998), 151--204.

\bibitem[JN]{JN} K.~Johansson and E.~Nordenstam. Eigenvalues of GUE minors. \textit{Electron. J. Probab.},
\textbf{11} no.\ 50, 1342--1371 (electronic), 2006, arXiv:math/0606760.


\bibitem[Ji]{Jiang} T.~Jiang, Limit Theorems for Beta-Jacobi Ensembles, \textit{Bernoulli},  \textbf{19}, no.\ 3 (2013), 721--1085, arXiv:0911.2262.

\bibitem[Ken]{Kenyon} R.~Kenyon, Height Fluctuations in the Honeycomb Dimer Model, \textit{Communications in Mathematical Physics,} \textbf{281}, no.\ 3 (2008),
675--709, arXiv: math-ph/0405052


\bibitem[Ker]{K-LLN} S.~V.~Kerov, Equilibrium and orthogonal polynomials (Russian), \textit{Algebra i Analiz,} \textbf{12} no.\ 6 (2000),
224--237, English translation: \textit{St. Petersburg Mathematical Journal,} 2001, \textbf{12}
no.\ 6, 1049--1059.

\bibitem[Ki]{Killip} R.~Killip, Gaussian fluctuations for $\beta$ ensembles,
\textit{Int.~Math.~Res.~Not.},  \textbf{8} (2008),  arXiv:math/0703140.


\bibitem[KN]{KN} R.~Killip and I.~Nenciu, Matrix models for circular ensembles, \textit{Int. Math. Res. Not.} \textbf{50} (2004),
2665--2701, arXiv:math/0410034

\bibitem[KLS]{KLS} R.~Koekoek, P.~Lesky, R.~F.~Swarttouw,   \textit{Hypergeometric Orthogonal Polynomials and
Their $q$-Analogues}, Springer, 2010.

\bibitem[Kr]{Kr} I.~Krasovsky,  Aspects of Toeplitz Determinants, Random Walks, Boundaries and
Spectra, \textit{Progress in Probability,} \textbf{64}, (2011), pp 305--324, arXiv:1007.1128.

\bibitem[KRV]{KRV} M.~Krishnapur, B.~Rider, and B.~Virag. Universality of the stochastic airy operator.
arXiv:1306.4832.



\bibitem[Ku]{Kuan} J.~Kuan, The Gaussian free field in interlacing particle systems,
arXiv:1109.4444.


\bibitem[M]{M} I.~G.~Macdonald, \textit{Symmetric functions and Hall polynomials}, Second Edition. Oxford University Press,
1999.

\bibitem[Me]{Me}  M.~L.~Mehta, \textit{Random Matrices} (3rd ed.),  Amsterdam: Elsevier/Academic Press, 2004.

\bibitem[N]{Ner} Yu.~A.~Neretin, Rayleigh triangles and non-matrix interpolation of matrix beta
integrals, \textit{Sbornik: Mathematics} (2003), \textbf{194}(4), 515--540.

\bibitem[Ok]{Ok} A.~Okounkov. Infinite wedge and random partitions. \textit{Selecta Math.} \textbf{7} (2001), 57--81.
arXiv:math/9907127.

\bibitem[OO]{OO} A.~Okounkov, G.~Olshanski, Shifted Jack polynomials, binomial formula, and applications. \textit{Mathematical Research Letters} \textbf{4} (1997), 69--78. arXiv:q-alg/9608020.

\bibitem[OR1]{OR}  A.~Okounkov, N.~Reshetikhin, Correlation functions of Schur process
with application to local geometry of a random 3-dimensional Young diagram. \textit{J. Amer. Math.
Soc.} \textbf{16} (2003), 581--603. arXiv:math.CO/0107056


\bibitem[OR2]{OR-birth} A.~Yu.~Okounkov, N.~Yu.~Reshetikhin, The birth of a random matrix, \textit{Mosc. Math. J.}, \textbf{6}, no.\ 3 (2006), 553--566


\bibitem[Op]{Op} Opdam, E. M. (1988), Root systems and hypergeometric functions. III, IV, \textit{Compositio Mathematica} \textbf{67} (1):
21--49; \textbf{67} (2): 191--209

\bibitem[Ox]{ABF} \textit{The Oxford Handbook of Random Matrix
Theory}, edited by G.~Akemann, J.~Baik, P.~Di~Francesco,  Oxford University Press, 2011.



\bibitem[P]{Petrov} L.~Petrov, Asymptotics of Uniformly Random Lozenge Tilings of Polygons. Gaussian Free
Field, to appear in \textit{Annals of Probability}, arXiv:1206.5123.

\bibitem[RRV]{RRV} J.~Ramirez, B.~Rider, B.~Virag, Beta ensembles, stochastic Airy spectrum, and a
diffusion, \textit{J. Amer. Math. Soc.} \textbf{24} (2011), no. 4, 919--944,  arXiv:math/0607331.

\bibitem[RV]{RV} B.~Rider, B.~Virag,  The noise in the circular law and the Gaussian free Field, \textit{Internat. Math. Research notices} 2007,
no.\ 2,  arxiv:math/0606663


\bibitem[Sch]{Sch} B.~Schapira, The Heckman-Opdam Markov processes, \textit{Probability Theory and Related
Fields,} \textbf{138}, no.\ 3--4 (2007), 495--519.

\bibitem[Shch]{Shch} M.~Shcherbina. Change of variables as a method to study general $\beta$--models: bulk universality.
arXiv:1310.7835.

\bibitem[She]{Sh} S.~Sheffield, Gaussian free fields for mathematicians, \textit{Probability Theory and Related Fields}
\textbf{139}
(2007), 521--541, arXiv:math.PR/0312099

\bibitem[Shi]{Shi} N.~Shimeno, A limit transition from the
Heckman-Opdam hypergeometric functions to the Whittaker functions associated with root systems
,arXiv:0812.3773

\bibitem[Se]{Selberg} A. Selberg, Bemerkninger om et multipelt integral, \textit{Norsk. Mat. Tidsskr.} \textbf{24} (1944), 71--78.

\bibitem[Sp]{Spohn} H.~Spohn, Dyson's Model of Interacting Brownian Motions of arbitrary coupling strength,
\textit{Markov Processes and Related Fields}, 1998, 649--661.

\bibitem[Sz1]{Szego1} G. Szeg\"{o}. On certain Hermitian forms associated with the Fourier series of a positive function.
Marcel Riesz Volume, Lund, 1952, 228--237.

\bibitem[Sz]{Szego} G.~Szeg\"{o}, \textit{Orthogonal polynomials}, Forth edition. American Mathematical Society, 2003.

\bibitem[VV]{VV} B.~Valk\'{o}, B.~Vir\'{a}g, Continuum limits of random matrices and the Brownian carousel. \textit{ Invent.
Math.} \textbf{177} (2009), no. 3, 463--508, arXiv:0712.2000.

\bibitem[Wa]{Wachter} K.~W.~Wachter, The limiting empirical measure of multiple discriminant ratios,
 \textit{Annals of Statistics,} \textbf{8}, no.\ 5 (1980), 937--957.

\bibitem[Wi]{Wigner} E.~P.~Wigner, On the distribution of the roots of certain symmetric matrices,
\textit{Ann. of Math.} \textbf{67} (1958), 325--327.

\end{thebibliography}

\end{document}